\theoremstyle{plain}
\newtheorem{thm}{Theorem}[section]
\newtheorem{lem}[thm]{Lemma}
\newtheorem{cor}[thm]{Corollary}
\newtheorem{prop}[thm]{Proposition}
\newtheorem{assump}[thm]{Assumption}
\theoremstyle{definition}
\newtheorem{defn}[thm]{Definition}
\newtheorem{exmp}[thm]{Example}
\theoremstyle{definition}
\newtheorem{note}[thm]{Note}
\newtheorem{rk}[thm]{Remark}
\newtheorem*{notation}{Notation}
\newtheorem{fact}[thm]{Fact}
\newcommand{\bA}{{\mathbb A}}
\newcommand{\bC}{{\mathbb C}}
\newcommand{\bG}{{\mathbb G}}
\newcommand{\bN}{{\mathbb N}}
\newcommand{\bR}{{\mathbb R}}
\newcommand{\bZ}{{\mathbb Z}}
\newcommand{\cB}{{\mathcal B}}
\newcommand{\cC}{{\mathcal C}}
\newcommand{\cD}{{\mathcal D}}
\newcommand{\cF}{{\mathcal F}}
\newcommand{\cH}{{\mathcal H}}
\newcommand{\cK}{{\mathcal K}}
\newcommand{\cL}{{\mathcal L}}
\newcommand{\cM}{{\mathcal M}}
\newcommand{\cN}{{\mathcal N}}
\newcommand{\cO}{{\mathcal O}}
\newcommand{\cW}{{\mathcal W}}
\newcommand{\scrF}{\EuScript F}
\newcommand{\A}{\mathcal{A}}
\newcommand{\B}{\mathcal{B}}
\newcommand{\fM}{\mathfrak{M}}
\newcommand{\fN}{\mathfrak{N}}
\newcommand{\fstop}{\mathfrak{f}}
\newcommand{\sheafhom}{\mathcal{H} \kern -.5pt \mathit{om}}
\newcommand{\derivedsheafhom}{\mathcal{R}\mathcal{H} \kern -.5pt \mathit{om}}
\newcommand{\frlag}{\mathfrak{L}\mathfrak{a}\mathfrak{g}}
\newcommand{\red}[1]{\textcolor{red}{#1}}
\numberwithin{equation}{section}
\numberwithin{figure}{section}
\title[Algebraic sheaves of Floer homology groups]{Algebraic sheaves of Floer homology groups via algebraic torus actions on the Fukaya category}
\author[Yusuf Bar\i\c{s} Kartal]{Yusuf Bar\i\c{s} Kartal}
\address{Department of Mathematics, Princeton University, Princeton, NJ 08544, USA}
\email{ykartal@math.princeton.edu}
\date{}
\keywords{Algebraic torus actions, family Floer homology, Lagrangian flux, affine torus charts, homological mirror symmetry}
\begin{document}
	
\maketitle	
\begin{abstract}
Let $(M,\omega_M)$ be a monotone or negatively monotone symplectic manifold, or a Weinstein manifold. One can construct an ``action'' of $H^1(M,\bG_m)$ on the Fukaya category (wrapped Fukaya category in the exact case) that reflects the action of $Symp^0(M,\omega_M)$ on the set of Lagrangian branes. A priori this action is only analytic. The purpose of this work is to show the algebraicity of this action under some assumptions. 

We use this to prove a tameness result for the sheaf of Lagrangian Floer homology groups obtained by moving one of the Lagrangians via global symplectic isotopies. We also show the algebraicity of the locus of $z\in H^1(M,\bG_m)$ that fix a Lagrangian brane in the Fukaya category. The latter has applications to Lagrangian flux. Finally, we prove a statement in mirror symmetry: in the Weinstein case, assume that $M$ is mirror to an affine or projective variety $X$, that there exists an exact Lagrangian torus $L\subset M$ such that $H^1(M)\to H^1(L)$ is surjective, and that $L$ is sent to a smooth point of $x\in X$ under the mirror equivalence. Then we construct a Zariski chart of $X$ containing $x$, that is isomorphic to $H^1(L,\bG_m)$, and such that other points of this chart correspond to non-exact deformations of $L$ (possibly equipped with unitary local systems). In particular, this implies rationality of the irreducible component containing $x$; however, it is stronger. 

Under our assumptions, one can construct an algebraic action of $H^1(M,\bG_m)$, namely the action by non-unitary local systems. By combining techniques from family Floer homology and non-commutative geometry, we prove that this action coincides with the geometric action mentioned in the first paragraph. We use this to deduce the theorems above.
\end{abstract}

\setcounter{tocdepth}{1}
\tableofcontents
\parskip1em
\parindent0em	

\section{Introduction}\label{sec:intro}
\subsection{Motivation}
Let $(M,\omega_M)$ be a closed symplectic manifold. Then, there exists a natural group homomorphism
\begin{equation}\label{eq:sympoverham}
Flux:\widetilde{Symp^0}/\widetilde{Ham^0}\to H^1(M,\bR)
\end{equation}
called the \textbf{flux map}. Here, $\widetilde{Symp^0}$ denote the universal cover of the identity component of $Symp(M,\omega_M)$, and $\widetilde{Ham^0}$ is the preimage of Hamiltonian diffeomorphisms under the covering map. This map is an isomorphism by Banyaga's theorem. Let $L$ be a Lagrangian submanifold. One can define a family of Lagrangians 
\begin{equation}
\{\phi_1(L): [\{\phi_t\}]\in \widetilde{Symp^0}\text{ lifts } \phi_1\in Symp^0(M,\omega_M) \}	
\end{equation}
parametrized by $\widetilde{Symp^0}$; hence, a family of Lagrangians parametrized by $H^1(M,\bR)$, which is well-defined only up to Hamiltonian isotopy. The purpose of this article is to explore properties of this family over $H^1(M,\bR)$, when $M$ is monotone, negatively monotone or exact.

Let $\frlag$ denote the set of closed Lagrangians up to Hamiltonian isotopy. $\frlag$ carries a natural topology, and it can be endowed with a (local) integral affine structure, where the coordinates are given by action coordinates (see \cite{kosoaffine} for a special case). Moreover, $Symp^0$ action on $\frlag$ induces an action of $\widetilde{Symp^0}$, and thus of $H^1(M,\bR)$. The family above can be seen as the orbit map of $L\in\frlag$, and this is an affine map $H^1(M,\bR)\to \frlag$. Similarly, one can also consider the moduli of Lagrangians endowed with unitary local systems, denoted by $\frlag^+$. By \cite{fukayamirror}, this moduli space carries a local analytic structure. Then, the extended symplectomorphism group (i.e. the group of symplectomorphisms and unitary local systems) modulo Hamiltonians act on $\frlag^+$, inducing an analogous $H^1(M,\bG_m)$-action on $\frlag^+$. Denote the action of $z\in H^1(M,\bG_m)$ by $\phi_z$. This action is (locally) analytic. Similar to above, one can define a family of Lagrangians endowed with local systems parametrized by $H^1(M,\bG_m)$, which can be thought as an analytic family.

The questions this article tries to answer are about the complexity of the families over $H^1(M,\bR)$ and $H^1(M,\bG_m)$. These families are a priori affine/analytic, and our main results can be seen as algebraicity of these families. As we will see, it will have implications to growth of Floer homology groups, generalizing the results of \cite{owniteratespadic}, as well as to the flux group of Lagrangians in monotone or negatively monotone symplectic manifolds. We also use it to prove a result in homological mirror symmetry for exact symplectic manifolds.

Even thought the local structures of $\frlag$ and $\frlag^+$ are easy to describe, these spaces are hard to work with. To our knowledge, they do not have nice constructions analogous to moduli of sheaves in algebraic geometry, and they do not form Artin stacks. Therefore, we prefer to replace them by ``moduli of objects'' in the Fukaya category. These moduli can be described as stacks as in \cite{toenvaquiemoduli}, and under some assumptions on the category, they are relatively tame. However, we will avoid the use of their language and treat ``moduli of objects'' as heuristics, and refer to its ``functor of points''. More precisely, we will use families of objects/modules over the Fukaya category, and use this as an algebraic replacement for families of Lagrangians. The approach via Fukaya categories will allow us to apply techniques of homological algebra and non-commutative geometry. 

Therefore, we study the continuous dynamics on the Fukaya category. One expects an analytic action of $H^1(M,\bG_m)$ on the Fukaya category, by family Floer homology \cite{abouzaidicm}. However, this action is often not ``algebraic''. For instance, if $M=E$ is an elliptic curve, then its Fukaya category is derived equivalent to $D^bCoh(E^\vee)$, where $E^\vee$ is the dual elliptic curve and the action of one of the cocharacters of $H^1(M,\bG_m)=\bG_m^2$ translates as the the action of $\bG_m$ via uniformization. This action is clearly non-algebraic. Fix $x\in E^\vee$ and consider the sheaf $RHom(\cO_{z.x},\cO_x), z\in \bG_m$ over $\bG_m$. This sheaf has $0$ dimensional, but infinite support in $\bG_m$; hence, it is only an analytic coherent sheaf. The result translates to the mirror dual torus analogously. Geometrically, the subgroup $\bR\subset \bG_m$ acts by rotation in fixed direction (which is extended to a $\bG_m$-action by unitary local systems), and the integral points $\bZ\subset\bR\subset \bG_m$ correspond to iterates of the full rotation. Therefore, if $L=L'$ is a simple non-separating curve orthogonal to the direction of the rotation, then the rank of $HF(\phi_z(L),L'), z\in\bG_m\subset H^1(E,\bG_m)$ is $0$ outside the subgroup $\bZ\subset \bG_m$, and it is $2$ at the points of this subgroup. 

\subsection{Algebraicity of the sheaf of Floer homologies}
Our first goal is to show that this does not happen in the monotone, negatively monotone, or exact case under some assumptions. More precisely, 
\begin{thm}\label{thm:algsheaf}
Let $L,L'\subset M$ be tautologically unobstructed closed Lagrangian branes. Then, there exists a finite complex of algebraic coherent sheaves over $H^1(M,\bG_m)$ whose restriction at $z\in H^1(M,\bG_m)$ has cohomology isomorphic to $HF(L,\phi_z(L') )$. 
\end{thm}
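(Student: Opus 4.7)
The plan is to combine two ingredients. First, the ``non-unitary local system'' action of $H^1(M,\bG_m)$ on the Fukaya category is visibly algebraic and gives rise to an explicit finite algebraic family of complexes whose restriction at $z$ computes $HF(L, L'\otimes z|_{L'})$. Second, the main comparison theorem announced in the abstract identifies this algebraic action, up to quasi-isomorphism, with the geometric action $\phi_z$. Granting the latter, Theorem~\ref{thm:algsheaf} reduces to constructing the family.

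More concretely, I would first replace $L,L'$ by Hamiltonian-isotopic representatives meeting transversally, so that $CF(L,L')$ is a finitely generated free module with generators indexed by $L\cap L'$. For a rank-one local system $\rho$ on $L'$, the twisted Floer differential weights each rigid pseudo-holomorphic strip $u$ with boundary class $[\partial_{L'}u]\in H_1(L')$ by the holonomy $\rho([\partial_{L'}u])\in\bG_m$. Under monotonicity, negative monotonicity, or exactness, fixing the index of $u$ imposes an \emph{a priori} bound on the area of contributing strips, and Gromov compactness then makes each matrix entry of the differential a \emph{finite} sum of holonomies. Letting $\rho$ vary in the universal family over $L'$ pulled back along the restriction map $H^1(M,\bG_m)\to H^1(L',\bG_m)$, each such holonomy becomes a character of $H^1(M,\bG_m)$, i.e.\ a unit in the coordinate ring $\cO(H^1(M,\bG_m))$. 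The output is a finite complex $\cC^\bullet$ of free $\cO(H^1(M,\bG_m))$-modules---a fortiori coherent sheaves---whose fiber at $z$ is the Floer complex of $L$ against $L'$ twisted by $z|_{L'}$.

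The last step is to invoke the comparison theorem: the algebraic $H^1(M,\bG_m)$-action on the Fukaya category by twisting with non-unitary local systems agrees with the geometric action $\phi_z$ up to $A_\infty$-equivalence, whence $H^\ast(\cC^\bullet|_z)\cong HF(L,L'\otimes z|_{L'})\cong HF(L,\phi_z(L'))$. The main obstacle, of course, is this comparison theorem, which is the central technical contribution of the paper; it merges family Floer homology (which models the geometric action) with non-commutative-geometric arguments (which capture the algebraic twist), and it cannot be bypassed for Theorem~\ref{thm:algsheaf}. Subordinate obstacles, which should be routine but not automatic, are checking that the twisting construction is compatible with change of auxiliary data and with the $A_\infty$-structure, so that different representatives of $L$ and $L'$ in the Fukaya category yield quasi-isomorphic families $\cC^\bullet$, and verifying that \emph{all} higher $A_\infty$-operations in the universally-twisted Fukaya category have algebraic---not merely analytic---matrix coefficients. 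Both reduce to a careful accounting of the boundary classes of holomorphic discs using monotonicity or exactness.
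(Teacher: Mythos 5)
The proposal correctly identifies that the comparison between the algebraic ``non-unitary local system'' action and the geometric action $\phi_z$ is the crux (Theorem~\ref{thm:mainabstract}), but the proposed construction of the algebraic complex has a genuine gap, and it is not the complex the paper actually uses.

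You propose to build $\cC^\bullet$ directly from $CF(L,L')$ by twisting the Floer differential with a universal local system on $L'$, claiming that monotonicity (or exactness) bounds the areas of rigid index-$1$ strips so that the matrix entries are polynomial in the holonomy variables. This fails because the hypotheses on $L$ and $L'$ are only ``tautologically unobstructed'' closed branes: they need not be monotone, exact, Bohr--Sommerfeld, or rational. Without such a condition the disc counts contributing to a fixed matrix entry of the twisted differential are genuinely infinite once $\rho$ has nontrivial valuation, and the series (and certainly the higher $A_\infty$-terms needed for well-definedness up to quasi-isomorphism) do not lie in $\cO(H^1(M,\bG_m))$. The paper explicitly flags this: finiteness of disc counts is precisely what the Bohr--Sommerfeld monotonicity hypothesis (Lemma~\ref{lem:finitediscs}) buys, and it is imposed only on a fixed finite generating set $\{L_i\}$, not on $L,L'$. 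Indeed the paper even warns that ``non-exact deformations of a Bohr--Sommerfeld/exact Lagrangian under global isotopies fail this property,'' so the class of $L,L'$ one must handle unavoidably contains Lagrangians for which your twisted complex does not converge.

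The paper's construction sidesteps this by never twisting $L$ or $L'$ directly. It first builds an algebraic family of \emph{bimodules} $\fM^M$ over the subcategory $\cF(M)$ spanned by the Bohr--Sommerfeld monotone (resp.\ exact) generators $\{L_i\}$; the disc counts entering $\fM^M$ have only $L_i$'s on their boundary, so they are finite by Lemma~\ref{lem:finitediscs}. It then represents $L,L'$ via their (untwisted) Yoneda modules over $\cF(M)$, and the sheaf of Theorem~\ref{thm:algsheaf} is the convolution
\begin{equation}
h_{L'}\otimes_{\cF(M)}\fM^M\otimes_{\cF(M)}h^L,
\end{equation}
algebraic because $\fM^M$ is, and cohomologically coherent because $\fM^M$ is a \emph{perfect} family (Lemma~\ref{lem:properimpliesperfect} in the closed case, Corollary~\ref{lem:perfectintheexactcase} in the wrapped case) while $h_L,h_{L'}$ are proper. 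The identification of the fiber at $z$ with $HF(L,\phi_z(L'))$ then follows from Theorem~\ref{thm:mainabstract} and Lemma~\ref{lem:hlconvhlequalshom}. So the extra step you need is to push the twisting onto the generators via the bimodule formalism; working with $CF(L,L')$ directly, as you propose, does not produce a well-defined algebraic family in the required generality.
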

\begin{assump}\label{assump:monotoneexact}
	If $M$ is closed, then it is monotone or negatively monotone, non-degenerate, and its Fukaya category $\cF(M)$ is generated by a set of Bohr-Sommerfeld monotone Lagrangians. If $M$ is open, then it is Weinstein. 	
\end{assump}
If we do not specify whether $M$ is closed, resp. exact, then the phrase ``Fukaya category'' refers to compact Fukaya category, resp. wrapped Fukaya category (or generating subcategories). We define Fukaya category and Floer homology over the Novikov field $\Lambda=\bC((T^\bR))$. The notation $\bG_m$ also refers to the multiplicative group over $\Lambda$. Any $z\in H^1(M,\bG_m)$ splits as $``z=T^{val_T z}z_0$'', where $val_T z\in H^1(M,\bR)$ is obtained by componentwise application of $T$-adic valuation and $z_0\in H^1(M,U_\Lambda)$, the set of ``unitary classes''. Let $\alpha$ be a closed $1$-form representing $val_T z$ such that the corresponding symplectic vector field $X_\alpha$ (defined by $\iota_{X_\alpha}\omega_M=-\alpha$ in our conventions) is complete with flow $\phi_\alpha^t$. Let $\xi_{z_0}$ denote the unitary local system corresponding to $z_0$. We define the action of $\phi_z$ on the brane $(L,\xi)$ by $\phi_z(L,\xi)=(\phi_\alpha^1(L),\xi\otimes\xi_{z_0})$. The only ambiguity is in the choice of $1$-form $\alpha$, but different choices result in Hamiltonian isotopic branes. In particular, the Floer homology is well-defined. 

Theorem \ref{thm:algsheaf} can be seen as a tameness result for the family Floer sheaf (where one restricts to Lagrangians deformed by some $\phi_z$). In particular, it implies,
\begin{cor}\label{cor:zariskiopenrank}
There exists a non-empty Zariski open subset of $H^1(M,\bG_m)$ on which the rank of $HF(L,\phi_z(L') )$ is constant. 
\end{cor}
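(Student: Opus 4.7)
The plan is to deduce this corollary directly from Theorem \ref{thm:algsheaf} via semi-continuity of the dimensions of cohomology of a bounded complex of coherent sheaves; once the algebraic sheaf model for family Floer cohomology is in hand, the corollary is an essentially formal consequence.

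First, I would apply Theorem \ref{thm:algsheaf} to produce a finite complex $\cC^\bullet$ of coherent algebraic sheaves on $H^1(M,\bG_m)$ whose fiberwise cohomology at any point $z$ is isomorphic to $HF^i(L,\phi_z(L'))$. Since $H^1(M,\bG_m)$ is an algebraic torus over $\Lambda$, it is a smooth affine variety whose coordinate ring is regular of finite Krull dimension, so every coherent sheaf admits a finite locally free resolution. I would replace $\cC^\bullet$ by a quasi-isomorphic bounded complex of locally free coherent sheaves of finite rank, leaving the fiberwise cohomology unchanged (fibers of locally free sheaves being exact).

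Next, I invoke the standard semi-continuity of ranks. For each $i$, the function $r_i(z) := \dim_{\kappa(z)} H^i(\cC^\bullet|_z)$ is upper semi-continuous in the Zariski topology: writing $d^j_z$ for the fiber of the differential at $z$, one has $r_i(z) = \mathrm{rk}(\cC^i) - \mathrm{rk}(d^i_z) - \mathrm{rk}(d^{i-1}_z)$, and the ranks $\mathrm{rk}(d^j_z)$ are lower semi-continuous in $z$ (each is the locus where suitable minors of the matrix of $d^j$ vanish). Thus the locus $U_i = \{z : r_i(z) = \min_w r_i(w)\}$ is a non-empty Zariski open subset. Only finitely many $i$ contribute by boundedness, and since $H^1(M,\bG_m)$ is irreducible, the intersection $U := \bigcap_i U_i$ is a non-empty Zariski open subset on which every $\dim HF^i(L,\phi_z(L'))$, and hence the total rank, is constant.

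I do not anticipate a real obstacle here; the essential content is all in Theorem \ref{thm:algsheaf}. The only small point to track is the interpretation of "rank" in the statement: if one reads it as the total graded dimension, summing the constants $r_i$ on $U$ yields a constant total rank, whereas if one reads it as each graded component separately, the same open set $U$ already achieves this. Either reading is settled by the same argument.
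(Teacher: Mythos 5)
Your argument is correct and follows essentially the same route as the paper: replace the complex from Theorem \ref{thm:algsheaf} by a finite complex of finite-rank projective modules (possible because $\cO(H^1(M,\bG_m))$ is regular of finite Krull dimension), then use lower semi-continuity of the rank of the differential. The only cosmetic difference is that you run the semi-continuity argument degree by degree, producing open sets $U_i$ and intersecting them using irreducibility of the torus, whereas the paper works directly with the total rank via the identity $\dim H^*(C|_z) = \dim(C) - 2\,\mathrm{rank}(d_z)$, which gives a single non-empty open locus in one step; the two descriptions yield the same open set, since the locus where $\sum_j \mathrm{rank}(d^j_z)$ is maximal coincides with the intersection $\bigcap_j\{\mathrm{rank}(d^j_z)\text{ maximal}\}$. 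Either formulation is fine, and your final paragraph correctly notes that the reading of ``rank'' (total vs.\ graded) is immaterial.
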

More generally, 
\begin{cor}\label{cor:rankstratification}
The set of $z\in H^1(M,\bG_m)$ at which the rank of $HF(L,\phi_z(L') )$ is at least $k$ forms a closed algebraic subvariety of $H^1(M,\bG_m)$, for any $k$. Therefore, constant rank loci of $HF(L,\phi_z(L') )$ form an algebraic stratification of $H^1(M,\bG_m)$.
\end{cor}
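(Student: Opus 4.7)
The plan is to deduce the corollary from Theorem~\ref{thm:algsheaf} by invoking the standard upper semi-continuity of fibrewise cohomology dimensions for a complex of coherent sheaves on a smooth variety.

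First, I would apply Theorem~\ref{thm:algsheaf} to obtain a finite complex $\cC^\bullet$ of algebraic coherent sheaves on $X := H^1(M,\bG_m)$ whose fibrewise cohomology at a point $z$ is isomorphic to $HF(L,\phi_z(L'))$. The variety $X$ is a commutative algebraic group whose connected components are torsors over the algebraic torus $\operatorname{Hom}(H_1(M,\bZ)_{\mathrm{free}},\bG_m)$, and in particular $X$ is smooth. On a smooth Noetherian scheme every coherent sheaf has finite Tor-dimension, so $\cC^\bullet$ is quasi-isomorphic to a bounded complex of locally free sheaves of finite rank; replacing it by such a resolution if necessary, I may assume that $\cC^\bullet$ itself consists of vector bundles.

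Next, for a bounded complex $E^\bullet$ of vector bundles on a Noetherian scheme, the function $z\mapsto \dim_{\kappa(z)} H^i(E^\bullet\otimes_{\cO_X}\kappa(z))$ is upper semi-continuous in the Zariski topology for each fixed $i$. Summing over the finitely many nonvanishing degrees gives that
\[
z \longmapsto \mathrm{rk}\,HF(L,\phi_z(L')) = \sum_i \dim_{\kappa(z)} H^i\bigl(\cC^\bullet\otimes_{\cO_X}\kappa(z)\bigr)
\]
is also upper semi-continuous. Hence $\{z\in X : \mathrm{rk}\,HF(L,\phi_z(L')) \geq k\}$ is Zariski closed, proving the first assertion. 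For the second, each constant-rank locus is the intersection of the closed set $\{\mathrm{rk}\geq k\}$ with the Zariski open complement of $\{\mathrm{rk}\geq k+1\}$, hence is locally closed, and these loci partition $X$. The only point requiring care is the reduction to a complex of vector bundles, which relies on the smoothness of $H^1(M,\bG_m)$; beyond this, no genuine obstacle remains, since once Theorem~\ref{thm:algsheaf} is granted, upper semi-continuity does all of the work.
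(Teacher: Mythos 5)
Your argument is correct and follows essentially the same route as the paper: both replace the complex from Theorem~\ref{thm:algsheaf} by a finite complex of vector bundles on $H^1(M,\bG_m)$ and then invoke Zariski semi-continuity of matrix rank. You phrase it as upper semi-continuity of $\dim H^i$ for each fixed $i$ and sum over $i$, whereas the paper uses the single identity $\dim H^*(C|_z,d_z)=\dim C-2\,\mathrm{rank}(d_z)$ and lower semi-continuity of $\mathrm{rank}(d_z)$; these are equivalent. One caveat worth noting: when $M$ is monotone the complex is only $\bZ/2\bZ$-graded, and the general reduction \emph{finite Tor-dimension $\Rightarrow$ quasi-isomorphic to a bounded complex of vector bundles} that you invoke is a $\bZ$-graded statement; the paper sidesteps this (see Note~\ref{note:z2gradedmodify}) by observing that the specific complex $h_{L'}\otimes_{\cF(M)}\fM^M\otimes_{\cF(M)}h^L$ already carries such a replacement by construction, so you should appeal to that rather than to a general smoothness argument in the monotone case.
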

Theorem \ref{thm:algsheaf} can be used to strengthen one of the main results of \cite{owniteratespadic}. \cite[Theorem 1.1]{owniteratespadic} states that under Assumption \ref{assump:monotoneexact}, for any $\phi\in Symp^0(M,\omega)$, the rank $HF(L,\phi^k(L'))$ is constant in $k\in\bZ$ with finitely many possible exceptions. On the other hand, Theorem \ref{thm:algsheaf} implies:
\begin{cor}\label{cor:realiterates}
Let $\phi^t_\alpha$	denote the flow of a closed $1$-form $\alpha$. Then the rank of $HF(L,\phi^t_\alpha(L'))$ is constant in $t\in \bR$ with finitely many possible exceptions. 
\end{cor}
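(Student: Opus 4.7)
The plan is to combine Theorem \ref{thm:algsheaf} with Corollary \ref{cor:rankstratification}, applied to the one-parameter family $\gamma:\bR\to H^1(M,\bG_m)$ defined by $\gamma(t)=T^{t[\alpha]}$ (with trivial unitary part). By construction of the $H^1(M,\bG_m)$-action, $\phi^t_\alpha(L')=\phi_{\gamma(t)}(L')$ as branes, so it suffices to show that for every closed algebraic subvariety $Z\subset H^1(M,\bG_m)$, the preimage $\gamma^{-1}(Z)$ is either all of $\bR$ or finite. Applied to the strata $Z_k:=\{z:\mathrm{rk}\,HF(L,\phi_z(L'))\geq k\}$ supplied by Corollary \ref{cor:rankstratification}, this dichotomy immediately yields the desired dichotomy on the level of ranks.

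Everything then reduces to the following non-archimedean lemma: for any non-zero Laurent polynomial $f\in\Lambda[z_1^{\pm 1},\ldots,z_n^{\pm 1}]$ (writing the identity component of $H^1(M,\bG_m)$ as $\bG_m^n$), the set $\{t\in\bR: f(T^{t[\alpha]})=0\}$ is either $\bR$ or finite. Expanding $f=\sum_\beta c_\beta z^\beta$ and regrouping by the value of $\langle\beta,[\alpha]\rangle\in\bR$ gives
\begin{equation}
f(T^{t[\alpha]})=\sum_{s\in S} C_s\,T^{ts},\qquad S\subset\bR\text{ finite},\ C_s\in\Lambda.
\end{equation}
If every $C_s$ vanishes then $f\circ\gamma\equiv 0$; otherwise set $S':=\{s\in S:C_s\neq 0\}$, and observe that the $T$-adic valuation of $f(T^{t[\alpha]})$ equals $\min_{s\in S'}(val_T(C_s)+ts)$ whenever this minimum is attained at a single index. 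Failure of unique attainment forces a linear equation $t(s-s')=val_T(C_{s'})-val_T(C_s)$ for some pair $s\neq s'$ in $S'$, so there are only finitely many bad values of $t$; at all other $t$ the expression has finite valuation and is therefore non-zero.

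Combining these: if $\gamma(\bR)\subset Z$ then $\gamma^{-1}(Z)=\bR$, and otherwise some Laurent polynomial in the defining ideal of $Z$ does not vanish identically along $\gamma$, so $\gamma^{-1}(Z)$ is finite. Setting $k^*:=\max\{k:\gamma(\bR)\subset Z_k\}$, we conclude $\mathrm{rk}\,HF(L,\phi^t_\alpha(L'))=k^*$ outside of a finite subset of $\bR$, proving the corollary. Beyond Theorem \ref{thm:algsheaf} the only substantive input is the non-archimedean lemma, and this is genuinely the delicate step: it crucially exploits the fact that distinct real exponents of $T$ cannot cancel generically in the Novikov field $\Lambda=\bC((T^\bR))$, a feature that would fail over $\bC$ and is precisely what rules out the elliptic-curve counterexample sketched in the motivation.
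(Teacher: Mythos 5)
Your proposal is correct and follows essentially the same route as the paper: restrict the algebraic Floer sheaf of Theorem \ref{thm:algsheaf} to the one-parameter curve $t\mapsto T^{t[\alpha]}$, and reduce to a non-archimedean finiteness statement for zeros of finite sums $\sum_s C_s T^{ts}$. The paper packages this as Lemma \ref{lem:finitezeroes} by first base-changing the complex along $\Lambda[z^{H_1(M)}]\to\Lambda[z^{\bR}]$ and then arguing via minors of the differential, proving the lemma by establishing that zeros are bounded (looking at $t\to\pm\infty$) and discrete; your version invokes Corollary \ref{cor:rankstratification} explicitly and proves the lemma directly by observing that non-unique attainment of the minimal valuation forces one of finitely many linear equations in $t$, which is a marginally cleaner way to run the same estimate.
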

\begin{rk}
\cite[Theorem 1.1]{owniteratespadic} uses ``the $p$-adic method'' inspired by \cite{skolemmahler} and \cite{dynmordellforcoherent}. Even though its implication is weaker than Corollary \ref{cor:realiterates}, we also apply that technique in the non-monotone case, under the assumption that $\phi$ is generic (see \cite[Theorem 1.5]{owniteratespadic}). The ``$p$-adic method" also has the promise of being applicable without genericity of $\phi$ (in the non-monotone case). In this case, we expect $rk(HF(L,\phi^k(L')))$ to differ from an arithmetic sequence at finitely many $k\in\bZ$. 
\end{rk}
\subsection{Algebraic stabilizers and the flux groups of Lagrangians}
As we have explained, one already expects to have an ``analytic action of $H^1(M,\bG_m)$'' on the Fukaya category, and we show algebraicity of this action. As a result, one also expects ``algebraic stabilizers''. Our second main result makes this precise. Call two objects $L,L'$ of the Fukaya category \textbf{stably isomorphic}, if $L$ is quasi-isomorphic to a direct summand of $L'^{\oplus q}$ for some $q\gg 0$, and vice versa. 
\begin{thm}\label{thm:stabilizer}
Let $L$ be a tautologically unobstructed, closed Lagrangian brane in $M$. Then the set of $z\in H^1(M,\bG_m)$ such that $L$ is stably isomorphic to $\phi_z(L)$ form a subtorus of $H^1(M,\bG_m)$ whose Lie algebra is given by the kernel of the map $H^1(M,\Lambda)\to H^1(L,\Lambda)$.
\end{thm}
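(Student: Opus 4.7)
The strategy is to use that the geometric action of $H^1(M,\bG_m)$ on $\cF(M)$ coincides with the algebraic action by tensoring with non-unitary local systems, as established elsewhere in the paper. Under this identification $\phi_z(L)$ is modeled by $L$ with its local system twisted by $\xi_z|_L$, so $\phi_z(L) \cong L$ whenever $\xi_z|_L$ is trivial. Let $K := \ker\bigl(H^1(M,\bG_m) \to H^1(L,\bG_m)\bigr)$; then the identity component $K^0$ is a subtorus whose Lie algebra is precisely $\ker\bigl(H^1(M,\Lambda) \to H^1(L,\Lambda)\bigr)$. The plan is to prove that the stabilizer $S$ and $K$ have the same identity component.

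The inclusion $K \subseteq S$ is immediate from the description above. For the reverse, I first argue that $S$ is a closed algebraic subgroup of $H^1(M,\bG_m)$. Algebraicity follows from Theorem \ref{thm:algsheaf} applied with $L' = L$: the coherent complex computing $HF^*(L,\phi_z(L))$ is algebraic over $H^1(M,\bG_m)$, and the locus on which its degree-$0$ piece carries a pair of mutually stably-inverse morphisms witnessing $\phi_z(L) \sim L$ is a constructible subset. Closure under multiplication and inversion, inherited from the action, upgrades it to a closed algebraic subgroup of the torus.

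The essential step is the tangent space calculation. For $v \in H^1(M,\Lambda)$, the infinitesimal action at the identity sends $v$ to the first-order Maurer--Cartan deformation of the $A_\infty$-algebra $CF^*(L,L)$ by the image of $v|_L$ under the classical injection $H^1(L,\Lambda) \hookrightarrow HF^1(L,L)$ (an isomorphism in degree $1$ in the Weinstein case via $HF^*(L,L) \cong H^*(L,\Lambda)$, and injective in the monotone case under Assumption \ref{assump:monotoneexact} via the Oh spectral sequence in the stated range). If $v \in \mathrm{Lie}(S)$, this first-order deformation of $L$ must be stably trivial, forcing the Maurer--Cartan class $v|_L \in HF^1(L,L)$ to vanish; injectivity then gives $v|_L = 0$ in $H^1(L,\Lambda)$, i.e.\ $v \in \mathrm{Lie}(K)$. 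Combined with $K \subseteq S$, this yields $\mathrm{Lie}(S) = \mathrm{Lie}(K)$, and together with algebraicity of $S$ one concludes $S^0 = K^0$, the claimed subtorus.

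I expect the main obstacle to be the passage from \emph{stable} isomorphism of first-order deformations to vanishing of the Maurer--Cartan obstruction: for honest isomorphism, equality of obstruction classes is standard $A_\infty$ deformation theory, but stable iso only records equality in an idempotent-completed setting. I would bridge this by lifting the stable-iso witnesses to the Karoubi completion of the infinitesimal Fukaya category over $\Lambda[\epsilon]/(\epsilon^2)$, and exploiting that under Assumption \ref{assump:monotoneexact} the algebra $HF^0(L,L)$ is finite-dimensional over $\Lambda$, so idempotents lift uniquely from $\Lambda$ to $\Lambda[\epsilon]/(\epsilon^2)$. This reduces the stable-iso obstruction to the iso obstruction of a direct summand of $L^{\oplus q}$, which is still detected by the Maurer--Cartan class in $HF^1(L,L)$, completing the argument.
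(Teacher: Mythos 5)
Your route to identifying the Lie algebra of the stabilizer is genuinely different from the paper's. The paper never touches Maurer--Cartan theory or the Oh spectral sequence: after showing $S$ is a constructible abstract subgroup (hence a closed algebraic subgroup), it reduces the identification $S=K$ to the case of \emph{unitary} classes and invokes a PSS-type lemma (\Cref{lem:localsystofukaya}, \Cref{cor:loctofukaya}) asserting that the functor $Loc_L(U_\Lambda)\to H^*(\cF(M))$ is fully faithful, so that $(L,\xi_1)\sim(L,\xi_2)$ stably iff $\xi_1\cong\xi_2$. Your proposal replaces that reduction by a tangent-space computation.

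However there is a genuine gap: you prove $\mathrm{Lie}(S)=\mathrm{Lie}(K)$ and conclude $S^0=K^0$, but the theorem claims $S$ \emph{is} a subtorus, i.e.\ connected, and the paper explicitly flags this as the stronger and harder part (``we also claim this subgroup is connected''). A Lie-algebra argument cannot, in principle, rule out a nontrivial finite quotient $S/S^0$: torsion elements of $S$ are invisible to any first-order deformation argument at the identity. Concretely, if $z\in S$ satisfies $z^k\in S^0$ but $z\notin S^0$, your computation says nothing about $z$. The paper's treatment of exactly this case is \Cref{lem:kneqh}: if $K=H^o\subsetneq H$, pick $z\in H\setminus H^o$, note $val_T(z)\in val_T(K)$, divide by a suitable $z_0\in K$ to land on a \emph{unitary} $z z_0^{-1}\in H\setminus K$, and then derive a contradiction from \Cref{cor:loctofukaya}. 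Without an analogue of this step your argument proves only $S^0=K^0$, a strictly weaker statement than the theorem.

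A secondary issue worth flagging: you use ``$v\in\mathrm{Lie}(S)$'' to produce a first-order deformation of $L$ over $R=\Lambda[\epsilon]/(\epsilon^2)$ and then deduce the Maurer--Cartan class vanishes. But $S$ is defined as the reduced subvariety swept out by the $\Lambda$-points where the stable-iso condition holds; it is not a priori a representing scheme for the functor $R\mapsto\{z_\nu\in H^1(M,\bG_m)(R):\phi_{z_\nu}(L)\sim_R L\}$. Passing from a tangent vector of the reduced subgroup to an $R$-point deformation satisfying the stable-iso condition over $R$ needs its own argument (e.g.\ a scheme-theoretic description of $S$ as the vanishing locus of an obstruction, or an explicit comparison with an $R$-valued moduli functor), and this is not supplied. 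Finally, the appeal to the Oh spectral sequence to get injectivity of $H^1(L,\Lambda)\to HF^1(L,L)$ in the monotone case is an overcomplication: $L$ is assumed tautologically unobstructed, so there is a $J$ with no nonconstant holomorphic discs on $L$ and hence $HF^*(L,L)\cong H^*(L,\Lambda)$ outright, which is also what the paper's \Cref{lem:localsystofukaya} uses.
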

Observe that the first claim is stronger than just being an algebraic subgroup, we also claim this subgroup is connected. Theorem \ref{thm:stabilizer} implies:
\begin{cor}\label{cor:vanishingflux}
Assume $M$ is closed. Given $\phi\in Symp^0(M,\omega_M)$, if $\phi(L)$ is Hamiltonian isotopic to $L$, then the flux of an isotopy from $1$ to $\phi$ restricts to $0$ on $L$. 
\end{cor}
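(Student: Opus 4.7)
The plan is to deduce Corollary \ref{cor:vanishingflux} directly from Theorem \ref{thm:stabilizer}, by exhibiting $\phi$ as Hamiltonian-isotopic to $\phi_z$ for the element $z = T^\alpha \in H^1(M, \bG_m)$ determined by the flux $\alpha$ of the given isotopy from $id$ to $\phi$.

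Concretely, fix an isotopy $\{\phi_t\}_{t \in [0,1]}$ with $\phi_0 = id$, $\phi_1 = \phi$, and flux $\alpha \in H^1(M, \bR)$. Choose a closed $1$-form $\beta$ representing $\alpha$ whose symplectic dual vector field is complete, and let $\phi_\beta^1$ denote its time-$1$ flow. Since $\phi$ and $\phi_\beta^1$ have the same flux, Banyaga's theorem guarantees that $\phi \circ (\phi_\beta^1)^{-1}$ is Hamiltonian; hence $\phi(L)$ is Hamiltonian isotopic to $\phi_\beta^1(L) = \phi_z(L)$, where $z = T^\alpha$ with trivial unitary component. The hypothesis that $\phi(L)$ is Hamiltonian isotopic to $L$ then yields $L \simeq \phi_z(L)$ as branes in the Fukaya category, and in particular they are stably isomorphic.

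Applying Theorem \ref{thm:stabilizer}, we conclude $z = T^\alpha$ lies in the subtorus $H \subset H^1(M, \bG_m)$ whose Lie algebra is $K := \ker(H^1(M, \Lambda) \to H^1(L, \Lambda))$. The restriction homomorphism of algebraic tori $r : H^1(M, \bG_m) \to H^1(L, \bG_m)$ has derivative precisely the map $H^1(M, \Lambda) \to H^1(L, \Lambda)$ whose kernel is $K$; therefore $r(H)$ is a connected algebraic subgroup of $H^1(L, \bG_m)$ with trivial Lie algebra, forcing $r(H) = \{1\}$. Hence $r(T^\alpha) = T^{\alpha|_L} = 1$ in $H^1(L, \bG_m) = Hom(H_1(L, \bZ), \Lambda^*)$. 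Evaluating on any $\gamma \in H_1(L, \bZ)$ gives $T^{\alpha|_L(\gamma)} = 1 \in \Lambda^*$, and since $T^r \neq 1$ for any nonzero $r \in \bR$ in the Novikov field, $\alpha|_L(\gamma) = 0$ for every $\gamma$, i.e., the flux restricts to $0$ on $L$.

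The only step requiring care is the identification in the first paragraph: the abstract action $\phi_z$ used in Theorem \ref{thm:stabilizer}, defined via a $1$-form representing $val_T z$, must genuinely coincide up to Hamiltonian isotopy with the geometric action of any symplectomorphism of the corresponding flux. This is essentially Banyaga's theorem combined with the Hamiltonian-isotopy-invariance of $\phi_z$ noted after Theorem \ref{thm:algsheaf}; the remainder of the argument is formal manipulation inside the algebraic torus $H^1(M, \bG_m)$ and poses no real obstacle.
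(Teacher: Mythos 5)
Your proof is correct and follows essentially the same route as the paper: reduce $\phi$ to $\phi_z$ with $z=T^{[\alpha]}$ via Banyaga's theorem, apply Theorem \ref{thm:stabilizer} to place $z$ in the subtorus $H$ with Lie algebra $\ker(H^1(M,\Lambda)\to H^1(L,\Lambda))$, and conclude $\alpha|_L=0$. The paper simply reads off $[\alpha]=val_T(z)\in\ker(H^1(M,\bR)\to H^1(L,\bR))$ directly from membership in $H$, whereas you arrive at the same conclusion slightly more verbosely by pushing $H$ forward along the restriction homomorphism of tori; the logical content is identical.
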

\begin{rk}
Corollary \ref{cor:vanishingflux} is stated for closed case due to subtleties related to flux on a non-compact manifold. One often uses compactly generated symplectic isotopies to define this; however, this does not suffice for our purposes. 
\end{rk}
Recall that the flux group of a monotone symplectic manifold vanishes (see \cite{lalondemcduffpolterovichonflux}, \cite{luptonoprea}). The following can be seen as a Lagrangian version of the same statement.
\begin{cor}\label{cor:lagrvanishingflux}
Assume the map $H^1(M,\bR)\to H^1(L,\bR)$ is surjective. Under the assumptions above, the flux of a Lagrangian isotopy from $L$ to itself vanishes (this is an element of $H^1(L,\bR)$).
\end{cor}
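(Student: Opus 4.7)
The plan is to reduce Corollary \ref{cor:lagrvanishingflux} to Corollary \ref{cor:vanishingflux} by realizing a given Lagrangian loop $\{L_t\}_{t\in[0,1]}$ based at $L$ as the trace of an ambient symplectic isotopy acting on $L$, up to Hamiltonian correction. Write $\mu\in H^1(L,\bR)$ for the Lagrangian flux of $\{L_t\}$ and use the surjectivity hypothesis to lift $\mu$ to a class $\tilde\mu\in H^1(M,\bR)$.

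First, I would construct a symplectic isotopy $\{\phi_t\}$ of $M$ whose orbit through $L$ is Hamiltonian-equivalent to $\{L_t\}$. In more detail, the generator $X_t$ of $\{L_t\}$ along $L_t$ satisfies that $\iota_{X_t}\omega|_{L_t}=:\beta_t$ is a closed $1$-form on $L_t$, and the time integral of $[\beta_t]$ represents $\mu$. Using the Weinstein neighborhood theorem together with the surjectivity hypothesis and a partition-of-unity argument on $M$, one extends the pointwise-in-$t$ cohomology classes $[\beta_t]$ to closed $1$-forms $\alpha_t$ on $M$ restricting on $L_t$ to a form cohomologous to $\beta_t$. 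Let $Y_t$ be the symplectic vector field defined by $\iota_{Y_t}\omega=-\alpha_t$; after correcting $Y_t$ by a time-dependent ambient Hamiltonian vector field that absorbs the exact-on-$L_t$ discrepancy $Y_t|_{L_t}-X_t$, its flow $\phi_t$ is a symplectic isotopy such that $\phi_t(L)$ is Hamiltonian isotopic to $L_t$ for every $t$. In particular $\phi_1(L)$ is Hamiltonian isotopic to $L_1=L$, and the flux of $\{\phi_t\}$ in $H^1(M,\bR)$ equals $\tilde\mu$, whose restriction to $L$ is $\mu$ by construction.

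Second, apply Corollary \ref{cor:vanishingflux} to $\phi=\phi_1$: since $\phi_1(L)$ is Hamiltonian isotopic to $L$, the flux of $\{\phi_t\}$ restricts to $0$ in $H^1(L,\bR)$, so $\mu=0$, as desired.

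The main obstacle is the first step, the extension of a Lagrangian isotopy to an ambient symplectic isotopy in such a way that the restriction of the ambient flux matches the Lagrangian flux. One has to arrange this smoothly in $t$ and construct the Hamiltonian correction globally on $M$, which is where the closedness of $M$ (assumed throughout Corollary \ref{cor:vanishingflux}) and compactness of $L$ make the Moser-type argument and partition-of-unity extension routine but require care. Once this extension lemma is in place, the reduction to Corollary \ref{cor:vanishingflux} is immediate. One could alternatively bypass constructing an isotopy of ambient symplectomorphisms point-by-point in $t$ by working only at the endpoint: choose an $\alpha$ representing $\tilde\mu$ with complete flow $\phi_\alpha^t$; the Lagrangian paths $\{L_t\}$ and $\{\phi_\alpha^t(L)\}$ both have flux $\mu$ from $L$, so their concatenation is a Lagrangian loop whose flux is Hamiltonian-invariant and equals $\mu-\mu=0$, and then Corollary \ref{cor:vanishingflux} applied to $\phi_\alpha^1$ gives $\mu=0$.
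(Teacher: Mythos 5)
Your main argument (the first two paragraphs) coincides with the paper's: under the surjectivity hypothesis, extend the Lagrangian isotopy to an ambient symplectic isotopy and then apply Corollary \ref{cor:vanishingflux}; the paper invokes the extension step in one sentence, while you supply the details (Weinstein neighborhood, extension of the generating $1$-forms to closed forms on $M$, Hamiltonian correction of the tangential discrepancy), which is fine.

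The alternative you sketch in the last paragraph is not sound as stated. The concatenation of $\{L_t\}$ with the reverse of $\{\phi^t_\alpha(L)\}$ is a Lagrangian \emph{path} from $L$ to $\phi^1_\alpha(L)$, not a loop --- the two paths have the common start point $L$ but different endpoints $L$ and $\phi^1_\alpha(L)$ --- and noting that this path has zero Lagrangian flux does not by itself establish that $\phi^1_\alpha(L)$ is Hamiltonian isotopic to $L$, which is exactly the hypothesis you need in order to invoke Corollary \ref{cor:vanishingflux} for $\phi = \phi^1_\alpha$. The implication ``zero-flux Lagrangian isotopy between $L$ and $L'$ $\Rightarrow$ $L$ and $L'$ Hamiltonian isotopic'' is a nontrivial statement; under your standing hypotheses the natural proof again extends the Lagrangian isotopy to an ambient one (now arranged to have vanishing ambient flux) and invokes Banyaga's theorem, so nothing is actually bypassed, and without that extra justification the ``alternative'' has a genuine gap.
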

This result may be well known, but we are not aware of it in the literature. Contrary to Corollary \ref{cor:vanishingflux}, it is valid in the non-compact case as well.

Before turning to the next theorem, we would like to mention the possibility that Theorem \ref{thm:stabilizer} can be used to reprove \cite[Theorem 1.3]{onoflux}, which states that if the map $H^1(M,\bR)\to H^1(L,\bR)$ is surjective, then the component of $L$ within $\frlag$ is Hausdorff. 

\subsection{Mirror symmetry, affine torus charts and rationality}
We also study the implications in the mirror symmetry. Namely, to construct the sheaf mentioned in Theorem \ref{thm:algsheaf}, we actually construct families of ``quasi-functors'' (a.k.a. bimodules) of the Fukaya category. The same family can be used to construct ``algebraic families of objects'' corresponding to deformations of $L$ by global symplectomorphisms. In other words, this can be considered as a map from $H^1(M,\bG_m)$ to the ``moduli of objects'', as we mentioned above. Assume that the map $H^1(M,\bR)\to H^1(L,\bR)$ is surjective. By Theorem \ref{thm:stabilizer}, one can construct the map from $H^1(L,\bG_m)$ to the moduli (more precisely, a family parametrized by $H^1(L,\bG_m)$), and it is an injective map. Hence, this can be seen as a construction of torus charts in the moduli of objects. Putting the heuristics aside, one can prove the following:
\begin{thm}\label{thm:rationalmirror}
Assume $M$ is Weinstein, $L$ is as above and a Lagrangian torus, and $H^1(M,\bR)\to H^1(L,\bR)$ is surjective. Assume $M$ is mirror dual to a projective or affine variety $X$ over $\Lambda$, in the sense that the wrapped Fukaya category is $\bZ$-graded and derived equivalent to $D^b Coh(X)$. Further assume, the equivalence maps $L$ to a sky-scraper sheaf of a smooth point $x\in X$. Then $x$ lies in a Zariski chart isomorphic to $H^1(L,\bG_m)\cong \bG_m^{b_1(L)}$. In particular, its irreducible component is rational. Under the given equivalence, other points in this chart correspond to Lagrangian tori deforming $L$ (possibly equipped with unitary local systems) inside $M$.	
\end{thm}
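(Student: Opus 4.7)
The plan is to produce an algebraic morphism $f \colon H^1(L,\bG_m) \to X$ with $f(1) = x$ and to show it is an open immersion. By the algebraicity of the $H^1(M,\bG_m)$-action on the Fukaya category established earlier in the paper, acting on $L$ yields an algebraic family of objects whose fiber over $z$ is stably isomorphic to $\phi_z(L)$. Theorem~\ref{thm:stabilizer} together with the surjectivity of $H^1(M,\bR) \to H^1(L,\bR)$ identifies the stabilizer of $L$ with $\ker(H^1(M,\bG_m) \to H^1(L,\bG_m))$; picking equivariant isomorphisms and descending, the family pushes down to an algebraic family parametrized by $H^1(L,\bG_m)$. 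Through the mirror equivalence this becomes a family $\cE \in D^b\mathrm{Coh}(X \times H^1(L,\bG_m))$ with $\cE|_1 \simeq \cO_x$, equivalently a morphism $f$ from $H^1(L,\bG_m)$ into the moduli of simple objects of $D^b\mathrm{Coh}(X)$.

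Each fiber $\cE|_z$ is simple, since $\mathrm{End}(\cE|_z) \cong HF^0(\phi_z L, \phi_z L) = \Lambda$, and $\dim_\Lambda \mathrm{Ext}^i(\cE|_z,\cE|_z) = \binom{b_1(L)}{i}$. At $z=1$ these match the Ext dimensions of a skyscraper at a smooth point of a $b_1(L)$-dimensional variety, which forces $\dim X = b_1(L)$. Because $x$ is smooth, the formal deformation functor of $\cO_x$ in $D^b\mathrm{Coh}(X)$ is represented by $\widehat{X}_x$, so on a formal neighborhood of $1$ the map $f$ factors through $X$. The differential $df_1 \colon H^1(L,\Lambda) \to \mathrm{Ext}^1(\cO_x,\cO_x) = T_x X$ can be read off the construction of the family: the identification of the geometric $H^1(M,\bG_m)$-action with the algebraic local-system action, proved in the body of the paper, forces $df_1$ to be the canonical identification $H^1(L,\Lambda) \xrightarrow{\sim} H^1(L,\Lambda)$, so $f$ is étale at $1$.

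Theorem~\ref{thm:stabilizer}, read in the quotient $H^1(L,\bG_m)$, shows that $f$ is injective on $\Lambda$-points. To extend étaleness from $1$ to the whole torus I would use $H^1(L,\bG_m)$-equivariance: translation by $z_0$ on the base corresponds to the action of $z_0$ on the Fukaya category, transporting the infinitesimal computation from $1$ to any other point. An étale morphism of finite-type varieties in characteristic zero that is injective on closed points is an open immersion, so $f$ is an open immersion; the agreement of the formal neighborhoods of $[\cO_x]$ in the moduli and of $x$ in $X$ ensures that the image lies in $X$, realizing an open subvariety of $X$ containing $x$ and isomorphic to $H^1(L,\bG_m) \cong \bG_m^{b_1(L)}$. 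The final assertion of the theorem is then immediate, since by construction $\cE|_z$ corresponds to the Lagrangian deformation $(\phi_z L, \xi_{z_0})$. The step I expect to be most delicate is this global extension, since it depends on the functoriality in $z$ of the paper's comparison between the analytic geometric action and the algebraic local-system action, and it must be handled before one can invoke the étale-plus-injective criterion over the entire torus rather than just a formal neighborhood of the identity.
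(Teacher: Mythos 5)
Your overall strategy — build an algebraic family over a torus parametrizing the $\phi_z$-deformations of $L$, push it to the mirror, and then prove the resulting map to $X$ is an open immersion using injectivity on $\Lambda$-points plus an infinitesimal computation — is the same as the paper's, but there are three places where your argument has genuine gaps that the paper handles with more care.

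First, the descent. You propose to act on $L$ by all of $H^1(M,\bG_m)$ and then "descend" the family along the quotient by the stabilizer $H$ to obtain a family over $H^1(L,\bG_m)$. This descent is not automatic: one needs to endow the family with an $H$-equivariant structure compatible with the $A_\infty$-module maps, and $L \simeq \phi_z(L)$ for $z \in H$ is only a quasi-isomorphism, not a canonical one. The paper avoids this entirely by choosing a complementary subtorus $J \subset H^1(M,\bG_m)$ with $H^1(M,\bG_m) = H \oplus J$ and setting $\fN_L := h_L \otimes_{\cW(M)} \fM^M|_J$; since $J \to H^1(L,\bG_m)$ is an isomorphism, this gives the desired base without any descent.

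Second, and more seriously, your argument only produces a \emph{formal} factorization of $f$ through $X$ near $z = 1$: you invoke representability of the deformation functor of $\cO_x$ by the formal completion $\widehat{X}_x$. To obtain an honest algebraic morphism $J \to X$ on the nose (not just a map of formal schemes), you need an algebraic representability result for the relevant moduli problem. The paper uses Inaba's theorem that simple complexes on a projective variety are parametrized by an algebraic space $Splcpx_X$; this gives a genuine morphism of schemes, and then the connectedness of $J$ forces the image to lie in the component of skyscraper sheaves, which is $X$. Your "agreement of formal neighborhoods" does not by itself show that the image of the whole torus is contained in $X$, nor that $f$ is algebraic away from a formal neighborhood of $1$. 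Relatedly, you never address the case where $X$ is affine rather than projective: Inaba's theorem does not apply directly, and the paper has to extend the Fourier–Mukai kernel to a compactification $\bar{X}$, prove properness of its support over $J$ (using \Cref{lem:propernl}), and then show the image lies in $X$.

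Third, your plan to transport the infinitesimal injectivity from $z = 1$ to all of $J$ by "equivariance" is a reasonable heuristic, but the paper's \Cref{lem:immersivenl} proves immersiveness at \emph{every} $z$ directly, using the Yoneda computation $\mathrm{hom}(h^{alg}_L|_{z_{\nu_1}}, h^{alg}_L|_{z_{\nu_2}}) \simeq R\Gamma_L((\xi_{z_{\nu_2}} \otimes \xi_{z_{\nu_1}}^{-1})|_L)$ over $\Lambda[\epsilon]/(\epsilon^2)$. This avoids having to make the equivariance of the comparison between the analytic and algebraic actions precise, which you correctly flag as the most delicate point of your approach. If you want to keep your equivariance route, you would need to establish that the group-like property of $\fM^M$ (i.e. $\fM^M|_{z_0} \otimes_{\cW(M)} \fM^M|_{z} \simeq \fM^M|_{z_0 z}$) is witnessed by an explicit isomorphism of families functorial in $z$, which is roughly what $\fM^M$'s group-like structure provides, but you would have to carry it through the mirror functor carefully.
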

Our proof actually implies this claim for Fukaya-Seidel categories of Lefschetz fibrations. Note that we need $\bZ$-grading and $\bZ$-graded equivalence. 
\begin{note}
As mentioned, for the proof of \Cref{thm:rationalmirror}, we essentially construct a map from $H^1(L,\bG_m)$ to the moduli of objects, which on the mirror gives a map to the moduli of objects of $D^bCoh(X)$. The condition that $L$ is mirror to a point means this map hits the component containing skyscraper sheaves; hence, it is basically a map to $X$. On the other hand, this condition can be dropped in some cases. Assume $X$ is smooth, and admits a smooth compactification $\bar X$ such that $D=\bar X\setminus X$ is an ample divisor. Also assume the pair $(\bar X,D)$ is mirror to a pair $(M,\fstop)$, where $\fstop$ is a nice stop. In other words, $D^bCoh(\bar X)$ is also equivalent to the partially wrapped Fukaya category $\cW(M,\fstop)$, and the restriction to $X$ on the $B$-side corresponds to the stop removal functor on the $A$-side. Then, if $L$ is compact and exact, \cite[Thm 6.27, Cor 8.2]{ownwithcotefiltrationgrowth} implies it corresponds to a complex of sheaves on $X$ with $0$-dimensional cohomological support. The length of the support is given by $b_0(L)$. Presumably, from the family above, one obtains a map from $H^1(L,\bG_m)$ to the Hilbert scheme of $X$, and under further topological assumptions on $L$ (at least connectedness, and possibly that $L$ is a homology torus), an actual map to $X$ itself. 
\end{note}
\begin{note}
As long as $L$ is exact, one can construct an algebraic family of modules parametrized by $H^1(L,\bG_m)$ (analogous to $h_L^{alg}$ that appears later, and essentially corresponding to twists by non-unitary local systems). Therefore, under the conditions of \Cref{thm:rationalmirror}, one still has a map from $H^1(L,\bG_m)$ to the moduli of objects of $D^bCoh(X)$, without the surjectivity assumption on cohomology. As remarked to us by Mohammed Abouzaid, in the exact case, the injectivity of this map is easy and does not require surjectivity assumption on the cohomology. The major point of \Cref{thm:rationalmirror} is that the deformations by non-unitary local systems are realized by flux. In other words, the other points in the chart are realized by the Lagrangian tori deforming $L$ (with unitary local systems). This theorem can have applications in SYZ mirror symmetry as well. 
\end{note}
\begin{note}
Under the assumptions of \Cref{thm:rationalmirror}, one has not only an $H^1(L,\bG_m)$-chart, but actually an $H^1(L,\bG_m)$-action on the category, where the chart is a single orbit. Up to some technical details, this likely implies that the component of the chart is toric.
\end{note}
\subsection{Algebraic torus actions, geometricity}
The key to prove \Cref{thm:algsheaf} is to construct an algebraic action of $H^1(M,\bG_m)$, and to show that it coincides with the expected geometric action by symplectomorphisms. 

One can extend the Fukaya category by pairs $(L,\xi|_{L})$, where $L$ is a Lagrangian and $\xi$ is a unitary local system on $M$. The coefficients of the $A_\infty$-products are sums over the same moduli of discs; however, each summand is twisted by a period of $\xi$. This does not extend beyond the unitary case, as for a non-unitary $\Lambda^*$-local system $\xi$, the periods are elements of $\Lambda^*$ that may have non-zero $T$-adic valuation, and Gromov compactness no longer shows the convergence of these sums. However, the key implication of \Cref{assump:monotoneexact} is that the Fukaya category is split generated by a finite collection $\{L_i\}$ of Lagrangians such that the coefficients of the $A_\infty$-maps are finite sums in $T^E\in \Lambda$. As a result, one can twist by non-unitary local systems too. By extending the Fukaya category, by pairs $(L_i,\xi|_{L_i})$, where $\xi$ is a $\Lambda^*$-local system, one obtains an action of the abstract group $H^1(M,\Lambda^*)$. If for $z\in H^1(M,\bG_m)$, $\xi_z$ denotes the corresponding local system, the action is by $(L_i,\xi|_{L_i}) \mapsto (L_i,\xi|_{L_i}\otimes\xi_z|_{L_i} )$.

First thing to note here is that, this is an action in the derived sense (or in the Morita sense to be precise). More precisely, as observed, if $\tilde L$ fails to be Bohr-Sommerfeld monotone, resp. exact, $(\tilde L,\xi|_{\tilde L})$ cannot be added to the Fukaya category. On the other hand, we have an action on the span of $\{(L_i,\xi)\}$ and these generate the Fukaya category. In other words, any other $\tilde L$ can be represented as a complex of these generators, and the action on them extends to derived Fukaya category. To make this more precise, one uses the language of derived Morita theory: let $\cF(M)$ denote the span of $\{L_i\}$ and $\widetilde{\cF}(M)$ denote the span of $\{(L_i,\xi)\}$. One has an action by auto-equivalences on the latter. Each auto-equivalence correspond to a ``graph bimodule'' (c.f. Fourier-Mukai kernels). As objects of $\widetilde{\cF}(M)$ can be represented as complexes of objects of $\cF(M)$, the restriction of this bimodule to the latter is not loss of information. Similarly, one can now act on other Lagrangians, roughly by representing them as complexes of $\cF(M)$ again. It is important to note that non monotone/exact Lagrangians are unavoidable: even the non-exact deformations of a Bohr-Sommerfeld/exact Lagrangians under global isotopies fails this property. 

Let $\fM^M|_z$ denote the bimodule corresponding to $z\in H^1(M,\bG_m)$. That this is an action can be stated as $\fM^M|_{z=1}$ is the diagonal bimodule, and $\fM^M|_{z_2}\otimes_{\cF(M)} \fM^M|_{z_1}\simeq \fM^M|_{z_1z_2}$. Recall that the convolution over $\cF(M)$ is what corresponds to composition in the language of bimodules. 

We show that this action is geometric, i.e. $\fM^M|_z$ acts the same way as $\phi_z$ for all $z\in H^1(M,\bG_m)$. Since we use actions on the derived category, it is convenient to state this using the language of Yoneda modules. Let $h_{\tilde L}$ denote the right Yoneda module of $\tilde L$. Then we have
\begin{thm}[Main abstract theorem]\label{thm:mainabstract}
Given tautologically unobstructed compact Lagrangian brane $\tilde{L}$ 
\begin{equation}
		h_{\tilde{L}}\otimes_{\cF(M)} \fM^M|_z\simeq h_{\phi_z(\tilde{L})}
\end{equation}	
for all $z\in H^1(M,\bG_m)$. 
\end{thm}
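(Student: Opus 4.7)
The plan is to identify the two sides of the claimed equivalence by reducing to a split-generating family and then interpolating via family Floer theory. Write $z = T^{v} z_0$ with $v = val_T z \in H^1(M,\bR)$ and $z_0 \in H^1(M, U_\Lambda)$. Since $\fM^M$ is a bimodule action and $\phi_{z_1 z_2} = \phi_{z_1} \circ \phi_{z_2}$ up to Hamiltonian isotopy, it suffices to treat the unitary factor $z_0$ and the non-unitary factor $T^v$ separately, and to identify each with the corresponding bimodule-convolution.

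The unitary case is largely tautological. By the construction of $\fM^M$, the bimodule $\fM^M|_{z_0}$ is the graph of the autoequivalence of $\widetilde{\cF}(M)$ given by tensoring with $\xi_{z_0}$, so for a generator $L_i$ we have $h_{L_i} \otimes_{\cF(M)} \fM^M|_{z_0} \simeq h_{(L_i, \xi_{z_0}|_{L_i})} = h_{\phi_{z_0}(L_i)}$. For arbitrary tautologically unobstructed $\tilde L$, both $h_{\tilde L} \otimes \fM^M|_{z_0}$ and $h_{\phi_{z_0}(\tilde L)}$ coincide with the $A_\infty$-module on the same underlying graded space as $h_{\tilde L}$ but with structure maps twisted by the parallel transport of $\xi_{z_0}$ along the boundary arc lying on $\tilde L$, the first by Morita extension via a bar resolution, the second by the definition of Floer cohomology with local coefficients.

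The substance of the proof is the non-unitary case $z = T^v$. Fix a closed $1$-form $\alpha$ with $[\alpha] = v$ and complete flow $\phi_\alpha^t$, and consider the family $\tilde L_t := \phi_\alpha^t(\tilde L)$ for $t \in [0,1]$. For each test Lagrangian $K \in \cF(M)$, construct a continuation-type chain map $CF(K, \tilde L) \to CF(K, \tilde L_1)$ by counting strips with moving boundary on $\{\tilde L_t\}$. The key geometric input is a Stokes-type area decomposition: a moving-boundary strip from $x \in K \cap \tilde L$ to $y \in K \cap \tilde L_1$ has symplectic area equal to that of any chosen stationary reference strip plus $\int_\gamma \alpha$, where $\gamma$ is its boundary arc. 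Consequently in Novikov coefficients the continuation map, and in fact all higher $A_\infty$-module operations, coincide with the module structure on $CF(K, \tilde L)$ twisted by the rank-one Novikov local system $\xi_{T^v}|_{\tilde L}$ whose monodromy along $\gamma \subset \tilde L$ is $T^{\int_\gamma \alpha}$. Assembling over all $K$ and checking naturality identifies $h_{\phi_z(\tilde L)}$ with the $\xi_{T^v}|_{\tilde L}$-twist of $h_{\tilde L}$. On generators this twist is by construction $h_{L_i} \otimes \fM^M|_{T^v}$, and since $\{L_i\}$ split-generate $\cF(M)$, the identification extends to general $\tilde L$ via the uniqueness of Morita extensions from a generating subcategory.

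The principal obstacle is to execute the family Floer step at the chain level functorially in the test object $K$, so as to produce a genuine quasi-isomorphism of right $A_\infty$-modules rather than an isomorphism on cohomology only. This demands a coherent choice of domain-dependent perturbation data on the entire moduli of moving-boundary strips and discs, arranged so that the continuation map commutes (up to $A_\infty$-homotopy) with composition by morphisms $K \to K'$ on the input side. Gromov compactness is controlled by the area decomposition above, since the flux term $\int_\gamma \alpha$ is bounded by a geometric quantity depending only on $\alpha$ and the length of $\gamma$; independence of the auxiliary choices — the representative $\alpha$ of $v$ and the path in $Symp^0$ — follows from the fact that different choices differ by Hamiltonian isotopies, under which the resulting Yoneda module is invariant up to quasi-isomorphism. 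Once these analytic ingredients are in place, matching the resulting local-system twist with $\fM^M|_{T^v}$ as a bimodule is a Morita-theoretic verification using \Cref{assump:monotoneexact} that the collection $\{L_i\}$ both split-generates $\cF(M)$ and carries the algebraic twisting operation tautologically.
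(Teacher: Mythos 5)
Your strategy recovers the paper's argument only in the small-flux regime and is missing the mechanism needed to pass to arbitrary $z$. The paper does not attempt a global continuation-map argument: Lemma \ref{lem:hlgeometricnearby} (Fukaya's trick) establishes the identification $h_{\tilde L}^{an}|_z \simeq h_{\phi_z(\tilde L)}$ only for $z$ in a small affinoid neighborhood $S_P$ of the unitary torus; Lemma \ref{lem:hlconvmtohlsemicont} then shows $h_{\tilde L}\otimes_{\cF(M)}\fM^M|_{S_P}\simeq h_{\tilde L}^{an}$ over the same small domain; and the passage to arbitrary $z$ is accomplished by chaining finitely many such local identifications together using the group-like property $\fM^M|_{z_1}\otimes_{\cF(M)}\fM^M|_{z_2}\simeq\fM^M|_{z_1z_2}$ (Proposition \ref{prop:grouplike1}). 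Your proposal never invokes this group-like structure, and it is not optional: it is what converts a local statement into a global one.

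The concrete gap is in your non-unitary step. You propose a moving-boundary continuation map $CF(K,\tilde L)\to CF(K,\tilde L_1)$ and claim that the area decomposition makes the resulting $A_\infty$-module ``coincide with'' the local-system twist. This fails for large $v=val_T z$ on two counts. First, the Fukaya-trick identification of disc moduli requires pushing forward the Floer/perturbation data by a diffeomorphism $\psi_v$ supported near $\tilde L$ and needs $(\psi_v)_*\omega_M$-tame almost complex structures to remain $\omega_M$-tame, which only holds for $v$ near $0$; for large $v$ the intersection pattern $K\cap\phi_\alpha^t(\tilde L)$ changes, the generators at $t=0$ and $t=1$ cannot be identified, and the clean energy identity \eqref{eq:energyidentity} breaks down. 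Second, even formally, the twist of $h_{\tilde L}$ by a non-unitary local system is not a priori well-defined when $\tilde L$ is not Bohr-Sommerfeld/exact — the defining series need not converge over all of $H^1(M,\bG_m)$, which is exactly why the paper works over the generating Lagrangians $L_i$ (where Lemma \ref{lem:finitediscs} gives finiteness) and over small $S_P$ for general $\tilde L$. Your assertion that Gromov compactness is ``controlled by the area decomposition'' with a bound depending only on $\alpha$ and the length of $\gamma$ does not hold: boundary length of holomorphic discs is not a priori controlled by energy, and the energy of moving-boundary strips is not uniformly bounded over a large isotopy. Without the group-like property to propagate the local statement, the argument does not close.
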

When $z$ is close to identity, we use Fukaya's trick to prove this. To conclude \Cref{thm:mainabstract} for general $z$, we use the action property.

To establish algebraicity, we need to construct $\{\fM^M|_z:z\in H^1(M,\bG_m)\}$ as an algebraic family of bimodules. Heuristically, one constructs an algebraic action by acting on Bohr-Sommerfeld monotone/exact Lagrangians via ``the universal rank-$1$ local system''. To make this more precise, we use the notion of algebraic family of bimodules borrowed from \cite{flux}, and construct one $\fM^M$. The formulae defining $\fM^M$ are in the ring of algebraic functions of $H^1(M,\bG_m)$, and algebraic by definition.

We would like to note that extension of \Cref{thm:mainabstract} (and hence \Cref{thm:algsheaf}) to the wrapped case meets difficulties. We use the fact that the constructed family of bimodules is proper, which fails in the wrapped case. To overcome this difficulty, we endow $M$ with the structure of a Lefschetz fibration. Let $\cW(M,\fstop)$ denote the corresponding Fukaya-Seidel category. The statements such as \Cref{thm:mainabstract} hold over $\cW(M,\fstop)$, and we show that this property descends to $\cW(M)$ under stop removal functor $\cW(M,\fstop)\to\cW(M)$. We overcome a similar technical difficulty in showing group-action property in this way as well.

\subsection{Future work}
In work in progress, we aim to extend this construction to Lagrangian isotopies (that does not necessarily come from global symplectic isotopies) by combining what is established in this paper and the Viterbo restriction. 

More precisely, assume $M$ is Weinstein and $L$ is exact. One can construct an algebraic family of right modules $h_L^{alg}$ parametrized by $H^1(L,\bG_m)$ in a very similar way. This is essentially obtained by deforming $L$ by the universal non-unitary local system. As remarked above, this leads to an injective map from $H^1(L,\bG_m)$ to the moduli of objects; however, the argument above does not prove that this deformation is realized by Lagrangian isotopies unless $H^1(M,\Lambda)\to H^1(L,\Lambda)$ is surjective. In other words, the family is not a priori ``geometric''. 
We aim to prove the geometricity of the family over Weinstein neighborhoods that are also Liouville subdomains, and use this to conclude geometricity over $M$. Note the following heuristic picture from mirror symmetry: if $X$ is a variety, $x\in X$ is a smooth point, then often there is not a vector field moving $x$ in every tangent direction. However, there are always Zariski local vector fields defined near $x$ and moving it in every direction. Restriction to sufficiently large Weinstein neighborhoods to make use of the continuous symmetries can be seen as the mirror analogue of this. 

We hope to extend this to the compact case by using other restriction functors, as in \cite{leehmsforopen}, or as will appear in \cite{localcats}. Note on the other hand in the compact case, there are restrictions on the size of Weinstein neighborhoods, and often on the flux of Lagrangian isotopies of $L$.
\subsection{Outline of the paper}
In Section \ref{sec:background}, we give background on Fukaya categories and related homological algebra. In particular, we recall notions such as families of modules and bimodules. In Section \ref{sec:algfamilydefclass}, we first restrict to compact $M$, define the algebraic family $\fM^M$, and show that it is essentially a group action. We also show how to extend these results to the non-proper (wrapped) case. In Section \ref{sec:algfloersheaf}, we use ``the algebraic torus action'' established in the previous section to prove \Cref{thm:mainabstract}, \Cref{thm:algsheaf}, and their corollaries. In \Cref{sec:stabilizer}, we prove \Cref{thm:stabilizer}, and use this to deduce corollaries about flux. In \Cref{sec:mirror}, we prove \Cref{thm:rationalmirror} by essentially constructing affine torus chart in the moduli of objects of the derived category that is geometrically realized by symplectic isotopies on the A-side. 
\subsection*{Acknowledgments}
We would like to thank Sheel Ganatra for telling us about ``smooth categorical compactifications'' of wrapped Fukaya categories, 
as well as for reference suggestions and useful conversations. We would also like to thank to Mohammed Abouzaid, Ivan Smith, Conan Leung, J\'{a}nos Koll\'{a}r, and John Sheridan for useful conversations. 

\section{Background on Fukaya categories and related homological algebra}\label{sec:background}
\subsection{Reminders on $A_\infty$-categories and modules}
In this section, we collect some generalities on $A_\infty$-categories and modules. Let $\cB$ be an $A_\infty$-category over $\Lambda=\bC((T^\bR))$. Given $L\in ob(\cB)$, we denote corresponding right, resp. left Yoneda module by $h_L$, resp. $h^L$. These are defined by $h_L=\cB(\cdot,L)$, resp. $h^L=\cB(L,\cdot)$, with the module structures induced by the $A_\infty$-structure of $\cB$. Given $L,L'$, one can define the corresponding Yoneda bimodule by $h^L\otimes_\Lambda h_{L'}$ (we will often omit the subscript of tensor products, when it is the base field). The underlying graded vector space for the Yoneda bimodule is given by 
\begin{equation}
	(L_0',L_0)\mapsto h^L(L_0)\otimes h_{L'}(L_0')=\cB(L,L_0)\otimes \cB(L_0',L')
\end{equation}
and the structure maps are given by 
\begin{equation}
	(x_1,\dots,x_k|m\otimes m'|x_l',\dots x_1')\mapsto
	\begin{cases}
	\pm\mu^1(m)\otimes m'\pm m\otimes \mu^1(m'),&\text{if }k=l=0\\
	\pm \mu(x_1,\dots,x_k,m)\otimes m',&\text{if }l=0\\
	\pm m\otimes \mu(m',x_l',\dots x_1'),&\text{if }k=0\\
	0,&\text{otherwise}
	\end{cases}
\end{equation}
We will often restrict our attention to categories and modules satisfying various properties. Recall:
\begin{defn}
An $A_\infty$-category $\cB$ over $\Lambda$ is called \textbf{proper} if $H^*(\cB(L,L'))$ is finite dimensional for all $L,L'$. Similarly, a left/right module, resp. bimodule $\fN$ is called \textbf{proper} if $H^*(\fN(L))$, resp. $H^*(\fN(L,L'))$ is finite dimensional. A left/right/bi-module is called \textbf{perfect} if it is quasi-isomorphic to a direct summand of an iterated cone of Yoneda modules (Yoneda bimodules in the case of bimodules). This condition is equivalent to the module being a compact object in the dg-category of modules (see \cite{kellerdg}). A category is called \textbf{(homologically) smooth} if the diagonal bimodule is perfect. See also \cite{koso}. 
\end{defn}
Given right module $\fN$ and left module $\fN'$, one can define the convolution $\fN\otimes_\cB  \fN'$ over $\cB$. Given by a bar construction, this is a chain complex over $\Lambda$. The underlying graded vector space is given by 
\begin{equation}
	\fN\otimes_{\cB}\fN'=\bigoplus \fN(L_p)\otimes \cB(L_{p-1},L_p) \otimes \dots \otimes\cB(L_0,L_1)\otimes \fN'(L_0)
\end{equation}
where the sum varies over all objects of $\cB$ and all $p\geq 0$. Differential is defined by applying $\mu_\fN$, $\mu_{\fN'}$ or $\mu_\cB$ to successive terms. For more details, see \cite{generation}. 

If $\fN$ is a bimodule, the convolution carries the structure of a left module. It assigns the graded vector space
\begin{equation}
	(\fN\otimes_{\cB}\fN')(L)=\bigoplus \fN(L_p,L)\otimes \cB(L_{p-1},L_p) \otimes \dots \otimes\cB(L_0,L_1)\otimes \fN'(L_0)
\end{equation}
to an object $L$, its differential is the same as above, and its higher structure maps are defined similarly. Similarly, when $\fN'$ is a bimodule $\fN\otimes_{\cB}\fN'$ is a right module, and when $\fN$ and $\fN'$ are bimodules, $\fN\otimes_{\cB}\fN'$ is a bimodule.
\begin{note}\label{note:nconvdiagisn}
Let $\cB$ denote the diagonal bimodule. Then, for any right module $\fN$, it is a standard fact that $\fN\otimes_{\cB} \cB\simeq \fN$. The $0^{th}$-map of the quasi-isomorphism $\fN\otimes_{\cB} \cB\to \fN$ is given by 
\begin{equation}\label{eq:nconvdiagisn}
(n\otimes x_p \dots x_1\otimes x )\mapsto 	\pm \mu_\fN(n|x_p, \dots ,x_1, x )
\end{equation}	
and the higher maps are similar. Analogous statements hold for left modules and bimodules. 
\end{note}
The following is easy to prove:
\begin{lem}\label{lem:hlconvhlequalshom}
$h_{L'}\otimes_\cB h^L\simeq \cB(L,L')$ as chain complexes. More generally, for any right module $\fN'$, $\fN'\otimes_{\cB} h^L  \simeq \fN'(L)$. Similarly, for any left module $\fN$, $h_{L'}\otimes_{\cB}\fN\simeq \fN(L')$ and for any bimodule $\fM$, $h_{L'}\otimes_{\cB}\fM\otimes_{\cB} h^L\simeq \fM(L,L')$.
\end{lem}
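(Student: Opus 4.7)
The plan is to deduce all four assertions from \Cref{note:nconvdiagisn} via a direct identification of bar complexes. The first assertion is the special case $\fN'=h_{L'}$ of the second one, since $h_{L'}(L)=\cB(L,L')$, so I would concentrate on showing $\fN'\otimes_{\cB} h^L \simeq \fN'(L)$ for an arbitrary right module $\fN'$.

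The key observation is that this convolution coincides, as a chain complex, with the evaluation at $L$ of the right module $\fN'\otimes_{\cB}\cB$, where $\cB$ is viewed as the diagonal bimodule. Indeed, using $h^L(L_0)=\cB(L,L_0)$, each summand
\begin{equation*}
\fN'(L_p)\otimes\cB(L_{p-1},L_p)\otimes\cdots\otimes\cB(L_0,L_1)\otimes h^L(L_0)
\end{equation*}
is literally the same as the corresponding summand of $(\fN'\otimes_{\cB}\cB)(L)$, and the two differentials agree term by term since both assemble $\mu_{\fN'}$ and $\mu_{\cB}$ on successive inputs. Hence $\fN'\otimes_{\cB} h^L=(\fN'\otimes_{\cB}\cB)(L)$ as chain complexes, and \Cref{note:nconvdiagisn} supplies a quasi-isomorphism of right modules $\fN'\otimes_{\cB}\cB\to \fN'$ via \eqref{eq:nconvdiagisn}; its $0^{\text{th}}$ component at the object $L$ is the sought quasi-isomorphism to $\fN'(L)$.

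The third statement is symmetric: $h_{L'}(L_p)=\cB(L_p,L')$ identifies $h_{L'}\otimes_{\cB}\fN$ with $(\cB\otimes_{\cB}\fN)(L')$, and the left-module analogue of \Cref{note:nconvdiagisn} gives $\cB\otimes_{\cB}\fN\simeq \fN$ as left modules, whose evaluation at $L'$ is $\fN(L')$. The bimodule case then follows by iterating: first, the second statement applied to the right module $\fN'=\fM(\cdot,K)$ shows that $(\fM\otimes_{\cB} h^L)(K)=\fM(\cdot,K)\otimes_{\cB} h^L\simeq \fM(L,K)$ for every $K$, so that $\fM\otimes_{\cB} h^L\simeq \fM(L,\cdot)$ as left modules; convolving with $h_{L'}$ and invoking the third statement then gives $h_{L'}\otimes_{\cB}\fM\otimes_{\cB} h^L\simeq h_{L'}\otimes_{\cB}\fM(L,\cdot)\simeq \fM(L,L')$.

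Since the entire argument is a bookkeeping exercise built on \Cref{note:nconvdiagisn}, there is no serious obstacle. The only subtlety I would want to double-check is that the identification between $\fN'\otimes_{\cB} h^L$ and $(\fN'\otimes_{\cB}\cB)(L)$ truly is an equality of chain complexes, not merely an isomorphism of underlying graded vector spaces, so that the module quasi-isomorphism of \Cref{note:nconvdiagisn} descends to the claimed quasi-isomorphism after evaluation at $L$ with no further homological-algebra input.
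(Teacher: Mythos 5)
Your proof is correct, but it takes a genuinely different route from the paper's for the core statements. The paper proves $h_{L'}\otimes_\cB h^L\simeq \cB(L,L')$ directly: it writes down the map $x'\otimes\cdots\otimes x\mapsto\pm\mu_\cB(x',\dots,x)$ and shows the cone is acyclic by filtering by bar length, observing that the $E_1$-page is the bar complex of the ordinary graded algebra $H^*(\cB)$ and hence the $E_2$-page vanishes. You instead reduce everything to \Cref{note:nconvdiagisn} via the chain-level identification $\fN'\otimes_\cB h^L = (\fN'\otimes_\cB\cB)(L)$, which is indeed a literal equality of chain complexes (each bar summand and each differential term agree, since $h^L(L_0)=\cB(L,L_0)$ and the left-module structure maps of $h^L$ are exactly the bimodule structure maps of the diagonal with no right inputs). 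Your argument is shorter and cleaner if \Cref{note:nconvdiagisn} is taken as granted, whereas the paper's is self-contained; in effect you are deferring the spectral-sequence content to the ``standard fact'' the paper elects to reprove. For the bimodule case your strategy (first obtain $\fM\otimes_\cB h^L\simeq\fM(L,\cdot)$ as a left-module quasi-isomorphism, then convolve with $h_{L'}$) matches the paper's. Two small points worth making explicit: to promote the object-wise quasi-isomorphisms $(\fM\otimes_\cB h^L)(K)\simeq\fM(L,K)$ to a left-module quasi-isomorphism you should note that they are the components of the single natural map coming from the (bimodule version of the) map in \Cref{note:nconvdiagisn}, rather than assembled after the fact; and the final step implicitly uses that $h_{L'}\otimes_\cB(-)$ preserves quasi-isomorphisms, which holds because the bar construction is K-flat, a point the paper also leaves implicit.
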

\begin{proof}
One can define the map $f:h_{L'}\otimes_\cB h^L\to \cB(L,L')$	that sends $x'\otimes\dots\otimes x$ to $\pm\mu_\cB(x',\dots,x)$. For simplicity, assume $\cB$ has one object, i.e. it is an $A_\infty$-algebra. Let $C$ denote the cone of $f$. $C$ is naturally filtered by length, and the $E_1$-page of the corresponding spectral sequence is equivalent to the bar complex of the ordinary graded algebra $H^*(\cB)$ (spread into degrees). Therefore, its $E_2$-page vanishes, and $C$ is acyclic. In other words, $f$ is a quasi-isomorphism.

The proof is the same when one of the Yoneda modules is replaced by right/left modules. The bimodule version can be proven by applying right/left modules in order. More precisely, if $\fM$ is a bimodule, $\fM\otimes_{\cB} h^L\simeq \fM(L,\cdot)$ as left modules. The quasi-isomorphism is the same as above at the $0^{th}$-level. The $q^{th}$ map of the module quasi-isomorphism is given by
\begin{equation}
 (x_1',\dots ,x_q'|m\otimes x_p \dots x_1\otimes x )\mapsto \pm\mu_\fM^{q|1|p+1} (x_1',\dots ,x_q'|m| x_p \dots x_1, x )
\end{equation}
Applying $h_{L'}\otimes_{\cB}(\cdot)$ to both sides, we obtain 
\begin{equation}
h_{L'}\otimes_{\cB}\fM\otimes_{\cB} h^L\simeq h_{L'}\otimes_{\cB}\fM(L,\cdot)\simeq \fM(L,L')
\end{equation}
\end{proof}
This immediately implies
\begin{cor}\label{cor:properconvperfisperf}
Assume $\fN$ is a proper right module (i.e. $H^*(\fN(L))$ is finite dimensional), and $\fM$ is a perfect bimodule (i.e. it is quasi-isomorphic to a direct summand of an iterated cone of Yoneda bimodules). Then, $\fN\otimes_{\cB}\fM$ is perfect. 
\end{cor}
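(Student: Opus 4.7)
The plan is to reduce to the Yoneda case and then exploit properness to replace an a priori large chain complex with a finite-dimensional graded vector space. Since perfect bimodules are exactly the objects of the smallest subcategory of the dg-category of bimodules that contains the Yoneda bimodules and is closed under shifts, cones, and direct summands, and since convolution $\fN \otimes_\cB (-)$ is an exact functor on bimodules that commutes with all three operations, it suffices to verify the conclusion for $\fM = h^L \otimes h_{L'}$ a Yoneda bimodule.

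First I would unwind definitions to produce an identification of right modules
\begin{equation}
	\fN \otimes_\cB (h^L \otimes h_{L'}) \simeq \fN(L) \otimes_\Lambda h_{L'},
\end{equation}
where the right-hand side is viewed as a right $\cB$-module by tensoring each graded component $h_{L'}(K) = \cB(K,L')$ with the complex $\fN(L)$. The key observation is that in the bar complex for the left-hand side the factor $\cB(K,L')$ coming from the outermost argument of $h_{L'}$ is external to the bar resolution, so it factors out; the remaining terms compute $\fN \otimes_\cB h^L$, which by \Cref{lem:hlconvhlequalshom} is quasi-isomorphic to $\fN(L)$. One must check that the comparison map is compatible with the right $\cB$-action, which acts only on the $\cB(K,L')$ slot; this should follow formally from the bimodule structure on $h^L \otimes h_{L'}$ being concentrated on the $h_{L'}$ factor on the right.

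Next I would use the properness hypothesis: $H^*(\fN(L))$ is a finite-dimensional graded $\Lambda$-vector space $V$. Since $\Lambda$ is a field, $\fN(L)$ is quasi-isomorphic (as a chain complex) to $V$ equipped with the zero differential. Tensoring the previous quasi-isomorphism with $h_{L'}$ yields
\begin{equation}
	\fN \otimes_\cB (h^L \otimes h_{L'}) \simeq V \otimes_\Lambda h_{L'} \simeq \bigoplus_i h_{L'}[-i]^{\oplus \dim V^i},
\end{equation}
a finite direct sum of shifts of the Yoneda module $h_{L'}$, which is manifestly perfect.

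The only delicate step is the identification in the second paragraph: one must justify that the right module structures agree on the nose (or up to coherent higher homotopies in the $A_\infty$-sense), rather than merely having quasi-isomorphic underlying complexes. This is where I expect the bookkeeping to be the main, albeit routine, obstacle; everything else is formal closure of the class of perfect right modules under shifts, cones, and retracts, together with the standard fact that every complex of $\Lambda$-vector spaces is formal.
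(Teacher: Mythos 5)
Your proposal is correct and follows essentially the same route as the paper: reduce to a Yoneda bimodule $h^L\otimes_\Lambda h_{L'}$, apply \Cref{lem:hlconvhlequalshom} to rewrite the convolution as $\fN(L)\otimes_\Lambda h_{L'}$, and invoke properness of $\fN$ to conclude. The paper is more terse on the final step (it just says finite-dimensional cohomology completes the proof) whereas you spell out the formality of $\fN(L)$ over the field $\Lambda$, but this is the same argument.
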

\begin{proof}
It suffices to show this for $\fM=h^L\otimes_\Lambda h_{L'}$, a Yoneda bimodule. In this case, 
\begin{equation}
\fN\otimes_{\cB} (h^L\otimes_\Lambda h_{L'})\simeq (\fN\otimes_{\cB} h^L)\otimes_\Lambda h_{L'}	\simeq \fN(L)\otimes_\Lambda h_{L'}
\end{equation}
by Lemma \ref{lem:hlconvhlequalshom}. The complex $\fN(L)$ has finite dimensional cohomology, as $\fN$ is proper, and this completes the proof.
\end{proof}
\begin{rk}It is easy to see other variants of this corollary (i.e. for left modules etc.) hold.
\end{rk}
Recall that one can think of bimodules over an $A_\infty$-category as generalizations of functors (see for instance, \cite{kellerdg}). In particular, for any endo-functor $\Phi:\cB\to \cB$, there are two bimodules, ${_\Phi\cB}=\cB(\cdot, \Phi(\cdot))$, and $\cB_\Phi=\cB(\Phi(\cdot), \cdot)$. The bimodule ${_\Phi\cB}$ is defined by $(L',L)\mapsto \cB(L', \Phi(L))$, and its structure maps are given by 
\begin{equation}\label{eq:subphibstructures}
(x_1,\dots, x_k|m|x_l',\dots,x_1')\mapsto \sum \pm \mu_\cB (\Phi^{i_1}(x_1,\dots,x_{i_1}), \Phi^{i_2}(x_{i_1+1},\dots, x_{i_1+i_2}),\dots, \Phi^{i_j}(\dots,x_k), m,x_l',\dots,x_1'  )
\end{equation}
The sum varies over all $(i_1,\dots, i_j)$ such that $i_1+\dots+ i_j=k$. \eqref{eq:subphibstructures} explains why we put $\Phi$ as a left subscript to ${_\Phi\cB}$: to obtain it, one twists the left multiplication by $\Phi$. The bimodule $\cB_\Phi$ is defined similarly. 
\begin{lem}\label{lem:composedgraphbimodules}
${_\Psi\cB}\otimes_{\cB} {_\Phi\cB}\simeq {_{\Phi\circ\Psi}\cB}$.
\end{lem}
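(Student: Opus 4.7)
The plan is to construct an explicit bimodule morphism $f\colon {_\Psi\cB}\otimes_{\cB}{_\Phi\cB}\to {_{\Phi\circ\Psi}\cB}$ and to verify it is a quasi-isomorphism by a bar-type spectral sequence argument paralleling the proof of \Cref{lem:hlconvhlequalshom}. The guiding baby case is when $\cB$ is an ordinary associative algebra: balancing over $\cB$ identifies ${_\Psi\cB}\otimes_\cB{_\Phi\cB}$ with $\cB$ via an explicit formula using the endomorphisms $\Phi,\Psi$, and a direct computation shows that the resulting bimodule structure on $\cB$ is precisely ${_{\Phi\circ\Psi}\cB}$. In the $A_\infty$-setting one categorifies this by using the higher components of the $A_\infty$-functors $\Phi,\Psi$ to absorb the intermediate bar morphisms.

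Concretely, a typical summand of $({_\Psi\cB}\otimes_{\cB}{_\Phi\cB})(X,Y)$ has the form
\begin{equation*}
m_1\otimes x_p\otimes\cdots\otimes x_1\otimes m_2,
\end{equation*}
with $m_1$ lying in ${_\Psi\cB}$, $m_2$ lying in ${_\Phi\cB}$, and the $x_i$'s a composable chain in $\cB$. I would define $f^{0|1|0}$ by summing, over all ways of partitioning a consecutive portion of this bar string into blocks, the $\mu_\cB$-composition obtained by applying the corresponding higher component of the relevant $A_\infty$-functor block-wise and then composing with the remaining factors. The higher components $f^{a|1|b}$ follow the standard recipe that extends an $A_\infty$-functor to a morphism between graph bimodules; the check that $f$ satisfies the $A_\infty$-bimodule morphism equations then reduces to the $A_\infty$-functor relations for $\Phi$ and $\Psi$ together with the compatibility of the twisted actions on ${_\Psi\cB}$ and ${_\Phi\cB}$.

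To prove $f$ is a quasi-isomorphism, I would pass to its mapping cone $C$ and filter $C$ by total tensor length. As in the proof of \Cref{lem:hlconvhlequalshom}, the $E_1$-page of the associated spectral sequence collapses to the bar complex for the diagonal bimodule of the cohomology category $H^*(\cB)$, which admits the standard contracting homotopy; hence the $E_2$-page vanishes and $C$ is acyclic.

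The hard part will be the bookkeeping: writing down $f^{a|1|b}$ explicitly and verifying the bimodule morphism equations is a lengthy exercise in Koszul signs and $A_\infty$-functor identities. Once that is set up, the spectral sequence step is essentially identical to the argument in \Cref{lem:hlconvhlequalshom}. An alternative that sidesteps much of this bookkeeping is to interpret ${_\Phi\cB}$ as the graph bimodule of the $A_\infty$-functor $\Phi$ and to invoke the general derived-Morita fact that the assignment $\Phi\mapsto{_\Phi\cB}$ turns composition of $A_\infty$-functors into convolution of bimodules; cf.\ \cite{kellerdg}.
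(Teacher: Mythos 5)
Your construction of the morphism matches the paper's, but your quasi-isomorphism step is a genuinely different route. The paper avoids redoing any bar-type argument by observing that, as a chain complex evaluated at a pair $(L,L')$, the cone of the morphism is insensitive to the action of $\Psi$ on morphisms (only $\Psi$ on objects enters, via $\Psi(L')$); it therefore reduces to the case $\Psi = 1_\cB$, where the map becomes the standard quasi-isomorphism $\cB\otimes_\cB\fN\to\fN$ for $\fN = {_\Phi\cB}$ (Note \ref{note:nconvdiagisn}). You instead filter the cone by bar length and run the spectral sequence directly, as in \Cref{lem:hlconvhlequalshom}. This works, but your description of the $E_1$-page as ``the bar complex for the diagonal bimodule'' is imprecise: evaluated at $(X,Y)$, the $E_1$-page is the (cone on the augmentation of the) two-sided bar construction $B\bigl(H^*h_{\Psi(Y)},\,H^*(\cB),\,H^*(\cB)(X,\Phi(-))\bigr)$; the second factor is the left module $h^X$ twisted by $\Phi$, which is \emph{not} a Yoneda module. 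Acyclicity still holds because the first factor $h_{\Psi(Y)}$ is a Yoneda (hence projective) module over $H^*(\cB)$, and the contracting homotopy must insert the unit on that side. So the argument is sound but asymmetric, unlike the situation in \Cref{lem:hlconvhlequalshom} where both ends are Yoneda modules. The paper's reduction trick is slicker and subcontracts the homological algebra to a single known fact; your direct route is more self-contained but requires this extra care. Your suggested alternative of citing the general ``composition goes to convolution'' Morita principle is essentially circular here, since that principle is precisely what the lemma establishes.
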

\begin{rk}
In case these bimodules are counter-intuitive to the reader, we present the following simplified picture: let $\cB$ be a linear category, with finitely many objects and $A=\bigoplus_{L,L'} \cB(L,L')$ be its total algebra. Let $e_L\in A$ denote the idempotent element corresponding to the unit of $L$. Then, the right Yoneda module of $L$ is given by $e_L A$. Assume $\Phi$ acts on $\cB$ strictly (i.e. without higher components). It induces an action on $A$ that sends $e_L$ to $e_{\Phi(L)}$. Let ${_\Phi A}$, denote the $A$-bimodule, with the same underlying vector space as $A$ and same right multiplication, but the left multiplication is given by $x.m=\Phi(m)a\in A$. $A_\Phi$ is defined similarly. It is easy to check that $e_LA\otimes_A {_\Phi A}\cong e_{\Phi(L)}A $, via an isomorphism that sends $e_La\otimes a'$ to $\Phi(e_La)a'=e_{\Phi(L)}\Phi(a)a'$. In other words, the action of ${_\Phi A}$ on right Yoneda modules via convolution coincide with the action of $\Phi$ (we will not prove $A_\infty$-version of this claim though). Lemma \ref{lem:composedgraphbimodules} becomes ${_\Psi A}\otimes_{A}  {_\Phi A}\cong {_{\Phi\circ\Psi}A}$, which is easy to check by hand. An isomorphism is given by $a\otimes a'\mapsto \Phi(a)a'$.
\end{rk}
\begin{proof}[Proof of Lemma \ref{lem:composedgraphbimodules}]
We define a bimodule homomorphism ${_\Psi\cB}\otimes_{\cB} {_\Phi\cB}\to {_{\Phi\circ\Psi}\cB}$. The $(0|1|l)$ component of the homomorphism is given by 
\begin{equation}
(m\otimes x_1''\otimes\dots x_p''\otimes m'|  x_l',\dots, x_1')\mapsto \sum\pm \mu_\cB(\Phi^{i_1}(m,\dots),\dots ,\Phi^{i_j}(\dots, x_p''),m',\dots x_1')  	
\end{equation}
To write its $(k|1|l)$ component, one first applies $\Psi$ to $(x_1,\dots x_k)$. More precisely, given
\begin{equation}\label{eq:beforepsiphi}
	(x_1,\dots x_k|m\otimes x_1''\otimes\dots x_p''\otimes m'|  x_l',\dots, x_1')
\end{equation}
one has $(\Psi^{i_1}(x_1,\dots),\dots,\Psi^{i_j}( \dots,x_k)|m\otimes x_1''\otimes\dots x_p''\otimes m'|  x_l',\dots, x_1')$ for each $i_1+\dots +i_j=k$. Then one applies $\Phi$, to the terms on the right of $m'$. For instance, if $\Phi$ and $\Psi$ has no higher maps, one obtains 
\begin{equation}
	\pm \mu_\cB(\Phi^1(\Psi^1(x_1)),\dots ,\Phi^1(\Psi^1(x_k)),\Phi^1(m),\Phi^1( x_1''),\dots ,\Phi^1(x_p''), m',  x_l',\dots, x_1' )
\end{equation}
It is easy to check this defines a morphism 
\begin{equation}\label{eq:psiphitocomposition}
{_\Psi\cB}\otimes_{\cB} {_\Phi\cB}\to {_{\Phi\circ\Psi}\cB}
\end{equation} 
of bimodules. To show \eqref{eq:psiphitocomposition} is a quasi-isomorphism observe that as a chain complex (i.e. without higher structure maps) the cone of \eqref{eq:psiphitocomposition} does not depend on the action of $\Psi$ on morphisms. More precisely, the complex one obtains by plugging $(L,L')$ to $cone(\eqref{eq:psiphitocomposition})$ is the same as the complex one obtains by plugging $(L,\Psi(L'))$ into \begin{equation}\label{eq:onephitocomposition}
cone({_{1_\cB}\cB}\otimes_{\cB} {_\Phi\cB}\to {_{\Phi\circ 1_\cB}\cB})	
\end{equation}
Therefore, without loss of generality one can assume $\Psi=1_\cB$. Clearly, ${_{1_\cB}\cB}\simeq \cB$, the diagonal bimodule, and \eqref{eq:onephitocomposition} is the same as the standard quasi-isomorphism $\cB\otimes_{\cB}\fN\to \fN$, for $\fN={_\Phi\cB}$ (see Note \ref{note:nconvdiagisn} for a version of this). 
%
\end{proof}
Note that we will later refer to the explicit quasi-isomorphism given in the proof of Lemma \ref{lem:composedgraphbimodules}.
\begin{rk}
One similarly has $\cB_\Phi\otimes_{\cB} \cB_\Psi\simeq \cB_{\Phi\circ\Psi}$, although we do not need this. In the heuristics above, this becomes $A_\Phi\otimes_{A} A_\Psi\cong A_{\Phi\circ\Psi}$, and an isomorphism is given by $a\otimes a'\mapsto a\Phi(a')$. 
\end{rk}

Under the quasi-isomorphism given in Lemma \ref{lem:hlconvhlequalshom}, the composition map
\begin{equation}
	hom(L',L'')\otimes hom(L,L')\to hom(L,L'')
\end{equation}
admits a simple description. Namely, we would like to construct a chain map 
\begin{equation}\label{eq:hlcomps}
	h_{L''}\otimes_{\cB} h^{L'}	\otimes_\Lambda h_{L'}\otimes_{\cB} h^{L}\to h_{L''}\otimes_{\cB} h^{L}
\end{equation}
For this, observe there exists a map of bimodules from $h^{L'}	\otimes_\Lambda h_{L'}$ to the diagonal bimodule $\cB$ given by 
\begin{equation}\label{eq:midcontraction}
	(x_1,\dots x_k|m\otimes m'|x_1',\dots, x_l')\to\pm \mu_\cB(x_1,\dots x_k,m,m',x_1',\dots, x_l')	
\end{equation}
Applying this to the middle part of (\ref{eq:hlcomps}), we obtain the composition map. More precisely,
\begin{lem}\label{lem:compositionbymiddlecontraction}
Under the quasi-isomorphisms given by Lemma \ref{lem:hlconvhlequalshom}, the composition map is homotopic to the composition
\begin{equation}
	h_{L''}\otimes_{\cB} h^{L'}	\otimes_\Lambda h_{L'}\otimes_{\cB} h^{L}\to h_{L''}\otimes_{\cB} \cB \otimes_{\cB} h^{L} \simeq h_{L''}\otimes_{\cB} h^{L}	
\end{equation}
where the first map is given by contracting $h^{L'}	\otimes_\Lambda h_{L'}$ under the map $h^{L'}	\otimes_\Lambda h_{L'}\to\cB$ and the second one is the standard quasi-isomorphism of a module with its convolution with the diagonal. More precisely, there exists a homotopy commutative diagram
\begin{equation}\label{eq:htpcommutativeinproductlemma}
\xymatrix{h_{L''}\otimes_{\cB} h^{L'}	\otimes_\Lambda h_{L'}\otimes_{\cB} h^{L}\ar[r]\ar[d] & h_{L''}\otimes_{\cB} \cB \otimes_{\cB} h^{L} \ar[r]& h_{L''}\otimes_{\cB} h^{L}\ar[d] \\ \cB(L',L'')\otimes_\Lambda \cB(L,L')\ar[r(1.55)]& & \cB(L,L'')   }	
\end{equation}
\end{lem}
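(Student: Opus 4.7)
The plan is to verify the homotopy commutativity of \eqref{eq:htpcommutativeinproductlemma} by an explicit bar-level calculation, with the chain homotopy built directly from the $A_\infty$-associativity relations for $\mu_\cB$. On a typical generator
\begin{equation*}
y \otimes x_p \otimes \cdots \otimes x_1 \otimes m \otimes m' \otimes x_1' \otimes \cdots \otimes x_q' \otimes x
\end{equation*}
of $h_{L''} \otimes_{\cB} h^{L'} \otimes_\Lambda h_{L'} \otimes_{\cB} h^L$ (with $y\in\cB(L_p,L'')$, $m\in\cB(L',L_0)$, $m'\in\cB(L_0',L')$, $x\in\cB(L,L_0')$), the lower route computes, via Lemma \ref{lem:hlconvhlequalshom} followed by $\mu_\cB^2$, the single term $\pm\mu_\cB^2\bigl(\mu_\cB(y,x_p,\ldots,x_1,m),\, \mu_\cB(m',x_1',\ldots,x_q',x)\bigr)$. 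The upper route applies the bimodule map $\eta$ of \eqref{eq:midcontraction}, which yields the sum
\begin{equation*}
\sum_{k,l} \pm\, y\otimes x_p\otimes\cdots\otimes x_{k+1}\otimes \mu_\cB(x_k,\ldots,x_1,m,m',x_1',\ldots,x_l')\otimes x_{l+1}'\otimes\cdots\otimes x_q'\otimes x
\end{equation*}
in $h_{L''}\otimes_{\cB}\cB\otimes_{\cB}h^L$, and then absorbs the diagonal by a further application of big $\mu_\cB$'s via the standard quasi-isomorphism (in the spirit of \Cref{note:nconvdiagisn}).

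With both composites expressed as explicit sums of nested $\mu_\cB$-operations applied to the long string $(y,x_p,\ldots,x_1,m,m',x_1',\ldots,x_q',x)$, the proof reduces to an $A_\infty$-associativity identity. The $A_\infty$-relation applied to this string states that the signed sum of all nested $\mu_\cB(\ldots,\mu_\cB(\ldots),\ldots)$ expressions vanishes. Partitioning such nested expressions according to whether the \emph{inner} $\mu_\cB$ lies entirely to the left of the $m\otimes m'$ junction, entirely to the right of it, or straddles it, one recognizes the lower route as the contribution of those nested terms corresponding to the unique partition into a left block containing $m$ and a right block containing $m'$, and the upper route as the contribution of the straddling terms in which the inner $\mu_\cB$ engulfs both $m$ and $m'$ simultaneously.

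I would then define the homotopy $H$ to be the sum of the remaining nested expressions, namely those straddling terms whose inner $\mu_\cB$ strictly contains elements from both sides of the junction but is not of maximal length, equipped with the Koszul signs dictated by the bar conventions. The identity $dH+Hd=(\text{lower route})-(\text{upper route})$ then follows directly from the $A_\infty$-relation above once one checks that: (i) the differential on the bar complex of $h_{L''}\otimes_{\cB} h^{L'}$ and $h_{L'}\otimes_{\cB}h^L$ absorbs precisely the "one-sided" terms of the $A_\infty$-relation; (ii) the internal differentials on the tensor factors $h_{L''}\otimes_{\cB}h^{L'}$ and $h_{L'}\otimes_{\cB}h^L$ produce the outer terms matching the lower route; and (iii) the terms corresponding to $\eta$ applied with $k=0$ or $l=0$ together with the diagonal absorption correctly reassemble into the upper route. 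The hard part is purely bookkeeping, namely tracking Koszul signs consistently with the conventions in \eqref{eq:midcontraction} and in the proof of Lemma \ref{lem:hlconvhlequalshom}; the algebraic content is an immediate consequence of $A_\infty$-associativity, and one could alternatively shorten the argument by invoking the model-independent observation that both composites are chain maps of free $\Lambda$-complexes inducing the same map on homology (namely the Yoneda composition in $H^*(\cB)$), though producing the explicit homotopy has the advantage of being usable later.
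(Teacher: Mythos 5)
Your overall strategy — an explicit bar-level homotopy extracted from the $A_\infty$-relations — is the same kind of argument as the paper's, but the single-shot decomposition you describe does not quite close. The key issue is your identification of the ``lower route.'' After the left vertical quasi-isomorphism and the bottom $\mu^2_\cB$, the lower route produces terms of the form $\pm\mu^2_\cB\bigl(\mu_\cB(y,x_p,\ldots,x_1,m),\,\mu_\cB(m',x_1',\ldots,x_q',x)\bigr)$, i.e.\ a $\mu^2$ with \emph{two} large $\mu_\cB$'s nested inside. A term of this shape is not one of the nested expressions $\mu_\cB(\ldots,\mu_\cB(\ldots),\ldots)$ occurring in the single $A_\infty$-relation for the long string, so your partition into ``left,'' ``right,'' and ``straddling'' inner blocks does not recover it; a single application of the $A_\infty$-relation cannot trade a one-inner nesting for a two-inner nesting. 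This is precisely why the paper interposes the intermediate object $h_{L''}\otimes_\cB h^{L'}\otimes_\Lambda\cB(L,L')$ and factors \eqref{eq:htpcommutativeinproductlemma} into two squares. Each square is then handled by a single, genuinely one-inner $A_\infty$-relation argument (the first homotopy applies $\mu_\cB$ to all substrings containing $m$, $m'$, $x$ but not $x''$; the second applies one big $\mu_\cB$ to the whole string); composing the two homotopies yields the full one, which therefore also contains contributions whose inner $\mu_\cB$ does \emph{not} straddle the $m\otimes m'$ junction — terms your proposed $H$ omits. The paper also fixes which of the two natural maps $h_{L''}\otimes_\cB\cB\otimes_\cB h^L\to h_{L''}\otimes_\cB h^L$ is being used (the one contracting $\cB\otimes_\cB h^L\to h^L$); this choice matters for the sign bookkeeping you defer.

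The ``model-independent'' shortcut you float at the end does not save the day either. Over the field $\Lambda$ it is true that two chain maps of $\Lambda$-complexes inducing the same map on cohomology are chain homotopic, but to invoke this you must already know that the upper route induces the Yoneda composition on cohomology — and that is essentially the content of the lemma. The induced map on cohomology of the contraction $\eta:h^{L'}\otimes_\Lambda h_{L'}\to\cB$ is indeed the product, but you still have to check that this identification is compatible with the quasi-isomorphisms of \Cref{lem:hlconvhlequalshom} on both sides, which is the same verification repackaged. So either spell out the two-step factorization as in the paper, or carefully track the two-inner versus one-inner discrepancy directly — but as written, the proposed $H$ would not satisfy $dH+Hd=\text{(lower)}-\text{(upper)}$.
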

\begin{rk}
Note that to contract by $h^{L'}\otimes_\Lambda h_{L'}\to\cB$ as in the first row of (\ref{eq:htpcommutativeinproductlemma}), one still uses higher components of this bimodule homomorphism. Also notice, there is an ambiguity in the map
\begin{equation}\label{eq:tripleproduct}
h_{L''}\otimes_{\cB} \cB \otimes_{\cB} h^{L} \xrightarrow{\simeq} h_{L''}\otimes_{\cB} h^{L}		
\end{equation}
Clearly, the chain complexes $(h_{L''}\otimes_{\cB} \cB) \otimes_{\cB} h^{L}$ and $h_{L''}\otimes_{\cB} (\cB \otimes_{\cB} h^{L})$ are identical, but the map (\ref{eq:tripleproduct}) is obtained by contracting via one of the quasi-isomorphisms $h_{L''}\otimes_{\cB} \cB\to h_{L''}$ and $\cB \otimes_{\cB} h^{L}\to h^L$. Two induced maps are actually homotopic, and the homotopy is given by the map from $h_{L''}\otimes_{\cB} \cB \otimes_{\cB} h^{L}$ to $h_{L''}\otimes_{\cB}  h^{L}$ that forgets the middle $\cB$ component. More precisely, the domain is spanned by strings of the form $x\otimes x_1\dots x_k\otimes b\otimes x_l'\otimes\dots x_1'\otimes x'$, where $b$ belongs to middle component of triple convolution. The domain is spanned by strings $x\otimes \dots \otimes x'$, and the homotopy between maps above is given by sending a string to the same string (with a Koszul sign), only forgetting which element belongs to middle component. During the proof, we will use the second one of these maps (\ref{eq:tripleproduct}). 
\end{rk}
\begin{proof}[Proof of Lemma \ref{lem:compositionbymiddlecontraction}]
We prove this lemma in two steps. First, consider the diagram
\begin{equation}
	\xymatrix{h_{L''}\otimes_{\cB} h^{L'}	\otimes_\Lambda h_{L'}\otimes_{\cB} h^{L}\ar[r]\ar[d] & h_{L''}\otimes_{\cB} \cB \otimes_{\cB} h^{L} \ar[r]& h_{L''}\otimes_{\cB} h^{L}\ar@{=}[d] \\ h_{L''}\otimes_{\cB} h^{L'}\otimes_\Lambda \cB(L,L')\ar[r(1.5)]& & h_{L''}\otimes_{\cB} h^{L}  }	
\end{equation}
The first line of this diagram is as described above and the left vertical arrow is obtained by applying the map in Lemma \ref{lem:hlconvhlequalshom} to $h_{L'}\otimes_{\cB} h^{L}$. The bottom horizontal arrow is obtained by applying $h_{L''}\otimes_{\cB}(\cdot)$ to the natural left module homomorphism $h^{L'}\otimes_\Lambda \cB(L,L')\to h^L$. 

We claim this diagram is homotopy commutative: the composition through upper right corner sends the string
\begin{equation}\label{eq:stringfourconv}
	x''\otimes\dots\otimes x_2'\otimes_\Lambda x_1'\otimes \dots \otimes x
\end{equation}
to a signed sum of $x''\otimes\dots \otimes \mu_\B(\dots,\mu_\cB(\dots, x_2',x_1',\dots),\dots ,x)$. Here, we apply $\mu_\cB$ twice, the first one contains $x_1',x_2'$, but not $x$, the second one contains $x$ but not $x''$ (recall we are using the map $h_{L''}\otimes_{\cB} \cB\otimes_{\cB} h^L\to h_{L''}\otimes_{\cB} h^L$ that is induced by $\cB\otimes_{\cB} h^L\to h^L$). Similarly, the composition through bottom left arrow sends the string (\ref{eq:stringfourconv}) to a signed sum of $x''\otimes \dots \otimes \mu_\cB(\dots,x_2',\mu_\cB(x_1',\dots, x) )$ (where, in particular, $x''$ is outside $\mu_\cB$). An homotopy is given by the map sending the string (\ref{eq:stringfourconv}) to a signed sum of $x''\otimes \dots \otimes \mu_\cB(\dots, x_2',x_1',\dots ,x)$. We apply the $A_\infty$-map to all sub-strings containing $x$, $x_1'$ and $x_2'$, but not $x''$. That this is a homotopy follows from $A_\infty$-equations for the substrings containing $x$ and $x_2'$, but not containing $x''$. 

Similarly, consider the diagram 
\begin{equation}
	\xymatrix{h_{L''}\otimes_{\cB} h^{L'}\otimes_\Lambda \cB(L,L')\ar[r]\ar[d]& h_{L''}\otimes_{\cB} h^{L}\ar[d] \\ \cB(L',L'')\otimes_\Lambda \cB(L,L')\ar[r]& \cB(L,L'') }
\end{equation}
The upper horizontal arrow is as above, the vertical arrows are obtained by application of the quasi-isomorphism of Lemma \ref{lem:hlconvhlequalshom}, and the bottom arrow is composition. This diagram is also homotopy commutative: the composition through upper right corner sends the string 
\begin{equation}\label{eq:stringtriple}
	x''\otimes\dots\otimes x'\otimes b
\end{equation}
to a signed sum of $\mu_\cB(x'',\dots \mu_\cB(\dots, x',b) )$ (i.e. the first product is applied to a sub-string containing $b,x'$, but not $x''$). Similarly, the composition through bottom left corner sends (\ref{eq:stringtriple}) to $\pm \mu_\cB(\mu_\cB(x'',\dots, x'),b)$. An homotopy is given by a map sending (\ref{eq:stringtriple}) to $\pm\mu_\cB(x'',\dots, x',b)$. 
It is easy to check this defines an homotopy. 

Combining these diagrams gives us the homotopy commutativity of (\ref{eq:htpcommutativeinproductlemma}). 
\end{proof}

\subsection{Reminders on Fukaya categories of monotone symplectic manifolds}
In this section, we recall basics of Fukaya categories on compact manifolds. Throughout the paper, $\Lambda=\bC((T^\bR))$ denote the Novikov field with complex coefficients and real exponents.  

Let $(M,\omega_M)$ be a compact monotone or negatively monotone symplectic manifold. Let $\{L_i\}$ be a collection of oriented, monotone, tautologically unobstructed Lagrangians. Assume $L_i$ are equipped with grading and spin structures. Without loss of generality, we can assume $L_i$ are pairwise transverse. One can define Fukaya category with objects $\{L_i\}$ by counting marked holomorphic discs. Most of our constructions are model independent, but for the compact case we prefer to use the version of Fukaya category presented in \cite{seidelbook} and \cite{sheridanfano}. 

In \cite{seidelbook}, one makes consistent choices of Floer and perturbation data varying over the moduli of discs with marked points. For instance, given $(L_i,L_j)$, one chooses a (possibly time dependent) pair $(H,J)$ of an Hamiltonian and an almost complex structure. The choice is made so that $\phi_H^1(L_i)\pitchfork L_j$, and one defines 
\begin{equation}
	hom(L_i,L_j)=CF(L_i,L_j)\cong \Lambda\langle \phi_H^1(L_i)\cap L_j \rangle 
\end{equation}
to be the vector space generated by time-$1$ chords from $L_i$ to $L_j$. Then $hom(L_i,L_j)$ is equipped with the standard Floer differential, i.e. 
\begin{equation}\label{eq:fukdiff}
\mu^1(x)=\sum \pm T^{E(u)}y
\end{equation}
where $x,y$ are generators of $CF(L_i,L_j)$. The sum varies over $y$ and the elements $u$ of $0$-dimensional component of moduli of pseudo-holomorphic strips (up to translation) asymptotic to $x$ and $y$ with boundary components on $L_i$ and $L_j$. $T$ is the Novikov parameter as above, and $E(u)$ denote the topological energy of the strips. One defines the topological energy of a strip $S=\bR_s\times [0,1]_t$ equipped with Floer data $(H,J)$ by 
\begin{equation}
E(u)=E^{top}(u)=\int_S u^* \omega_M-d(H.dt)
\end{equation}
More generally, given $(L_{i_0},L_{i_1},\dots , L_{i_q})$, and generators $x_j\in CF(L_{i_{j-1}}, L_{i_j})$, define 
\begin{equation}\label{eq:fukstructure}
	\mu^q(x_q,\dots, x_1)=\sum \pm T^{E(u)}.y	
\end{equation}
as $y$ runs over the generators of $CF(L_{i_1}, L_{i_q})$, $u$ runs over rigid marked pseudo-holomorphic discs with boundary components on various $L_{i_j}$ and asymptotic to $x_1,\dots, x_q,y$ near markings. Here, $E(u)$ still denotes the topological energy, defined similarly (see \cite{abousei} for instance). Thanks to spin structures, one can orient the relevant moduli of pseudo-holomorphic discs, and this determines the signs in (\ref{eq:fukdiff}) and (\ref{eq:fukstructure}). We will often omit the superscript $q$. We choose the perturbation data such that the topological energy is larger than the geometric energy. By standard compactness and gluing arguments, the operations $\mu^q$ satisfy $A_\infty$-relations; hence, we obtain an $A_\infty$ category.

The difference of geometric and topological energies is given by integrating curvature, see \cite[(5.12)]{seidellef1}, for instance. For later purposes, we also assume that the Floer data is chosen such that this difference is uniformly bounded over discs with fixed boundary conditions, and fixed inputs and outputs. 

Throughout the paper, we assume the Floer data for a pair $(L_i,L_j)$ such that $i\neq j$ has vanishing Hamiltonian term. This will simplify the application of Fukaya's trick.

Let $L\subset M$ be another tautologically unobstructed Lagrangian, and assume it is equipped with a brane structure (i.e. grading and spin structure). Without loss of generality assume it is transverse to all $L_i$. Extend the Fukaya category by adding $L$ and making further consistent choices of Floer and perturbation data. As above, we assume the Floer data for the pairs $(L,L_i)$ and $(L_i,L)$ have vanishing Hamiltonian terms. Then, one can define a left, resp. right $A_\infty$-module $h^L$, resp. $h_L$ over the Fukaya category spanned by $\{L_i\}$ by restricting the Yoneda modules. Concretely, 
\begin{equation}
	h^L(L_i):=CF(L,L_i)\text{ and }h_L(L_i):=CF(L_i,L)
\end{equation}
and the differential and higher structure maps of are induced by the $A_\infty$-operations on the Fukaya category with objects $\{L_i\}_i\cup\{L\}$.

More generally, if $L$ is a Lagrangian as above and $\xi_L$ is a $U_\Lambda$-local system on $L$, one can add the pair $(L,\xi_L)$ to the Fukaya category and define its left/right Yoneda modules. Recall that if $(L,\xi_{L})$ and $(L',\xi_{L'})$ are two such pairs, one defines the Floer chains $CF((L,\xi_{L}),(L',\xi_{L'}))=CF(L,L')$. To define the differential, one fixes a basepoint on $L$, resp. $L'$ as well as homotopy classes of paths on $L$, resp. $L'$ from the basepoint to the generators of $CF((L,\xi_{L}),(L',\xi_{L'}))$. Given a Floer strip $u$, one can define $[\partial_L u]\in H_1(L)$, resp. $[\partial_{L'} u]\in H_1(L')$, by concatenating the path to the part of the boundary of $u$ lying on $L$, resp. $L'$, and inverse of the path from the output to the basepoint. The Floer differential is defined by counting such strips weighted by $T^{E(u)}\xi_L^{[-\partial_L u]}\xi_{L'}^{[\partial_{L'} u]}$ (instead of $T^{E(u)}$). Here $\xi_L^{[-\partial_L u]}$ denotes the holonomy of unitary local system $\xi_L\in H^1(L,U_\Lambda)$ at $[\partial_L u]$, and similarly with $\xi_{L'}^{[-\partial_{L'} u]}$. The other $A_\infty$-maps are modified similarly.
\begin{note}\label{note:globallocal1}
Throughout the paper, we only work with globally defined local systems on $M$. Hence, we will choose a basepoint on $M$ and paths from this point to the generators (not necessarily on $L$ and $L'$). The definitions will be modified accordingly, without changing the quasi-isomorphism class of Floer complex and Yoneda modules. 
\end{note}
In the following, we will use the phrase \textbf{Lagrangian brane} to mean a tautologically unobstructed Lagrangian with fixed grading and spin structure, and we will assume it is equipped with a unitary local system, unless stated otherwise. However, we will often omit the local system $\xi_L$ from the notation. If we further twist $L=(L,\xi_L)$ by another local system $\xi'$, we denote the new pair by $(L,\xi')$ rather than $(L,\xi_L\otimes\xi')$ to avoid further complicating the notation. 

We will consider Fukaya category with a restricted set of Lagrangians. However, it will have the following property:
\begin{defn}
We say $\{L_i\}$ \textbf{split generate $L$}, if $L$ is quasi-isomorphic to a direct summand of a twisted complex in $\{L_i\}$ in the Fukaya category with objects $\{L_i\}\cup\{L\}$. We say $\{L_i\}$ \textbf{split generate the Fukaya category} if this holds for any tautologically unobstructed Lagrangian brane $L$.
\end{defn}
\begin{notation}
From now on assume $\{L_i\}$ is a finite set of Lagrangian branes that split generate the Fukaya category and let $\cF(M)$ denote the Fukaya category spanned by	$\{L_i\}$. We assume $L_i$ is equipped with the trivial local system for any $i$.
\end{notation}
One can ensure split generation by the following:
\begin{thm}\cite{generation}
Let $\cF$ denote the span of $\{L_i\}$. If the open-closed map $HH_*(\cF,\cF)\to QH^*(M)$ hits the identity, then $\{L_i\}$ split generate the Fukaya category.
\end{thm}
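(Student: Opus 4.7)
The strategy is to exhibit, for an arbitrary tautologically unobstructed Lagrangian brane $L$, the identity $1_L \in HF(L,L)$ as a composition of morphisms factoring through the generators $\{L_i\}$; a standard argument in idempotent completion of dg-categories then shows $h_L$ to be a direct summand of a twisted complex in $\{h_{L_i}\}$, which is the content of split generation.

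First, I would construct a module action of the Hochschild chain complex $CC_*(\cF(M))$ on $CF(L,L)$. For each Hochschild chain $\eta = x_p\otimes\cdots\otimes x_1$ and each $\alpha\in CF(L,L)$, set $\alpha\cdot\eta \in CF(L,L)$ to be the count of pseudo-holomorphic discs with one boundary $L$-input carrying $\alpha$, one boundary $L$-output, and the remaining boundary arcs carrying the labels prescribed by $\eta$. With consistent Floer and perturbation data this is a chain-level action. Crucially, every resulting class in $HF(L,L)$ factors through at least one $L_i$ by construction—the output arises from a disc whose remaining boundary components rest on the generators—so the image of $\alpha\mapsto \alpha\cdot\eta$ lies in the two-sided ideal generated by compositions through $\{L_i\}$.

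Next, I would prove a Cardy-type relation equating $1_L\cdot\eta$ with $\mathcal{CO}(\mathcal{OC}(\eta))$ as chain maps $CC_*(\cF(M))\to CF(L,L)$. The homotopy comes from a one-parameter family of discs with one interior marked point and boundary insertions as above: the interior point moves along a path from a chosen boundary location (yielding $1_L\cdot\eta$ when the marking degenerates onto the boundary) to a puncture where closed-string data is introduced (yielding the $\mathcal{OC}$–$\mathcal{CO}$ factorization when the boundary inputs collide to create a closed string). This moduli-theoretic identity is the geometric heart of the argument.

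With the Cardy relation in hand, the conclusion is immediate: choose a Hochschild cycle $\eta$ with $\mathcal{OC}(\eta)=1\in QH^*(M)$, which exists by hypothesis; then in cohomology
\[
1_L \;=\; \mathcal{CO}(1) \;=\; \mathcal{CO}(\mathcal{OC}(\eta)) \;=\; 1_L\cdot[\eta],
\]
so $1_L$ lies in the two-sided ideal of classes factoring through $\{L_i\}$, as required. The principal obstacle is the degeneration analysis underlying the Cardy relation: one must enumerate the codimension-one strata of the one-parameter moduli (sphere and disc bubbling onto generators, formation of boundary nodes), ensure that only the two distinguished strata contribute, and track signs. Under \Cref{assump:monotoneexact}, monotonicity or exactness eliminates the harmful bubbles and the argument fits within standard transversality frameworks.
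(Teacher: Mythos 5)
You are reproducing the argument of \cite{generation}, which the paper cites without a proof of its own; the overall skeleton you describe --- $\mathcal{OC}$, $\mathcal{CO}$, a Cardy relation from degenerating annuli, and a homological-algebra finish --- is indeed the right one.

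The problem is that your ``module action'' $CC_*(\cF)\otimes CF(L,L)\to CF(L,L)$ is not well-defined as described. A Hochschild chain $\eta=x_p\otimes\cdots\otimes x_1$ is supported on a cyclic sequence of objects $L_{i_0},\dots,L_{i_p}$ with $x_j\in CF(L_{i_{j-1}},L_{i_j})$ and $L_{i_0}=L_{i_p}$. A disc whose boundary carries an arc on $L$ (bearing $\alpha$ and the output) together with arcs on the $L_{i_j}$ necessarily has two further corners, one in $CF(L,L_{i_0})$ and one in $CF(L_{i_p},L)$, which your operation does not supply. Those corners are exactly the generators of $h^L$ and $h_L$ restricted to $\cF$, and the correct intermediate object is the bar complex $h_L\otimes_\cF h^L$: there is a coproduct-type map $\Delta\colon CC_*(\cF)\to h_L\otimes_\cF h^L$ and the trace map $\mu\colon h_L\otimes_\cF h^L\to CF(L,L)$, and the Cardy relation reads $\mathcal{CO}\circ\mathcal{OC}\simeq \mu\circ\Delta$ on cohomology, not $\mathcal{CO}\circ\mathcal{OC}\simeq 1_L\cdot(-)$. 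Correspondingly, ``lies in the two-sided ideal generated by compositions through $\{L_i\}$'' is not what one actually obtains: one obtains that $1_L$ is in the image of $H^*(\mu)$, and the content of the final step is the module-theoretic lemma of \cite{generation} asserting that this forces $h_L$ to be a direct summand of a twisted complex of the $h_{L_i}$. That lemma, proved by splitting the bar resolution of $h_L$ and requiring somewhat more than ``idempotent completion,'' is the step that actually closes the argument; as stated, your factorization claim does not reach it.
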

If there exists a collection of Lagrangians satisfying the condition of this theorem, then $M$ is called \textbf{non-degenerate}. By \cite{sheelthesis}, this implies homological smoothness of $\cF$.

Assumption \ref{assump:monotoneexact} also imposes that there exists a set of generators that satisfy a condition called \textbf{Bohr-Sommerfeld monotonicity}. We borrow this notion from \cite[Remark 4.1.4]{wehrheimwoodwardquilted1}. To define this condition, one needs to assume $\omega_M$ is integral. For simplicity assume $[\omega_M]=c_1(M)$ ($M$ is already assumed to be either monotone or negatively monotone). Fix a pre-quantum bundle $(\cL,\nabla)$, i.e. a complex line bundle $\cL$ with a connection $\nabla$ that has curvature $-2\pi i\omega_M$. A Lagrangian $L$ is \textbf{Bohr-Sommerfeld} if the flat line bundle $(\cL,\nabla)|_L$ has trivial holonomy (it is called \textbf{rational} if the holonomy group is finite). To define Bohr-Sommerfeld monotone, fix an isomorphism of line bundles $\cL\cong \cK^{-1}$, where $\cK$ is the canonical bundle. Then, $L$ is \textbf{Bohr-Sommerfeld monotone} if it is Bohr-Sommerfeld and the natural Maslov section of $\cK^{-1}|_L$ is homotopic to a flat section of $\cL|_L$ under this isomorphism. For more details, see \cite{wehrheimwoodwardquilted1}. The crucial implication for us is the following:
\begin{lem}\label{lem:finitediscs}\cite{wehrheimwoodwardquilted1}
If $\{L_i\}$ are Bohr-Sommerfeld monotone (with respect to same data), then there exists at most finitely many pseudo-holomorphic rigid discs with fixed boundary conditions and asymptotic conditions.	
\end{lem}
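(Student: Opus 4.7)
The plan is to use Bohr--Sommerfeld monotonicity to promote the rigidity condition into a fixed value for the topological symplectic area, and then close the argument with Gromov compactness.

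First I would recall the mechanism behind Bohr--Sommerfeld monotonicity. Since $(\mathcal L,\nabla)$ has curvature $-2\pi i\omega_M$, Stokes' theorem applied to a disc $u\colon (D^2,\partial D^2)\to(M,L_i)$ identifies the holonomy of $\nabla$ along $u(\partial D^2)$ with $\exp(-2\pi i\int_{D^2} u^*\omega_M)$. The Bohr--Sommerfeld condition forces this holonomy to be trivial, so $\int u^*\omega_M\in\mathbb Z$, and the monotone refinement (matching the Maslov section of $\mathcal K^{-1}|_{L_i}$ with a flat section of $\mathcal L|_{L_i}$ under the chosen isomorphism $\mathcal L\cong\mathcal K^{-1}$) upgrades this to an identity $\int u^*\omega_M=c\,\mu(u)$ for a universal constant $c$, where $\mu(u)$ is the Maslov index. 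The same identity extends to polygons whose boundary segments lie on various $L_i$'s: one caps off each boundary segment using flat sections of $\mathcal L|_{L_i}$ and compares contributions corner by corner, so that the total symplectic area is still a fixed multiple of the total Maslov index (modulo contributions from the corners, which are themselves determined by the chosen intersection points).

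Next I would relate rigidity to a fixed topological energy. The virtual dimension of the moduli of pseudo-holomorphic polygons with prescribed corners depends linearly on the Maslov index of the underlying topological class, with offsets determined entirely by the fixed corner data. The rigidity hypothesis (zero-dimensional component) therefore pins down the Maslov index of the class, and by the Bohr--Sommerfeld monotone identity above, it simultaneously pins down $\int u^*\omega_M$. Since the Floer data for pairs $(L_i,L_j)$ with $i\ne j$ has vanishing Hamiltonian term, the topological and geometric energies coincide up to the uniformly bounded curvature correction discussed earlier; hence all rigid pseudo-holomorphic polygons with the prescribed boundary and asymptotic data have uniformly bounded geometric energy.

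Finally, Gromov compactness applied with this uniform energy bound shows that the relevant moduli space is sequentially compact (after allowing for the standard nodal/broken degenerations, which are ruled out in the top stratum by rigidity and the fact that any such degeneration would drop the expected dimension). A compact zero-dimensional space is finite, giving the claim. The main technical obstacle is making the area--Maslov index identity entirely precise for polygons with several Lagrangian boundary segments meeting at corners; one has to verify that the contributions from trivializing the pre-quantum bundle along each segment, combined with the corner terms coming from the fixed Hamiltonian chords, assemble into a single linear identity. This is standard but notationally delicate, and is the essential input from \cite{wehrheimwoodwardquilted1}.
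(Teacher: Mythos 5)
Your argument is essentially the paper's: rigidity fixes the Maslov index, Bohr--Sommerfeld monotonicity converts that into a fixed topological area, the topological-geometric energy gap is uniformly bounded by hypothesis, and Gromov compactness closes the loop. The one genuine difference is how the area--index identity is obtained. You attempt an \emph{absolute} identity for a single polygon with corners, and you candidly flag the corner bookkeeping (trivializing $\cL$ along each boundary segment, matching contributions at the chords) as the delicate part. The paper sidesteps this entirely by taking two rigid polygons $u_1,u_2$ with the same boundary conditions and the same asymptotics and forming the connected sum $u=u_1\#(-u_2)$ along the corresponding strip-like ends: the result is a map from a genus-zero surface with boundary circles in the $L_i$'s and \emph{no} corners, so the Bohr--Sommerfeld monotone relation (\cite[Lemma 4.1.5]{wehrheimwoodwardquilted1}) applies cleanly to relate the \emph{difference} of areas to the \emph{difference} of Maslov indices, which is $0$ because both come from zero-dimensional moduli. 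This difference-of-discs device buys you exactly the thing you identified as the technical obstacle: the corner contributions cancel identically, so you never need to make the absolute identity for polygons precise. If you want to go the absolute route, you do need to carry out the corner-by-corner comparison carefully (and argue the offsets are fixed once the chords and brane data are fixed); that is indeed what \cite{wehrheimwoodwardquilted1} does and is legitimate, but the gluing argument is shorter.
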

Hence, the coefficients of the $A_\infty$-products are finite. 
\begin{rk}
When $[\omega_M]$ is not equal but proportional to $c_1(M)$, one needs to fix an isomorphism $\cL^{\otimes l}\cong (\cK^{-1})^{\otimes k}$, and the Bohr-Sommerfeld monotonicity condition becomes the matching of the powers of above-mentioned sections of $\cK^{-1}|_L$ and $\cL|_L$. In \cite{wehrheimwoodwardquilted1}, the authors consider only the monotone case, and assume $k,l>0$. On the other hand, the proofs of statements implying finiteness (such as \cite[Lemma 4.1.5, Remark 4.2.2]{wehrheimwoodwardquilted1}) of the counts go through, even if one of $k$ and $l$ is negative. Hence, Lemma \ref{lem:finitediscs} is still true in the negatively monotone case.
\end{rk}
\begin{rk}
This notion can easily be generalized to include rational symplectic forms and (a finite set of) rational Lagrangians. The simplest way is to replace $\omega_M$ by $k\omega_M, k\in \bZ, k\gg 0$, to ensure it is integral, and fix a pre-quantum bundle for $k\omega_M$. If all $L_i$ are rational with respect to this bundle, one can make them Bohr-Sommerfeld by replacing $\omega_M$ by a multiple of itself again (and replacing the bundle by its corresponding power). One can still define Bohr-Sommerfeld monotonicity as in \cite{wehrheimwoodwardquilted1} and Lemma \ref{lem:finitediscs} holds.
\end{rk}
From now on, when considering compact $M$, we will assume $\cF(M)$ is spanned by a finite set $\{L_i\}$ of split-generating, Bohr-Sommerfeld monotone Lagrangians. In particular, the $A_\infty$-coefficients are all finite sums. 

\subsection{Reminders on wrapped Fukaya categories}
In this section, we remind basics of wrapped Fukaya categories very briefly. For more details, the reader should consult \cite{GPS1,GPS2}, whose conventions and definitions we use. Only note that we reverse (back) the composition conventions of \cite{GPS1}. For instance, we have compositions
\begin{equation}
	\mu^2:hom(L_1,L_2)\otimes hom (L_0,L_1)\to hom(L_0,L_2)
\end{equation}
rather than $\mu^2:hom(L_0,L_1)\otimes hom (L_1,L_2)\to hom(L_0,L_2)$.

Throughout this section assume $M$ is a Weinstein manifold, and let $\sigma\subset\partial_\infty M$ be a stop. The partially wrapped Fukaya category $\cW(M,\sigma)$ is an $A_\infty$-category with objects given by exact, cylindrical Lagrangian branes, and the hom-complexes have cohomology $HW(L,L')$. In \cite{GPS1}, they are constructed via localization: the authors define a directed $A_\infty$-category $\cO(M,\sigma)$, that include cofinal positive wrappings of each Lagrangian. For each positive isotopy $L\rightsquigarrow L^+$, one obtains a continuation element in $CF(L^+,L)=\cO(M,\sigma)(L^+,L)$. Then, $\cW(M,\sigma)$ is defined to be the localization at these continuation elements. When $\sigma=\emptyset$, we denote $\cW(M,\sigma)$ by $\cW(M)$. There is a natural functor $\cW(M,\sigma)\to \cW(M)$, called \textbf{stop removal}. This functor is a quotient functor when $\sigma$ is a Weinstein, or (almost) Legendrian stop. The quotient can be taken to be by a collection of discs (the linking discs of $\sigma$ when it is almost Legendrian, and the linking discs of the core of $\sigma$ when $\sigma$ is Weinstein).
\begin{exmp}
Assume $M$ is endowed with the structure of a Lefschetz fibration with Weinstein fibers (which is always possible by \cite[Theorem 1.10]{girouxpardon}), and let  $\sigma$ denote a smooth fiber of this fibration (pushed to contact boundary at infinity, so it is a stop). Then, $\cW(M,\sigma)$ is quasi-equivalent to Fukaya-Seidel category. By \cite[Corollary 1.14]{GPS2}, this category is generated by Lefschetz thimbles. 
\end{exmp}
Note that when defining the Floer differential and higher maps in the exact setting, the weights $T^{E(u)}$ of the discs are often ignored. However, as we are interested in non-exact deformations of Lagrangians, the relevant disc counts can be infinite; therefore, we assume $A_\infty$-products are defined using counts with weight $T^{E(u)}$ as above. On the other hand, if one does not add non-exact Lagrangians, $E(u)$ can be expressed in terms of well-defined actions of each generator. Therefore, $T^{E(u)}$ terms can be gotten rid of by rescaling the generators. As a result, this category is equivalent to the standard one (after base change to $\Lambda$).

The following is crucial:
\begin{fact}
When $\sigma$ is almost Legendrian or Weinstein, $\cW(M,\sigma)$ is a smooth category.	
\end{fact}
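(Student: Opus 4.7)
The plan is to reduce smoothness to a finite generation statement together with a properness-type finiteness of the endomorphism algebra of a generating set, both of which are provided by \cite{GPS2}.

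First I would recall that homological smoothness of an $A_\infty$-category $\cC$ is Morita invariant in the following strong sense: if $\cC$ is split-generated by a finite collection $\{X_1,\dots,X_n\}$, and the endomorphism algebra $A:=\bigoplus_{i,j}\cC(X_i,X_j)$ is smooth as a dg algebra (i.e.\ perfect as an $A$-bimodule), then $\cC$ is smooth. Indeed, the existence of a finite split-generating set implies that the diagonal bimodule of $\cC$ is a summand of an iterated cone of Yoneda bimodules $h^{X_i}\otimes_{\Lambda} h_{X_j}$ tensored against the diagonal of $A$; if the latter is perfect over $A$, then pulling back through the Morita equivalence produces a perfect resolution of the diagonal of $\cC$.

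Next I would input the relevant results from \cite{GPS2}. When $\sigma\subset\partial_\infty M$ is Weinstein, $\cW(M,\sigma)$ is split-generated by the (finitely many) cocores of the critical handles of $M$ together with the linking disks of the cores of $\sigma$; when $\sigma$ is almost Legendrian, one replaces the latter by the linking disks of $\sigma$. Call this finite collection $\{D_i\}$. The geometric content of \cite{GPS2} that I would invoke is, on one hand, this generation statement, and on the other hand, the identification of the endomorphism algebra of $\bigoplus D_i$ with (a version of) the Chekanov--Eliashberg dg algebra of the attaching data and of $\sigma$. The essential finiteness property this buys us is that each hom-space $\cW(M,\sigma)(D_i,D_j)$ admits a presentation in which the diagonal bimodule is built as a finite iterated cone of Yoneda bimodules of the $D_i$; this is where the Weinstein or almost-Legendrian hypothesis on $\sigma$ enters, ensuring that the stop removal is a quotient by a finite set of spherical-like objects and hence does not destroy the smooth resolution.

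The step I expect to be the actual obstacle, and where I would lean most heavily on \cite{GPS2}, is constructing an explicit finite resolution of the diagonal bimodule of the endomorphism algebra $A=\bigoplus\cW(M,\sigma)(D_i,D_j)$ by Yoneda bimodules. Conceptually, this resolution comes from the Koszul-type duality between the wrapped category and its proper ``linking disk'' subcategory: the diagonal is resolved by tensoring the two sides of this duality, and finiteness of the resolution reflects the finiteness of the handle decomposition of $M$ and of $\sigma$. Once that resolution is in hand, tensoring against $\cW(M,\sigma)$ via the Morita equivalence $\cW(M,\sigma)\simeq \mathrm{Perf}(A)$ yields a perfect resolution of the diagonal bimodule of $\cW(M,\sigma)$, which is precisely homological smoothness.
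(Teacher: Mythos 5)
The paper does not actually prove this statement; it is labeled a \emph{Fact} and attributed to the literature --- Ganatra's thesis \cite{sheelthesis} for $\sigma=\emptyset$, and \cite{GPS2} implicitly for general $\sigma$. Your opening reduction is correct and standard: smoothness is Morita invariant, a finite split-generating collection $\{D_i\}$ gives a Morita equivalence $\cW(M,\sigma)\simeq\mathrm{Perf}(A)$ with $A=\bigoplus_{i,j}\cW(M,\sigma)(D_i,D_j)$, so it suffices to show $A$ is a smooth dga; and the generation by cocores and linking disks that you cite from \cite{GPS2} is the right geometric input for finiteness of the generating set.

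The genuine gap is that the crucial step --- exhibiting a finite resolution of the diagonal $A$-bimodule --- is left entirely as a black box under the heading of ``Koszul-type duality'' and the Chekanov--Eliashberg dga, and you acknowledge this is where all the content lies. As written you have reorganized the problem, not solved it: knowing that $A$ is split-generated by finitely many objects does not by itself make its diagonal perfect, and a CE-type presentation yields smoothness only with further finiteness input (finitely many generators in bounded degree, or equivalently finiteness of the handle/Reeb-chord data), which you do not extract. Separately, the aside that the Weinstein/almost-Legendrian hypothesis ensures ``stop removal is a quotient by a finite set of spherical-like objects and hence does not destroy the smooth resolution'' is misdirected: the stop removal functor goes $\cW(M,\sigma)\to\cW(M)$, so smoothness of the partially wrapped category is not protected through a quotient --- if anything one proves smoothness of $\cW(M,\sigma)$ first and then argues that the quotient preserves it. The hypothesis on $\sigma$ enters to guarantee the finite generation statement and the existence of a finite Weinstein handle structure, not to insulate a pre-existing resolution from a quotient that in fact points in the opposite direction.
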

When $\sigma=\emptyset$, this follows from the non-degeneracy of $M$ and \cite{sheelthesis}. 

Observe that one can extend $\cW(M,\sigma)$ by adding pairs $(L,\xi)$, where $L$ is exact (and cylindrical at infinity) and $\xi$ is a $\Lambda^*$-local system. Without exactness, one would need to use unitary local systems.

Let $\tilde L$ be a compact, possibly non-exact, tautologically unobstructed Lagrangian. One can still define $h_{\tilde L}$ over $\cW(M,\sigma)$. Indeed, this module can be defined over $\cO(M,\sigma)$, and it is ``local'' with respect to continuation elements (i.e. these elements act invertibly on $h_{\tilde L}(L)$). As a result, it descends to $\cW(M,\sigma)$. As mentioned above, the stop removal functor $\cW(M,\sigma)\to \cW(M)$ is also a quotient by a collection of Lagrangian discs, and they can be chosen arbitrarily far in the cylindrical end. In particular, if $D$ is such a disc $h_{\tilde L}(D)=0$, and the module $h_{\tilde L}$ over $\cW(M,\sigma)$ descends to the right module over $\cW(M)$, which we also denoted by $h_{\tilde L}$.

\subsection{Families of objects}\label{subsec:familiesdeformations}
In this section, we will remind the notions of families of objects, mostly following \cite{flux} and \cite{owniteratespadic} (mostly \cite{flux} for the latter). For us, the parameter space for families will be either an affine variety over $\Lambda=\bC((T^\bR))$, or an affinoid domain over $\Lambda$. For our purposes, a deep knowledge of affinoid domains is not required. In practice, we will take the dual perspective and work with their ring of analytic functions. We remind basics of affinoid domains in Appendix \ref{sec:appendixaffinoidsemicont}. Relevant examples are the following:
\begin{exmp}\label{exmp:adicannulus}
Let $a<b\in\bR$ and consider the ring of series $\sum_{n\in\bZ} a_nz^n$, $a_n\in \Lambda$ such that $val_T(a_n)+n\nu\to\infty $ as $n\to\pm\infty$, for every $a\leq \nu\leq b$. In other words, this is the ring of functions that converge at $z\in\Lambda$ with valuation between $a$ and $b$. Denote this ring by $\Lambda\{z^\bZ \}_{[a,b]}$. We can think of $\Lambda\{z^\bZ \}_{[a,b]}$ as the ring of analytic functions of an annulus $S_{[a,b]}$. In other words, $S_{[a,b]}$ is the ``spectrum'' of this ring. The $\Lambda$-points of $S_{[a,b]}$ (i.e. the continuous algebra maps $\Lambda\{z^\bZ \}_{[a,b]}\to\Lambda$) are in correspondence with elements of $\Lambda$ with $T$-adic valuation between $a$ and $b$. One can see $S_{[a,b]}$ as an analytic subdomain of the affine variety $\bG_m:=Spec(\Lambda[z^\bZ])$.
\end{exmp}
\begin{exmp}[\cite{abouzaidicm}]\label{exmp:adicpolytope}
More generally, let $V$ be a finite rank lattice, and let $P\subset Hom(V,\bR)$ be a convex polytope defined by integral affine equations. Consider the ring of series $\sum_{v\in V} a_vz^v$, $a_v\in \Lambda$ such that $val_T(a_v)+\langle \boldsymbol{\nu},v \rangle\to \infty$ for all $\boldsymbol{\nu}\in P$. Denote this ring of series by $\Lambda\{z^V \}_P$. One can show that this ring is Noetherian (\cite[Remark 2.6]{abouzaidicm}, \cite[p.222]{boschguntherremmert}). We can think of $\Lambda\{z^V \}_P$ as the ring of analytic functions on an affinoid domain $S_P$. To describe the set of $\Lambda$-points consider the set of group homomorphisms $Hom(V,\Lambda^*)$, and the map $val_T:Hom(V,\Lambda^*)\to Hom(V,\bR)$. The $\Lambda$-points of $S_P$ are in correspondence with pre-image of $P$ under the map $val_T$. Conversely, one can define $\Lambda\{z^V \}_{P}$ to be the set of series in $z^v,v\in V$ that converge over the pre-image $val_T^{-1}(P)$. One can see $S_P$ as an analytic subdomain of the affine variety $Hom(V,\bG_m):=Spec(\Lambda[z^V])$. When $P=\{0\}$, we denote $S_P$ by $S_0$ and it can be identified with $Hom(V,U_\Lambda)$, where $U_\Lambda=S_{[0,0]}$. 
\end{exmp}
Often we will not distinguish between the set of $\Lambda$-points and the affinoid domain itself. 

Now we are ready to define families:
\begin{defn}
Let $\cB$ be an $A_\infty$-category over $\Lambda$ and let $S$ be a smooth affine variety, resp. affinoid domain over $\Lambda$ with ring of functions $\cO(S)$, resp. ring of analytic functions $\cO^{an}(S)$. A \textbf{family of right $A_\infty$-modules} over $\cB$ parametrized by $S$ is an assignment of a projective, graded $\cO(S)$-module, resp. $\cO^{an}(S)$-module
\begin{equation}
L\mapsto \fN(L)
\end{equation}
for all $L\in ob(\cB)$ and a family of $\cO(S)$-linear, resp. $\cO^{an}(S)$-linear, structure maps
\begin{equation}
\mu_{\fN}^{1|k}:\fN(L_{k})\otimes \cB(L_{k-1},L_k)\otimes \dots \otimes \cB(L_0,L_1) \to\fN(L_0)[1-k]
\end{equation} satisfying the right $A_\infty$-module equations. 
Families of left modules and bimodules are defined similarly, i.e. as left $A_\infty$-modules or $A_\infty$-bimodules over $\cB$ that carry $\cO(S)$, resp. $\cO^{an}(S)$-linear structures compatible with the structure maps. When the parameter space $S$ is an affine variety, we refer to the family as \textbf{algebraic}, whereas when it is an affinoid domain we call it \textbf{analytic}. 
\end{defn}
\begin{defn}
Given two families $\fN_1$, $\fN_2$, we define \textbf{a pre-morphism of families} to be an $\cO(S)$-linear, resp. $\cO^{an}(S)$-linear, $A_\infty$-module pre-morphism. In other words, a pre-morphism of families is a collection of $\cO(S)$-linear, resp. $\cO^{an}(S)$-linear, maps 
\begin{equation}
f^{1|k}:\fN_1(L_{k})\otimes \cB(L_{k-1},L_k)\otimes \dots \otimes \cB(L_0,L_1) \to\fN_2(L_0)[-k]
\end{equation}
The definition is similar for families of left-modules and bimodules. We denote the set of pre-morphisms by $hom_S(\fN_1,\fN_2)$ and its cohomology by $Hom(\fN_1,\fN_2)$. One can define a differential and composition of pre-morphisms of families analogously to pre-morphisms of $A_\infty$-modules (see \cite{seidelbook}). One can also define shifts, and cones of closed morphisms similar to ordinary $A_\infty$-modules. Hence, the families of right/left/bi-modules form a pre-triangulated dg category.
\end{defn}
\begin{exmp}
For any right $A_\infty$-module $N$ over $\cB$, and parameter space $S$, there is \textbf{the constant family} defined by $\fN(L)=N(L)\otimes\cO(S)$ (or $\fN(L)=N(L)\otimes\cO^{an}(S)$ in the case of an affinoid $S$). The structure maps are given by $\cO(S)$-linearly, resp. $\cO^{an}(S)$-linearly, extending the structure maps of $N$. We denote the constant family also by $\underline{N}$. One can similarly define constant families of left modules and bimodules. In particular, \textbf{a constant family of Yoneda modules/bimodules} is defined by letting $N$ to be a Yoneda module/bimodule. If instead of $\cO(S)$, resp. $\cO^{an}(S)$, one considers a vector bundle $E$ over $S$ (i.e. a finite rank projective module $E$ over $\cO(S)$, resp. $\cO^{an}(S)$), we call the resulting family $N\otimes E$ \textbf{a locally constant family} (when $N$ is Yoneda, \textbf{a locally constant family of Yoneda modules/bimodules}). 
\end{exmp}
\begin{defn}
A family of modules/bimodules is called \textbf{perfect} if it is quasi-isomorphic to a direct summand of an iterated cone of locally constant families of Yoneda modules/bimodules. A family is called \textbf{proper} if each $\fN(L)$ has finitely generated cohomology over $\cO(S)$, resp. $\cO^{an}(S)$. Perfect/proper families form triangulated subcategories of the dg category of families. 
\end{defn}
It is easy to see that a perfect family over a proper category is proper. Conversely:
\begin{lem}\label{lem:properimpliesperfect}
If $\cB$ is a smooth category, then any proper family over $\cB$ is perfect.	
\end{lem}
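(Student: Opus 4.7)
The proof strategy mirrors the non-family analogue (proper implies perfect over a smooth category) by using the diagonal bimodule and the convolution identity $\fN \otimes_\cB \cB \simeq \fN$ from \Cref{note:nconvdiagisn}, together with \Cref{lem:hlconvhlequalshom} applied fiberwise; the only new ingredient is that one must ensure the resulting building blocks are locally constant families of Yoneda modules, not just families whose fibers are Yoneda modules.

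The plan is as follows. First, I would invoke the version of \Cref{note:nconvdiagisn} for families to write $\fN \simeq \fN \otimes_\cB \underline{\cB}$, where $\underline{\cB}$ denotes the constant family of diagonal bimodules. Smoothness of $\cB$ then means that $\cB$ itself is a direct summand of an iterated cone of Yoneda bimodules $h^{L_i} \otimes_\Lambda h_{L_j}$; base-changing this splitting to $\cO(S)$ (or $\cO^{an}(S)$) exhibits $\underline{\cB}$ as a direct summand of an iterated cone of \emph{constant} families of Yoneda bimodules. Convolving with $\fN$ and using the family version of \Cref{lem:hlconvhlequalshom}, which gives $\fN \otimes_\cB \underline{h^{L_i} \otimes h_{L_j}} \simeq \fN(L_i) \otimes_{\cO(S)} \underline{h_{L_j}}$, I would then conclude that $\fN$ is a direct summand of an iterated cone of families of the form $\fN(L_i) \otimes_{\cO(S)} \underline{h_{L_j}}$.

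Next, I would use properness: each $\fN(L_i)$ is a complex of projective $\cO(S)$-modules (resp.\ $\cO^{an}(S)$-modules) whose cohomology is finitely generated. Because $S$ is smooth in the algebraic case, and because the affinoid rings $\Lambda\{z^V\}_P$ arising in \Cref{exmp:adicannulus} and \Cref{exmp:adicpolytope} are regular Noetherian of finite global dimension, such a complex is quasi-isomorphic to a bounded complex of finitely generated projective $\cO(S)$-modules, i.e., a bounded complex of vector bundles $E_{ij}$ on $S$. Substituting this replacement, each $\fN(L_i) \otimes \underline{h_{L_j}}$ is quasi-isomorphic to a finite iterated cone of locally constant families of Yoneda modules $\underline{h_{L_j}} \otimes E_{ij}$, so $\fN$ itself is a direct summand of an iterated cone of locally constant Yoneda families, i.e., perfect.

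The main technical obstacle is the finite projective replacement step: one must verify that the global dimension of the parameter ring is finite and that properness (finitely generated cohomology) passes to a finite length projective resolution. For smooth affine varieties this is standard Serre regularity plus the Hilbert syzygy theorem; for the affinoid domains of interest it follows from the regularity and Noetherianity of $\Lambda\{z^V\}_P$ (see \cite[Remark 2.6]{abouzaidicm} for Noetherianity, with regularity verified directly from the presentation of these rings as completed localizations of $\Lambda[z^V]$). A minor additional care is needed to perform the replacement in the graded, $A_\infty$-module sense rather than just for chain complexes of modules, but this is routine because the module structure on $\fN(L_i)$ in this step is purely that of an $\cO(S)$-complex—the $\cB$-module structure has already been absorbed into the tensor factor $\underline{h_{L_j}}$.
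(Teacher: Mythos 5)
Your proof is correct and follows essentially the same route as the paper's: write $\fN \simeq \fN \otimes_\cB \underline{\cB}$, expand the diagonal via smoothness as a summand of an iterated cone of Yoneda bimodules, apply the family version of Lemma~\ref{lem:hlconvhlequalshom} to reduce to $\fN(L_i)\otimes_\Lambda h_{L_j}$, and then invoke properness together with regularity of $\cO(S)$ (resp.\ $\cO^{an}(S)$, cf.\ Lemma~\ref{lem:projectiveresolution}) to replace $\fN(L_i)$ by a bounded complex of finite rank projectives. Your added comments on base-changing the splitting of $\cB$ to $\cO(S)$ and on the affinoid finite-global-dimension point are harmless elaborations of steps the paper leaves implicit.
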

\begin{proof}
We prove this for families of right modules, other cases are similar. Let $\fN$ be a proper family of right modules. Smoothness of $\cB$ implies that the diagonal bimodule is a direct summand of an iterated cone of Yoneda bimodules $h^L\otimes_\Lambda h_{L'}$. Therefore, $\fN\simeq \fN\otimes_{\cB} \cB$ is quasi-isomorphic to a direct summand of an iterated cones of $\fN\otimes_\cB (h^L\otimes_\Lambda h_{L'})$, and it suffices to prove perfectness of the latter.
By Lemma \ref{lem:hlconvhlequalshom},
\begin{equation}
	\fN\otimes_{\cB} (h^L\otimes_\Lambda h_{L'})\simeq (\fN\otimes_{\cB} h^L)\otimes_\Lambda h_{L'} \simeq \fN(L) \otimes_\Lambda h_{L'}
\end{equation}
As $\fN$ is proper, $\fN(L)$ has finitely generated cohomology over $\cO(S)$ (hence, as $S$ is smooth, $\fN(L)$ is quasi-isomorphic to a finite complex of finite rank projective modules over $\cO(S)$). This implies $\fN(L) \otimes_\Lambda h_{L'}$ is perfect. 
\end{proof}
We will also need:
\begin{lem}\label{lem:perfconvperfisperf}
If $\cB$ is a proper category, the convolution of two perfect families over $\cB$ is perfect.
\end{lem}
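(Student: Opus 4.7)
\textbf{Proof plan for \Cref{lem:perfconvperfisperf}.} The plan is to reduce to the locally constant Yoneda case, apply \Cref{lem:hlconvhlequalshom} to simplify the convolution, and then invoke properness of $\cB$ to finish. Since the convolution $(\cdot)\otimes_{\cB}(\cdot)$ is $\Lambda$-linear and commutes with cones and direct summands in each variable (via the usual bar-complex model), and since perfectness is preserved under cones and direct summands, it suffices to verify the claim when each factor is a single locally constant family of Yoneda type. Without loss of generality (the right/left/bi variants being identical), assume we are convolving
\begin{equation}
	\fN_1=\underline{h_{L'}}\otimes E_1\qquad\text{and}\qquad \fN_2=\underline{h^{L}\otimes_{\Lambda} h_{L''}}\otimes E_2,
\end{equation}
where $E_1,E_2$ are finite-rank projective modules over $\cO(S)$ (or $\cO^{an}(S)$), and the second factor is a locally constant Yoneda bimodule (the right/left module cases drop the $h_{L''}$).

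Next I would use that the $\cO(S)$-tensor factors are flat and commute past the convolution over $\cB$ to write
\begin{equation}
	\fN_1\otimes_{\cB}\fN_2\ \simeq\ \bigl(\underline{h_{L'}}\otimes_{\cB}\underline{h^{L}}\bigr)\otimes_{\Lambda}\underline{h_{L''}}\otimes_{\cO(S)}(E_1\otimes_{\cO(S)}E_2).
\end{equation}
By \Cref{lem:hlconvhlequalshom}, the inner convolution reduces to $\underline{\cB(L,L')}$, so the whole expression becomes $\underline{\cB(L,L')}\otimes_{\Lambda}\underline{h_{L''}}\otimes(E_1\otimes_{\cO(S)}E_2)$. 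The product $E_1\otimes_{\cO(S)}E_2$ remains a finite-rank projective module over $\cO(S)$.

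Finally, the properness hypothesis on $\cB$ forces $H^*(\cB(L,L'))$ to be a finite-dimensional graded $\Lambda$-vector space, so $\cB(L,L')$ is quasi-isomorphic to a finite complex of finite-dimensional $\Lambda$-vector spaces. Substituting this finite model exhibits $\fN_1\otimes_{\cB}\fN_2$ as an iterated cone of locally constant families of Yoneda type (the cone construction being on the finitely many basis vectors of each piece of the finite model of $\cB(L,L')$), and is therefore perfect. The only delicate point I anticipate is bookkeeping: one must verify that flatness of $\cO(S)$-tensoring really allows one to pull the coefficient module past the $\cB$-convolution in the $A_\infty$-sense, and that the resulting quasi-isomorphisms respect the ambient left/right/bi-module structures on the outer factors $h_{L'}$ and $h_{L''}$. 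These checks are formal but are the main place where care is required.
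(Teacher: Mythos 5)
Your proof is correct and follows essentially the same route as the paper: reduce to locally constant families of Yoneda type, collapse the inner convolution via \Cref{lem:hlconvhlequalshom}, and invoke properness of $\cB$. The paper's version (via the proof of \Cref{cor:properconvperfisperf}, using that a perfect family over a proper category is proper) reduces only one factor to Yoneda, while you reduce both and apply properness of $\cB(L,L')$ directly, but these implementations are interchangeable and rest on the same two ingredients.
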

\begin{proof}
The statement of the lemma holds for several variants such as convolution of two bimodules, a single bimodule and a right module, etc. The proof is the same as in Corollary \ref{cor:properconvperfisperf}. 
\end{proof}
\begin{note}\label{note:kprojrepl}
A family $\fN$ of right modules over $\cB$ parametrized by an affine variety $S$ can be seen as an $A_\infty$-functor $\cB^{op}\to \cC_{dg}(\cO(S))$, where $\cC_{dg}(\cO(S))$ is the dg category of complexes over $\cO(S)$. The category of such functors is not derived automatically; for instance, when $\cB=\Lambda$, the category of families is equivalent to $\cC_{dg}(\cO(S))$, and the quasi-isomorphisms are not invertible in this category. To deal with this issue, one needs to pass to derived category by inverting quasi-isomorphisms. One way to do this is to replace the underlying complexes of $\cO(S)$-modules by $K$-projective complexes (which is generalization of free/projective replacements of finite complexes to unbounded complexes). More precisely, by \cite{spaltenstein}, there is a $K$-projective replacement functor from $\cC_{dg}(\cO(S))$ to the subcategory of $K$-projective complexes, and one composes it with $\cB^{op}\to \cC_{dg}(\cO(S))$ to obtain a family $\fN^{kp}$ and a quasi-isomorphism $\fN^{kp}\to \fN$. One can also show $\fN^{kp}$ is essentially unique. In many of our constructions, the underlying complexes of $\cO(S)$-modules are finite free; hence, $K$-projective automatically, but even when this is not the case, one can apply functorial $K$-projective replacements to obtain one such. So as not to complicate notation further, we will keep such replacements implicit. 
This remark applies to families of left modules and bimodules, as well as families parametrized by affinoid domains. Also see \cite[Lemma 6.10]{ownpaperalg}. 
\end{note}
\begin{note}\label{note:kprojreplz2graded}
This paper is also concerned with monotone symplectic manifolds; therefore, we also work with $\bZ/2\bZ$-graded complexes. One can see a $\bZ/2\bZ$-graded complex as either a $2$-periodic unbounded complex or as a dg-module over the dga $\Lambda[w^\pm]$, $|w|=2$, $d(w)=0$. Similarly, a family of such complexes can be seen as a functor to $\cO(S)[w^\pm]$-modules ($|w|=2$, $d(w)=0$). Possibly the replacements in \cite{spaltenstein} can be assumed to satisfy $2$-periodicity as well by a slight modification of their argument. Alternatively, one can use fibrant/cofibrant replacement functors over the category of dg $\cO(S)[w^\pm]$-modules (see \cite[Prop 3.1]{kellerdg}). Thus, \Cref{note:kprojrepl} applies. 
\end{note}
\begin{note}\label{note:z2gradedmodify}
There are other minor modifications needed in $\bZ/2\bZ$-graded case. First, if $C$ is a $\bZ$-graded complex of $\cO(S)$-modules with finitely generated cohomology and if $S$ is smooth, then $C$ is quasi-isomorphic to a finite complex of finite rank projective modules. This may hold in this generality in the $\bZ/2\bZ$-graded case as well. 
On the other hand, the complexes we will encounter will already have this property; therefore, we will implicitly assume this in our arguments. For instance, when we construct a proper (hence, perfect) family of bimodules $\fM$, one can represent it as a twisted complex of locally constant families. As a result, by assuming $\cB$ is minimal, one can ensure that $\fM(L_i,L_j)$ is quasi-isomorphic to a twisted complex of finite rank projective modules. Similar conclusions hold for $(\fM\otimes_{\cB} \fM')(L_i,L_j)$, $(h_L\otimes_{\cB} \fM)(L_i)$, $(h_L\otimes_{\cB} \fM\otimes h^{L'})$, and the likes, thanks to twisted complex representation as above, and \Cref{lem:hlconvhlequalshom}. Observe that in this case, there is a compatible $\bZ$-grading on the finite rank projective replacement. This is important in the proofs of some statements such as \Cref{lem:pointwisevanishing} and \Cref{prop:semicontinuityoverbase}, as they use convergent spectral sequence arguments. 
\end{note}
We now explain what it means for a family $\fM$ to define a group action:
\begin{defn}
Let $S$ be an affine algebraic group over $\Lambda$, $\cB$ be an $A_\infty$-category and $\fM$ be a family of bimodules over $\cB$ parametrized by $S$. The family $\fM$ is called \textbf{group-like} if 
\begin{enumerate}
	\item $\fM|_{e_S}$ is quasi-equivalent to diagonal bimodule
	\item $\pi_2^*\fM\otimes_{\cB}^{rel} \pi_1^*\fM\simeq m^*\fM$, where $\pi_1$ and $\pi_2$ are the projection maps $S\times S\to S$ and $m:S\times S\to S$ is the group multiplication
\end{enumerate}
\end{defn}
Here, the quasi-isomorphism is a quasi-isomorphism of families over $S\times S$. The following implication explains the term group-like:
\begin{lem}
If $\fM$ is group-like, then $\fM|_{z_1}\otimes \fM|_{z_2}\simeq \fM|_{z_1+z_2}$ for every $z_1,z_2\in S$. In particular, each $\fM|_z$ is invertible with an inverse given by $\fM|_{z^{-1}}$. 
\end{lem}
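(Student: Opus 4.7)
The plan is to pull back the group-like quasi-isomorphism to a point. Fix $(z_1,z_2) \in S\times S$, viewed as a closed $\Lambda$-point of the parameter space. The two properties defining ``group-like'' say that we have a quasi-isomorphism
\begin{equation}
\pi_2^*\fM \otimes_{\cB}^{rel} \pi_1^*\fM \;\simeq\; m^*\fM
\end{equation}
of families of bimodules parametrized by $S\times S$, together with $\fM|_{e_S}\simeq \cB$. Restricting the right-hand side at $(z_1,z_2)$ gives $\fM|_{m(z_1,z_2)} = \fM|_{z_1 z_2}$, while restricting the pullbacks gives $\pi_1^*\fM|_{(z_1,z_2)} = \fM|_{z_1}$ and $\pi_2^*\fM|_{(z_1,z_2)} = \fM|_{z_2}$. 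So once I know that restriction to a point commutes with the relative convolution $\otimes_{\cB}^{rel}$, I obtain
\begin{equation}
\fM|_{z_2}\otimes_{\cB} \fM|_{z_1} \;\simeq\; \fM|_{z_1 z_2},
\end{equation}
which is the first assertion (after relabeling or flipping the order of $z_1,z_2$).

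The key step, and what I expect to be the only mildly technical one, is the commutation of restriction with the convolution $\otimes_{\cB}^{rel}$. The underlying issue is that $\otimes_{\cB}$ is a bar construction, tensored over $\cO(S\times S)=\cO(S)\otimes_\Lambda \cO(S)$, and pulling back along the closed immersion $\{(z_1,z_2)\}\hookrightarrow S\times S$ is a priori only right-exact. To handle this I would pass to $K$-projective models for $\pi_1^*\fM$ and $\pi_2^*\fM$ as in \Cref{note:kprojrepl} (or, equivalently, observe that in all the cases we will apply this to, $\fM$ is perfect so its pullbacks have free underlying $\cO(S\times S)$-modules and no replacement is needed). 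For such representatives, the bar complex computing $\pi_2^*\fM\otimes_{\cB}^{rel}\pi_1^*\fM$ has each summand a flat $\cO(S\times S)$-module, so its base change to the residue field at $(z_1,z_2)$ agrees with the bar complex computing $\fM|_{z_2}\otimes_{\cB}\fM|_{z_1}$. Applying the same base change to $m^*\fM$ gives $\fM|_{z_1 z_2}$, and the pullback of the quasi-isomorphism remains a quasi-isomorphism.

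For the second claim, specialize to $z_2 = z^{-1}$ (and $z_1=z$). The displayed isomorphism becomes
\begin{equation}
\fM|_{z^{-1}}\otimes_{\cB} \fM|_{z} \;\simeq\; \fM|_{e_S} \;\simeq\; \cB,
\end{equation}
where the last step is the first axiom of group-likeness. Swapping the roles of $z$ and $z^{-1}$ gives the convolution in the opposite order. Since the diagonal bimodule $\cB$ is a two-sided identity for convolution (\Cref{note:nconvdiagisn}), this exhibits $\fM|_{z^{-1}}$ as a two-sided inverse of $\fM|_z$ in the Morita/convolution sense, proving invertibility.
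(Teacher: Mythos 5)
The paper states this lemma without proof, treating it as an immediate consequence of the definitions, so there is no ``paper approach'' to compare against; your argument correctly supplies the implicit steps. Restricting the defining quasi-isomorphism $\pi_2^*\fM\otimes_{\cB}^{rel}\pi_1^*\fM\simeq m^*\fM$ to the $\Lambda$-point $(z_1,z_2)$, noting that $\pi_i^*\fM|_{(z_1,z_2)}=\fM|_{z_i}$ and $m^*\fM|_{(z_1,z_2)}=\fM|_{z_1z_2}$, and then invoking K-projectivity so that restriction commutes with the bar construction is exactly the right thing to do, and specializing $z_2=z^{-1}$ together with property (1) of group-likeness and \Cref{note:nconvdiagisn} gives invertibility. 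One small nitpick: the parenthetical remark that perfectness of $\fM$ gives ``free underlying $\cO(S\times S)$-modules'' is not quite literally true (a direct summand of an iterated cone of locally constant Yoneda families need not be free in each degree); but this aside is harmless since the main route through K-projective models works, and moreover the paper's definition of a family already requires degree-wise projective underlying modules, so in the bounded setting no replacement is needed at all.
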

One can also prove:
\begin{lem}\label{lem:grouplikeimpliesperfect}
If $\fM$ is group-like and $\cB$ is smooth, then $\fM$ is perfect.
\end{lem}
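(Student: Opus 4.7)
The plan is to combine two ingredients: the perfectness of $\underline{\cB}$ coming from smoothness of $\cB$, and the invertibility of $\fM$ coming from the group-like condition. Concretely, I would move perfectness from the unit $\underline{\cB}$ to $\fM$ by showing that the endofunctor $-\otimes_{\cB}^{rel}\fM$ sends each locally constant family of Yoneda bimodules to a perfect family, and then expanding $\underline{\cB}$ in terms of such Yonedas.

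First I would record the invertibility. Pulling back the group-like isomorphism $m^*\fM \simeq \pi_2^*\fM \otimes_{\cB}^{rel} \pi_1^*\fM$ along the maps $z \mapsto (z, z^{-1})$ and $z \mapsto (z^{-1}, z)$ from $S$ to $S\times S$, and using that $\fM|_{e_S}\simeq\cB$, one obtains quasi-isomorphisms of families
\begin{equation}
\fM \otimes_{\cB}^{rel} \mathrm{inv}^*\fM \simeq \underline{\cB} \simeq \mathrm{inv}^*\fM \otimes_{\cB}^{rel} \fM,
\end{equation}
where $\mathrm{inv}: S \to S$ is group inversion. It follows that the endofunctor $- \otimes_{\cB}^{rel} \fM$ on the dg-category of families of right $\cB$-modules parametrized by $S$ is a quasi-equivalence with quasi-inverse $-\otimes_{\cB}^{rel} \mathrm{inv}^*\fM$, and therefore preserves perfect objects.

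For each $L'\in ob(\cB)$, the family of right modules $\underline{h_{L'}}$ is perfect, so its image $\underline{h_{L'}}\otimes_{\cB}^{rel}\fM\simeq \fM(\cdot, L')$ (via the family analog of Lemma \ref{lem:hlconvhlequalshom}) is a perfect family of right $\cB$-modules. Applying the same Lemma to a locally constant Yoneda bimodule family gives, as families of bimodules,
\begin{equation}
\underline{h^L \otimes_\Lambda h_{L'}}\otimes_{\cB}^{rel}\fM \simeq \underline{h^L}\otimes_\Lambda \fM(\cdot, L').
\end{equation}
Since $\fM(\cdot, L')$ is a direct summand of an iterated cone of locally constant Yoneda right modules $\underline{h_{L''}}\otimes\underline{E}$, tensoring on the left by $\underline{h^L}$ realizes the right-hand side as a direct summand of an iterated cone of locally constant families of Yoneda bimodules $\underline{h^L\otimes_\Lambda h_{L''}}\otimes\underline{E}$; in particular, it is perfect.

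Finally, smoothness of $\cB$ means that the diagonal bimodule is a direct summand of an iterated cone of Yoneda bimodules $h^{L_a}\otimes_\Lambda h_{L_b}$, so $\underline{\cB}$ is a direct summand of an iterated cone of locally constant Yoneda bimodule families. Applying $-\otimes_{\cB}^{rel}\fM$ to this decomposition realizes $\fM \simeq \underline{\cB}\otimes_{\cB}^{rel}\fM$ as a direct summand of an iterated cone of families $\underline{h^{L_a}\otimes_\Lambda h_{L_b}}\otimes_{\cB}^{rel}\fM$, each of which is perfect by the previous paragraph, so $\fM$ is perfect. The main technical obstacle is to rigorously set up $-\otimes_{\cB}^{rel}\fM$ as an autoequivalence on the derived dg-category of families (passing through $K$-projective or cofibrant replacements as in Notes \ref{note:kprojrepl}--\ref{note:z2gradedmodify}) and to verify that perfectness is preserved; once these foundational issues are handled, the rest is formal manipulation of the closed monoidal structure.
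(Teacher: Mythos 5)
Your proof is correct and rests on the same two ingredients as the paper's: the invertibility of the family coming from the group-like property, and the identification of perfect families with compact objects so that a quasi-equivalence preserves them. The paper applies the compactness argument directly at the level of families of \emph{bimodules} and concludes in one stroke: $-\otimes_{\cB}^{rel}\fM$ is an auto-equivalence of the dg-category of families of bimodules, hence sends the perfect family $\underline{\cB}$ to the perfect family $\fM$. Your version invokes the compactness argument only for families of \emph{right} modules, to show $\fM(\cdot,L')$ is perfect, and then builds up perfectness of $\fM$ by explicitly pushing the smoothness decomposition of the diagonal through $-\otimes_{\cB}^{rel}\fM$. The two routes are essentially interchangeable: since a bimodule over $\cB$ is a right module over $\cB^{op}\otimes_\Lambda\cB$, the paper's observation ``one can similarly identify the compact objects in the category of families with perfect families'' is the same fact you used for right modules, applied once more. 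Your unpacking is slightly longer but has the mild advantage of producing an explicit twisted-complex presentation of $\fM$ in terms of locally constant Yoneda bimodule families, which is the form actually used later (e.g.\ in Note \ref{note:z2gradedmodify}). Both arguments correctly defer the technical point you flag at the end — setting up $-\otimes_{\cB}^{rel}\fM$ on the derived category of families via $K$-projective replacements — to Notes \ref{note:kprojrepl} and \ref{note:z2gradedmodify}.
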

\begin{proof}
The proof is similar to perfectness of a single invertible bimodule: if $\fM$ is an invertible bimodule, then 	$(\cdot)\otimes_{\cB} \fM$ is an auto-equivalence on the category of bimodules. As perfect bimodules are the same as compact objects in the category of bimodules (see \cite{kellerdg}), auto-equivalences preserve perfectness. In particular, $\fM\simeq \cB\otimes_\cB \fM$ is perfect.

To apply this idea to families, one considers the functor $(\cdot)\otimes_{\cB}^{rel} \fM$, i.e. the convolution relative to $S$, acting on the families of bimodules parametrized by $S$. As above, this is an auto-equivalence, with a quasi-inverse given by $(\cdot)\otimes_{\cB}^{rel}\fM^-$, where $\fM^-$ is the family obtained by pulling $\fM$ back along $S\to S,z\mapsto z^{-1}$ (hence, $\fM^-|_z\simeq \fM|_{z^{-1}}$). One can similarly identify the compact objects in the category of families with perfect families to see that auto-equivalences preserve perfectness. 
Hence, as before $\fM\simeq \underline{\cB}\otimes_\cB^{rel} \fM$ is perfect. Here, $\underline{\cB}$ is the constant family of bimodules with fiber $\cB$, which is perfect as $\cB$ is a smooth category. 
\end{proof}


\section{The global algebraic family $\fM^M$ and group-like property}\label{sec:algfamilydefclass}
\subsection{Definition of the family and local behavior via Fukaya's trick}
Recall that we use $\cF(M)$ to denote the Fukaya category of $(M,\omega_M)$ spanned by a fixed finite set of Bohr-Sommerfeld monotone Lagrangians that generate the Fukaya category. In this section, we will define a family $\fM^M$ of bimodules over $\cF(M)$ parametrized by 
\begin{equation}
H^1(M,\bG_m):=Spec(\Lambda[z^{H_1(M)}])\cong \bG_m^{b_1(M)}
\end{equation}
where $H_1(M)$ denote $H_1(M,\bZ)$ modulo torsion. Fix a basepoint on $M$ and relative homotopy classes of paths from the basepoint to the generators of $\cF(M)$ (i.e. to a point on the corresponding Hamiltonian chord). 
Let $u$ denote a map from a disc with boundary marking points to $M$ such that the boundary maps to $\bigcup L_i$ and near marked points $u$ is asymptotic to generators of the Fukaya category. 
\begin{defn}
We define the family of bimodules $\fM^M$ as follows: To a pair $(L_i,L_j)$, it associates the ($\bZ/2\bZ$-)graded free $\Lambda[z^{H_1(M)}]$-module 
\begin{equation}\label{eq:bimoduleunderlying}
\fM^M(L_i,L_j):=\Lambda[z^{H_1(M)}]\otimes_\Lambda CF(L_i,L_j)
\end{equation}
Its differential is defined by the formula 
\begin{equation}\label{eq:bimodulediff}
\mu^{0|1|0}(x):= \sum \pm T^{E(u)}z^{[\partial_h u]}.y
\end{equation}
where the count is over the Floer strips with input $x$ and output $y$ up to translation. Here, $E(u)$ denotes the energy of $u$ and $[\partial_h u]\in H_1(M)$ denotes the class obtained by concatenating the path from the basepoint to $x$, the $u$-image of one side of the Floer strip and the inverse of the path from the basepoint to $y$. 
More generally, define the structure maps of $\fM^M$ by the formula
\begin{equation}\label{eq:bimodulestructure}
(x_1,\dots,x_e|x|x_d',\dots, x_1')\mapsto \sum\pm T^{E(u)}z^{[\partial_h u]}.y
\end{equation}
where the sum is over pseudo-holomorphic discs as in Figure \ref{figure:structurebimodule}. The class $[\partial_h u]\in H_1(M)$ is defined similarly, by concatenating the $u$-image of a path from $x$ to $y$ with fixed paths from the basepoint to $x$ and $y$. The blue middle line in Figure \ref{figure:structurebimodule} shows the path from $x$ to $y$ concatenated with fixed paths to obtain $[\partial_h u]$.
\end{defn}
\begin{figure}\centering
	\includegraphics[height=4 cm]{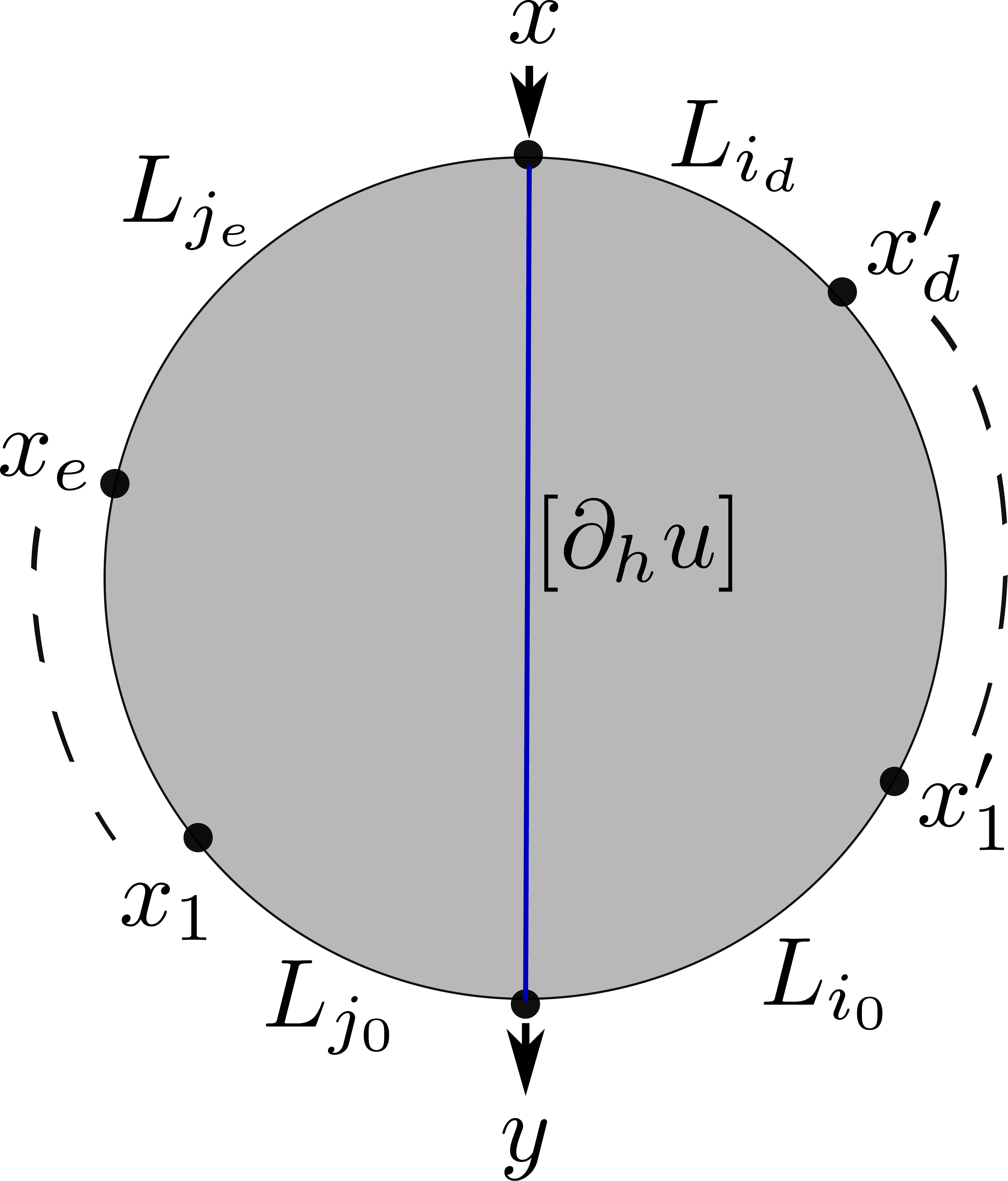}
	\caption{The disc counts defining $\fM^M$}
	\label{figure:structurebimodule}
\end{figure}
The sums defining the structure maps are finite, thanks to Bohr-Sommerfeld assumption; hence, $\fM^M$ is well-defined. 
\begin{rk}
Figure \ref{figure:structurebimodule} represents a count of pseudo-holomorphic discs, not quilted discs. The blue middle line is a pictorial representation of $[\partial_h u]$, not a seam. Also note that the signs in (\ref{eq:bimodulediff}) and (\ref{eq:bimodulestructure}) are determined by the orientation of the moduli of discs in the same way the signs in the defining equation for the diagonal bimodule do. Indeed, these equations are simply the deformations of the equations defining the diagonal bimodule. 
\end{rk}
Let $P\subset H^1(M,\bR)$ be a closed polytope:
\begin{defn}
Define the family $\fM_P^M$ of bimodules over $\cF(M)$ parametrized by the affinoid domain	$S_P\subset H^1(M,\bG_m)$ by replacing (\ref{eq:bimoduleunderlying}) by 
\begin{equation}\label{eq:Pbimoduleunderlying}
\fM_P^M(L_i,L_j):=\Lambda\{z^{H_1(M)}\}_P\otimes_\Lambda CF(L_i,L_j)
\end{equation}
The differential and the structure maps are defined by (\ref{eq:bimodulediff}) and (\ref{eq:bimodulestructure}).
\end{defn}
\begin{rk}
If one drops Bohr-Sommerfeld assumption, $\fM^M$ is not well-defined as (\ref{eq:bimodulediff}) and (\ref{eq:bimodulestructure}) may not converge over $H^1(M,\bG_m)$. On the other hand, if $P$ is a small neighborhood of $0$, then $\fM_P^M$ is well-defined. This follows from Fukaya's trick applied to diagonal. We will prove a simple variant of this.
\end{rk}
Given Bohr-Sommerfeld monotone $L$, one can similarly define a family of right modules deforming the right Yoneda module $h_L$. More precisely:
\begin{defn}\label{defn:hlalg}
Let $h_L^{alg}$ denote the family of right modules over $\cF(M)$ parametrized by $Spec(\Lambda[z^{H_1(M)}])$ that associates the free graded complex \begin{equation}\label{eq:rightalgunderlying}
h_L^{alg}(L_i):=\Lambda[z^{H_1(M)}]\otimes_\Lambda CF(L_i,L)
\end{equation} to each object $L_i$ and whose structure maps are defined by 
\begin{equation}\label{eq:rightalgstructure}
(x;x_e,\dots,x_1)\mapsto \sum\pm T^{E(u)}z^{[\partial_2 u]}.y
\end{equation}
where the sum varies over pseudo-holomorphic discs as in Figure \ref{figure:structureright}. The class $[\partial_2 u]\in H_1(M)$ is defined analogously, by concatenating the blue line from $x$ to $y$ in Figure \ref{figure:structureright} with the fixed path from the basepoint to generators $x$ and $y$.  Analogous to $\fM_P^M$, define $h_L^{alg}|_P$ by replacing (\ref{eq:rightalgunderlying}) by $\Lambda\{z^{H_1(M)}\}_P\otimes_\Lambda CF(L_i,L)$. 
\end{defn}
\begin{note}\label{note:localsystemsclass}
Recall that we assume a Lagrangian brane $L$ is equipped with a unitary local system $\xi_L$, mostly implicit in the notation. In the presence of a unitary local system $\xi_L$, one would normally need to modify (\ref{eq:rightalgstructure}) as $\sum\pm T^{E(u)}\xi_L^{[\partial_L u]}z^{[\partial_2 u]}.y$. As remarked before, we will only work with local systems defined over all $M$; hence, for notational convenience, we instead replace (\ref{eq:rightalgstructure}) by $\sum\pm T^{E(u)}\xi_L^{[\partial_2 u]}z^{[\partial_2 u]}.y$. The families that are defined by these two formulas are isomorphic: an isomorphism is given by rescaling the generators. See Note \ref{note:globallocal1}
\end{note}
Observe that $h_L^{alg}|_{z=1}$ is the same as right Yoneda module of the Lagrangian $L$. This family is well-defined thanks to the assumption that $L$ is Bohr-Sommerfeld monotone. However, as we will see, even without this assumption, it is well-defined over a small affinoid domain containing $H^1(M,U_\Lambda)\subset H^1(M,\bG_m)$. 

\begin{figure}\centering
	\includegraphics[height=4 cm]{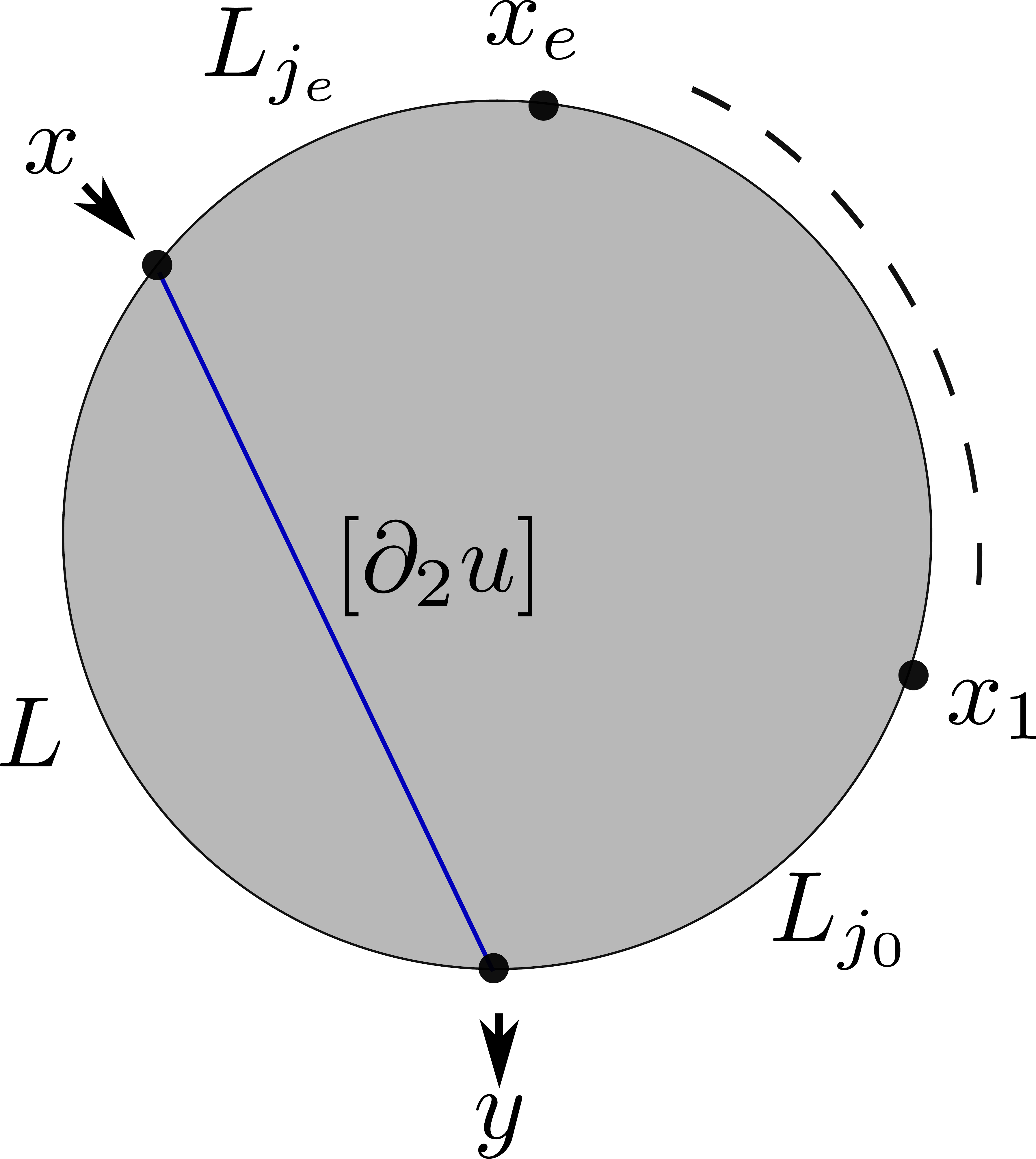}
	\caption{The disc counts defining $h_L^{alg}$}
	\label{figure:structureright}
\end{figure}

One can identify the set of isomorphism classes of $U_\Lambda$-local systems on $M$ with $H^1(M,U_\Lambda)\cong U_\Lambda^{b_1(M)}$ via their holonomy. Moreover, given $z_0\in H^1(M,U_\Lambda)\subset H^1(M,\bG_m)$, the module $h_{L}^{alg}|_{z=z_0}$ is quasi-isomorphic to $h_{(L,\xi_{z_0})}$, where $\xi_{z_0}$ denote the local system corresponding to $z_0$. 

Now, let $\tilde L$ denote a tautologically unobstructed Lagrangian brane, that is not necessarily Bohr-Sommerfeld monotone. By a small Hamiltonian perturbation assume $\tilde L\pitchfork L_i$ and extend the perturbation data to include boundary conditions on $\tilde L$ as well. We assume the Hamiltonian term of the pair $(L_i,\tilde L)$ vanishes as before. We define an analytic family similar to Definition \ref{defn:hlalg}:
\begin{defn}\label{defn:analyticdefolag}
The graded vector space underlying $h_{\tilde L}^{an}$ is given by
\begin{equation}
h_{\tilde L}^{an}(L_i)=CF(L_i,\tilde L)\otimes_\Lambda \Lambda\{z^{H_1(M)}\}_P
\end{equation}
where $P$ is a polytope containing $0$ in its interior. The differential and the structure maps are defined by (\ref{eq:rightalgstructure}). The domain $P$ will mostly be omitted from the notation, but when we want to emphasize, we will use the notation $h_{\tilde L}^{an}|_P$.
\end{defn}
\begin{rk}
	When $L$ is Bohr-Sommerfeld monotone $h_L^{alg}|_P$ and $h_L^{an}|_P$ coincide.		
\end{rk}
Observe that $h_{\tilde L}^{an}$ is well-defined if one allows $P$ to be $\{0\}$. Indeed, as observed, the restriction of this family to a point $z_0\in H^1(M,U_\Lambda)$ gives $h_{(L,\xi_{z_0})}$. The proof of Lemma \ref{lem:hlgeometricnearby} will show that one can let $P$ to be a small neighborhood of $0$:
%
\begin{lem}\label{lem:hlgeometricnearby}
For a small neighborhood $P$ of $0\in H^1(M,\bR)$, any restriction $h_{\tilde L}^{an}|_z$, $z\in S_P=val_T^{-1}(P)$ is quasi-isomorphic to the Yoneda module of a Lagrangian-local system pair. More precisely, one can write given $z\in P\subset H^1(M,\bG_m)$ as $z=T^{val_T(z)}z_0$, where $val_T(z)\in H^1(M,\bR)$ and $z_0\in H^1(M,U_\Lambda)$, and 
\begin{equation}
	h_{\tilde L}^{an}|_z\simeq h_{(\phi^1_\alpha(\tilde L),\xi_{z_0})}=:h_{\phi_z(\tilde L)}
\end{equation}
where $\alpha$ is a closed $1$-form representing $val_T(z)$.
\end{lem}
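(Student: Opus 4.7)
The plan is to apply Fukaya's trick to identify the structure maps of $h_{\tilde L}^{an}|_z$ with those of the Yoneda module of $\phi_z(\tilde L) = (\phi_\alpha^1(\tilde L), \xi_{z_0})$, for $z = T^{val_T(z)} z_0$ with $val_T(z) = [\alpha]$ sufficiently small.

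First, I would shrink $P$ so that for every $z \in S_P$ one can choose a representative $1$-form $\alpha$ of $val_T(z)$ of $C^0$-norm controlled by the diameter of $P$, and so that $\phi_\alpha^1$ is $C^1$-close to the identity on a neighborhood of $\tilde L$ and its interactions with the $L_i$. In particular $\phi_\alpha^1(\tilde L)$ remains tautologically unobstructed and transverse to all $L_i$, and one may choose Floer/perturbation data for the pairs $(L_i,\phi_\alpha^1(\tilde L))$ by pushing forward the chosen data for $(L_i,\tilde L)$ under a diffeomorphism $\Psi_\alpha$ which is the identity outside a small neighborhood of $\tilde L$ and agrees with $\phi_\alpha^1$ on an even smaller one. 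Since the Hamiltonian term of the data on the pair $(L_i,\tilde L)$ vanishes, the same holds for the pushed-forward data, and intersections $L_i \cap \phi_\alpha^1(\tilde L)$ are canonically in bijection with $L_i \cap \tilde L$ via $\Psi_\alpha$.

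Next, I would set up the bijection of moduli spaces. Given a pseudo-holomorphic disc $u$ with boundary conditions $(L_{i_0},\dots,L_{i_k},\tilde L)$, contributing to a structure map of $h_{\tilde L}^{an}|_z$, the map $\Psi_\alpha \circ u$ is pseudo-holomorphic for the pushed-forward almost complex structure and has boundary on $(L_{i_0},\dots,L_{i_k},\phi_\alpha^1(\tilde L))$. This gives a bijection of the moduli spaces used to define $h_{\tilde L}^{an}|_z$ and $h_{\phi_z(\tilde L)}$, and it preserves indices and orientations. The key area computation is the standard flux identity: writing $u' = \Psi_\alpha \circ u$,
\begin{equation}
E(u') - E(u) \;=\; \int_{\partial_2 u} \alpha \;=\; \langle [\alpha], [\partial_2 u] \rangle,
\end{equation}
where $[\partial_2 u]$ is the class used to weight $h_{\tilde L}^{an}$. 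Using $z = T^{[\alpha]} z_0$ and the convention of Note \ref{note:localsystemsclass} for the unitary local system $\xi_{z_0}$ (so that its holonomy along $\partial_2 u'$ equals $z_0^{[\partial_2 u]}$), the weights match:
\begin{equation}
T^{E(u)} z^{[\partial_2 u]} \;=\; T^{E(u)} T^{\langle [\alpha], [\partial_2 u]\rangle} z_0^{[\partial_2 u]} \;=\; T^{E(u')}\, \xi_{z_0}^{[\partial_2 u']}.
\end{equation}
Therefore the structure maps of $h_{\tilde L}^{an}|_z$ and of $h_{\phi_z(\tilde L)}$ agree term by term under the canonical identification $L_i\cap \tilde L \leftrightarrow L_i\cap \phi_\alpha^1(\tilde L)$, giving an isomorphism (in particular a quasi-isomorphism) of right $\cF(M)$-modules.

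The main obstacle is the compactness issue inherent in Fukaya's trick: one has to verify that no sequence of discs for the pushed-forward data escapes the bijection via bubbling or by drifting outside the neighborhood where $\Psi_\alpha$ is defined. This is handled by a uniform energy estimate together with the openness of transversality: after shrinking $P$, all rigid discs relevant to structure maps of bounded length stay within a fixed compact region, and the area difference $\langle [\alpha], [\partial_2 u]\rangle$ remains small, so Gromov compactness ensures the moduli spaces stay in bijection. One should also check independence from the choice of representative $\alpha$ of $val_T(z)$ and of the cut-off diffeomorphism $\Psi_\alpha$: two such choices differ by a Hamiltonian isotopy supported near $\tilde L$, which produces a chain-homotopy equivalence of the resulting Yoneda modules, so $h_{\phi_z(\tilde L)}$ is well-defined up to quasi-isomorphism, matching the ambiguity already present in the statement.
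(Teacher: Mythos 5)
Your overall strategy (Fukaya's trick, pushing the data forward along a cut-off diffeomorphism $\Psi_\alpha$ fixing the $L_i$ and carrying $\tilde L$ to $\phi_\alpha^1(\tilde L)$) is exactly the one used in the paper, but the central ``standard flux identity''
\begin{equation*}
E(u') - E(u) = \langle [\alpha], [\partial_2 u]\rangle
\end{equation*}
is not correct as stated, and the gap it hides is not cosmetic. The arc of $\partial u$ lying on $\tilde L$ is not a closed loop: its endpoints are the Hamiltonian chords $x,y$, which \emph{move} under the isotopy $\psi_{tv}$. The area swept by that arc therefore equals $v([\partial_{\tilde L}u])$ only up to boundary corrections coming from the areas swept by the fixed reference paths (the ones that close the arc up to the class $[\partial_2 u]$, and those from the basepoint on $\tilde L$). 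Concretely, the correct identity is of the form
\begin{equation*}
E(\psi_v\circ u) = E(u) + v([\partial_2 u]) - g_v(x) + g_v(y),
\end{equation*}
with real numbers $g_v(x)$, $g_v(y)$ depending only on $(v,x)$ and $(v,y)$. In particular, it is false that the structure maps of $h_{\tilde L}^{an}|_z$ and $h_{\phi_z(\tilde L)}$ agree term by term under the canonical identification of generators; they agree only after rescaling the generators $x\mapsto T^{g_v(x)}x$. That rescaling is precisely what produces the module isomorphism, and it is the missing step in your argument. To fix the proof, you need to (i) derive the energy identity via Stokes' theorem, carefully separating the part of the swept area that is a flux term from the endpoint contributions, and (ii) absorb the endpoint contributions $g_v$ into a rescaling of the basis of each $CF(L_i,\tilde L)$. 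Your final paragraph on compactness and on independence of choices is fine, and indeed those points are addressed (via tameness of the pushed-forward almost complex structures for small $v$) in the paper.
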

\begin{proof}
We prove this lemma using Fukaya's trick. First, choose a collection of closed $1$-forms on $M$, representing a basis of $H^1(M,\bR)$. This collection spans a subspace of the space of closed $1$-forms isomorphic to $H^1(M,\bR)$, let $\alpha(v)$ denote the $1$-form corresponding to $v\in H^1(M,\bR)$ and $\phi_v$ denote the time $1$-flow of $X_{\alpha(v)}$. The collection $\{\phi_v\}\subset Symp^0(M,\omega_M)$ does not define an action of $H^1(M,\bR)$ in the strict sense, but it defines an action up to Hamiltonian isotopy (i.e. $\phi_{v_1}\circ\phi_{v_2}$ is Hamiltonian isotopic to $\phi_{v_1+v_2}$). Moreover, $\phi_v$ is well defined up to Hamiltonian isotopy; hence, $h_{\phi_v(\tilde L)}$ is well-defined up to quasi-isomorphism. Observe that when $v$ is close to $0$, the intersection points $L_i\cap \phi_v(\tilde L)$ can be identified with $L_i\cap \tilde L$.

For a small neighborhood $P\subset H^1(M,\bR)$ of $0$, choose a smoothly varying family of diffeomorphisms $\{\psi_v:v\in P\}$ such that $\psi_v(\tilde L)=\phi_v(\tilde L)$ and $\psi_v(L_i)=L_i$. We choose $\psi_v$ with small support near $\tilde L$ so that it does not effect Hamiltonian chords between $L_i$. 

Choose perturbation data for discs with boundary on $(L_{i_0},L_{i_1},\dots, L_{i_k}, \phi_v(\tilde L))$ such that it is related to initial perturbation data for $(L_{i_0},L_{i_1},\dots, L_{i_k}, \tilde L)$ by push-forward along $\psi_v$. The almost complex structures are tame with respect to $(\psi_v)_*\omega_M$; hence, with respect to $\omega_M$, as long as $v$ is close to $0$ (strictly speaking, we allow the almost complex structures to vary over the discs, but one can choose families of almost complex structures in a way that tameness with respect to $(\psi_v)_*\omega_M$ holds uniformly for almost complex structures in these families, as long as $v$ belong to a small neighborhood of $0$). 
As $\psi_v$ is supported near $\tilde L$, the Hamiltonian chords between $L_i$ with respect to push-forward Hamiltonian (and $\omega_M$) remains unchanged. Note that push-forward still modifies the Floer data for the pair $(L_i,L_j)$, but we have an identification of the Floer chains, and the number of pseudo-holomorphic strips --with the same boundary conditions on $L_i$ and the same asymptotic conditions on the strip-like ends-- remains unchanged. This is analogous to invariance of such counts with no Hamiltonian terms and asymptotic to actual intersection points. The same applies to higher $A_\infty$-operations between $L_i$; hence, Fukaya category $\cF(M)$ spanned by $\{L_i\}$ remains unchanged, in the strict sense. Therefore, one can use the new data to define the module $h_{\phi_v(L)}$ over $\cF(M)$. 

$CF(L_i,\tilde L)$ can be identified with $CF(L_i,\phi_v(\tilde L))$ as graded vector spaces. One also has an identification of the Floer strips, that sends $u$ to $\psi_v\circ u$, but the energy changes. Similarly, pseudo-holomorphic discs with boundary on $(L_{i_0},L_{i_1},\dots, L_{i_k}, \tilde L)$ and $(L_{i_0},L_{i_1},\dots, L_{i_k}, \phi_v(\tilde L))$ (defined by the original data and the push-forward data) can be identified. The energy changes according to the formula
\begin{equation}\label{eq:energyidentity}
E(\psi_v\circ u)=E(u)+v([\partial_2 u])-g_v(x)+g_v(y)
\end{equation}
Here, both $E(\psi_v\circ u)$ and $E(u)$ denote topological energies, computed with respect to $\omega_M$ (and the data used to define respective discs). $v([\partial_2 u])$ denotes the evaluation of $[\partial_2 u]\in H_1(M)$ at $v\in H^1(M,\bR)$. $x$ and $y$ denote the module input and the output of the marked disc $u$ (that correspond to the inputs and outputs of $\psi_v\circ u$). $g_v(x)$, resp. $g_v(y)$ are real numbers that depend only on $(v,x)$, resp. $(v,y)$. We will define $g_v(x)$ and $g_v(y)$; however, we will not attempt to give explicit formulae for them. 

Assuming (\ref{eq:energyidentity}), we can conclude Lemma \ref{lem:hlgeometricnearby} as follows: for simplicity, assume $\tilde L$ carries no local system and let $z=T^vz_0$, where $v$ is close to $0$. For such small $v$, one can use the push-forward data to define $h_{\phi_z(\tilde L)}:=h_{\phi_v(\tilde L),\xi_{z_0}}$ as above, and identify the generators of $h_{\phi_z(\tilde L)}(L_i)$ with the generators of $h_{\tilde L}(L_i)=CF(L_i,\tilde L)$. Then, the differential of $h_{\phi_z(\tilde L)}(L_i)$ satisfies 
\begin{equation}
	\mu^1(T^{g_v(x)}x)=\sum \pm T^{E(\psi_v\circ u)+g_v(x)}\xi_{z_0}^{[\partial_2 u]}y= \sum \pm T^{E(u)+v([\partial_2 u])}\xi_{z_0}^{[\partial_2 u]}	T^{g_v(y)}y
\end{equation}
by (\ref{eq:energyidentity}). If we replace every generator $x$ by $x'=T^{g_v(x)}x$, this expression turns into 
\begin{equation}
	\mu^1(x')=\sum \pm T^{E(u)+v([\partial_2 u])}\xi_{z_0}^{[\partial_2 u]}	y'
\end{equation}
which is the same as (\ref{eq:rightalgstructure}) evaluated at $z=T^vz_0$. The same holds for the higher structure maps, and this shows this module is isomorphic to $h^{an}_{\tilde L}|_{z=T^vz_0}$. One adds an extra $\xi_{\tilde L}^{[\partial_2 u]}$-term when $\tilde L$ carries the local system $\xi_{\tilde L}$ (which we assume to be globally defined for simplicity). 

(\ref{eq:energyidentity}) is similar to the one given in \cite{abouzaidicm} and it follows from Stokes' theorem. To see why it holds, first notice that if we let $(H.\gamma, J)$ to be the perturbation data on a disc $S$, where $\gamma$ is a closed $1$-form on $S$ that vanish in tangent directions to boundary, its topological energy $E(u)=\int_S u^*\omega_M -d(u^*H.\gamma)$ differs from the symplectic area by $\int_{\partial S} u^*H.\gamma$ (where this expression involves integrals over Hamiltonian chords $S$ is asymptotic to as well, and only these matter as $\gamma|_{\partial S}=0$). When we let $u:S\to M$ evolve by $\psi_v$, this isotopy does not change the Hamiltonian chords between $L_i$, and the integral of $u^*H.\gamma$ over the chord remains the same as well. On the other hand, the Hamiltonian term associated to pair $(L_i,\tilde L)$ is zero, and remains zero under isotopy; therefore, $\int_{\partial S} u^*H.\gamma$ stays the same as $u$ (and $H$) evolves. In other words, the Hamiltonian part of topological energy is the same, and the difference of topological energies is equal to the difference of symplectic areas of $\psi_v\circ u$ and $u$. 

To calculate the symplectic area difference, we apply Stokes' theorem. As we apply isotopy $\psi_v$, all the boundary components, except the one mapping to $\tilde L$ vary over some $L_i$. Hence, the symplectic area traced by other components vanish, and the symplectic are difference of $u$ and $\psi_v(u)$ is the area traced by the boundary component on $\tilde L$ under the isotopy $\psi_{tv}, t\in[0,1]$. To compute the area traced by the part on $\tilde L$, fix a basepoint on $\tilde L$ and paths to all generators $\tilde L\cap L_i$ from this basepoint. Define $[\partial_{\tilde L} u]\in H_1(\tilde L)$ as before, i.e. as the concatenation of the fixed paths on $\tilde L$ with the $\tilde L$-part of the boundary of $u$. The area swiped by this loop is equal to $v([\partial_{\tilde L}u])$ (as $\phi_{tv}^{-1}\circ \psi_{tv}|{\tilde L}$ is homotopic to identity). 
It differs from the area swiped by the $\tilde L$-part of the boundary by the areas swiped by the fixed paths from the basepoint on $\tilde L$, i.e. it is of the form $n_v(y)-n_v(x)$, where $n_v(x),n_v(y)\in\bR$ only depend on the generators $x$, resp. $y$, and $v$. From this we can conclude the energy difference $E(\psi_v\circ u)-E(u)=v([\partial_{\tilde L} u])+n_v(y)-n_v(x)$. On the other hand, the difference
\begin{equation}\label{eq:loopdifference}
	v([\partial_2 u])-v([\partial_{\tilde L} u])
\end{equation}
is also of the form $m_v(y)-m_v(x)$. Indeed, for each generator $x$, we have a fixed path from the basepoint on $M$ to $x$ and another from the basepoint on $\tilde L$ to $x$. Concatenating them gives a path that only depend on $x$ and (\ref{eq:loopdifference}) can be seen as the difference of area swiped by paths between the basepoints that correspond to $x$ and $y$. Therefore, 
\begin{equation}
	E(\psi_v\circ u)-E(u)=v([\partial_2 u])+(m_v(y)+n_v(y))-(m_v(x)+n_v(x))
\end{equation}
Letting $g_v(x)=m_v(x)+n_v(x)$, we conclude the proof of (\ref{eq:energyidentity}).
\end{proof}
\begin{cor}\label{cor:hlanconverges}
$h_{\tilde L}^{an}$ is well-defined for small enough $P$ containing $0$ in its interior.	
\end{cor}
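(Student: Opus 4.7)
The objective is to verify that the formulas \eqref{eq:rightalgstructure} defining the differential and higher structure maps of $h_{\tilde L}^{an}|_P$ converge as elements of the affinoid ring $\Lambda\{z^{H_1(M)}\}_P$ of \Cref{exmp:adicpolytope}. By definition of that ring, this amounts to showing that for each $v \in P$ and each fixed input-output data, one has $E(u) + v([\partial_2 u]) \to \infty$ as $u$ varies over the relevant moduli of pseudo-holomorphic discs (and in particular that only finitely many such $u$ contribute below any given bound).

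The plan is to deduce this pointwise-in-$v$ convergence from the energy identity \eqref{eq:energyidentity} established inside the proof of \Cref{lem:hlgeometricnearby}. First I would shrink $P$ so that the smooth family of diffeomorphisms $\{\psi_v : v \in P\}$ and the corresponding push-forward perturbation data are all defined, and the pushed-forward almost complex structures remain tame with respect to $(\psi_v)_*\omega_M$ uniformly for $v \in P$. For each fixed $v \in P$, the discs $u$ with boundary on $(L_{i_0},\dots,L_{i_k},\tilde L)$ for the original data are in bijection with pseudo-holomorphic discs $\psi_v \circ u$ for the push-forward data, and \eqref{eq:energyidentity} reads
\begin{equation*}
E(\psi_v \circ u) \;=\; E(u) + v([\partial_2 u]) - g_v(x) + g_v(y),
\end{equation*}
where $g_v(x),g_v(y)$ depend only on $(v,x)$ and $(v,y)$.

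Because our choice of Floer data ensures topological energy dominates geometric energy, one has $E(\psi_v \circ u) \geq 0$, so Gromov compactness applied to the perturbed configuration gives finiteness of the zero-dimensional moduli of $\psi_v \circ u$ with bounded energy and fixed asymptotics. Combining this with the energy identity above (whose correction terms $g_v(x), g_v(y)$ are constants once $v,x,y$ are fixed) yields $E(u) + v([\partial_2 u]) \to \infty$ as $u$ varies, giving convergence at $v$. Since $v \in P$ was arbitrary, the defining series lie in $\Lambda\{z^{H_1(M)}\}_P$. The same argument applies verbatim to the higher $A_\infty$-structure maps, since the geometric setup is identical up to adding category-input strip-like ends. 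The only role of shrinking $P$ is to make Fukaya's trick available simultaneously for all $v \in P$; once that is secured, there is no further obstacle, as per-$v$ Gromov compactness does the rest.
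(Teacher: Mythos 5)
Your proposal is correct and is essentially the same argument the paper uses: both rely on Fukaya's trick and the energy identity \eqref{eq:energyidentity} from the proof of \Cref{lem:hlgeometricnearby} to transfer the convergence (Gromov compactness) of the pushed-forward counts for $h_{\phi_z(\tilde L)}$ to convergence of the series defining $h_{\tilde L}^{an}|_{z=T^v z_0}$. The paper packages this by observing that, after rescaling generators, the two modules have identical structure maps — which is precisely what your unwinding of the energy identity establishes.
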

\begin{proof}
This follows from the proof of Lemma \ref{lem:hlgeometricnearby}. Namely, the proof shows that, if $v$ is small, then with the right choice of data, and after scaling the generators $h_{\phi_z(\tilde L)}$ has the same structure maps as $h_{\tilde L}^{an}$ evaluated at $z=T^vz_0$. the former is well-defined, showing the convergence of series defining $h_{\tilde L}^{an}|_{z=T^vz_0}$ as well. 
\end{proof}
\begin{note}\label{note:convergenceofmp}
For $\fM_P^M$ to be well defined, one needs (\ref{eq:bimodulediff}) and (\ref{eq:bimodulestructure}) to converge as long as $z\in S_P$. Previously, we relied on Assumption \ref{assump:monotoneexact} for this. However, one can apply ideas in the proof of Lemma \ref{lem:hlgeometricnearby} and Corollary \ref{cor:hlanconverges} to establish convergence without this assumption. The main additional difficulties are Hamiltonian terms, and possibly repeating boundary labels. More precisely, analogous to before, we would like to choose diffeomorphisms $\psi_v$ such that $L_j$ on the left hand side of Figure \ref{figure:structurebimodule} map to $\phi_v(L_j)$ and $L_i$ on the right hand side stay the same (setwise). This is not always possible as some $L_{j_l}$ and some $L_{i_k}$ may be the same. One can fix these problems simultaneously as follows: first use families of Hamiltonian diffeomorphisms parametrized by the discs to gauge the perturbation data and the boundary conditions so that the Hamiltonian terms for pairs $(L_{i_d},L_{j_e})$ and $(L_{i_0},L_{j_0})$ become zero, and $L_{i_k}$ becomes transverse to $L_{j_l}$ for all $k$ and $l$. One can use the same family of Hamiltonian diffeomorphisms to write a correspondence of solutions to perturbed Cauchy-Riemann equations such that the topological energies match. Then, one can apply the proof of Lemma \ref{lem:hlgeometricnearby} to show the convergence of (\ref{eq:bimodulediff}) and (\ref{eq:bimodulestructure}) for the new boundary conditions and perturbation data, and for small $val_T(z)$. However, the series for the new conditions is exactly the same as the old conditions; therefore, we obtain convergence.
\end{note}
For a Bohr-Sommerfeld monotone $L$, one can define a morphism of families
\begin{equation}\label{eq:maphlmtohl}
h_L\otimes_{\cF(M)}\fM^M\to h_L^{alg}
\end{equation}
by the formula
\begin{equation}\label{eq:formulamaphlmtohl}
(x\otimes x_1\otimes\dots \otimes x_e\otimes m;x_d',\dots x_1')\mapsto \sum\pm T^{E(u)}z^{[\partial_1u]}.y
\end{equation}
Here, $x\otimes x_1\otimes\dots \otimes x_e\otimes x$ is a generator of $h_L\otimes_{\cF(M)}\fM^M$. The sum ranges over pseudo-holomorphic curves with input $x, x_1,\dots , x_e, m,x_d',\dots, x_1'$ and output $y$. The class $[\partial_1u]\in H_1(M)$ is defined similar to $[\partial_hu]$, namely by concatenating the fixed path from the base point to $m$, the $u$-image of the path from the marked point mapping to $m$ to the marked point mapping to $y$, and inverse of the fixed map from the basepoint to $y$. We represent this class by the blue line in Figure \ref{figure:maphlmtohl}. It is easy to check this defines a map of bimodules.
\begin{figure}\centering
	\includegraphics[height=4 cm]{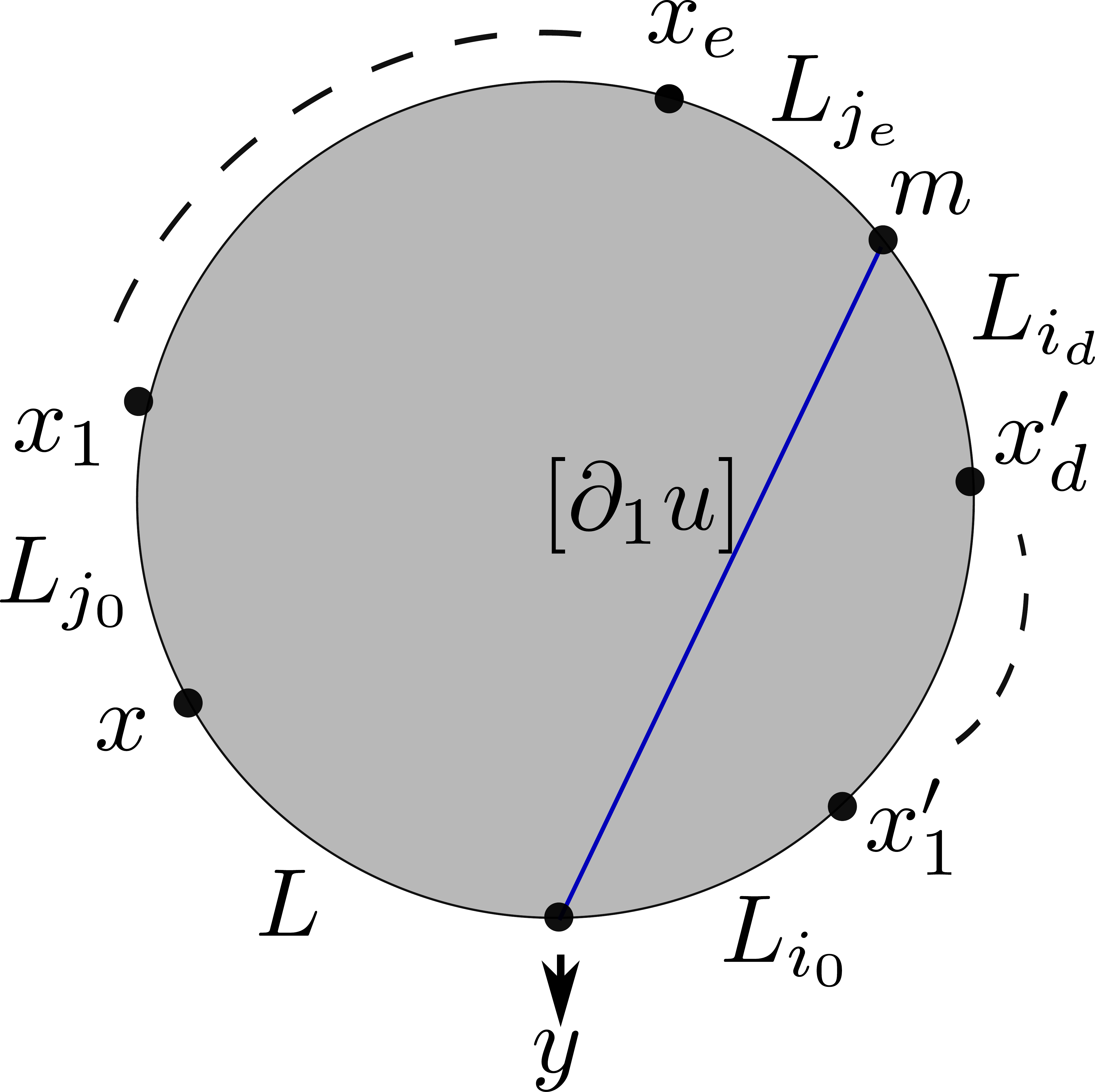}
	\caption{The disc counts defining (\ref{eq:maphlmtohl})}
	\label{figure:maphlmtohl}
\end{figure}
Analogously, one can define a map of families
\begin{equation}\label{eq:analyticmaphlmtohl}
h_{\tilde L}\otimes_{\cF(M)} \fM_P^M\to h_{\tilde L}^{an}
\end{equation}
for small $P\subset H^1(M,\bR)$ containing $0$. One still needs to check the convergence of series (\ref{eq:formulamaphlmtohl}) for small enough $P$. This can be shown using the same idea in Corollary \ref{cor:hlanconverges} and Note \ref{note:convergenceofmp}.

One can prove the following:
\begin{lem}\label{lem:hlconvmtohlsemicont}
The map (\ref{eq:analyticmaphlmtohl}) is a quasi-isomorphism of families for small $P$. Therefore, (\ref{eq:maphlmtohl}) is a quasi-isomorphism when restricted to a small domain $S_P$.
\end{lem}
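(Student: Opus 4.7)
The plan is to establish the result pointwise via Fukaya's trick, and then upgrade to a family-level statement. First I would observe that at $z=1$ everything is transparent: the formulas \eqref{eq:bimoduleunderlying}--\eqref{eq:bimodulestructure} reduce to the defining data of the diagonal bimodule $\cF(M)$, so $\fM^M|_{z=1}\simeq\cF(M)$; similarly $h_{\tilde L}^{an}|_{z=1}\simeq h_{\tilde L}$; and \eqref{eq:formulamaphlmtohl} reduces to the standard contraction $h_{\tilde L}\otimes_{\cF(M)}\cF(M)\to h_{\tilde L}$ from \Cref{note:nconvdiagisn}, which is a quasi-isomorphism.

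Next, for a nearby point $z=T^{v}z_0$ with $v$ small and $z_0\in H^1(M,U_\Lambda)$, I would apply Fukaya's trick exactly as in \Cref{lem:hlgeometricnearby}: choose diffeomorphisms $\psi_v$ supported near $\tilde L$ with $\psi_v(\tilde L)=\phi_v(\tilde L)$ and $\psi_v(L_i)=L_i$, and push forward the Floer data. The moduli spaces of discs relevant to the source $h_{\tilde L}\otimes_{\cF(M)}\fM_P^M|_z$, the target $h_{\tilde L}^{an}|_z$, and the map \eqref{eq:analyticmaphlmtohl} itself are all simultaneously identified with the moduli spaces at $z=1$ via composition with $\psi_v$. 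The energy identity \eqref{eq:energyidentity} then shows that the weight $T^{E(u)}z^{[\partial_{\bullet} u]}$ attached to each disc at parameter $z$ differs from the weight $T^{E(\psi_v\circ u)}\xi_{z_0}^{[\partial_{\bullet}u]}$ computed using the push-forward data only by the boundary terms $T^{g_v(\cdot)}$, which depend on the asymptotic generators alone. Rescaling generators on each side by the appropriate $T^{g_v(\cdot)}$ yields strict isomorphisms of the source with $h_{\tilde L}\otimes_{\cF(M)}\cF(M)$ (with the perturbed data on $\tilde L$) and of the target with $h_{\tilde L}$ (with the same data). Crucially, these rescalings must be chosen compatibly so that they intertwine the map \eqref{eq:analyticmaphlmtohl} with the standard contraction; this is where one uses that the path classes $[\partial_1 u]$, $[\partial_2 u]$, $[\partial_h u]$ appearing in the three definitions share the same boundary rescaling data on $\tilde L$. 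Once this compatibility is verified, the map at $z$ becomes the standard contraction, and pointwise quasi-isomorphism follows.

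Finally, to upgrade from pointwise to a quasi-isomorphism of families, I would form the cone $C$ of \eqref{eq:analyticmaphlmtohl}. Using \Cref{lem:hlconvhlequalshom}, \Cref{lem:perfconvperfisperf}, and \Cref{lem:properimpliesperfect} (noting that $\cF(M)$ is smooth and proper, and $h_{\tilde L}^{an}$ has fiberwise finite-dimensional cohomology after taking hom with each $L_i$), $C$ is a proper, and hence perfect, family over $S_P$. Proper, pointwise-acyclic families over a reduced affinoid base are acyclic as families by a standard semicontinuity argument (the support is closed, and vanishing at every point forces empty support); this is the same principle that will be used in \Cref{prop:semicontinuityoverbase} referenced in the excerpt. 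Applying this to $C$ gives the family-level quasi-isomorphism on $S_P$ for $P$ sufficiently small. The statement about \eqref{eq:maphlmtohl} restricted to $S_P$ is then immediate by functoriality of restriction.

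The main obstacle I anticipate is in the pointwise step: ensuring that the Fukaya trick produces a \emph{single} family of diffeomorphisms $\psi_v$ and compatible perturbation data that simultaneously identify the discs contributing to the source, target, and map, with rescaling constants that make the diagram strictly commute. This is the same technical issue flagged in \Cref{note:convergenceofmp}, complicated here by the presence of two sets of boundary labels meeting at $\tilde L$ (cf. \Cref{figure:maphlmtohl}). One must fix paths from a basepoint on $\tilde L$ (or use the global basepoint as in \Cref{note:globallocal1}) to each generator once and for all, and verify that the scaling constants $g_v$ on module inputs agree on both sides of the map.
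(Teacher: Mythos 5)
Your pointwise step contains a genuine gap. You propose to identify $h_{\tilde L}\otimes_{\cF(M)}\fM_P^M|_z$ with $h_{\tilde L}\otimes_{\cF(M)}\cF(M)$ (for the pushed-forward data) by composing with a single family $\psi_v$ supported near $\tilde L$ and rescaling generators. But $\psi_v$ fixes all the $L_i$, so it does not move any disc contributing to $\fM_P^M$, whose boundary lies entirely on the $L_i$. Those discs carry the weight $T^{E(u)}z^{[\partial_h u]}$, and the twist $z^{[\partial_h u]}$ \emph{cannot} be absorbed by a generator rescaling: a rescaling changes the weight only by a factor $c_x c_y^{-1}$ depending on the asymptotics $x,y$, whereas $[\partial_h u]\in H_1(M)$ genuinely depends on the disc $u$ (two strips with the same endpoints can have different $[\partial_h u]$). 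Moreover, for $z$ with $val_T(z)\neq 0$ the source is \emph{not} quasi-isomorphic to $h_{\tilde L}\otimes\cF(M)\simeq h_{\tilde L}$: it should be $h_{\phi_z(\tilde L)}$, which for a generically non-exact $\phi_z(\tilde L)$ differs from $h_{\tilde L}$. You flag a potential difficulty in ``choosing the rescaling constants compatibly,'' but the problem is not one of bookkeeping; the bimodule factor is simply untouched by $\psi_v$, and its non-unitary twist is not a rescaling phenomenon.

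The paper avoids this by never invoking Fukaya's trick for this lemma. On the unitary locus $S_0=H^1(M,U_\Lambda)$, where $val_T(z)=0$, the bimodule $\fM_P^M|_z$ is identified \emph{strictly} with the graph bimodule $\cM_\Xi={}_{\Phi_z}\widetilde{\cF}(M)$ of the auto-equivalence ``twist by the unitary local system $\xi_z$'' on a $\bZ$-equivariant enlargement $\widetilde{\cF}(M)$, and $\phi_z(\tilde L)$ is just $(\tilde L,\xi_z)$ with no geometric perturbation. The pointwise quasi-isomorphism at $z\in S_0$ is then a consequence of the formal identity $h_{\tilde L}\otimes\cM_\Xi\simeq h_{\Xi(\tilde L)}$ (a variant of \Cref{note:nconvdiagisn}/\Cref{lem:composedgraphbimodules}). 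Only \emph{after} establishing acyclicity of the cone on all of $S_0$ does the paper use the properness/perfectness and the affinoid semicontinuity statement \Cref{prop:semicontinuityoverbase} to extend acyclicity from $S_0$ to $S_{P'}$ for a small polytope $P'$. Your third step (perfectness of the cone and a semicontinuity argument) is the right ingredient, but the input you would feed it — pointwise acyclicity on all of $S_P$ — is exactly what you do not have, and the paper's strategy is designed to only require pointwise acyclicity on the unitary slice.
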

\begin{proof}
Consider the cone of (\ref{eq:analyticmaphlmtohl}). The cone is acyclic at $z=1$, as this map at $z=1$ is a well-known quasi-isomorphism from the convolution of a module with the diagonal bimodule to the module itself, which is given in Note \ref{note:nconvdiagisn}. One can similarly check that the cone vanishes for $z\in H^1(M,U_\Lambda)$. 

To see this, extend $\cF(M)$ to include pairs $(L_i,\xi_z^{\otimes k})$, where $L_i\in \cF(M)$ and $k\in\bZ$, as well as pairs $(\tilde L,\xi_z^{\otimes k})$ (more precisely, triples $(L_i,\xi_z^{\otimes k},k)$, resp. $(L,\xi_z^{\otimes k},k)$, i.e. one remembers $k$). Then, $\xi_z$ acts on this new set of objects freely by $(L_i,\xi_z^{\otimes k},k)\mapsto (L_i,\xi_z^{\otimes k+1},k+1)$, etc. By choosing Floer and perturbation data to be equivariant under this free action, one obtains a strictly $\bZ$-equivariant category $\widetilde{\cF}(M)$, i.e. $\{\xi_z^{\otimes k}\}$ act on $\widetilde{\cF}(M)$ by strict auto-equivalences. Note that when defining Floer cohomology with local coefficients, we use the base point on $M$, and the paths from this base point to the generators, as in Note \ref{note:localsystemsclass} (as opposed to choosing separate base points and paths on the Lagrangians). Denote the action of $\xi_z$ on $\widetilde{\cF}(M)$ by $\Xi$. Consider the bimodule corresponding to the action of $\Xi$, i.e. 
\begin{equation}
	(L',L'')\mapsto hom(L',\Xi(L''))\cong hom(\Xi^{-1}(L'),L'')
\end{equation}
Denote this bimodule by $\cM_\Xi$. Then for purely algebraic reasons, 
\begin{equation}\label{eq:hlxitohxil}
	h_{\tilde L}\otimes_{\widetilde{\cF}(M)} \cM_\Xi\simeq h_{\Xi(\tilde L)}
\end{equation}
More precisely, one can write a map 
\begin{equation}\label{eq:hlxitohxilexpanded}
	h_{\tilde L}\otimes_{\widetilde{\cF}(M)} \cM_\Xi=hom(\cdot,\tilde L)\otimes_{\widetilde{\cF}(M)} hom(\Xi^{-1}(\cdot),\cdot)\to hom(\Xi^{-1}(\cdot),\tilde L)
\end{equation}
using the same formula (\ref{eq:nconvdiagisn}) (and the analogous higher maps). That this is an equivalence can be proven in the same way, or deduced from Note \ref{note:nconvdiagisn} directly. It is clear that $hom(\Xi^{-1}(\cdot),\tilde L)\simeq h_{\Xi(\tilde L)}$. Moreover, as $\cF(M)$ generates $\widetilde{\cF}(M)$, the base of the convolution can be replaced by $\cF(M)$. The restriction of $\cM_\Xi$ to $\cF(M)$ is $\fM^M_P|_z$ and the restriction of $h_{\Xi(\tilde L)}$ to $\cF(M)$ is $h_{\tilde L}^{an}|_z$. Combining these we obtain
\begin{equation}
	h_{\tilde L}\otimes_{\cF(M)} \fM^M_P|_z\simeq 	h_{\tilde L}\otimes_{\cF(M)} \cM_\Xi\simeq h_{\Xi(\tilde L)}\simeq h_{\tilde L}^{an}|_z
\end{equation}
It is easy to check that this quasi-isomorphism is the same as (\ref{eq:analyticmaphlmtohl}) restricted to $z$. 
In other words, the cone of (\ref{eq:analyticmaphlmtohl}) vanishes at $z\in H^1(M,U_\Lambda)$.

As $\cF(M)$ is smooth and proper, the proper families $\fM_P^M$ and $h_{\tilde L}^{an}$ are perfect by Lemma \ref{lem:properimpliesperfect}. By Lemma \ref{lem:perfconvperfisperf}, one can conclude that the convolution $h_{\tilde L}\otimes_{\cF(M)} \fM_P^M$ and the cone of (\ref{eq:analyticmaphlmtohl}) are also perfect families. Therefore, by semi-continuity (i.e. by \Cref{prop:semicontinuityoverbase}), the cone vanishes in a small neighborhood of $H^1(M,U_\Lambda)$.
\end{proof}

\subsection{Group-like property}\label{subsec:grouplike}
To show that $\fM^M$ is group-like, we write a morphism 
\begin{equation}\label{eq:grouplikemap}
\pi_2^*\fM^M\otimes_{\cF(M)}^{rel} \pi_1^*\fM^M\to m^*\fM^M
\end{equation}
of families analogous to (\ref{eq:maphlmtohl}). The defining formula for the morphism is 
\begin{equation}\label{eq:grouplikeformula}
(x_1,\dots,x_e|m_2\otimes \dots\otimes  m_1|x_d',\dots,x_1')\mapsto \sum \pm T^{E(u)}z_1^{[\partial_1u]}z_2^{[\partial_2u]}.y
\end{equation}
Here, $z_1,z_2$ are coordinates corresponding to first and second component, $m_2\otimes \dots\otimes  m_1$ is a generator of  $\pi_2^*\fM^M\otimes_{\cF(M)}^{rel} \pi_1^*\fM^M$ and the sum ranges over pseudo-holomorphic discs as in Figure \ref{figure:grouplikeformula}. The classes $[\partial_1u]$ and $[\partial_2u]$ are defined analogously, namely by concatenating the $u$-image of paths from $m_1$, resp. $m_2$, to $y$ with the fixed paths from the base point. It is easy to check that this defines a morphism of families of bimodules. 
\begin{figure}\centering
	\includegraphics[height=4 cm]{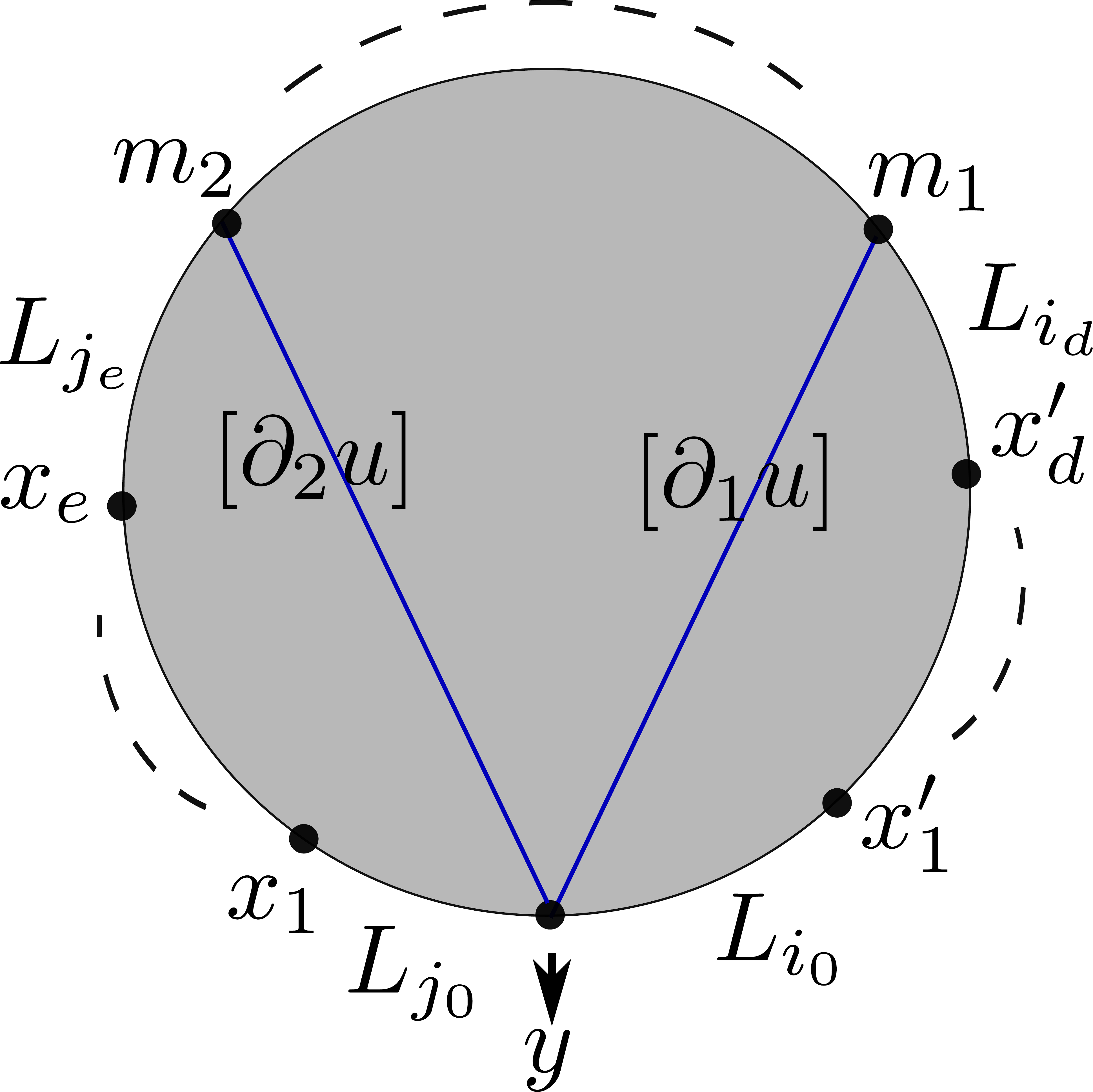}
	\caption{The disc counts defining (\ref{eq:grouplikemap}) }
	\label{figure:grouplikeformula}
\end{figure}
Our goal is to prove
\begin{prop}\label{prop:grouplike1}
	The map (\ref{eq:grouplikemap}) is a quasi-isomorphism; hence, $\fM^M$ is group-like.	
\end{prop}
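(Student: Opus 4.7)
The plan is to reduce the statement to a pointwise check by semi-continuity, and then identify the pointwise map with the quasi-isomorphism produced by Lemma \ref{lem:composedgraphbimodules} applied to strict auto-equivalences coming from local systems. The first step is to observe that both $\pi_2^*\fM^M \otimes_{\cF(M)}^{rel} \pi_1^*\fM^M$ and $m^*\fM^M$ are perfect families of bimodules over $\cF(M)$ parametrized by $H^1(M,\bG_m) \times H^1(M,\bG_m)$: the family $\fM^M$ is proper by construction (free of rank $\dim CF(L_i,L_j)$ on each pair), hence perfect by Lemma \ref{lem:properimpliesperfect} since $\cF(M)$ is smooth; the convolution is then perfect by Lemma \ref{lem:perfconvperfisperf} since $\cF(M)$ is proper. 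Standard semi-continuity (\Cref{prop:semicontinuityoverbase}) reduces the claim to checking that the cone of (\ref{eq:grouplikemap}) vanishes at every closed point $(z_1,z_2) \in H^1(M,\bG_m) \times H^1(M,\bG_m)$.

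To carry out the pointwise check, following the strategy of Lemma \ref{lem:hlconvmtohlsemicont}, I would extend $\cF(M)$ to the larger category $\widetilde{\cF}(M)$ whose objects are pairs $(L_i, \xi_z^{\otimes k})$ where $z \in H^1(M,\bG_m)$ and $k \in \bZ$; by Bohr-Sommerfeld monotonicity (\Cref{assump:monotoneexact}) the $A_\infty$-coefficients remain finite sums, so this makes sense also for non-unitary $\xi_z$. Choosing Floer and perturbation data equivariantly under $(L_i,\xi_z^{\otimes k}) \mapsto (L_i,\xi_z^{\otimes k+1})$ yields a strict auto-equivalence $\Xi_z$ of $\widetilde{\cF}(M)$ for each $z$; comparing formulas, the graph bimodule $_{\Xi_z}\widetilde{\cF}(M)$ restricted to $\cF(M) \subset \widetilde{\cF}(M)$ on both arguments is isomorphic to $\fM^M|_z$. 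Then Lemma \ref{lem:composedgraphbimodules} gives
\begin{equation}
{_{\Xi_{z_2}}\widetilde{\cF}(M)} \otimes_{\widetilde{\cF}(M)} {_{\Xi_{z_1}}\widetilde{\cF}(M)} \simeq {_{\Xi_{z_1}\circ \Xi_{z_2}}\widetilde{\cF}(M)} = {_{\Xi_{z_1 z_2}}\widetilde{\cF}(M)},
\end{equation}
and restricting both sides to $\cF(M)$ produces a quasi-isomorphism $\fM^M|_{z_2} \otimes_{\cF(M)} \fM^M|_{z_1} \xrightarrow{\simeq} \fM^M|_{z_1 z_2}$, using that $\{L_i\}$ split-generates $\widetilde{\cF}(M)$ so that convolution over $\widetilde{\cF}(M)$ and over $\cF(M)$ agree on bimodules restricted from $\widetilde{\cF}(M)$.

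The remaining task is to verify that the quasi-isomorphism produced by this algebraic argument agrees with the geometric map (\ref{eq:grouplikemap}) defined by the disc count of Figure \ref{figure:grouplikeformula}. Unwinding the explicit formula from the proof of Lemma \ref{lem:composedgraphbimodules} in the present setting, the composition first applies $\Xi_{z_2}$ to the inputs on the left, then applies $\Xi_{z_1}$, then composes with $\mu_{\widetilde{\cF}(M)}$; on matrix coefficients this contributes precisely a weight $\xi_{z_2}^{[\partial_2 u]} \xi_{z_1}^{[\partial_1 u]}$ from holonomies along the two homological boundary components of $u$ demarcated by the module entries $m_1, m_2$, which under the identification of $\xi_z$ with its holonomy class matches the factor $z_1^{[\partial_1 u]} z_2^{[\partial_2 u]}$ in (\ref{eq:grouplikeformula}). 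The first statement of Proposition \ref{prop:grouplike1} then follows, and $\fM^M|_{e} \simeq \cF(M)$ at $z = 1$ is a special case (also the $z_1 = z_2 = 1$ instance of the above), giving the group-like property.

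The main obstacle I expect is the last bookkeeping step: carefully aligning the choice of basepoints, paths from the basepoint to generators, and the conventions on Koszul signs so that the explicit quasi-isomorphism of Lemma \ref{lem:composedgraphbimodules} actually matches the geometric map, rather than just being homotopic through some further natural homotopy. A minor additional subtlety is ensuring that the restriction of bimodules from $\widetilde{\cF}(M)$ to $\cF(M)$ is compatible with convolution in the appropriate derived sense (so that the displayed equivalence descends), which is standard since $\cF(M)$ split-generates $\widetilde{\cF}(M)$, but should be spelled out for the reader.
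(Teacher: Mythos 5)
Your proposal follows essentially the same route as the paper's proof: you extend $\cF(M)$ to the local-system-twisted category $\widetilde{\cF}(M)$, note that for each $z$ tensoring by $\xi_z$ is a strict auto-equivalence whose graph bimodule restricts to $\fM^M|_z$, identify the pointwise restriction of \eqref{eq:grouplikemap} with the explicit quasi-isomorphism of Lemma \ref{lem:composedgraphbimodules}, and then promote pointwise acyclicity of the cone to global acyclicity using properness and perfectness of the families. The intermediate claims (split generation of $\widetilde{\cF}(M)$ by $\cF(M)$ so that convolution over either base agrees, the matching of holonomy weights $z_1^{[\partial_1 u]}z_2^{[\partial_2 u]}$ with the algebraic formula, the need to track basepoints and sign conventions) all line up with the paper.

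The one genuine gap is the reduction step: Proposition \ref{prop:semicontinuityoverbase} is not the right tool. That proposition is a statement about affinoid domains: its input is a finite-cohomology complex over $\Lambda\{z^V\}_P$ whose derived restriction at every $\Lambda$-point of the \emph{unitary locus} $S_0$ is acyclic, and its output is acyclicity over a smaller polytope $P'\ni 0$ — it neither accepts acyclicity at every closed point of the algebraic torus as input, nor delivers acyclicity over the whole of $H^1(M,\bG_m)\times H^1(M,\bG_m)$. The result you actually need is Lemma \ref{lem:pointwisevanishing}: a bounded complex of coherent sheaves over a smooth affine $\Lambda$-variety whose derived fibre at every closed point is acyclic is itself acyclic. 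To put yourself in a position to invoke it, you must first show that $cone\eqref{eq:extendedgrouplikemap}(L,L')$ has bounded, coherent cohomology for each pair $(L,L')$, which follows from the chain you already sketched — properness of $\pi_i^*\fM^M$, then perfectness via Lemma \ref{lem:properimpliesperfect}, then properness of the convolution via Lemma \ref{lem:perfconvperfisperf}. With that substitution made, your argument coincides with the one in the paper.
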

As $\cF(M)$ is spanned by Bohr-Sommerfeld monotone Lagrangians, we can expand it to contain pairs $(L,\xi_{L})$, where $L\in ob(\cF(M))$, and $\xi_L$ is a --possibly non-unitary-- $\Lambda^*$-local system of rank $1$. More precisely, one can define $A_\infty$-structure via the same formulae, i.e. by multiplying summand in (\ref{eq:fukdiff}) and (\ref{eq:fukstructure}) by $\xi_L^{[\partial_L]}$, for all $L$ to which boundary components of the holomorphic disc map. Without Assumption \ref{assump:monotoneexact}, these sums may no longer be well-defined; however, by Lemma \ref{lem:finitediscs}, our assumptions imply finiteness of these sums. As before, for convenience, we only consider local systems that extend to all $M$ (with a fixed extension), and instead of choosing a new basepoint for each Lagrangian, and paths from this basepoint to the Hamiltonian chords, we use the ones already chosen on $M$. 
\begin{rk}
Contrary to before, $h_{\tilde L}$ may not be well-defined for another compact Lagrangian $\tilde L$, as the convergence of defining formulae is not guaranteed.
\end{rk}
Denote this expanded $A_\infty$-category over $\Lambda$ by $\widetilde{\cF}(M)$ (we have used this notation previously with a similar, but different meaning). In particular, its set of objects are pairs $(L,\xi_L)$, where $\xi_L$ is a $\Lambda^*$-local system on \emph{$M$}. We assume the Floer and perturbation data does not depend on the local systems on Lagrangians. The family $\fM^M$ extends to a family-- still denoted by $\fM^M$-- over $\widetilde{\cF}(M)$ in the obvious way, and the same formulae \eqref{eq:grouplikeformula} defines a map 
\begin{equation}\label{eq:extendedgrouplikemap}
	\pi_2^*\fM^M\otimes_{\widetilde{\cF}(M)}^{rel} \pi_1^*\fM^M\to m^*\fM^M
\end{equation}
similar to \eqref{eq:grouplikemap}. 

$\Lambda^*$-local systems on $M$ correspond to $\Lambda$-points of $H^1(M,\bG_m)$. As extension of unitary case before, denote the rank $1$, $\Lambda^*$-local system on $M$ by $\xi_z$. By the assumption on Floer and perturbation data, for each $z$, one has a strict auto-equivalence $\Phi_z$ of $\widetilde{\cF}(M)$ that sends $(L,\xi_{L})$ to $(L,\xi_L\otimes\xi_z )$. This should not be confused with $\phi_z$, defined before, although our eventual goal is to show they act on the Fukaya category in the same way. It is immediate that the bimodule $\cB(\cdot, \Phi_z(\cdot))=_{\Phi_z}\cB$ is the same as $\fM^M|_z$ (generators identify and the structure maps are defined by the same count). Moreover, the map 
\begin{equation}
	\fM^M|_{z_2}\otimes_{\widetilde{\cF}(M)} \fM^M|_{z_1}\to \fM^M|_{z_1z_2}	
\end{equation}
obtained by restricting \eqref{eq:extendedgrouplikemap} coincide with the map \eqref{eq:psiphitocomposition} for $\Phi=\Phi_{z_1}$ and $\Psi=\Phi_{z_2}$. Indeed, after one identifies the generators, to apply \eqref{eq:psiphitocomposition} to a cochain 
\begin{equation}
	(x_1,\dots x_k|m\otimes x_1''\otimes\dots x_p''\otimes m'|  x_l',\dots, x_1')
\end{equation}
one first applies $\Psi=\Phi_{z_2}$ to $x_1,\dots x_k$. This does not change the corresponding chord, but modifies the structure maps that these are plugged into. Then one applies $\Phi=\Phi_{z_1}$ to $x_1,\dots x_k,m, x_1'',\dots, x_p''$ and the final effect is a count of discs as in \Cref{figure:grouplikeformula}, where $L_{j_0},\dots, L_{j_e}$ components are modified by $\xi_{z_2}$ and the components on the left of $m_1$ (i.e. those other than $L_{i_0},\dots, L_{i_d}$) are modified by $\xi_{z_1}$ ($L_{j_0},\dots, L_{j_e}$ components are modified twice; hence, modified by $\xi_{z_1z_2}$, but this is not relevant here). The relative homotopy class of the boundary in the counterclockwise direction from $m'$, resp. $m$, input to output is $[\partial_1u]$, resp. $[\partial_2u]$. Hence, the counts defining \eqref{eq:psiphitocomposition} and \eqref{eq:extendedgrouplikemap} agree in this specific case. 

Combining this with \Cref{lem:composedgraphbimodules}, we see that the restriction of \eqref{eq:extendedgrouplikemap} to $(z_1,z_2)$ is a quasi-isomorphism at every closed point $(z_1,z_2)$. 

To conclude the proof of Proposition \ref{prop:grouplike1}, we need the following lemma (c.f. Proposition \ref{prop:semicontinuityoverbase}):
\begin{lem}\label{lem:pointwisevanishing}
Let $C$ be a bounded complex of coherent sheaves over a smooth affine variety $V$ over $\Lambda$ such that for every closed point $x\in V$, $Li_x^*C$ is acyclic. Then $C$ is acyclic. 
\end{lem}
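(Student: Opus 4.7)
My plan is to argue by contradiction on the highest nonvanishing cohomology sheaf of $C$. Suppose some cohomology sheaf is nonzero, and let $N$ be the largest index with $\mathcal{H}^N(C) \neq 0$. Since $V$ is of finite type over the field $\Lambda$, its coordinate ring is Jacobson, so the nonempty support of the coherent sheaf $\mathcal{H}^N(C)$ must contain a closed point $x$; Nakayama's lemma at the maximal ideal $\mathfrak{m}_x$ then implies that the ordinary fiber $\mathcal{H}^N(C)\otimes_{\cO_V} k(x)$ is nonzero. The goal is to identify this nonzero fiber with $H^N(Li_x^*C)$, producing the required contradiction with the hypothesis $Li_x^*C\simeq 0$.

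To make this identification, I would apply $Li_x^*$ to the canonical truncation triangle
\[
\tau^{\leq N-1} C \longrightarrow C \longrightarrow \mathcal{H}^N(C)[-N] \longrightarrow .
\]
Since $V$ is smooth over the field $\Lambda$, its coordinate ring is regular Noetherian of finite Krull dimension, hence of finite global dimension, so the residue field $k(x)$ admits a bounded projective resolution; in particular $Li_x^*$ has bounded Tor-amplitude and preserves upper bounds on cohomological degree. Therefore $Li_x^*\tau^{\leq N-1}C$ has cohomology only in degrees $\leq N-1$, and the long exact sequence in degrees $N$ and $N+1$ gives
\[
H^N(Li_x^*C) \;\cong\; H^N\!\bigl(Li_x^*\mathcal{H}^N(C)[-N]\bigr) \;=\; \mathcal{H}^N(C)\otimes_{\cO_V} k(x) \;\neq\; 0,
\]
contradicting $Li_x^*C\simeq 0$.

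The argument is essentially formal, and the only input one needs to be careful about is the Tor-amplitude bound, which is immediate from smoothness of $V$ over a field. A parallel formulation would be to use smoothness to first replace $C$ globally by a bounded complex of finitely generated projective $\cO(V)$-modules, after which $Li_x^*C$ is just the underived fiber $C\otimes_{\cO(V)} k(x)$, and one concludes by a minimal-model/Nakayama argument over the regular local ring $\cO(V)_{\mathfrak{m}_x}$. I do not expect a serious obstacle in either route; the only subtlety is correctly invoking the finite global dimension of $\cO(V)$ so that derived pullback does not create cohomology in degrees above $N-1$.
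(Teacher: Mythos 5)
Your proof is correct. It takes essentially the same underlying idea as the paper's — isolate the contribution of the top nonvanishing cohomology sheaf to $Li_x^*C$, identify it with the ordinary fiber $\mathcal H^N(C)\otimes k(x)$, and contradict acyclicity via Nakayama — but packages it differently. The paper fixes a closed point $x$, runs the hyper-Tor spectral sequence $L_p i_x^*\mathcal H^{-q}(C)\Rightarrow \mathcal H^{-p-q}(Li_x^*C)=0$, and performs a corner argument at the minimal $q_0$ with $i_x^*\mathcal H^{-q_0}\neq 0$; you instead fix the global top degree $N$ first, choose a closed point in the support of $\mathcal H^N(C)$, and read off $H^N(Li_x^*C)\cong \mathcal H^N(C)\otimes k(x)$ from the canonical truncation triangle. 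The two routes are interchangeable, and the truncation-triangle version is arguably a bit cleaner since it avoids any spectral sequence bookkeeping. One small remark: the Tor-amplitude bound you invoke from smoothness is more than you need. That $Li_x^*$ does not increase cohomological degree (equivalently, $H^j(Li_x^*\tau^{\leq N-1}C)=0$ for $j\geq N$) is automatic from $Li_x^*$ being the left derived functor of a right-exact functor, with no regularity hypothesis; smoothness would only be needed to bound the Tor-amplitude from below, which your argument never uses. The parallel formulation you sketch at the end — first replace $C$ by a bounded complex of finite projectives using regularity — is the route the paper prefers in neighboring semi-continuity arguments (compare \Cref{lem:projectiveresolution} and \Cref{prop:semicontinuityoverbase}), but for this particular lemma the direct derived-category argument is perfectly adequate.
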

\begin{proof}
Denote the hypercohomology groups of $C$ by $\cH^q$. We show $\cH^q=0$ for all $q$. First, given a coherent sheaf $\scrF$ on $V$ (i.e. a module over $\cO(V)$), if the ordinary restriction $i^*_x\scrF=0$ for a closed point $x$, then $Li_x^*\scrF=0$. To see this, first observe by Nakayama's lemma $i^*_x\scrF=0$ implies $\scrF$ is zero on the local ring of $x$ (or on a small neighborhood $U$ of $x$). This clearly implies $Li_x^*\scrF=0$. 

Given closed point, there is a spectral sequence with $E_2$-page $L_pi_x^*\cH^{-q}\Rightarrow L_{p+q}i_x^*C=\cH^{-p-q}(Li_x^*C)=0$, with differentials $L_pi_x^*\cH^{-q}\to L_{p-2}i_x^*\cH^{-q-1}$. By the observation above, given $q$, if $i_x^*\cH^{-q}=L_0i_x^*\cH^{-q}=0$, then $L_pi_x^*\cH^{-q}=0$ for all $p$. Assume $L_0i_x^*\cH^{-q_0}\neq 0$, and $q_0$ is the minimal such. Then, $E_2$-page is $0$ below the line $q=q_0$, and $L_0i_x^*\cH^{-q_0}$ survives to infinity page. Since, the spectral sequence converges to $0$, $i_x^*\cH^{-q_0}=L_0i_x^*\cH^{-q_0}=0$, which is a contradiction. Therefore, $L_0i_x^*\cH^{-q}=0$ for all $q$ (and for all $x$ as $x$ was arbitrary).

A coherent sheaf that vanishes at every closed point is $0$. 
Hence, $\cH^q=0$ for all $q$.
\end{proof}
\begin{proof}[Proof of Proposition \ref{prop:grouplike1}]
$\widetilde{\cF}(M)$ is a proper category expanding the smooth category $\cF(M)$. As any proper module over a smooth category is perfect (this is a simpler version of Lemma \ref{lem:properimpliesperfect}), Yoneda modules of objects of $\widetilde{\cF}(M)$ are perfect over $\cF(M)$. In particular, $\widetilde{\cF}(M)$ is split generated by $\cF(M)$, and 
\begin{equation}
	\pi_2^*\fM^M\otimes_{\cF(M)}^{rel} \pi_1^*\fM^M\simeq \pi_2^*\fM^M\otimes_{\widetilde{\cF}(M)}^{rel} \pi_1^*\fM^M
\end{equation}
Moreover, under this identification, the maps \eqref{eq:grouplikemap} and \eqref{eq:extendedgrouplikemap} coincide, and $cone(\ref{eq:grouplikemap})\simeq cone(\ref{eq:extendedgrouplikemap})$ (as families of $\cF(M)$-bimodules, or as families of $\widetilde{\cF}(M)$-bimodules if one considers $cone(\ref{eq:grouplikemap})$ as a family of $\widetilde{\cF}(M)$-bimodules). 

As observed above, 	\eqref{eq:extendedgrouplikemap} is a quasi-isomorphism at every closed point $(z_1,z_2)$; hence, $cone(\ref{eq:extendedgrouplikemap})|_{(z_1,z_2)}\simeq 0$. The families $\pi_1^*\fM^M$ and $\pi_2^*\fM^M$ are proper; hence, by Lemma \ref{lem:properimpliesperfect}, they are perfect. This implies $\pi_2^*\fM^M\otimes_{\cF(M)}^{rel} \pi_1^*\fM^M$ is proper by Lemma \ref{lem:perfconvperfisperf}. Therefore,  $cone(\ref{eq:extendedgrouplikemap})$ is proper, and $cone(\ref{eq:extendedgrouplikemap})(L,L')$ has coherent, bounded cohomology over $H^1(M,\bG_m)\times H^1(M,\bG_m)$, for any pair $(L,L')$ of objects. In particular, $cone(\ref{eq:extendedgrouplikemap})(L,L')$ is quasi-isomorphic to a bounded complex of coherent sheaves $C$, and by Lemma \ref{lem:pointwisevanishing}, it is acyclic. Since this holds for every $(L,L')$, $cone(\ref{eq:extendedgrouplikemap})\simeq 0$. This concludes the proof.
\end{proof}
We will combine Lemma \ref{lem:hlgeometricnearby}, Lemma \ref{lem:hlconvmtohlsemicont} and Proposition \ref{prop:grouplike1}, to conclude that the family $\fM^M$ is ``geometric'', i.e. $\fM^M|_z$ represents $\phi_z$.

\subsection{The adjustments for the Weinstein case}\label{subsec:exactcase}
In this section, we explain how to extend the results of the previous chapter to exact symplectic manifolds. We will restrict ourselves to Weinstein manifolds, although the results presumably generalize to non-degenerate Liouville manifolds with slightly different techniques. We need to explain how to construct the families, and establish \Cref{lem:hlgeometricnearby}, \Cref{lem:hlconvmtohlsemicont} and \Cref{prop:grouplike1} in this setting. Throughout this section, assume $M$ is Weinstein. 
Therefore, $\cW(M)$ is a smooth category, as stated. Also, throughout this section, let $G$ denote the algebraic group $H^1(M,\bG_m)=Spec(\Lambda[z^{H_1(M)}])$ (where $H_1(M)$ means first homology modulo torsion). 

The family $\fM^M$ can be constructed as before. More precisely, one can define the family $\fM^M$ exactly in the same way on $\cO(M,\sigma)$. Then, if $C$ denotes the set of continuation elements, then the right localization $\fM^M_{C^{-1}}$ (or the left localization $_{C^{-1}}\fM^M$) descends to a family of bimodules over $\cW(M,\sigma)$. This can be shown using the techniques of \cite{GPS1}. One can also define $\fM^M$ in the same way for the models of $\cW(M)$ given in \cite{abousei,generation}. Similarly, given compact, tautologically unobstructed Lagrangian brane $\tilde L$, one can define $h_{\tilde L}^{an}$ exactly as before. Analogous to \Cref{lem:hlgeometricnearby} we have 
\begin{lem}\label{lem:hlgeometricnearbywrapped}
For small $P\subset H^1(M,\bR)$, $h_{\tilde L}^{an}$ is well-defined and given $z\in S_P$, $h_{\tilde L}^{an}|_z\simeq h_{\phi_z(\tilde L)}$.
\end{lem}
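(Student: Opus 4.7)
The plan is to adapt the proof of \Cref{lem:hlgeometricnearby} to the wrapped setting, working either on the localization model or on the directed category $\cO(M,\sigma)$ of \cite{GPS1}. The key observation is that $\tilde L$ is compact, so all of Fukaya's trick can be carried out in a compact region of $M$, leaving the cylindrical ends (where the non-compact generators of $\cW(M)$ live) completely untouched.

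First, I would fix a small polytope $P \subset H^1(M,\bR)$ containing $0$ and choose a smoothly varying family of diffeomorphisms $\{\psi_v : v \in P\}$ with compact support in a neighborhood $U$ of $\tilde L$, such that $\psi_v(\tilde L) = \phi_v(\tilde L)$ (the $\phi_v$ being time-$1$ flows of a fixed linear-in-$v$ family of closed $1$-forms representing classes in $H^1(M,\bR)$). Since $U$ is compact and the non-compact Lagrangian generators of $\cW(M)$ are cylindrical at infinity, by choosing $U$ appropriately we may assume that inside $U$ the generators $L_i$ are pushed setwise to themselves by $\psi_v$ (after an initial small Hamiltonian perturbation to ensure transversality with $\tilde L$). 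Push forward all the Floer and perturbation data for tuples ending in $\tilde L$ under $\psi_v$; the data for pairs not involving $\tilde L$ is unchanged.

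Next I would verify convergence of $h_{\tilde L}^{an}|_P$ for small enough $P$. This uses the same energy identity $E(\psi_v \circ u) = E(u) + v([\partial_2 u]) - g_v(x) + g_v(y)$ as in the compact case, whose derivation is purely local near $\tilde L$ (via Stokes' theorem on the boundary component mapping to $\tilde L$) and therefore applies verbatim here. The rescaling of generators $x \mapsto T^{g_v(x)}x$ then matches the structure maps of $h_{\phi_z(\tilde L)}$ with those of $h_{\tilde L}^{an}|_{z = T^v z_0}$. Since $\tilde L$ is compact and compact modules over $\cO(M,\sigma)$ are local with respect to continuation elements (they vanish on Lagrangians pushed far into the cylindrical end, and continuation elements act invertibly on them), the identification descends from $\cO(M,\sigma)$ to $\cW(M,\sigma)$, and further to $\cW(M)$ via the stop removal quotient.

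The main obstacle I anticipate is bookkeeping around the localization at continuation elements: one must verify that the isomorphism of modules built by Fukaya's trick is compatible with continuation maps, so that it genuinely descends to the localized wrapped category rather than only to the pre-localization $\cO(M,\sigma)$. This should follow because the continuation elements live in hom-spaces between the (non-compact) generators $L_i$, which are untouched by $\psi_v$, so the continuation elements themselves are identified under the Fukaya trick. An alternative route, which may be technically cleaner, is to work with a cofinal wrapping of $\tilde L$ only inside the compact region $U$ and invoke invariance of wrapped Floer cohomology under compactly supported symplectic isotopies to bypass localization issues altogether; I expect this is the cleanest way to finish the argument.
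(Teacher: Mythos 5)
Your proposal matches the paper's approach: the paper simply states that the proof of \Cref{lem:hlgeometricnearby} applies verbatim, relying on the fact that $\tilde L$ is compact so Fukaya's trick takes place in a compact region, and that the descent of $h_{\tilde L}^{an}$ along localization and stop removal was already established in the preliminaries. Your worry about compatibility with continuation elements is handled automatically: after rescaling generators, Fukaya's trick gives a strict isomorphism of modules over $\cO(M,\sigma)$, and any such isomorphism between two local modules descends to the localized category.
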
 
Hence, $h_{\tilde L}^{an}$ is an analytic family of right modules parametrized by $S_P$. The proof of \Cref{lem:hlgeometricnearby} applies verbatim, we include a separate statement for cross referencing purposes.

The main obstruction to establish \Cref{lem:hlconvmtohlsemicont} and \Cref{prop:grouplike1} is that the family $\fM^M$ is not necessarily proper; therefore, we cannot apply \Cref{lem:properimpliesperfect} to show perfectness. Moreover, one needs the properness of both sides of \eqref{eq:grouplikemap} to apply \Cref{lem:pointwisevanishing} to conclude \eqref{eq:grouplikemap} is a quasi-isomorphism. This does not hold. Indeed, in the presence of a stop $\fstop$ such that $\cW(M,\fstop)$ is proper, the proofs of \Cref{lem:hlconvmtohlsemicont} and \Cref{prop:grouplike1} works verbatim. 

We use this fact to deduce the same result on $\cW(M)$ as follows: by \cite{girouxpardon}, one can endow $M$ with the structure of a Lefschetz fibration. Let $\fstop$ denote the corresponding fiber (pushed to infinity, so it is a stop). In particular, $\cW(M,\fstop)$ is smooth and proper and there exists a quotient map $\cW(M,\fstop)\to \cW(M,\fstop)/\cD\simeq \cW(M)$ (this is an example of smooth categorical compactification as in \cite{efimovhfp}, also see \cite{ownwithcotefiltrationgrowth}). Here, $\cD$ can be taken to be a finite collection of Lagrangian discs (the linking discs of the core of $\fstop$). 
There exists an algebraic family $\fM^M_{\cW(M,\fstop)}$ of bimodules over $\cW(M,\fstop)$, defined exactly in the same way, and an analytic family $h_{\tilde L}^{an}$. Note that the family $h_{\tilde L}^{an}$ over $\cW(M,\fstop)$ can be obtained from the family $h_{\tilde L}^{an}$ over $\cW(M)$ by restriction along the stop removal functor $\cW(M,\fstop)\to\cW(M)$. This is true as $\tilde L$ is compact. In particular, $h_{\tilde L}^{an}(D)$ is acyclic for all $D\in \cD$.

Observe $\fM^M_{\cW(M,\fstop)}$ can seen as the deformation of the diagonal bimodule obtained by endowing one Lagrangian with ``the universal rank-$1$'' local system (these local systems are parametrized by $G$). Consider the right quotient $\fM^M_{\cW(M,\fstop)}/\cD$, which is a family of $\cW(M,\fstop)$-$\cW(M)$-bimodules. See \cite{GPS1} for a review of quotients and localization (and note the slight change in composition conventions). Similarly, this family is the deformation of $\cW(M,\fstop)/\cD$ (as a $\cW(M,\fstop)$-$\cW(M)$-bimodule) obtained by endowing one input with the universal rank-$1$ local system on $M$. By definition of localization, $(\cW(M,\fstop)/\cD)(D,\cdot)$ and $(\fM^M_{\cW(M,\fstop)}/\cD)(D,\cdot)$ are acyclic for every $D\in\cD$, and $\fM^M_{\cW(M,\fstop)}/\cD$ can be identified with $\fM^M$. The latter is a priori a family of $\cW(M)$-$\cW(M)$-bimodules, but in the last sentence, it is considered as a family of $\cW(M,\fstop)$-$\cW(M)$-bimodules. It follows that $(\cW(M,\fstop)/\cD)(\cdot,D)$ and its deformation $(\fM^M_{\cW(M,\fstop)}/\cD)(\cdot, D)$ are acyclic too (for each $D\in\cD$). In particular, they both descend to $\cW(M,\fstop)/\cD$-$\cW(M,\fstop)/\cD$-bimodules. 

Similarly, $\cD\backslash\fM^M_{\cW(M,\fstop)}$ descends to a family of $\cW(M,\fstop)/\cD$-$\cW(M,\fstop)/\cD$-bimodules. Therefore, one has 
\begin{equation}
\cD\backslash\fM^M_{\cW(M,\fstop)}\simeq \cD\backslash\fM^M_{\cW(M,\fstop)}/\cD\simeq \fM^M_{\cW(M,\fstop)}/\cD
\end{equation}
and this is also quasi-isomorphic to $\fM^M$ as remarked. Now we are ready to prove
\begin{lem}\label{lem:hlconvmtohlsemicontwrapped}
There exists $P\subset H^1(M,\bR)$ such that $h_{\tilde L}\otimes_{\cW(M)}\fM^M|_{S_P}\simeq h_{\tilde L}^{an}$.
\end{lem}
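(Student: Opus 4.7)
The plan is to reduce the claim to the proper setting of $\cW(M,\fstop)$, where the argument of \Cref{lem:hlconvmtohlsemicont} applies verbatim, and then descend along the stop removal quotient $\cW(M,\fstop)\to\cW(M,\fstop)/\cD\simeq\cW(M)$.

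First, on $\cW(M,\fstop)$ I would construct, by the same formula as in \eqref{eq:formulamaphlmtohl}, an analytic morphism
\begin{equation}\label{eq:wmfstopversion}
h_{\tilde L}\otimes_{\cW(M,\fstop)}\fM^M_{\cW(M,\fstop)}\big|_{S_P}\longrightarrow h_{\tilde L}^{an}
\end{equation}
of families of right $\cW(M,\fstop)$-modules, where $h_{\tilde L}^{an}$ here is the family over $\cW(M,\fstop)$ (equivalently, the restriction along stop removal of the family over $\cW(M)$, which makes sense because $\tilde L$ is compact so $h_{\tilde L}^{an}(D)\simeq 0$ for $D\in\cD$). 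Convergence for $P$ a sufficiently small neighborhood of $0$ follows as in Corollary \ref{cor:hlanconverges} and Note \ref{note:convergenceofmp}.

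Next, I would repeat the proof of \Cref{lem:hlconvmtohlsemicont} over $\cW(M,\fstop)$: since $\cW(M,\fstop)$ is both smooth and proper, the proper families $\fM^M_{\cW(M,\fstop)}$ and $h_{\tilde L}^{an}$ are perfect by \Cref{lem:properimpliesperfect}, their convolution is perfect by \Cref{lem:perfconvperfisperf}, so the cone of \eqref{eq:wmfstopversion} is a perfect family, in particular proper. Pointwise vanishing at $z\in H^1(M,U_\Lambda)$ is checked exactly as before, by extending to a category with $\Lambda^\ast$-local systems on $M$ and identifying both sides with the Yoneda module of a twisted brane. Semicontinuity (\Cref{prop:semicontinuityoverbase}) then yields that \eqref{eq:wmfstopversion} is a quasi-isomorphism for a possibly smaller $P$.

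Finally, I would descend. Because $h_{\tilde L}$ is a $\cD$-local right module (it vanishes on each $D\in\cD$), convolution with it over $\cW(M,\fstop)$ factors through the left localization by $\cD$. Concretely,
\begin{equation}
h_{\tilde L}\otimes_{\cW(M,\fstop)}\fM^M_{\cW(M,\fstop)}\;\simeq\;h_{\tilde L}\otimes_{\cW(M)}\bigl(\cD\backslash\fM^M_{\cW(M,\fstop)}\bigr)\;\simeq\;h_{\tilde L}\otimes_{\cW(M)}\fM^M,
\end{equation}
where the second identification uses that $\cD\backslash\fM^M_{\cW(M,\fstop)}$ is also acyclic on $\cD$ in the right argument, as noted in the paragraph preceding the lemma, and hence descends to $\fM^M$ on both sides. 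Verifying that the map \eqref{eq:wmfstopversion} is identified under this descent with the map $h_{\tilde L}\otimes_{\cW(M)}\fM^M|_{S_P}\to h_{\tilde L}^{an}$ (built over $\cW(M)$) is immediate from the fact that the defining disc count \eqref{eq:formulamaphlmtohl} does not see whether one works before or after stop removal, as long as all module inputs are compact.

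The main obstacle will be the bookkeeping in the descent step: one must be careful that the quasi-isomorphism of families over $S_P$ obtained at the level of $\cW(M,\fstop)$ transports to a quasi-isomorphism of families over $\cW(M)$, and that all intermediate convolutions (in particular $\cD\backslash\fM^M_{\cW(M,\fstop)}$ viewed as a family of $\cW(M)$-$\cW(M,\fstop)$-bimodules) genuinely descend rather than only do so pointwise. This is essentially a Morita-invariance statement for the bar resolution applied to a local module, and is the reason for introducing the Lefschetz fibration stop in the first place, replacing the missing properness of $\cW(M)$ with the properness of $\cW(M,\fstop)$.
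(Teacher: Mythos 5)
Your plan is correct and follows essentially the same route as the paper: establish the quasi-isomorphism over the proper category $\cW(M,\fstop)$ (where \Cref{lem:hlconvmtohlsemicont} applies verbatim), then descend to $\cW(M)$ using that $h_{\tilde L}$ and $h_{\tilde L}^{an}$ are acyclic on the linking discs $\cD$ and that $\cD\backslash\fM^M_{\cW(M,\fstop)}\simeq\cD\backslash\fM^M_{\cW(M,\fstop)}/\cD\simeq\fM^M$. The Morita-invariance you flag at the end as ``the main obstacle'' is exactly what the paper dispatches with \cite[Lemma 3.15]{GPS1}, after first taking the right quotient of the convolution so that both factors are $\cD$-local; your presentation instead folds this into the observation that convolution with a $\cD$-local right module factors through the left localization, which amounts to the same associativity argument.
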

\begin{proof}
As remarked, this statement holds over $\cW(M,\fstop)$, i.e. $h_{\tilde L}\otimes_{\cW(M,\fstop)}\fM^M_{\cW(M,\fstop)}|_{S_P}\simeq h_{\tilde L}^{an}$. Take the right quotient by $\cD$ on both sides. The left hand side becomes $h_{\tilde L}\otimes_{\cW(M,\fstop)}(\fM^M_{\cW(M,\fstop)}/\cD)|_{S_P}$. By remarks above $(\fM^M_{\cW(M,\fstop)}/\cD)|_{S_P}\simeq (\cD\backslash \fM^M_{\cW(M,\fstop)}/\cD)|_{S_P}$ is a family of $\cW(M,\fstop)/\cD$-$\cW(M,\fstop)/\cD$-bimodules, and similarly for $h_{\tilde L}$. Hence, by \cite[Lemma 3.15]{GPS1},
\begin{equation}
h_{\tilde L}\otimes_{\cW(M,\fstop)}(\fM^M_{\cW(M,\fstop)}/\cD)|_{S_P}\simeq h_{\tilde L}\otimes_{(\cW(M,\fstop)/\cD)}(\fM^M_{\cW(M,\fstop)}/\cD)|_{S_P}\simeq h_{\tilde L}\otimes_{\cW(M)}\fM^M|_{S_P}
\end{equation}
Similarly, the right hand side becomes $h_{\tilde L}^{an}/\cD$. On the other hand, $h_{\tilde L}^{an}$ descend to the same named family over $\cW(M)$; therefore, $h_{\tilde L}^{an}\simeq h_{\tilde L}^{an}/\cD$. This finishes the proof.   
\end{proof}
Using similar methods, we have:
\begin{prop}\label{prop:grouplikewrapped}
The family $\fM^M$ over $\cW(M)$ is group-like. 
\end{prop}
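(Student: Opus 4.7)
The proof will follow the same two-stage pattern as \Cref{lem:hlconvmtohlsemicontwrapped}: first establish the group-like property for $\fM^M_{\cW(M,\fstop)}$ upstairs, then descend to $\cW(M)$ via the quotient by $\cD$. For the upstairs step, I would define the morphism
\[
\pi_2^*\fM^M_{\cW(M,\fstop)}\otimes^{rel}_{\cW(M,\fstop)} \pi_1^*\fM^M_{\cW(M,\fstop)}\to m^*\fM^M_{\cW(M,\fstop)}
\]
by the same formula \eqref{eq:grouplikeformula} and then repeat the proof of \Cref{prop:grouplike1} verbatim. Namely, extend $\cW(M,\fstop)$ by pairs $(L,\xi_L)$ with $\xi_L$ a $\Lambda^*$-local system on $M$ to obtain a category $\widetilde{\cW}(M,\fstop)$. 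Finiteness of the twisted disc counts here reduces to the exactness of the thimble generators together with Gromov compactness applied to discs with exact boundary conditions, so $\widetilde{\cW}(M,\fstop)$ is well-defined. As in \Cref{prop:grouplike1}, the restriction of the extended morphism to any closed point $(z_1,z_2)\in G\times G$ coincides with the composition quasi-isomorphism \eqref{eq:psiphitocomposition} of \Cref{lem:composedgraphbimodules} for the corresponding twist autoequivalences, so it is a pointwise quasi-isomorphism.

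Since $\cW(M,\fstop)$ is smooth and proper, both $\pi_i^*\fM^M_{\cW(M,\fstop)}$ are proper, hence perfect by \Cref{lem:properimpliesperfect}; by \Cref{lem:perfconvperfisperf} their convolution is also proper. The cone of the morphism above is therefore a family with bounded coherent cohomology over $G\times G$ that vanishes at every closed point, so \Cref{lem:pointwisevanishing} forces it to vanish, proving that $\fM^M_{\cW(M,\fstop)}$ is group-like over $\cW(M,\fstop)$.

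For the descent step I would take right (and then left) quotients by $\cD$ of both sides of the upstairs equivalence. The right-hand side becomes $m^*(\fM^M_{\cW(M,\fstop)}/\cD)\simeq m^*\fM^M$ since $\fM^M_{\cW(M,\fstop)}/\cD\simeq \fM^M$ as bimodules, as discussed preceding \Cref{lem:hlconvmtohlsemicontwrapped}. For the left-hand side, I would apply the analogue of \cite[Lemma 3.15]{GPS1} used in \Cref{lem:hlconvmtohlsemicontwrapped}: taking the right quotient by $\cD$ of the outer factor $\pi_2^*\fM^M_{\cW(M,\fstop)}$ yields $\pi_2^*\fM^M$, which is local on both sides with respect to $\cD$; similarly, taking the left quotient of the inner factor $\pi_1^*\fM^M_{\cW(M,\fstop)}$ yields $\pi_1^*\fM^M$; and because these factors are local, the convolution can be taken over $\cW(M)$ rather than $\cW(M,\fstop)$. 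Combining these identifications gives
\[
\pi_2^*\fM^M\otimes^{rel}_{\cW(M)} \pi_1^*\fM^M\simeq m^*\fM^M,
\]
which is the group-like property; together with the evident identification $\fM^M|_{e_G}\simeq \cW(M)$ (inherited from the upstairs statement after descent), this finishes the proof.

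The main obstacle I expect is the careful bookkeeping in the descent step: one has to verify that the morphism \eqref{eq:grouplikemap} defined downstairs on $\cW(M)$ by the disc count \eqref{eq:grouplikeformula} coincides, under the identification $\fM^M_{\cW(M,\fstop)}/\cD\simeq \fM^M$, with the quotient of the upstairs morphism. This is essentially naturality for the localization functor on the bar-resolved convolution, but the asymmetry of the quotient (right on one factor, left on the other) means one has to match up the two quotients via a zig-zag through $\cD\backslash\fM^M_{\cW(M,\fstop)}/\cD$; the acyclicity statements for $\fM^M_{\cW(M,\fstop)}(\cdot,D)$ and $\fM^M_{\cW(M,\fstop)}(D,\cdot)$ noted before \Cref{lem:hlconvmtohlsemicontwrapped} are exactly what is needed to bridge this zig-zag.
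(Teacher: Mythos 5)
Your proposal matches the paper's proof in both structure and content: establish the group-like property over $\cW(M,\fstop)$ by running Proposition \ref{prop:grouplike1} verbatim (properness of the Fukaya--Seidel category makes the semicontinuity argument via \Cref{lem:pointwisevanishing} apply), then descend to $\cW(M)$ by quotienting both sides by $\cD$ and invoking \cite[Lemma 3.15]{GPS1}. The worry you flag at the end — reconciling the left and right quotients — is handled exactly as you guess, via the observation that $\fM^M_{\cW(M,\fstop)}/\cD$ is already acyclic on discs in both slots, so $\cD\backslash\fM^M_{\cW(M,\fstop)}\simeq \cD\backslash\fM^M_{\cW(M,\fstop)}/\cD\simeq \fM^M_{\cW(M,\fstop)}/\cD\simeq \fM^M$, making the slight left/right bookkeeping mismatch in your descent step harmless.
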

\begin{proof}
As remarked above, the proof of \Cref{prop:grouplike1} works verbatim and one has 
\begin{equation}
	\pi_2^*\fM^M_{\cW(M,\fstop)}\otimes_{\cW(M,\fstop)}^{rel} \pi_1^*\fM^M_{\cW(M,\fstop)}\simeq m^*\fM^M_{\cW(M,\fstop)}
\end{equation}
where the quasi-isomorphism is defined as in \eqref{eq:grouplikemap}. Take the quotient by $\cD$ on both sides. The left hand side becomes 
$
		\pi_2^*(\cD\backslash\fM^M_{\cW(M,\fstop)})\otimes_{\cW(M,\fstop)}^{rel} \pi_1^*(\fM^M_{\cW(M,\fstop)}/\cD)
$. As above, both sides are bimodules over $\cW(M,\fstop)/\cD$, and one can apply \cite[Lemma 3.15]{GPS1} to show 
\begin{equation}
			\pi_2^*(\cD\backslash\fM^M_{\cW(M,\fstop)})\otimes_{\cW(M,\fstop)}^{rel} \pi_1^*(\fM^M_{\cW(M,\fstop)}/\cD)\simeq 		\pi_2^*(\cD\backslash\fM^M_{\cW(M,\fstop)})\otimes_{(\cW(M,\fstop)/\cD)}^{rel} \pi_1^*(\fM^M_{\cW(M,\fstop)}/\cD)\simeq \atop
					\pi_2^*\fM^M\otimes_{\cW(M)}^{rel} \pi_1^*\fM^M
\end{equation}
Similarly, the right hand side becomes $m^*(\cD\backslash \fM^M_{\cW(M,\fstop)}/\cD)\simeq m^*\fM^M$. Therefore, 
\begin{equation}
						\pi_2^*\fM^M\otimes_{\cW(M)}^{rel} \pi_1^*\fM^M\simeq m^*\fM^M
\end{equation}
This finishes the proof. 
\end{proof}
\Cref{prop:grouplikewrapped}, together with \Cref{lem:grouplikeimpliesperfect} imply:
\begin{cor}\label{lem:perfectintheexactcase}
The family $\fM^M$ is perfect.
\end{cor}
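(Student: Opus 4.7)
The plan is to deduce the corollary directly from the two inputs already at hand: the smoothness of $\cW(M)$ (recalled in the background section as a consequence of non-degeneracy of a Weinstein $M$ and \cite{sheelthesis}) and the group-like property of $\fM^M$ established in \Cref{prop:grouplikewrapped}. Once these are combined via \Cref{lem:grouplikeimpliesperfect}, perfectness follows with no further work specific to the wrapped setting.

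Let me unpack the mechanism of \Cref{lem:grouplikeimpliesperfect} to see why this really goes through, since this is the one place where the subtlety of the wrapped case could reappear. Group-likeness says that relative convolution with $\fM^M$ over $\cW(M)$, viewed as an endofunctor on the dg-category of families of bimodules parametrized by $G = H^1(M,\bG_m)$, is invertible: a quasi-inverse is furnished by convolution with the pullback $\fM^{M,-}$ of $\fM^M$ along $z \mapsto z^{-1}$, since both iterated convolutions are identified via the group-like isomorphisms with the pullback of $\fM^M$ along the trivial product map, hence with the constant diagonal family. As in \Cref{lem:grouplikeimpliesperfect}, an auto-equivalence preserves compact objects, and compact objects in the dg-category of families of $\cW(M)$-bimodules are precisely the perfect families. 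Applying this auto-equivalence to the constant diagonal family yields
\begin{equation}
    \fM^M \simeq \underline{\cW(M)} \otimes_{\cW(M)}^{rel} \fM^M,
\end{equation}
so it suffices that $\underline{\cW(M)}$ be perfect, which is exactly the smoothness of $\cW(M)$.

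The potential obstacle I want to flag is that this argument, unlike many proofs in the preceding sections, does \emph{not} require any properness of $\fM^M$, nor any auxiliary stop or Lefschetz fibration. The Lefschetz-fibration machinery was only needed to establish the group-like property itself (\Cref{prop:grouplikewrapped}); once that is in hand, perfectness is a formal consequence of smoothness of the base category together with invertibility of the convolution functor. So the proposed proof is essentially one line: cite \Cref{prop:grouplikewrapped} and \Cref{lem:grouplikeimpliesperfect}, noting that $\cW(M)$ is smooth.
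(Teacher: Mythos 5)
Your proposal is correct and coincides exactly with the paper's own (one-line) derivation: the corollary is stated in the paper as an immediate consequence of \Cref{prop:grouplikewrapped} together with \Cref{lem:grouplikeimpliesperfect}, with smoothness of $\cW(M)$ being the remaining hypothesis. Your unpacking of the auto-equivalence mechanism inside \Cref{lem:grouplikeimpliesperfect} is accurate but not a different route.
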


\section{Algebraicity of the Floer homology sheaf}\label{sec:algfloersheaf}
In this section, we prove the main abstract \namecref{thm:mainabstract}, \Cref{thm:mainabstract}, by combining Lemma \ref{lem:hlgeometricnearby}, Lemma \ref{lem:hlconvmtohlsemicont} and Proposition \ref{prop:grouplike1} (resp. Lemma \ref{lem:hlgeometricnearbywrapped}, Lemma \ref{lem:hlconvmtohlsemicontwrapped} and Proposition \ref{prop:grouplikewrapped}). We use this to deduce \Cref{thm:algsheaf}. To avoid repetition, we only show the proof for compact $M$. The proof is identical in the exact case. Recall the statements first:
\begingroup
\def\thethm{\ref*{thm:algsheaf}}
\begin{thm}
Let $L,L'\subset M$ be tautologically unobstructed closed Lagrangian branes. Then, there exists a finite complex of algebraic coherent sheaves over $H^1(M,\bG_m)$ whose restriction at $z\in H^1(M,\bG_m)$ has cohomology isomorphic to $HF(L,\phi_z(L') )$. 
\end{thm}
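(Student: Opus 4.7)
The plan is to construct the desired family of complexes as
$$
\cC \;:=\; h_{L'} \otimes_{\cF(M)} \fM^M \otimes_{\cF(M)} h^L,
$$
regarded as an algebraic family of chain complexes of $\cO(H^1(M,\bG_m))$-modules, with $K$-projective (or cofibrant) replacements left implicit as explained in \Cref{note:kprojrepl} and \Cref{note:kprojreplz2graded}. The Yoneda modules $h_L$ and $h^{L'}$ are taken over $\cF(M)$ (resp.\ $\cW(M)$ in the Weinstein case), which is legitimate since $L, L'$ are tautologically unobstructed and compact, so they need not be Bohr--Sommerfeld monotone or exact themselves but still define modules over the generating subcategory. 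Intuitively, the main abstract theorem says that convolution with $\fM^M|_z$ replaces the second factor by $h_{\phi_z(L')}$, so that the fiber of $\cC$ at $z$ computes $HF(L,\phi_z(L'))$ via \Cref{lem:hlconvhlequalshom}.

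The first step is to show that $\cC$ is a perfect family over the smooth affine variety $H^1(M,\bG_m) \cong \bG_m^{b_1(M)}$. Under \Cref{assump:monotoneexact}, the category $\cF(M)$ (resp.\ $\cW(M)$) is smooth, and the modules $h_L, h^{L'}$ are proper because $L$ and $L'$ are compact; by \Cref{lem:properimpliesperfect} they are perfect. The family $\fM^M$ is perfect: in the compact monotone case this is immediate from \Cref{lem:properimpliesperfect} since $\fM^M$ is visibly proper (the fibers are the finite-dimensional Floer complexes $CF(L_i,L_j)$ tensored with $\cO(H^1(M,\bG_m))$); in the Weinstein case $\fM^M$ is group-like (\Cref{prop:grouplikewrapped}) and hence perfect by \Cref{lem:grouplikeimpliesperfect}, as recorded in \Cref{lem:perfectintheexactcase}. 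Iterated convolutions of perfect families remain perfect by \Cref{lem:perfconvperfisperf} (in the compact case one also uses \Cref{cor:properconvperfisperf}), so $\cC$ is a perfect complex over $\cO(H^1(M,\bG_m))$. On a smooth affine variety, perfect complexes are quasi-isomorphic to finite complexes of finitely generated projective modules; sheafifying gives the claimed finite complex of algebraic coherent sheaves.

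The second step is to identify the fiber $\cC|_z$ at a closed point $z \in H^1(M,\bG_m)$. Applying \Cref{thm:mainabstract} with $\tilde L = L'$ gives $h_{L'} \otimes_{\cF(M)} \fM^M|_z \simeq h_{\phi_z(L')}$ as right modules. Convolving with $h^L$ and using \Cref{lem:hlconvhlequalshom} yields
$$
\cC|_z \;\simeq\; h_{\phi_z(L')} \otimes_{\cF(M)} h^L \;\simeq\; \cF(M)\bigl(L,\phi_z(L')\bigr) \;=\; CF\bigl(L,\phi_z(L')\bigr),
$$
whose cohomology is $HF(L,\phi_z(L'))$ by definition. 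Combining this with the first step gives the theorem.

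The genuine difficulty is concentrated in \Cref{thm:mainabstract} itself, which must combine Fukaya's trick near the identity (\Cref{lem:hlgeometricnearby}, \Cref{lem:hlconvmtohlsemicont}, and their Weinstein analogues) with the group-like property of $\fM^M$ (\Cref{prop:grouplike1}, \Cref{prop:grouplikewrapped}) to promote the local geometric identification to all of $H^1(M,\bG_m)$; once this is available as a black box, the present theorem is essentially formal. Two technical subtleties warrant mention. First, in the monotone/negatively monotone setting one must perform the $K$-projective replacements in the $\bZ/2\bZ$-graded context, i.e.\ over dg-modules for $\Lambda[w^{\pm}]$ as in \Cref{note:kprojreplz2graded}, so that $\cC$ makes sense as a genuine complex. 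Second, the passage from ``perfect family'' to ``finite complex of coherent sheaves'' uses that $H^1(M,\bG_m)$ is smooth and regular, so that finitely generated modules admit finite projective resolutions; this is standard and works identically in the $\bZ/2\bZ$-graded case by \Cref{note:z2gradedmodify}.
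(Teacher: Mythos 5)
Your proposal is correct and follows essentially the same route as the paper: you construct the same family $h_{L'}\otimes_{\cF(M)}\fM^M\otimes_{\cF(M)}h^L$, establish that it is cohomologically finitely generated over $\cO(H^1(M,\bG_m))$ (hence a finite complex of coherent sheaves on the smooth affine base), and identify the fiber at $z$ by combining \Cref{thm:mainabstract} with \Cref{lem:hlconvhlequalshom}. One small imprecision: your appeal to \Cref{lem:perfconvperfisperf} is not quite on point in the Weinstein case, since that lemma assumes $\cB$ is proper, which $\cW(M)$ is not; the paper instead relies on properness of $h_L,h^{L'}$ together with perfectness of $\fM^M$ (the reasoning of \Cref{cor:properconvperfisperf}, which does not require $\cB$ proper), and your argument is sound once read that way.
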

\addtocounter{thm}{-1}
\endgroup
\begingroup
\def\thethm{\ref*{thm:mainabstract}}
\begin{thm}
Given tautologically unobstructed Lagrangian brane $\tilde{L}$ 
\begin{equation}
	h_{\tilde{L}}\otimes_{\cF(M)} \fM^M|_z\simeq h_{\phi_z(\tilde{L})}
\end{equation}	
for all $z\in H^1(M,\bG_m)$.
\end{thm}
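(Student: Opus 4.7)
The plan is to reduce to the case of $z$ near the identity, where Lemmas \ref{lem:hlgeometricnearby} and \ref{lem:hlconvmtohlsemicont} directly yield the statement, and then to bootstrap to all of $H^1(M,\bG_m)$ using the group-like structure of $\fM^M$ established in Proposition \ref{prop:grouplike1}. More precisely, for any $z$ in a sufficiently small affinoid neighborhood $S_P$ of $1 \in H^1(M,\bG_m)$, Lemma \ref{lem:hlconvmtohlsemicont} gives $h_{\tilde L}\otimes_{\cF(M)}\fM^M|_z \simeq h^{an}_{\tilde L}|_z$, and Lemma \ref{lem:hlgeometricnearby} identifies the latter with $h_{\phi_z(\tilde L)}$. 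Crucially, this local case holds for \emph{any} tautologically unobstructed brane (with $P$ possibly depending on the brane), which is what makes the induction below feasible.

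To handle general $z$, write $z = T^v z_0$ with $v \in H^1(M,\bR)$ and $z_0 \in H^1(M, U_\Lambda)$, and choose an integer $n$ large enough so that $v/n$ lies in the interior of a single polytope $P$ that works simultaneously for the finitely many branes $\tilde L, \phi_{z_n}(\tilde L), \phi_{z_{n-1}}(\phi_{z_n}(\tilde L)), \ldots$ appearing below. Set $z_i = T^{v/n}$ for $i < n$ and $z_n = T^{v/n} z_0$, so that $z = z_1 \cdots z_n$ and each $z_i \in S_P$. Iterating Proposition \ref{prop:grouplike1} yields
\[
\fM^M|_z \;\simeq\; \fM^M|_{z_n}\otimes_{\cF(M)} \fM^M|_{z_{n-1}} \otimes_{\cF(M)} \cdots \otimes_{\cF(M)} \fM^M|_{z_1}.
\]
Convolving with $h_{\tilde L}$ on the left and applying the local case to peel off the outermost factor gives $h_{\phi_{z_n}(\tilde L)}\otimes \fM^M|_{z_{n-1}}\otimes \cdots \otimes \fM^M|_{z_1}$; since tautological unobstructedness is preserved under symplectic isotopy, a straightforward induction on $n$ (reapplying the local case at each step to the current translate) produces $h_{\phi_{z_1}(\cdots \phi_{z_n}(\tilde L)\cdots)}$.

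To close the argument, observe that the composition $\phi_{z_1}\circ\cdots\circ\phi_{z_n}$ has flux $v$ and unitary twist $z_0$, so it is Hamiltonian isotopic to $\phi_z$ as a brane (with the local system $\xi_{z_0}$); hence the corresponding Yoneda modules are quasi-isomorphic. The identical argument handles the Weinstein case after replacing Lemmas \ref{lem:hlgeometricnearby}, \ref{lem:hlconvmtohlsemicont} and Proposition \ref{prop:grouplike1} by \Cref{lem:hlgeometricnearbywrapped}, \Cref{lem:hlconvmtohlsemicontwrapped} and \Cref{prop:grouplikewrapped}, respectively.

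The hard part is the final compatibility step: matching the categorical action $h_{\tilde L}\mapsto h_{\tilde L}\otimes_{\cF(M)}\fM^M|_z$ with the geometric action $\tilde L\mapsto \phi_z(\tilde L)$ requires verifying that $\phi$ genuinely descends to an action of $H^1(M,\bG_m)$ on the set of Lagrangian branes modulo Hamiltonian isotopy, i.e.\ that $\phi_{z_1}\circ\phi_{z_2}$ and $\phi_{z_1 z_2}$ differ by a Hamiltonian isotopy. This is the Banyaga-type statement implicit in the discussion following Assumption \ref{assump:monotoneexact}, and it is precisely the mechanism through which the ambiguity in choosing a representing closed $1$-form $\alpha$ for $\mathrm{val}_T(z)$ becomes invisible at the level of Yoneda modules.
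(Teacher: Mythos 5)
Your overall strategy is the same as the paper's: reduce to $z$ near the identity via Lemmas~\ref{lem:hlgeometricnearby} and~\ref{lem:hlconvmtohlsemicont}, then propagate via the group-like property of $\fM^M$. Where you diverge, there is a genuine gap.

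You write ``choose an integer $n$ large enough so that $v/n$ lies in the interior of a single polytope $P$ that works simultaneously for the finitely many branes $\tilde L, \phi_{z_n}(\tilde L), \phi_{z_{n-1}}(\phi_{z_n}(\tilde L)),\ldots$ appearing below.'' This is circular as stated: the collection of intermediate branes \emph{depends on $n$}, and the polytope from Lemma~\ref{lem:hlgeometricnearby} in turn depends on the brane (through the choices of diffeomorphisms $\psi_v$, tameness of the pushed-forward almost complex structures, transversality, etc.). Increasing $n$ shrinks $v/n$, but it also changes which branes you need the polytope to accommodate, so it is not clear that any $n$ works. What you actually need is a uniform lower bound on the size of an admissible polytope along the entire compact path $\{\phi^t_\alpha(\tilde L): t\in[0,1]\}$, and that requires an explicit compactness argument. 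The paper supplies precisely this: it covers the segment $[0,[\alpha]]\subset H^1(M,\bR)$ by the open sets $t[\alpha]+\operatorname{int}(P_t)$, extracts a finite subcover, and chooses the (generally unequal) partition $0=t_0<\cdots<t_p=1$ accordingly. Your equal-subdivision argument can be repaired by first invoking a Lebesgue-number argument for this cover, but that makes it a reformulation of the paper's proof rather than an alternative to it; without it the argument does not close.

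A smaller remark: you flag the matching of $\phi_{z_1}\circ\cdots\circ\phi_{z_n}$ with $\phi_z$ (via a Banyaga-type statement) as ``the hard part.'' In the paper this step is invisible, because the telescoping is phrased entirely in terms of the one-parameter path $\phi^t_\alpha(\tilde L)$: the intermediate quasi-isomorphism $h_{\phi^{t_{i-1}}_\alpha(\tilde L)}\otimes\fM^M|_{T^{(t_i-t_{i-1})[\alpha]}}\simeq h_{\phi^{t_i}_\alpha(\tilde L)}$ already identifies the output with a point on the same flow, so no separate Hamiltonian-isotopy comparison between a composition of flows and $\phi_z$ is needed. Phrasing your iteration in terms of $\phi^{i/n}_\alpha(\tilde L)$ rather than nested compositions would make that discussion unnecessary.
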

\addtocounter{thm}{-1}
\endgroup
The algebraic sheaf mentioned in \Cref{thm:algsheaf} is quasi-isomorphic to $h_{L'}\otimes_{\cF(M)} \fM^M \otimes_{\cF(M)} h^L$. The algebraicity of will follow from algebraicity of the family $\fM^M$, and one point of the proof is its coherence. On the other hand, to relate it to Floer homology, we combine \Cref{thm:mainabstract} with \Cref{lem:hlconvhlequalshom}.
\begin{proof}[Proof of \Cref{thm:mainabstract}]
For simplicity, assume $z=T^{[\alpha]}$, for some closed $1$-form $\alpha$, and for cleaner notation denote $\phi_{T^{t[\alpha]}}$ by $\phi^t_\alpha$, as before. Then, $\phi^t_{\alpha}(\tilde{L}), t\in[0,1]$ defines a Lagrangian isotopy from $\tilde{L}$ to $\phi_z(\tilde{L})$. By Lemma \ref{lem:hlgeometricnearby} and Lemma \ref{lem:hlconvmtohlsemicont}, given $\phi^t_\alpha(\tilde{L})$, there exists a small $P_t\subset H^1(M,\bR)$ such that 
\begin{equation}
	h_{\phi^t_\alpha(\tilde{L})}\otimes_{\cF(M)} \fM^M|_{T^{[\beta]}}\simeq h_{\phi^t_{\alpha+\beta}(\tilde{L})}
\end{equation}
for every $[\beta]\in P_t$ (i.e. $z'=T^{[\beta]}\in S_{P_t}$). The compact interval $[0,val_T(z)]=[0,[\alpha]]\subset H^1(M,\bR)$ is covered by the interiors of $t[\alpha]+P_t$; thus, it admits a finite subcover, and there exists $0=t_0<t_1<\dots <t_p=1$ such that every $[t_{i-1}[\alpha],t_i[\alpha]]$ is contained in one of $t[\alpha]+P_t$. 
Hence, \begin{equation}\label{eq:inproofhlconvsmall}
	h_{\phi^t_\alpha(\tilde{L})}\otimes_{\cF(M)} \fM^M|_{T^{(t_{i-1}-t)[\alpha]}}\simeq h_{\phi^{t_{i-1}}_\alpha(\tilde{L})}
\end{equation}
and the same with $t_{i-1}$ replaced by $t_i$. (\ref{eq:inproofhlconvsmall}) also implies
\begin{equation}
	h_{\phi^{t_{i-1}}_\alpha(\tilde{L})}\otimes_{\cF(M)} \fM^M|_{T^{(t-t_{i-1})[\alpha]}}\simeq h_{\phi^{t}_\alpha(\tilde{L})}	
\end{equation}
by the group-like property of $\fM^M$. Hence,
\begin{align}
	h_{\phi^{t_i}_\alpha(\tilde{L})}\simeq 	h_{\phi^t_\alpha(\tilde{L})}\otimes_{\cF(M)} \fM^M|_{T^{(t_i-t)[\alpha]}}\simeq \\	h_{\phi^{t_{i-1}}_\alpha(\tilde{L})}\otimes_{\cF(M)} \fM^M|_{T^{(t-t_{i-1})[\alpha]}} \otimes_{\cF(M)} \fM^M|_{T^{(t_i-t)[\alpha]}}\simeq \\
	h_{\phi^{t_{i-1}}_\alpha(\tilde{L})}\otimes_{\cF(M)} \fM^M|_{T^{(t_i-t_{i-1})[\alpha]}} 
\end{align}
where the last identity also follows from the group-like property. Therefore,
\begin{equation}
h_{\phi^{t_p}_\alpha(\tilde{L})}\simeq h_{\phi^{t_0}_\alpha(\tilde{L})} \otimes_{\cF(M)} \fM^M|_{T^{(t_1-t_0)[\alpha]}}\otimes_{\cF(M)}  \dots \otimes_{\cF(M)} \fM^M|_{T^{(t_p-t_{p-1})[\alpha]}} \simeq \atop
h_{\phi^{t_0}_\alpha(\tilde{L})} \otimes_{\cF(M)} \fM^M|_{T^{(t_p-t_0)[\alpha]}}
\end{equation}
The last quasi-isomorphism is again due to group-like property. This is what the \namecref{thm:mainabstract} asserts, and it finishes the proof.
\end{proof}
\begin{rk}\label{rk:convleft}
It is also true that\begin{equation}
		\fM^M|_z \otimes_{\cF(M)} h^{\tilde{L}}\simeq h^{\phi_{-z}(\tilde{L})}
\end{equation}	which follows similarly. More precisely, one needs left module versions of Lemma \ref{lem:hlgeometricnearby}, Lemma \ref{lem:hlconvmtohlsemicont}, Lemma \ref{lem:hlgeometricnearbywrapped} and Lemma \ref{lem:hlconvmtohlsemicontwrapped}, whose proofs are very similar to given versions.
\end{rk}
\begin{proof}[Proof of Theorem \ref{thm:algsheaf}]
$\fM^M$ is a perfect family of bimodules (directly implied by \Cref{lem:properimpliesperfect} in the compact case, for the exact case, see \Cref{lem:perfectintheexactcase}).

As $h_{L'}$ and $h^L$ are proper modules, the complex $h_{L'}\otimes_{\cF(M)} \fM^M \otimes_{\cF(M)} h^L$ is quasi-isomorphic to a finite complex of coherent sheaves (it is algebraic by construction, it can be seen as a complex of $\cO(H^1(M,\bG_m))=\Lambda[z^{H_1(M)}]$-modules). 

By \Cref{thm:mainabstract}, $h_{L'}\otimes_{\cF(M)} \fM^M|_z$ is quasi-isomorphic to $h_{\phi_z(L')}$. Thus, the restriction of $h_{L'}\otimes_{\cF(M)} \fM^M \otimes_{\cF(M)} h^L$ at $z\in H^1(M,\bG_m)$ is quasi-isomorphic to 
\begin{equation}
h_{\phi_z(L')}\otimes_{\cF(M)} h^L\simeq CF(L,\phi_z(L'))	
\end{equation}
The last quasi-isomorphism follows from \Cref{lem:hlconvhlequalshom}. This completes the proof.
\end{proof}
Corollary \ref{cor:zariskiopenrank} and Corollary \ref{cor:rankstratification} now follows from generalities on complexes of coherent sheaves. More precisely, let $(C,d)$ be a finite, algebraic complex of vector bundles $H^1(M,\bG_m)$ (i.e. a finite complex of projective modules over $\Lambda[z^{H_1(M)}]$) that is quasi-isomorphic to $h_{L'}\otimes_{\cF(M)} \fM^M \otimes_{\cF(M)} h^L$. Then, at every $z\in H^1(M,\bG_m)$, the rank of $H^*(C|_z,d_z)$ is equal to $dim (C)-2rank(d_z)$. One can assume $C$ is graded free, and by trivializing the it, one can see $d$ as a matrix with coefficients in $\Lambda[z^{H_1(M)}]$. The locus where $rank(d_z)>\frac{1}{2}(dim(C)-k)$ is a Zariski open set, and the locus of points where $dim (C)-2rank(d_z)\geq k$ is its complement. This proves Corollary \ref{cor:rankstratification}. The minimal rank locus forms the Zariski open set that is mentioned in Corollary \ref{cor:zariskiopenrank}. 

To prove Corollary \ref{cor:realiterates}, we have to restrict this algebraic sheaf to ``the real line'' $\{T^{t[\alpha]}:t\in\bR \}\subset H^1(M,\bG_m)$. Consider the ring 
\begin{equation}
	\Lambda[z^\bR]=\{\sum a_rz^r:a_r\in\Lambda,\text{ the sum is finite} \}
\end{equation}
and base change $(C,d)$ to this ring, along the map $	\Lambda[z^{H_1(M)}]\to \Lambda[z^\bR]$ that sends $z^{[C]}$ to $z^{\alpha([C])}$. We obtain a finite complex of free $\Lambda[z^\bR]$-modules whose restriction to ``$z=T^t$'' (i.e. its base change along $\Lambda[z^\bR]\to\Lambda$, $z^r\mapsto T^{tr}$) is quasi-isomorphic to $CF(L,\phi_\alpha^t(L'))$. Denote this complex by $(C',d')$. We claim the rank of cohomology of this complex restricted to $z=T^t$ is constant in $t$, with finitely many exceptions. We first need:
\begin{lem}\label{lem:finitezeroes}
Let $f\in \Lambda[z^\bR]$ be non-zero. Then $f(T^t)=0$ holds only for finitely many $t\in \bR$.
\end{lem}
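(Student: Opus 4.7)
The plan is to use $T$-adic valuations and an elementary arrangement-of-lines argument. First, I would write $f$ in canonical form $f = \sum_{i=1}^n a_i z^{r_i}$, where $r_1 < r_2 < \dots < r_n$ are distinct real numbers and each $a_i \in \Lambda$ is nonzero. Let $v_i := \mathrm{val}_T(a_i) \in \bR$. Then
\begin{equation}
f(T^t) = \sum_{i=1}^n a_i T^{t r_i},
\end{equation}
and the $i$-th summand has $T$-adic valuation exactly $v_i + t r_i$, since multiplication by a monomial $T^{t r_i}$ shifts the valuation by $t r_i$.

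Next I would observe that if at some $t \in \bR$ the minimum $m(t) := \min_i (v_i + t r_i)$ is attained by a unique index $i_0$, then $f(T^t)$ has valuation exactly $m(t)$ (the leading $\bC$-coefficient of $a_{i_0} T^{tr_{i_0}}$ survives unchanged), so in particular $f(T^t) \neq 0$. Contrapositively, any $t$ with $f(T^t) = 0$ must satisfy $v_i + t r_i = v_j + t r_j$ for some pair $i \neq j$.

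Now the key combinatorial step: for each pair $i \neq j$, since $r_i \neq r_j$, the equation $v_i + t r_i = v_j + t r_j$ has the unique solution $t = (v_j - v_i)/(r_i - r_j)$. Taking the union over the $\binom{n}{2}$ pairs gives a finite set $T_0 \subset \bR$ containing all possible zeros of $t \mapsto f(T^t)$. Hence the zero locus is finite, which proves the lemma.

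The argument has essentially no obstacle: the only subtlety worth flagging is that $\Lambda$-elements are series over a well-ordered subset of $\bR$, so one must verify that the valuation of a finite $\Lambda$-sum is indeed the minimum of the term valuations whenever that minimum is uniquely attained---this is immediate from the definition of the Novikov valuation and the fact that the leading $\bC$-coefficient cannot be cancelled by any other summand.
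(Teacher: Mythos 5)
Your proof is correct, and it takes a cleaner route than the paper's. Both arguments hinge on the same tropical observation: if the minimum of the affine functions $t\mapsto v_i + t r_i$ is attained at a unique index $i_0$, then the lowest-order coefficient of the dominating term survives in the sum, so $f(T^t)\neq 0$. The paper organizes this into two steps: first it uses the asymptotics of the extreme terms ($i=1$ for $t\gg 0$, $i=n$ for $t\ll 0$) to bound the zero set, and then it shows each zero $t_0$ is isolated by observing that the subset $I$ of indices realizing the minimum at $t_0$ ceases to tie for $t$ near but not equal to $t_0$. You bypass the ``bounded $+$ discrete $\Rightarrow$ finite'' scheme entirely by directly including the zero set in the set of pairwise crossing points of the $n$ distinct lines, a set of cardinality at most $\binom{n}{2}$. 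This is a genuine simplification: the boundedness step and the local isolation argument both become special cases of your single containment. If anything, it is worth making explicit in your final write-up that $f(T^t)=0$ forces the minimum to be attained by at least two indices $i\neq j$, both equal to $m(t)$; you state the contrapositive correctly, but spelling out that both sides coincide with the minimum makes the reduction to $t=(v_j-v_i)/(r_i-r_j)$ transparent.
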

\begin{proof}
Let $f(z)=a_{r_1}z^{r_1}+\dots+ a_{r_k}z^{r_k}$, where $a_{r_i}\neq 0$ and $r_1<\dots< r_k$. Given $t$, $val_T(a_{r_i}T^{tr_i})=val_T(a_{r_i})+tr_i$. Thus, when ``$z=T^t$'' is plugged in for $t\gg 0$, $a_{r_1}z^{r_1}$ term has strictly smaller valuation than the other terms, and it is non-zero; hence, $f(T^t)\neq 0$. Similarly, for $t\ll 0$, $a_{r_k}z^{r_k}$ term has strictly smaller valuation and $f(T^t)\neq 0$. In other words, the zeroes of $t\mapsto f(T^t)$ form a bounded subset of $\bR$. 

Now, assume $t_0$ be a zero of this function, i.e. $f(T^{t_0})=0$. Let $I\subset\{1,\dots, k\}$ denote the set of $i$ such that $a_{r_i}T^{t_0r_i}$ terms has minimal valuation. There is a positive valuation gap between $a_{r_i}T^{t_0r_i}$ terms for $i\in I$ and the remaining terms, and this property persists for small deformations of $t_0$. In other words, there exists an $\epsilon>0$ such that for any $t\in (t_0-\epsilon,t_0+\epsilon)$, $a_{r_i}T^{tr_i}$ has also strictly smaller valuation than $a_{r_j}T^{tr_j}$, if $i\in I$, $j\not\in I$. On the other hand, if $t\neq t_0$, then $val_T(a_{r_i}T^{tr_i})$ is no longer equal to $val_T(a_{r_j}T^{tr_j})$, for $i,j\in I$, $i\neq j$ (as $r_i\neq r_j$). As these terms have pairwise different and smaller valuation than those for $i\not\in I$, $f(T^t)\neq 0$. Thus, $t_0$ is the only zero in $(t_0-\epsilon, t_0+\epsilon)$, and the set of zeroes is also discrete. As it is also bounded, this set is finite.
\end{proof}
The set of $t\in \bR$, where cohomology of the complex $(C'|_{T^t}, d'_{T^t})$ has rank larger than minimal (in $t$) is the same as the set of $t\in\bR$ such that $rank(d'_{T^t})$ is less than the maximal possible (among other $rank(d'_{T^t})$, not only less than full rank). This set is given by vanishing of finitely many minors of the matrix corresponding to $d'$. Hence, by Lemma \ref{lem:finitezeroes}, this set is finite (as its compliment is non-empty). This implies the minimal rank locus of the cohomology of $(C'|_{T^t}, d'_{T^t})$ is a cofinite subset of $\bR$, and this finishes the proof of \Cref{cor:realiterates}.

\section{Algebraic stabilizers and the flux groups of Lagrangians}\label{sec:stabilizer}
We now turn to the proof of:
\begingroup
\def\thethm{\ref*{thm:stabilizer}}
\begin{thm} Let $L$ be a tautologically unobstructed, closed Lagrangian brane in $M$. Then the set of $z\in H^1(M,\bG_m)$ such that $L$ is stably isomorphic to $\phi_z(L)$ form an algebraic subtorus of $H^1(M,\bG_m)$ whose Lie algebra is given by the kernel of the map $H^1(M,\Lambda)\to H^1(L,\Lambda)$.
\end{thm}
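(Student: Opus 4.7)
The plan is to identify $\mathrm{Stab}(L)$ with the explicit subtorus $T := \ker\bigl(H^1(M,\bG_m) \to H^1(L,\bG_m)\bigr)$, whose Lie algebra is exactly $\ker(H^1(M,\Lambda) \to H^1(L,\Lambda))$. By \Cref{thm:mainabstract}, $h_{\phi_z(L)} \simeq h_L \otimes_{\cF(M)} \fM^M|_z$, so $\mathrm{Stab}(L)$ is the locus where the family $\fN := h_L \otimes_{\cF(M)} \fM^M$ has fiber stably isomorphic to $h_L$. The group-like property of $\fM^M$ (\Cref{prop:grouplike1} / \Cref{prop:grouplikewrapped}) immediately implies that $\mathrm{Stab}(L)$ is a subgroup of $H^1(M,\bG_m)$: if $\fN|_{z_i}$ is stably isomorphic to $h_L$ for $i = 1, 2$, then convolving with $\fM^M|_{z_1}$ transfers the stable iso for $z_2$ to one for $z_1 z_2$.

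For the containment $T \subseteq \mathrm{Stab}(L)$, I would first observe that the analytic family $h_L^{an}$ of \Cref{defn:analyticdefolag} tautologically factors (near the identity) through the restriction $H^1(M,\bG_m) \to H^1(L,\bG_m)$, because deforming the Yoneda module of $L$ by a $\Lambda^\ast$-local system $\xi_z$ on $M$ only sees the pullback $\xi_z|_L$. In particular, for $z \in T$ sufficiently close to the identity, $h_L^{an}|_z$ is quasi-isomorphic to $h_L$. Combined with \Cref{lem:hlconvmtohlsemicont} (or \Cref{lem:hlconvmtohlsemicontwrapped}), which gives $\fN|_{S_P} \simeq h_L^{an}$ over a small affinoid neighborhood $S_P$ of the identity, this produces $\fN|_z \simeq h_L$ for all $z \in T \cap S_P$. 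Since $T \cap S_P$ is a neighborhood of the identity in the connected subtorus $T$, and $\mathrm{Stab}(L)$ is a subgroup, we conclude $T \subseteq \mathrm{Stab}(L)$.

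For the reverse containment, I would compute the Kodaira--Spencer map $KS : H^1(M,\Lambda) \to \mathrm{Ext}^1_{\cF(M)}(h_L, h_L) \cong HF^1(L,L)$ associated to the family $\fN$ at the identity. Tracing through the definition of $\fM^M$, the first-order deformation of $h_L$ in the direction $\alpha \in H^1(M,\Lambda)$ is given by twisting by the infinitesimal local system $\alpha$, and under the tautological-unobstructedness isomorphism $HF^\ast(L,L) \cong H^\ast(L,\Lambda)$, this identifies $KS$ with the restriction map $H^1(M,\Lambda) \to H^1(L,\Lambda)$. Hence $\mathrm{Lie}(\mathrm{Stab}(L)) \subseteq \ker(KS) = \mathrm{Lie}(T)$. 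Algebraicity of $\mathrm{Stab}(L)$ should follow from algebraicity of $\fN$ together with the semicontinuity of ranks of Floer homology provided by \Cref{cor:rankstratification} and perfectness of the relevant convolutions. Combining, $\mathrm{Stab}(L)$ is an algebraic subgroup containing the connected subtorus $T$ and with the same tangent space at the identity, so $\mathrm{Stab}(L) = T$.

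The main obstacle will be the rigorous justification that $\mathrm{Lie}(\mathrm{Stab}(L)) \subseteq \ker(KS)$: since stable isomorphism is strictly weaker than isomorphism, one must argue that a first-order deformation of $h_L$ that is stably trivial still has vanishing Kodaira--Spencer class. The natural route is to pair with the left Yoneda module $h^L$ (using \Cref{rk:convleft}) and reduce to a calculation inside $HF^\ast(L,L)$, where stable and strict iso coincide on the summand generated by the unit. A secondary technical point is extracting an algebraic cut-out of $\mathrm{Stab}(L)$ from $\fN$ that is scheme-theoretically reduced at the identity, which is needed to equate tangent spaces with Lie algebras.
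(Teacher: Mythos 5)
Your proposal takes a genuinely different route from the paper, and while the high-level outline (show the stabilizer is a subgroup, get one containment from Fukaya's trick near the identity, control the other side with an infinitesimal/Kodaira--Spencer argument) is reasonable, it leaves two real gaps that the paper's argument is specifically built to avoid.

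\textbf{Algebraicity is not established by rank semicontinuity.} You propose to deduce algebraicity from \Cref{cor:rankstratification} and perfectness of the convolutions. But the stable-isomorphism locus is not a rank condition on $HF(L,\phi_z(L))$: two objects can have self-Hom complexes of the right ranks without either embedding as a direct summand of the other. The paper instead encodes stable isomorphism by the requirement that the two composition maps in \eqref{eq:firstcomposition}--\eqref{eq:secondcomposition} hit the respective units, constructs an explicit family version of the composition (\Cref{lem:compositionforbimodulesatz} and \eqref{eq:familychaincomposition}), and shows via \Cref{lem:constructible} that the resulting locus is constructible; constructibility plus the abstract-subgroup structure then yields closedness. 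This is the technical heart of the paper's proof, and your sketch skips it.

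\textbf{The Kodaira--Spencer bound does not control the component group.} Even if one fully carries out the immersion-type computation (which the paper does carry out in \Cref{lem:immersivenl}, but only in the exact case, and only for honest isomorphism, not stable isomorphism, and only after the stabilizer has already been shown to be a subtorus), the conclusion ``$\mathrm{Stab}(L)$ contains the connected subtorus $T$ and has the same tangent space at the identity, hence $\mathrm{Stab}(L)=T$'' does not follow: an algebraic subgroup of a torus with the same Lie algebra as $T$ and containing $T$ can still be of the form $T\times(\text{finite group})$. The paper rules out the finite part by a different device: \Cref{lem:kneqh} reduces to finding a unitary $z\in H\setminus K$, and \Cref{lem:localsystofukaya} together with \Cref{cor:loctofukaya} --- the full faithfulness of the PSS-type functor from local systems on $L$ into $H^*(\cF(M))$ --- shows that a nontrivial unitary local system cannot give a stably isomorphic object. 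This single input handles the Lie algebra obstruction and the component group obstruction simultaneously, and also sidesteps the ``stable versus strict isomorphism'' issue you flag, because the grading argument in \Cref{cor:loctofukaya} extracts $\xi_1\cong\xi_2$ directly from stable isomorphism. You should replace the Kodaira--Spencer step with this kind of argument (or supplement it with a component-group argument), and replace the algebraicity step with the composition-map cut-out.
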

\addtocounter{thm}{-1}
\endgroup
The idea for algebraicity of this set is simple: $L$ and $\phi_z(L)$ are stably isomorphic if and only if the composition maps
\begin{align}\label{eq:firstcomposition}	HF(\phi_z(L),L)\otimes HF(L,\phi_z(L))\to HF(L,L)\\
\label{eq:secondcomposition}	HF(L,\phi_z(L))\otimes HF(\phi_z(L),L)\to HF(\phi_z(L),\phi_z(L))
\end{align}
both hit the respective units. We have shown $HF(L,\phi_z(L))$ can be recovered as the the cohomology of an algebraic sheaf at $z$, and the same holds for $HF(\phi_z(L),L)$ and $HF(\phi_z(L),\phi_z(L))$. One can also show the composition map is a map of algebraic sheaves. Therefore, the locus of $z$ such that these maps hit the unit form a constructible set in Zariski topology. But, this set is also a subgroup; therefore, it is a Zariski closed subgroup. 

Now, we would like to elaborate on the steps. First, (\ref{eq:secondcomposition}) hits the unit if and only if the map
\begin{equation}\label{eq:secondcompositionmodified}
		HF(\phi_{z^{-1}}(L),L)\otimes HF(L,\phi_{z^{-1}}(L))\to HF(L,L)
\end{equation}
hits the unit. (\ref{eq:secondcompositionmodified}) can be obtained from (\ref{eq:firstcomposition}) by a parameter change $z\mapsto z^{-1}$. Therefore, if the locus of points hitting the unit under (\ref{eq:firstcomposition}) is constructible, then the same holds for the locus corresponding to (\ref{eq:secondcompositionmodified}), and the intersection of two loci is still constructible. Hence, it suffices to show that the locus of points such that (\ref{eq:firstcomposition}) hits the unit is constructible. 

We have shown that $HF(L,\phi_z(L))$ is the cohomology of the restriction of the complex $h_L\otimes_{\cF(M)} \fM^M\otimes_{\cF(M)} h^L$ to $z\in H^1(M,\bG_m)$. Similarly, if one defines $\fM^{M,-}$ to be the family over $H^1(M,\bG_m)$ by replacing $z$ in (\ref{eq:bimodulediff}) and (\ref{eq:bimodulestructure}) by $z^{-1}$ (in other words, by a parameter change $z\mapsto z^{-1}$), then the cohomology of the complex $h_L\otimes_{\cF(M)} \fM^{M,-}\otimes_{\cF(M)} h^L$ at $z$ is $HF(\phi_z(L),L)$. Indeed, Remark \ref{rk:convleft} implies that 
\begin{equation}
h^{\phi_z(L)}\simeq \fM^{M,-}|_z\otimes_{\cF(M)} h^L	
\end{equation}
Recall from Lemma \ref{lem:compositionbymiddlecontraction} that the composition 
\begin{equation}
	h_L\otimes_{\cF(M)} h^{\phi_z(L)}\otimes h_{\phi_z(L)}\otimes_{\cF(M)} h^L\simeq CF(\phi_z(L),L)\otimes CF(L,\phi_z(L))\to\atop CF(L,L)\simeq h_L\otimes_{\cF(M)} h^L\simeq h_L\otimes_{\cF(M)} \cF(M)\otimes_{\cF(M)} h^L
\end{equation}
is obtained (up to homotopy) by contracting the middle components $h^{\phi_z(L)}\otimes h_{\phi_z(L)}$ via the bimodule map
\begin{equation}\label{eq:hphiconvhphi}
h^{\phi_z(L)}\otimes h_{\phi_z(L)}\to \cF(M)
\end{equation}
given by (\ref{eq:midcontraction}) (here $\cF(M)$ denotes the diagonal bimodule). The left hand side of \eqref{eq:hphiconvhphi} is quasi-isomorphic to
\begin{equation}\label{eq:mhlhlm}
\fM^{M,-}|_z\otimes_{\cF(M)} h^L	\otimes h_L\otimes_{\cF(M)} \fM^M|_z	
\end{equation}
As a result, \eqref{eq:hphiconvhphi} is equivalent to a map from \eqref{eq:mhlhlm} to $\cF(M)$. We have
\begin{lem}\label{lem:compositionforbimodulesatz}
The map from (\ref{eq:mhlhlm}) to $\cF(M)$ obtained by the composition of 
\begin{equation}\label{eq:multiplebimodulecomposition}
\fM^{M,-}|_z\otimes_{\cF(M)} h^L	\otimes h_L\otimes_{\cF(M)} \fM^M|_z\to \fM^{M,-}|_z\otimes_{\cF(M)} \cF(M) \otimes_{\cF(M)} \fM^M|_z\simeq\atop \fM^{M,-}|_z\otimes_{\cF(M)} \fM^M|_z \to \cF(M)			
\end{equation}
where the arrow in the first line is given by applying the natural bimodule map $h^L\otimes h_L\to \cF(M)$, and the arrow in the second line is given by applying (\ref{eq:grouplikemap}) at $z_1=z,z_2=z^{-1}$ (i.e. the composition map $\fM^M|_{z^{-1}}\otimes_{\cF(M)} \fM^M|_z\to \cF(M)$).
\end{lem}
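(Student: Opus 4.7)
The plan is to identify the composition \eqref{eq:multiplebimodulecomposition} with the middle contraction $h^{\phi_z(L)} \otimes h_{\phi_z(L)} \to \cF(M)$, transported through the quasi-isomorphisms $h^{\phi_z(L)} \simeq \fM^{M,-}|_z \otimes_{\cF(M)} h^L$ and $h_{\phi_z(L)} \simeq h_L \otimes_{\cF(M)} \fM^M|_z$ of \Cref{thm:mainabstract}. Once this identification is in place, \Cref{lem:compositionbymiddlecontraction} pins \eqref{eq:multiplebimodulecomposition} down as computing the Floer product $HF(\phi_z(L),L) \otimes HF(L,\phi_z(L)) \to HF(L,L)$ up to homotopy.

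The natural setting for such a comparison is the enlarged category $\widetilde{\cF}(M)$ from \Cref{subsec:grouplike}, in which $\phi_z$ acts as a strict auto-equivalence $\Phi_z$ and $\phi_z(L)$ is identified with $(L, \xi_L \otimes \xi_z) = \Phi_z(L)$. In this picture one has tautological identifications $h^{\Phi_z(L)} \simeq {_{\Phi_{z^{-1}}}\widetilde{\cF}(M)}(L,\cdot)$ and $h_{\Phi_z(L)} \simeq {_{\Phi_z}\widetilde{\cF}(M)}(\cdot,L)$, while the restrictions of ${_{\Phi_z}\widetilde{\cF}(M)}$ and ${_{\Phi_{z^{-1}}}\widetilde{\cF}(M)}$ to $\cF(M)$ are precisely $\fM^M|_z$ and $\fM^{M,-}|_z$ (as was already used in the proof of \Cref{prop:grouplike1}). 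The argument in the proof of \Cref{lem:hlconvmtohlsemicont}, specifically equation \eqref{eq:hlxitohxilexpanded}, realises the quasi-isomorphism $\fM^M|_z \otimes_{\cF(M)} h^L \simeq h^{\phi_z(L)}$ and its analogue on the right as the canonical convolution-with-diagonal maps of \Cref{lem:hlconvhlequalshom} applied in $\widetilde{\cF}(M)$.

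The key observation is then that, after these identifications, the natural middle contraction $h^{\Phi_z(L)} \otimes h_{\Phi_z(L)} \to \widetilde{\cF}(M)$ factors as two steps: (i) contract the inner $h^L \otimes h_L$ via $\mu_{\widetilde{\cF}}$ to land in the diagonal $\widetilde{\cF}(M)$ sandwiched between ${_{\Phi_{z^{-1}}}\widetilde{\cF}(M)}$ and ${_{\Phi_z}\widetilde{\cF}(M)}$, followed by (ii) the canonical quasi-isomorphism ${_{\Phi_{z^{-1}}}\widetilde{\cF}(M)} \otimes_{\widetilde{\cF}(M)} {_{\Phi_z}\widetilde{\cF}(M)} \simeq {_{\Phi_z\circ\Phi_{z^{-1}}}\widetilde{\cF}(M)} = \widetilde{\cF}(M)$ furnished by \Cref{lem:composedgraphbimodules}. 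The proof of \Cref{prop:grouplike1} showed that the group-like map \eqref{eq:grouplikemap} at $(z_1,z_2)$ coincides with \eqref{eq:psiphitocomposition} for $\Phi = \Phi_{z_1}$, $\Psi = \Phi_{z_2}$; specialising to $(z, z^{-1})$ gives exactly step (ii). Restricting from $\widetilde{\cF}(M)$ back to $\cF(M)$ preserves the identifications, as $\cF(M)$ split-generates $\widetilde{\cF}(M)$.

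The principal obstacle is coherence: verifying that the middle contractions, the isomorphisms $(\cdot) \otimes_{\cF(M)} \cF(M) \simeq (\cdot)$, the graph bimodule composition of \Cref{lem:composedgraphbimodules}, and the group-like map at $(z, z^{-1})$ fit into one homotopy commutative diagram. I would address this by writing each intermediate arrow in the explicit component form of \Cref{lem:compositionbymiddlecontraction} and producing the requisite chain homotopies term-by-term from the $A_\infty$-equations, leveraging the explicit homotopies already recorded in the proofs of \Cref{lem:composedgraphbimodules} and \Cref{prop:grouplike1}.
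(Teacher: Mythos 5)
Your proposal has the right \emph{algebraic skeleton} — factoring the middle contraction as (i) contract $h^L\otimes h_L$, (ii) compose the graph bimodules, and recognizing that \eqref{eq:grouplikemap} at $(z,z^{-1})$ coincides with the map \eqref{eq:psiphitocomposition} — but the central identification that drives it is not available, and this is a genuine gap.

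You write that ``$\phi_z(L)$ is identified with $(L,\xi_L\otimes\xi_z)=\Phi_z(L)$'' in $\widetilde{\cF}(M)$, and you then work with tautological Yoneda identifications $h^{\Phi_z(L)}\simeq{_{\Phi_{z^{-1}}}\widetilde{\cF}(M)}(L,\cdot)$, $h_{\Phi_z(L)}\simeq{_{\Phi_z}\widetilde{\cF}(M)}(\cdot,L)$. There are two problems. First, the category $\widetilde{\cF}(M)$ of \Cref{subsec:grouplike} extends $\cF(M)$ by non-unitary local systems only on the fixed Bohr--Sommerfeld generators $\{L_i\}$; the Lagrangian $L$ in \Cref{thm:stabilizer} (hence in this lemma) is a \emph{general} tautologically unobstructed closed brane, not assumed Bohr--Sommerfeld, and for such $L$ the $\xi_z$-twisted structure maps need not converge when $z$ is non-unitary. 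So $\Phi_z(L)$ is simply not an object of $\widetilde{\cF}(M)$, and $h^{\Phi_z(L)}$, $h_{\Phi_z(L)}$ are not defined in this setting. (Compare the paper's own caveat in \Cref{subsec:grouplike}: ``Contrary to before, $h_{\tilde L}$ may not be well-defined for another compact Lagrangian $\tilde L$.'') Second, even where both are defined, ``$\phi_z(L)\simeq\Phi_z(L)$'' conflates the geometric flux deformation $\phi_z(L)=(\phi^1_\alpha(L),\xi_{z_0})$ with a purely algebraic local-system twist; the assertion that their Yoneda modules agree, and more importantly the \emph{specific} quasi-isomorphism $h_{\phi_z(L)}\simeq h_L\otimes_{\cF(M)}\fM^M|_z$ that \eqref{eq:mhlhlm} is obtained from, is exactly the output of \Cref{thm:mainabstract}, which is proved by Fukaya's trick, the semicontinuity step of \Cref{lem:hlconvmtohlsemicont}, and group-like patching — none of which is a tautology in $\widetilde{\cF}(M)$. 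The lemma asks you to transport \eqref{eq:hphiconvhphi} through \emph{that} quasi-isomorphism; substituting a formally analogous identification in a category where the relevant objects do not exist does not accomplish this.

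The repair is essentially what the paper hints at: chase the explicit isomorphism of \Cref{thm:mainabstract} step by step. Concretely, your $\widetilde{\cF}(M)$ factorization \emph{does} apply verbatim at unitary $z$ (there the extension used in the proof of \Cref{lem:hlconvmtohlsemicont}, with unitary $\xi_z^{\otimes k}$ added to both $L_i$ and $\tilde L$, is legitimate, and the identification \eqref{eq:hlxitohxilexpanded} you cite is literally the restriction of \eqref{eq:analyticmaphlmtohl}); one then propagates along the isotopy $\phi^t_\alpha$ by Fukaya's trick (\Cref{lem:hlgeometricnearby}) and the group-like patching used in the proof of \Cref{thm:mainabstract}, checking at each subdivision $[t_{i-1},t_i]$ that the middle contraction intertwines with the identifications. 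Your final ``coherence'' paragraph, pushing component-wise chain homotopies through the $A_\infty$-equations, is the right kind of bookkeeping; it just has to be run along the geometric quasi-isomorphism, not the algebraic surrogate.
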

We skip the proof of this lemma. It is based on chasing the isomorphism constructed in the proof of \Cref{thm:mainabstract} (hence, also in \Cref{lem:hlgeometricnearby} and \Cref{lem:hlconvmtohlsemicont}). 

Therefore, one can write a family version of (\ref{eq:firstcomposition}) as the composition
\begin{equation}\label{eq:familybimodulecomposition}
	(\fM^{M,-}\otimes_{\cF(M)} h^L	\otimes h_L)\otimes_{\cF(M)}^{rel} \fM^M\to (\fM^{M,-}\otimes_{\cF(M)} \cF(M)) \otimes_{\cF(M)}^{rel} \fM^M\simeq\atop \fM^{M,-}\otimes_{\cF(M)}^{rel} \fM^M \to \underline{\cF(M)}			
\end{equation}	
Here the superscript $rel$ indicates the tensor product is taken relative to $H^1(M,\bG_m)$, and $\underline{\cF(M)}$ denotes the constant family of bimodules parametrized by $H^1(M,\bG_m)$ that restricts to $\cF(M)$ at points (we put the underline once to avoid confusion, but we will mostly omit this from the notation). Hence, (\ref{eq:familybimodulecomposition})  involves families over $H^1(M,\bG_m)$ only (i.e. not over $H^1(M,\bG_m)\times H^1(M,\bG_m)$ etc.). The family $\fM^{M,-}\otimes_{\cF(M)}^{rel} \fM^M$ can be obtained by restricting $\pi_2^*\fM^M\otimes_{\cF(M)}^{rel} \pi_1^*\fM^M$ to the subvariety $\{z_1=z,z_2=z^{-1}\}\subset H^1(M,\bG_m)\times H^1(M,\bG_m)$, and the bottom map in (\ref{eq:familybimodulecomposition}) is the restriction of (\ref{eq:grouplikemap}) to this subvariety. 

Hence we have a morphism of families
\begin{equation}\label{eq:familybimodulecomposedcomposition}
	\fM^{M,-}\otimes_{\cF(M)} h^L	\otimes h_L\otimes_{\cF(M)} \fM^M\to  \underline{\cF(M)}			
\end{equation}
that restrict to (\ref{eq:multiplebimodulecomposition}) at $z\in H^1(M,\bG_m)$ (we omitted the superscript $rel$ as there was not a unique place it can be placed). One can define a (necessarily algebraic) map 
\begin{equation}\label{eq:familychaincomposition}
	h_L\otimes_{\cF(M)} \fM^{M,-}\otimes_{\cF(M)} h^L	\otimes h_L\otimes_{\cF(M)} \fM^M\otimes_{\cF(M)} h^L
	\to  \underline{h_L	\otimes_{\cF(M)} h^L}
\end{equation}
of chain complexes over $H^1(M,\bG_m)$ by contracting middle components via (\ref{eq:familybimodulecomposedcomposition}). The restriction of this map to $z\in H^1(M,\bG_m)$ is homotopic to composition map (\ref{eq:firstcomposition}) by Lemma (\ref{lem:compositionforbimodulesatz}) and the preceding remarks. The complex $\underline{h_L	\otimes_{\cF(M)} h^L}\simeq CF(L,L)\otimes \cO(H^1(M,\bG_m))$ admits a natural closed section $\textbf{1}$ corresponding to unit. Then, the map (\ref{eq:firstcomposition}) hits the unit if and only if the restriction of the map (\ref{eq:familychaincomposition}) at $z$ hits $\textbf{1}|_z $. Let $s$ denote the image of $\textbf{1}\in \underline{h_L	\otimes_{\cF(M)} h^L}$ considered as an element of the cone of (\ref{eq:familychaincomposition}). Then, (\ref{eq:familychaincomposition}) at $z$ hits $\textbf{1}|_z $ if and only if $s|_z$ vanishes in cohomology of the $cone(\ref{eq:familychaincomposition})$. The following lemma shows that this condition defines a constructible set:
\begin{lem}\label{lem:constructible}
Let $S$ be a smooth affine variety over $\Lambda$ and let $(C,d)$ be a finite complex of finite rank projective modules over $\cO(S)$. Assume $s\in C$ satisfy $d(s)=0$. Then the locus of points in $S$ such that $s|_x$ vanishes in $H^*(C|_x)$ form a constructible set.
\end{lem}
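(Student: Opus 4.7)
The plan is to reduce the problem to a rank condition on matrices, for which constructibility is classical. Let $k$ denote the degree in which $s$ lives, so that vanishing of $s|_x$ in $H^*(C|_x)$ amounts to $s|_x \in \mathrm{im}(d|_x \colon C^{k-1}|_x \to C^k|_x)$. Since $S$ is smooth and affine, the finite rank projective modules $C^{k-1}$ and $C^k$ are Zariski-locally free. Covering $S$ by finitely many affine opens on which both modules trivialize, and noting that constructibility is preserved by finite unions and by inclusion of open subvarieties, it suffices to establish the claim on each chart of such a cover.

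On such an open $U$, I fix bases and represent $d$ by a matrix $D$ and $s$ by a column vector $\mathbf{s}$, both with entries in $\cO(U)$. Then $s|_x$ lies in $\mathrm{im}(d|_x)$ if and only if $\mathbf{s}|_x$ lies in the column span of $D|_x$, equivalently
\begin{equation}
\mathrm{rank}\bigl([D|_x \mid \mathbf{s}|_x]\bigr) = \mathrm{rank}(D|_x).
\end{equation}
The inclusion $\mathrm{im}(D|_x) \subseteq \mathrm{im}([D|_x \mid \mathbf{s}|_x])$ makes the opposite inequality automatic, so this is really an equality of two integer-valued functions on $U$.

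To finish, I would stratify $U$ by the value of $\mathrm{rank}(D|_x)$. For each $r \geq 0$, the locus $\{x \colon \mathrm{rank}(D|_x) \leq r\}$ is Zariski closed, cut out by the vanishing of all $(r+1) \times (r+1)$ minors of $D$; hence $\{x \colon \mathrm{rank}(D|_x) = r\}$ is locally closed. Within this stratum the desired condition reduces to $\mathrm{rank}([D|_x \mid \mathbf{s}|_x]) \leq r$, which is again closed by the same minor argument on the augmented matrix. Since only finitely many values of $r$ can occur, the locus in question is a finite union of locally closed subsets, hence constructible. The only mildly delicate point is the local-to-global passage at the start, but this is routine for Noetherian schemes, and I do not anticipate any serious obstacle beyond it.
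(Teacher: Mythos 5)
Your proof is correct and takes essentially the same route as the paper: reduce to the locally free case, express the condition as $\mathrm{rank}([D|_x \mid \mathbf{s}|_x]) = \mathrm{rank}(D|_x)$, stratify by this common rank value, and cut out each stratum as a locally closed set via minors. The paper phrases the rank stratification in a slightly more compressed way (all $(k+1)\times(k+1)$ minors of the augmented matrix vanish while some $k\times k$ minor of $D$ does not), but the argument is the same.
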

\begin{proof}
Assume without loss of generality that $s\in C^1$. Also, as $C$ is locally free, for simplicity we can assume it is free. Trivialize $C$ to consider $d_0:C^0\to C^1$ as a matrix. Then $s|_x\in Im(d_0|_x)$ if and only if the rank of the matrix $d_0$ is the same as the rank of the matrix $[d_0,s]$ at $x$. Here, $[d_0,s]$ is the matrix obtained by considering $s$ as a column matrix and placing on the right of $d_0$. The locus of points where these ranks are the same and $k$ is the set of points where all minors of $[d_0,s]$ of size $(k+1)$-vanish, but at least one minor of $d_0$ of size $k$ does not vanish. Hence, this set is locally closed, and the union of such sets over all $k$ is constructible. 
\end{proof}
We apply Lemma \ref{lem:constructible} to a finite projective replacement of the cohomologically proper complex $cone(\ref{eq:familychaincomposition})$. As remarked the locus of points where $(\ref{eq:secondcomposition})$ (equivalently (\ref{eq:secondcompositionmodified})) hits the unit can be obtained by the change of variable $z\mapsto z^{-1}$; hence, this set is also constructible. The intersection of two constructible sets is constructible. This intersection locus is the set of $z$ such that $L$ and $\phi_z(L)$ are stably isomorphic.
\begin{rk}
From a scheme theoretic perspective, we are considering the underlying reduced varieties of the locally closed subsets forming this intersection locus. 
\end{rk}
\begin{rk}
In the exact case, $\fM^M$ is not necessarily proper, but it is still perfect. This, together with compactness of $L$ (hence, properness of $h_L,h^L$) implies that \eqref{eq:familybimodulecomposedcomposition} is a proper bimodule, and \eqref{eq:multiplebimodulecomposition} is cohomologically finite. 
\end{rk}
Hence, we have shown that the locus of $z\in H^1(M,\bG_m)$ such that $L$ and $\phi_z(L)$ are stably isomorphic is a constructible set. It is clearly closed under multiplication and taking inverses; therefore, this locus is an abstract subgroup (to see this better, one has to consider the initial definition of stably isomorphic in terms of each object embedding into a direct sum of copies of the other). The following lemma shows that it is a closed subgroup:
\begin{lem}
Let $G$ be an affine algebraic group over $\Lambda$	and let $H\subset G$ be a (reduced) constructible set that form an abstract subgroup. Then, $H$ is an algebraic subgroup of $G$.
\end{lem}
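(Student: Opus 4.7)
The plan is to show that $H$ coincides with its Zariski closure $\bar{H}$, which will then automatically be an algebraic subgroup. First, I would verify that $\bar{H}$ is itself a subgroup of $G$: since multiplication $m:G\times G\to G$ and inversion $\iota:G\to G$ are morphisms of varieties, they are in particular continuous in the Zariski topology, so $\bar{H}\cdot \bar{H}\subseteq \overline{H\cdot H}=\bar{H}$ and $\bar{H}^{-1}\subseteq \overline{H^{-1}}=\bar{H}$. This step is essentially formal and works for any subgroup of $G$, constructible or not.

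Next, I would exploit the constructibility of $H$. Being constructible, $H$ is a finite union of locally closed subsets of $G$; intersecting each with $\bar{H}$ and using that $\bar{H}$ is irreducible in each component, a standard argument shows that $H$ must contain a subset $U$ that is open and dense in (some irreducible component of) $\bar{H}$. To keep things clean I would first reduce to the case where $\bar{H}$ is irreducible by passing to the connected component of the identity: $H^0:=H\cap \bar{H}^\circ$ is of finite index in $H$, its closure is $\bar{H}^\circ$ which is irreducible, and if we prove $H^0=\bar{H}^\circ$ then $H$ is a finite union of cosets of $\bar{H}^\circ$, hence closed.

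Now for the key step: given any $g\in \bar{H}^\circ$, I want to conclude $g\in H^0$. Consider the two subsets $U$ and $gU^{-1}$ of $\bar{H}^\circ$. Left translation by $g\in \bar{H}^\circ$ is an automorphism of $\bar{H}^\circ$ as a variety (using that $\bar{H}^\circ$ is a subgroup), and so is inversion, so $gU^{-1}$ is again open and dense in $\bar{H}^\circ$. Since $\bar{H}^\circ$ is irreducible, the intersection $U\cap gU^{-1}$ is nonempty, yielding $u_1,u_2\in U\subseteq H^0$ with $u_1=gu_2^{-1}$, i.e.\ $g=u_1u_2\in H^0\cdot H^0=H^0$. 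This forces $H^0=\bar{H}^\circ$, and then $H=\bar{H}$ as explained above.

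The main potential obstacle is the extraction of the open dense $U\subseteq H$ inside $\bar{H}^\circ$, which requires being careful about what ``constructible'' means over a non-algebraically-closed field like $\Lambda$ and ensuring that the locally closed pieces whose union is $H$ can be chosen so that at least one piece is dense in the (irreducible) component $\bar{H}^\circ$. This is standard once one works with the underlying scheme-theoretic constructible structure, but is the only place where something beyond continuity of group operations is invoked. Everything else is the classical Chevalley-style argument for closed subgroups and should go through verbatim.
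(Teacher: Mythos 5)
Your proof is correct, and it is a clean instance of the classical Chevalley/Borel argument that a constructible abstract subgroup of an algebraic group is closed. The route is genuinely different from the paper's: the paper first decomposes $H$ into finitely many irreducible, relatively closed pieces $S_i$, shows by irredundancy that left translation by $H$ permutes them, uses smoothness at a generic point to conclude the pieces are disjoint and are all translates of $S_1 \ni 1$, and then observes that $S_1$ is an open subgroup of the connected group $\overline{S_1}$, hence equal to it. You instead pass directly to the closure $\bar H$, show it is a subgroup, extract from constructibility a dense open $U \subseteq H^0 := H \cap \bar H^\circ$ inside the irreducible $\bar H^\circ$, and close the argument with the standard observation that for any $g \in \bar H^\circ$ the dense opens $U$ and $gU^{-1}$ must meet, forcing $g \in H^0 H^0 = H^0$. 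Both roads end at ``open subgroup of a connected group is everything''; yours avoids the explicit bookkeeping of irreducible components and the appeal to generic smoothness, at the cost of first having to verify that $\bar H$ is a subgroup (a small but real extra lemma). One remark on your closing caveat: the Novikov field $\Lambda = \bC((T^{\bR}))$ used in this paper is in fact algebraically closed (the paper relies on this in the affinoid Nullstellensatz of the appendix), so your worry about constructibility over a non-algebraically-closed base does not arise here.
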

\begin{proof}
$H$ is a finite union of locally closed sets $S_1,\dots ,S_l$, assume without loss of generality these sets are all irreducible, relatively closed in $H$ and none of them is contained in the union of others. As $H$ is closed under multiplication by its elements, for any $S_i$ and $x\in H$, $xS_i\subset H$. Hence, $xS_i\subset S_j$ for some $j$. If $xS_i\neq S_j$, this implies $S_i\neq x^{-1}S_j$, and the relatively closed, irreducible set $x^{-1}S_j$ is contained in one of $S_k$, $k\neq i$. This contradicts the initial assumption that $S_i\not\subset S_k$. Therefore, $xS_i=S_j$. In other words, the components $\{S_i\}$ are closed under multiplication by the elements of $H$. 

A generic point of $H$ belongs to exactly one of $S_i$ and is smooth. Any translation of this point by an element of $H$ satisfy the same property; hence, every point of $H$ is smooth and belongs to exactly one of $S_i$. In particular, these components $S_i$ are pairwise disjoint. Homogeneity of $H$ implies they are all translates of each other. Assume $S_1$ is the component such that $1\in S_1$. Being the connected component of the identity, $S_1$ is an abstract subgroup of $H$. 

Hence, we only need to check the locally closed, reduced, abstract subgroup $S_1$ is actually closed. The Zariski closure $\overline{S_1}$ is a closed, connected subgroup of $G$, and $S_1$ is an open subgroup of $\overline{S_1}$, which implies $S_1= \overline{S_1}$.
\end{proof}
This completes part of the proof of Theorem \ref{thm:stabilizer}. Namely, the locus of $z$ such that $L$ and $\phi_z(L)$ are stably isomorphic form a closed subgroup. Therefore, it is an extension of a subtorus of $H^1(M,\bG_m)$ by a finite set. Denote this locus by $H$ from now on. We claimed that the Lie algebra of $H$ is given by the kernel of $H^1(M,\Lambda)\to H^1(L,\Lambda)$. This kernel can obtained from the kernel of $H^1(M,\bZ)\to H^1(L,\bZ)$ by $\Lambda$-linear extension. Kernel of $H^1(M,\bZ)\to H^1(L,\bZ)$ is a primitive sublattice of $H^1(M,\bZ)$ ($H^1(M,\bZ)$ is naturally isomorphic to the cocharacter lattice of $H^1(M,\bG_m)$). Hence, it defines a subtorus of $H$ which we denote by $K$. To complete the proof of Theorem \ref{thm:stabilizer}, we need to show $K=H$. First,
\begin{lem}\label{lem:kneqh}
If $K\neq H$, then there exists $z\in H\setminus K$ such that $z\in H^1(M,U_\Lambda)$ (hence, it corresponds to a unitary local system on $M$ that is non-trivial on $L$).
\end{lem}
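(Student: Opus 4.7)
The plan is to exploit the structure theorem for closed subgroups of an algebraic torus. Since $G := H^1(M,\bG_m) \cong \bG_m^{b_1(M)}$ is a torus and $H$ is a closed (reduced) subgroup, it decomposes as a disjoint union $H = H^0 \sqcup \bigsqcup_i \zeta_i H^0$, where $H^0$ is its identity component (a subtorus) and the $\zeta_i$ represent the non-identity classes of the finite group $H/H^0$. Since $K \subseteq H$ has been established and $K$ is connected, we have $K \subseteq H^0$. Consequently, the hypothesis $K \neq H$ splits into two mutually exclusive cases: either (i) $K \subsetneq H^0$, or (ii) $K = H^0$ and $H \neq H^0$. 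In each case I will produce a unitary element of $H \setminus K$, i.e.\ an element of $H \cap H^1(M, U_\Lambda)$ not lying in $K$.

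In case (i), both $K$ and $H^0$ are subtori of $G$ with $\dim H^0 > \dim K$, so the cocharacter lattice of $H^0$ strictly contains that of $K$; I can therefore choose a cocharacter $\lambda \colon \bG_m \to H^0$ whose image is not contained in $K$. Since $\lambda$ is defined over $\bZ$, evaluating at $t \in \bC^* \subseteq U_\Lambda$ yields elements of $H^0(U_\Lambda) \subseteq H^1(M, U_\Lambda)$, and for generic $t$ the point $\lambda(t)$ avoids $K$ (the preimage $\lambda^{-1}(K)$ is a proper closed subgroup of $\bG_m$, hence finite). This supplies the desired unitary element of $H^0 \setminus K$.

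In case (ii), the finite quotient $H/H^0$ embeds into the quotient torus $G/H^0$, and a finite subgroup of a torus over a characteristic zero algebraically closed field consists of roots of unity. Hence any representative $\zeta \in H$ of a non-trivial class in $H/H^0$ satisfies $\zeta^n = 1$ for some $n \geq 2$, so $\zeta \in \mu_\infty \subset \bC^* \subseteq U_\Lambda$. This places $\zeta$ in $H \cap H^1(M, U_\Lambda)$ while $\zeta \notin H^0 = K$, completing this case. The argument is essentially structural and I do not anticipate any serious obstacle beyond confirming that torsion elements of $G(\Lambda)$ are genuinely unitary, which holds because $\Lambda = \bC((T^\bR))$ contains all complex roots of unity inside the chain $\mu_\infty \subseteq \bC^* \subseteq U_\Lambda$.
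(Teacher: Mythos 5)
Your case~(i) is correct and essentially matches the paper's argument: pick a cocharacter $\lambda\in X_*(H^0)\setminus X_*(K)$ (which exists because $K\subsetneq H^0$ forces a strict inclusion of saturated cocharacter lattices), note $\lambda^{-1}(K)$ is a proper closed subgroup of $\bG_m$, hence finite, and evaluate at a generic unitary $t$.

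Case~(ii) contains a genuine gap. You write that ``any representative $\zeta\in H$ of a non-trivial class in $H/H^0$ satisfies $\zeta^n=1$.'' This is false: what is true is that the image of $\zeta$ in the torus $G/H^0$ is torsion, i.e.\ $\zeta^n\in H^0$ for some $n$, not that $\zeta^n=1$. Replacing a torsion representative $\zeta$ by $\zeta h$ with $h\in H^0$ of infinite order destroys torsion while preserving the class. A concrete counterexample: in $G=\bG_m^2$ take $H^0=\{(t,1)\}$ and $H=H^0\cup(1,-1)H^0$; then $(2,-1)$ represents the non-trivial coset but is not torsion. What you actually need is that \emph{some} representative can be chosen torsion, and this does hold — $H$ is a diagonalizable group over an algebraically closed field of characteristic zero, so the exact sequence $1\to H^0\to H\to H/H^0\to 1$ splits, giving $H\cong H^0\times F$ with $F$ finite and hence consisting of roots of unity — but this requires invoking the structure theorem rather than asserting that arbitrary representatives are torsion. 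The paper sidesteps this entirely by a valuation argument: given arbitrary $z\in H\setminus H^0$ with $z^k\in H^0=K$, one has $k\,val_T(z)\in val_T(K)=\ker(H^1(M,\bR)\to H^1(L,\bR))$, which is an $\bR$-linear subspace, so $val_T(z)\in val_T(K)$; picking $z_0\in K$ with $val_T(z_0)=val_T(z)$ yields a unitary element $zz_0^{-1}\in H\setminus K$. Either fix is acceptable, but as written your step does not go through.
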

\begin{proof}
As $K$ is connected, it is a subgroup of $H^o\subset H$, the identity component. First assume $K\subsetneq H^o$. Then, there exists a primitive cocharacter $v\in Lie(H)$ such that $\Lambda. v\cap Lie(K)=\{0\}$. Then, $v$ defines a subgroup $\bG_m\cong K'\subset H^o$ such that $K\cap K'=\{1\}$. In particular, $K'\cap H^1(M,U_\Lambda)\cong U_\Lambda$ is not fully contained in $K$. This implies the claim. 

If $K=H^o\subsetneq H$, then there exists $z\in H\setminus H^o$, and $k\in \bN$ such that $z^k\in H^o=K$. In particular, $val_T(z^k)=k val_T(z)\in val_T(K)=ker (H^1(M,\bR)\to H^1(L,\bR))$. 
This implies $val_T(z)\in val_T(K)$ and there exists a $z_0\in K$ such that $val_T(z_0)=val_T(z)$. Then, $zz_0^{-1}$ is an element of $(H\setminus K)\cap H^1(M,U_\Lambda)$. 

Such an element corresponds to a unitary local system, and as it is not in $K$, the restriction of this local system to $L$ is non-trivial.
\end{proof}
The following lemma will let us complete the proof of Theorem \ref{thm:stabilizer}:
\begin{lem}\label{lem:localsystofukaya}
Let $L$ be a tautologically unobstructed, closed Lagrangian brane (endowed with the trivial unitary local system). Then there exists a functor
\begin{equation}
Loc_L(U_\Lambda)\to H^*(\cF(M))	
\end{equation}	
(resp. to $H^*(\cW(M))$), where $Loc_L(U_\Lambda)$ is the category of $U_\Lambda$-local systems on $L$, that sends $\xi $ to $(L,\xi)$. Moreover, this functor is fully faithful. 
\end{lem}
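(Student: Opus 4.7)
The plan is to extend $\cF(M)$ by adding all objects $(L,\xi)$ with $\xi \in Loc_L(U_\Lambda)$, using Floer and perturbation data that depend on the local systems only through holonomy factors, and then exhibit the Floer $A_\infty$-algebra on these objects as a model for the local system cohomology $A_\infty$-algebra on $L$.

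First I would fix a Morse function $f$ on $L$ and use it (via a small time-dependent Hamiltonian perturbation, or equivalently through a Morse--Bott / pearly-tree setup) to define Floer and perturbation data for all tuples $((L,\xi_0),\dots,(L,\xi_k))$ that depends only on the underlying Lagrangian $L$ and not on the local systems $\xi_i$.  Using tautological unobstructedness of $L$, I would choose the almost complex structure(s) to lie close to one with no non-constant $J$-holomorphic discs with boundary on $L$, so that no such disc can appear in the relevant moduli.  Per \Cref{note:localsystemsclass}, the local-system twists enter the disc counts purely as scalar holonomy weights, so the same family of moduli serves for all choices of $\xi_i$'s.

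With these choices, $CF((L,\xi),(L,\xi'))$ is identified, as a graded vector space, with $CM^*(f)\otimes Hom_\Lambda(\xi_{x_0},\xi'_{x_0})$, and the differential reduces to the ordinary Morse differential twisted by the holonomies of the local system $Hom(\xi,\xi')$ along flow lines; the higher $A_\infty$-products reduce to the Morse $A_\infty$-structure (counts of gradient flow trees) with the same holonomy weights.  This is a standard model for the $A_\infty$-algebra of $H^*(L;-)$ with its cup product, so
\[
HF^*((L,\xi),(L,\xi'))\cong H^*(L;\,Hom(\xi,\xi')),
\]
and the $\mu^2$-product on $HF^0$ corresponds to the cup product on $H^0$, which is the composition of parallel morphisms of local systems.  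In particular
\[
HF^0((L,\xi),(L,\xi'))\cong H^0(L;\,Hom(\xi,\xi'))=Hom_{Loc_L(U_\Lambda)}(\xi,\xi').
\]

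Define the functor on objects by $\xi\mapsto (L,\xi)$ and on morphisms by the above identification; compatibility with composition is the compatibility of $\mu^2$ with the cup product established above, and full faithfulness is immediate from the identification of $HF^0$ with $Hom_{Loc_L(U_\Lambda)}$.  In the Weinstein case, the same argument applies verbatim for $\cW(M)$, using that $L$ is compact and hence that its wrapped Floer complex with itself is still computed by a Morse-theoretic model on $L$.

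The main obstacle is to verify that disc bubbles contribute to none of the structure maps relevant for the composition on $HF^0$, and to arrange a consistent choice of perturbation data across all $(k{+}1)$-tuples of objects $(L,\xi_i)$.  Both issues are resolved by the observation that the underlying geometric moduli problems are independent of the $\xi_i$'s and involve only boundary conditions on (multiple copies of) $L$, so that tautological unobstructedness can be propagated in a single choice of data once and for all; the local systems contribute only scalar weights $\xi^{[\partial_i u]}$ to the counts.
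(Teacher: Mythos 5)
Your proposal is correct, but the route is genuinely different from the paper's. The paper's proof is a one-liner: it defines the functor on morphisms via a local-coefficient version of the PSS map $H^*(\xi_2 \otimes \xi_1^{-1}) \to HF((L,\xi_1),(L,\xi_2))$, and then invokes the standard fact that PSS is a ring map to get compatibility with $\mu^2$; full faithfulness follows once one knows $HF^0((L,\xi_1),(L,\xi_2)) \cong H^0(L; \xi_2\otimes\xi_1^{-1}) = Hom_{Loc}(\xi_1,\xi_2)$. Your approach instead builds a chain-level Morse/pearly model for the self-Floer complex of $(L,\xi)$ and argues that tautological unobstructedness (via a suitable choice of $J$) collapses the pearls so that what remains is the Morse $A_\infty$-structure with coefficients twisted by $Hom(\xi,\xi')$. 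Both arrive at the same isomorphism of cohomology rings. The PSS route is shorter and more robust — it does not require arranging the perturbation data so that no non-constant discs appear; your route is more explicit at chain level but requires slightly more care in one spot.

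The one place where your argument is a bit loose is the phrase ``choose the almost complex structure(s) to lie close to one with no non-constant $J$-holomorphic discs \ldots so that no such disc can appear.'' Closeness to $J_0$ does not by itself rule out discs: it only forces a positive lower bound $\epsilon>0$ on disc energy by Gromov compactness. To make the argument airtight you should either (a) use $J_0$ itself and note that since the disc moduli are empty there is nothing to make transverse, so the pearly model at $J_0$ literally reduces to the Morse model with generic Morse data; or (b) keep $J$ close to $J_0$, observe that disc corrections are $O(T^\epsilon)$, and then appeal to the invariance of $HF^*$ under change of data and the standard identification $HF^*(L,L)\cong H^*(L)$ for tautologically unobstructed $L$, rather than asserting the pearly moduli is literally the Morse moduli. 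Either fix closes the gap; the rest of your argument — independence of the moduli problems from the $\xi_i$, the holonomy-weight factorization as in Note~\ref{note:localsystemsclass}, the identification of $\mu^2$ on $H^0$ with composition of local-system morphisms, and the compact-Lagrangian reduction in the wrapped case — is sound.
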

\begin{proof}
The functor can be defined using a version of the PSS map. Namely, at the morphism level, define it to be the obvious map from $RHom(\xi_1,\xi_2)=H^*(\xi_2\otimes \xi_1^{-1})$ to $HF((L,\xi_1),(L,\xi_2))=HF(L,(L,\xi_2\otimes \xi_1^{-1}))\cong H^*(\xi_2\otimes \xi_1^{-1})$. One needs to check compatibility of this with the product (only $\mu^2$), which holds as the PSS map is an algebra map. 
The fully faithfulness is clear.
\end{proof}
\begin{cor}\label{cor:loctofukaya}
If $(L,\xi_1)$ and $(L,\xi_2)$ are stably isomorphic, than $\xi_1\cong\xi_2$.	
\end{cor}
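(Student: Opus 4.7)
The plan is to use \Cref{lem:localsystofukaya} in a fairly direct way, exploiting the fact that the image of the embedding $Loc_L(U_\Lambda)\to H^*(\cF(M))$ already sees enough hom-sets. First, I would unpack the definition of stable isomorphism: $(L,\xi_1)$ being a direct summand of $(L,\xi_2)^{\oplus q}$ means there exist cohomology-level morphisms $i\colon (L,\xi_1)\to (L,\xi_2)^{\oplus q}$ and $p\colon (L,\xi_2)^{\oplus q}\to (L,\xi_1)$ with $p\circ i=\mathrm{id}_{(L,\xi_1)}$. Writing $i=(i_1,\dots,i_q)$ and $p=(p_1,\dots,p_q)$ as tuples of morphisms $(L,\xi_1)\rightleftarrows (L,\xi_2)$, the equation $\sum_j p_j\circ i_j=\mathrm{id}\neq 0$ forces some $p_j\circ i_j\neq 0$, so in particular
\[
Hom_{H^*(\cF(M))}((L,\xi_1),(L,\xi_2))\neq 0 \quad\text{and}\quad Hom_{H^*(\cF(M))}((L,\xi_2),(L,\xi_1))\neq 0.
\]

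Next I would invoke the fully faithful functor of \Cref{lem:localsystofukaya} to transport this nonvanishing back to the category of unitary local systems on $L$: we obtain
\[
Hom_{Loc_L(U_\Lambda)}(\xi_1,\xi_2)\neq 0 \quad\text{and}\quad Hom_{Loc_L(U_\Lambda)}(\xi_2,\xi_1)\neq 0.
\]
For rank-one unitary local systems on the connected manifold $L$, $Hom_{Loc_L(U_\Lambda)}(\xi_1,\xi_2)=H^0(L,\xi_2\otimes\xi_1^{-1})$ is nonzero precisely when $\xi_2\otimes\xi_1^{-1}$ is trivial, i.e. $\xi_1\cong\xi_2$. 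Hence the conclusion.

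The main potential obstacle is really a bookkeeping one: making sure the argument is robust in the $\bZ/2\bZ$-graded setting, where ``stably isomorphic'' should be interpreted as an isomorphism in the appropriately graded idempotent-completed homotopy category, and confirming that the PSS-type functor of \Cref{lem:localsystofukaya} lands inside this category in a way compatible with summands. Assuming one works with (twisted) complexes and splits of idempotents in the ambient category, the chain of implications above is formal. One minor subtlety, should it arise, is that the argument as stated applies directly to rank-one local systems (which is the relevant case for branes here); if higher rank were allowed, one would instead invoke Krull--Schmidt for local systems on $L$ together with the full faithfulness to identify the indecomposable summands.
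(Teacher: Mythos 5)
There is a genuine gap. After invoking full faithfulness of the functor in \Cref{lem:localsystofukaya}, the $\mathrm{Hom}$-space in $Loc_L(U_\Lambda)$ that gets identified with $HF^*((L,\xi_1),(L,\xi_2))$ is $RHom(\xi_1,\xi_2)=H^*(L,\xi_2\otimes\xi_1^{-1})$, \emph{not} $H^0(L,\xi_2\otimes\xi_1^{-1})$; the proof of the lemma makes this explicit. So your chain of implications only yields $H^*(L,\xi_2\otimes\xi_1^{-1})\neq 0$, which does \emph{not} force $\xi_2\otimes\xi_1^{-1}$ to be trivial. For a concrete counterexample to that implication, take $L$ a closed genus-$2$ surface and $\xi$ any nontrivial rank-one unitary local system: then $H^0=H^2=0$ but $H^1\cong\Lambda^2$ by Euler characteristic, so $H^*(L,\xi)\neq 0$ while $\xi\not\cong 1$.

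To close the gap you need to use more than mere nonvanishing of both $\mathrm{Hom}$-spaces: you need that the composition
\begin{equation*}
RHom(\xi_2,\xi_1)\otimes RHom(\xi_1,\xi_2)\longrightarrow RHom(\xi_1,\xi_1)
\end{equation*}
hits the \emph{identity}, which is degree $0$. Since $RHom(\xi_i,\xi_j)=H^*(L,\xi_j\otimes\xi_i^{-1})$ is $\bZ$-graded and concentrated in nonnegative degrees, and composition preserves degree, the only way the unit can be in the image is if the degree-$0$ parts contribute; hence $H^0(L,\xi_2\otimes\xi_1^{-1})\neq 0$, and for rank one this forces $\xi_1\cong\xi_2$. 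This is exactly the grading argument the paper runs, and it is what rescues the statement even in the $\bZ/2\bZ$-graded (monotone) setting, which you correctly flag as a concern but do not actually resolve. Your appeal to ``Krull--Schmidt'' for higher rank has the same issue: full faithfulness gives you control over the derived $\mathrm{Hom}$, not over degree-$0$ morphisms of local systems, so the decomposition into indecomposables would require a separate degree argument.
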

\begin{proof}
The functor above immediately extends to direct sums of unitary local systems. As it is fully faithful, stable isomorphism assumption on 	$(L,\xi_1)$ and $(L,\xi_2)$ imply the same for $\xi_1$ and $\xi_2$, i.e. the composition
\begin{equation}\label{eq:complocalsystems}
	RHom(\xi_2,\xi_1)\otimes RHom(\xi_1,\xi_2)\to RHom(\xi_1,\xi_1)
\end{equation}
hits the identity, and similarly with $\xi_1,\xi_2$ swapped. Regardless of Fukaya category being $\bZ$-graded or not, the two spaces in (\ref{eq:complocalsystems}) are $\bZ$-graded, they have no negative degree components, and (\ref{eq:complocalsystems}) preserves grading. Therefore, $RHom^0(\xi_1,\xi_2)=H^0(\xi_2\otimes \xi_1^{-1})\neq 0$. Since $\xi_1$ and $\xi_2$ are of rank $1$, this implies $\xi_1\cong\xi_2$ in the case $L$ is connected. Observe that \eqref{eq:complocalsystems} hitting the identity has the stronger implication that for every connected component of $L$, $H^0(\xi_2\otimes \xi_1^{-1})$ has a section that is non-zero on that component; hence, the local systems are isomorphic over any component. This finishes the proof in the case $L$ is disconnected. 
\end{proof}
\begin{proof}[Conclusion of the proof of Theorem \ref{thm:stabilizer}]
We only need to show $K=H$. If $K\neq H$, Lemma \ref{lem:kneqh} implies there exists an element $z\in H\cap H^1(M,U_\Lambda)$ such that the corresponding local system $\xi_z$ is non-trivial on $L$. On the other hand, $z\in H$ implies $L$ and $\phi_z(L)$ are stably isomorphic, and by Corollary \ref{cor:loctofukaya}, $\xi_z|_L=1$. This contradiction shows that initial assumption was wrong and $K=H$. This completes the proof of Theorem \ref{thm:stabilizer}. 
\end{proof}
Now we can show
\begin{proof}[Proof of Corollary \ref{cor:vanishingflux}]
Let $[\alpha]$ be the flux of an isotopy from $1$ to $\phi\in Symp^0(M,\omega)$. By Banyaga's theorem, $\phi_\alpha^1$ is Hamiltonian isotopic to $\phi$. Hence, if we let $z=T^{[\alpha]}$, $\phi_z(L)$ and $L$ are stably isomorphic. By Theorem \ref{thm:stabilizer}, $z$ belongs to subtorus with Lie algebra given by $ker (H^1(M,\Lambda)\to H^1(L,\Lambda))$. In particular, $[\alpha]=val_T(z)\in ker (H^1(M,\bR)\to H^1(L,\bR))$. In other words, $[\alpha|_L]=0$. 
\end{proof}
\begin{proof}[Proof of Corollary \ref{cor:lagrvanishingflux}]
If $H^1(M,\bR)\to H^1(L,\bR)$ is surjective, than any Lagrangian isotopy from $L$ can be generated by a global symplectic isotopy. Hence, applying Corollary \ref{cor:vanishingflux}, we conclude the proof. 	
\end{proof}
We have always considered embedded Lagrangians, but we did not assume the Lagrangians are connected. Before ending the section, we prove the following lemma, which shows that under this extra assumption, the notions of isomorphic and stably isomorphic coincide:
\begin{lem}
If $L$ and $L'$ are connected and stably isomorphic, then they are isomorphic.		
\end{lem}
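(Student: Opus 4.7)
The plan is to reduce the statement to the fact that for a connected tautologically unobstructed Lagrangian brane $L$, the ring $HF^0(L,L)$ is a finite-dimensional commutative local $\Lambda$-algebra with residue field $\Lambda$. Granting this, unpack the stable isomorphism as morphisms $\iota = (f_i)_{i=1}^q \colon L \to L'^{\oplus q}$ and $\pi = (g_i)_{i=1}^q \colon L'^{\oplus q} \to L$ in the derived Fukaya category satisfying $\sum_i g_i \circ f_i = e_L$ in $HF^0(L,L)$. Since the unit $e_L$ does not lie in the maximal ideal, and the maximal ideal is closed under sums, at least one summand $g_j \circ f_j$ must be a unit; write $u$ for its inverse. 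Then $(u g_j) \circ f_j = e_L$, which realizes $L$ as a retract of $L'$ via the single pair $(f_j, u g_j)$.

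The local-ring claim will rest on three ingredients. First, $HF^0(L,L)$ is commutative as the even part of a graded-commutative $\Lambda$-algebra. Second, it is finite-dimensional: in the monotone case this comes from the Bohr--Sommerfeld pearl/Oh computation, while in the Weinstein case (with $L$ compact) it follows from the PSS isomorphism $HW^*(L,L) \simeq H^*(L;\Lambda)$. Third, its only idempotents are $0$ and $e_L$: Oh's spectral sequence (respectively PSS directly) gives a filtration on $HF^0(L,L)$ whose associated graded is a subquotient of $H^{\mathrm{even}}(L;\Lambda) = \Lambda \oplus H^{\geq 2}(L;\Lambda)$, and the second summand is a nilpotent graded ideal with residue field $\Lambda$ since $L$ is connected. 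Completeness of the Novikov filtration then lifts idempotent-triviality from the associated graded to $HF^0(L,L)$ itself, and any finite-dimensional commutative $\Lambda$-algebra whose only idempotents are $0$ and the unit is local.

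To finish, the retract produces an idempotent $e' = f_j \circ (u g_j) \in HF^0(L',L')$ with image $L$. Since $(u g_j) \circ e' \circ f_j = e_L \neq 0$, this idempotent is nonzero; applying the local-ring lemma to the connected Lagrangian $L'$ forces $e' = e_{L'}$, so $f_j \circ (u g_j) = e_{L'}$ as well. Hence $f_j$ and $u g_j$ are mutually inverse and $L \simeq L'$ in the derived Fukaya category. The main obstacle is the local-ring claim itself: controlling idempotent-lifting across the Novikov/Oh filtration and verifying that the filtration respects the ring structure. Once these technical points are established, the remainder of the argument is purely formal.
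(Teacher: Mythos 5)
Your proof is correct and follows the same overall strategy as the paper's, but the two arguments diverge in how they establish the underlying ring-theoretic facts and in the closing move. The paper simply invokes the isomorphism $HF^*(L,L) \cong H^*(L,\Lambda)$ (and $HF^*(L',L') \cong H^*(L',\Lambda)$) as algebras, which is immediate here because $L$ is tautologically unobstructed (so the pearly/PSS comparison holds without any quantum corrections and without a spectral-sequence argument); it then observes that an element of $H^*(L,\Lambda)$ with nonvanishing $H^0$-component is invertible and that $H^*(L,\Lambda)$ has no nontrivial idempotents when $L$ is connected. Your route through Oh's spectral sequence, finite-dimensionality, and idempotent-lifting across the Novikov filtration arrives at the same local-ring conclusion, but it is substantially heavier machinery than is needed once tautological unobstructedness is in play, and commutativity/graded-commutativity of $HF^*(L,L)$ — which you treat as a general structural fact — is most cleanly seen here precisely because $HF^*(L,L)$ is classical cohomology. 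The other genuine difference is the endgame: the paper extracts split embeddings in both directions ($g_i: L \to L'$ and $h: L'\to L$) and shows $h g_i$ and $g_i h$ are each split self-embeddings, hence invertible; you use only one direction of the stable isomorphism and instead close via the idempotent $e' = f_j \circ (u g_j) \in HF^0(L',L')$, which must equal $e_{L'}$ by the idempotent-triviality of $HF^0(L',L')$. Your closing move is slightly more economical (it never touches the other half of the stable isomorphism), though it still uses connectedness of both $L$ and $L'$, just as the paper does.
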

\begin{proof}
Stably isomorphic implies that
\begin{equation}
\mu^2:HF(L',L)\otimes HF(L,L')\to HF(L,L)\simeq H^*(L,\Lambda)	
\end{equation}
hits the unit. Therefore, there exists closed morphisms $f_i, g_i$ such that $\sum \mu^2(f_i,g_i)=1$. In particular, there exists an $i$ such that $\mu^2(f_i,g_i)$ has non-vanishing $H^0(L,\Lambda)\cong \Lambda$ component. In the $\bZ/2\bZ$-graded case, there may be other terms of even degree, but non-vanishing $H^0(L,\Lambda)$-component implies $\mu^2(f_i,g_i)$ is invertible in the algebra $H^*(L,\Lambda)$. Hence, by composing $f_i$ with an invertible element of $HF(L,L)\cong H^*(L,\Lambda)$ on the left, we can assume $\mu^2(f_i,g_i)=1$. Hence, $g_i$ is a split embedding from $L$ to $L'$. Similarly, there exists a split embedding $h$ from $L'$ to $L$. Thus, $\mu^2(h,g_i)$, resp. $\mu^2(g_i,h)$ are split embeddings from $L$ to itself, resp. $L'$ to itself. Any such split embedding is invertible, as the algebras $H^*(L,\Lambda)$, resp. $H^*(L',\Lambda)$ has no non-trivial idempotents. Invertibility of $\mu^2(h,g_i)$ and $\mu^2(g_i,h)$ imply invertibility of both $h$ and $g_i$. 
\end{proof}

\section{Affine torus charts and the rationality of the mirror}\label{sec:mirror}
In this section, we prove the following statement:
\begingroup
\def\thethm{\ref*{thm:rationalmirror}}
\begin{thm}
Assume $M$ is Weinstein, $L$, as above, is a Lagrangian torus, and $H^1(M,\bR)\to H^1(L,\bR)$ is surjective. Assume $M$ is mirror dual to a projective or affine variety $X$ over $\Lambda$, in the sense that the wrapped Fukaya category is $\bZ$-graded and derived equivalent to $D^b Coh(X)$. Further assume, the equivalence maps $L$ to a sky-scraper sheaf of a smooth point $x\in X$. Then $x$ lies in a Zariski chart isomorphic to $H^1(L,\bG_m)\cong \bG_m^{b_1(L)}$. In particular, its irreducible component is rational. Under the given equivalence, other points in this chart correspond to Lagrangian tori (possibly equipped with unitary local systems) inside $M$.	
\end{thm}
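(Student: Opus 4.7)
The plan is to descend the algebraic family from \Cref{sec:algfamilydefclass} to a family parametrized by $H^1(L,\bG_m)$, transport it across the mirror equivalence to obtain a morphism $f:H^1(L,\bG_m)\to X$, and then verify that $f$ is an open immersion containing $x$ in its image.

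First, set $G := H^1(M,\bG_m)$ and let $H \subset G$ be the stabilizer subtorus of $L$ produced by \Cref{thm:stabilizer}. By the surjectivity hypothesis $H^1(M,\bR) \to H^1(L,\bR)$ and the identification of $\mathrm{Lie}(H)$ with $\ker(H^1(M,\Lambda)\to H^1(L,\Lambda))$, the quotient $G/H$ is canonically isomorphic to $H^1(L,\bG_m)$. I would first construct a descent of the algebraic family $h_L \otimes_{\cW(M)} \fM^M$ of right $\cW(M)$-modules from $G$ to $G/H\simeq H^1(L,\bG_m)$. Morally each $\fM^M|_h$ with $h\in H$ acts as the identity on $h_L$; at the level of families this is not automatic but should follow by combining the group-like property (\Cref{prop:grouplikewrapped}) with the triviality of the stabilizer on $h_L$. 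The outcome is an algebraic family $\mathfrak{h}_L^{desc}$ over $H^1(L,\bG_m)$ whose fiber at $z$ is quasi-isomorphic to $h_{\phi_z(L)}$ by \Cref{thm:mainabstract}.

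Transporting $\mathfrak{h}_L^{desc}$ across the equivalence $\cW(M)\simeq D^bCoh(X)$ yields an algebraic family of objects of $D^bCoh(X)$ with fiber at the identity isomorphic to $\cO_x$. Since $x$ is smooth and $L$ is a torus, $\dim X = \dim\mathrm{Ext}^1_X(\cO_x,\cO_x) = \dim HF^1(L,L) = b_1(L) = \dim H^1(L,\bG_m)$, so source and target are smooth of the same dimension. The locus in the base where a fiber is a shifted skyscraper of a smooth point of $X$ is Zariski open and contains the identity, so on a neighborhood of $1$ one recovers a morphism to $X$. To extend to all of $H^1(L,\bG_m)$, I would use the $H^1(L,\bG_m)$-symmetry built into $\fM^M$: for any $z_0$, the group-like property identifies the fiber at $zz_0$ with the action of $\fM^M|_z$ on $\mathfrak{h}_L^{desc}|_{z_0}$, so openness of the skyscraper locus propagates along orbits and extends $f$ to all of $H^1(L,\bG_m)$. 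Injectivity of $f$ on closed points follows immediately from \Cref{thm:stabilizer}: $f(z_1)=f(z_2)$ forces $\phi_{z_1}(L)$ and $\phi_{z_2}(L)$ to be stably isomorphic, hence $z_1 z_2^{-1}\in H$, hence $z_1=z_2$ in $G/H$. For the differential, I would identify $df_1 : H^1(L,\Lambda) \to T_x X \simeq \mathrm{Ext}^1_X(\cO_x,\cO_x)$ with the natural map $H^1(L,\Lambda) \to HF^1(L,L)$ induced by PSS and the construction of $\fM^M$; for a Lagrangian torus this is an isomorphism by dimension count. Group translation on the source then forces $df$ to be an isomorphism at every point, so $f$ is \'etale. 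An \'etale, universally injective morphism is an open immersion, identifying $H^1(L,\bG_m)$ with a Zariski open subset of the unique irreducible component of $X$ through $x$; rationality of that component is then immediate, and the concrete identification of other points of the chart with Lagrangians $\phi_z(L)$ (possibly endowed with unitary local systems) is exactly \Cref{thm:mainabstract}.

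The main obstacle I expect lies in the descent and in the passage from a family of modules to a bona fide morphism to $X$ itself. Making the $H$-equivariant descent precise at the level of families --- rather than merely up to fiberwise quasi-isomorphism --- requires carefully packaging the group-like quasi-isomorphisms into a coherent descent datum, and verifying that every fiber remains a shifted skyscraper at a smooth point (rather than, e.g., developing higher-dimensional support or hitting the singular locus of $X$) requires a semicontinuity argument in the moduli of objects of $D^bCoh(X)$, to be bootstrapped using the \'etaleness of $f$ where it is already defined together with the $H^1(L,\bG_m)$-homogeneity. A secondary subtlety is the affine versus projective distinction in invoking a moduli space of objects; in the projective case one can appeal to Toen-Vaquie, while in the affine case one works directly with perfect modules over a compact generator, and the openness statements need to be verified by hand.
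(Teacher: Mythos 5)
Your plan is close to the paper's in outline, but you take a genuinely harder route at three points, and the hardest of your ``main obstacles'' the paper simply sidesteps. Instead of descending the family along $G\to G/H$, the paper observes that since the cocharacter lattice of the stabilizer subtorus $H$ is primitive (it is a kernel into a torsion-free group), one can choose a complementary subtorus $J\subset H^1(M,\bG_m)$ with $H^1(M,\bG_m)=H\oplus J$ and simply \emph{restrict} $h_L\otimes_{\cW(M)}\fM^M$ to $J\cong H^1(L,\bG_m)$. This entirely avoids the descent problem you flag: the fibers of $h_L\otimes\fM^M$ over an $H$-coset are only stably isomorphic, not equal, so building a coherent descent datum from the group-like quasi-isomorphisms would require verifying cocycle conditions at the cochain level---a real piece of work which restriction to a section makes unnecessary.

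For injectivity on tangent vectors you propose to identify $df_1$ with a PSS map $H^1(L,\Lambda)\to HF^1(L,L)$ and then use a dimension count. The identification of the differential with PSS is a nontrivial step you do not supply, and the paper avoids it altogether: its Lemma \ref{lem:immersivenl} works directly with rank-$1$ local systems over $R=\Lambda[\epsilon]/\epsilon^2$ and shows two distinct tangent vectors give non-quasi-isomorphic modules because the corresponding first-order local systems restrict to \emph{distinct} nontrivial local systems on $L$ (using $H\cap J=\{1\}$), and distinct nontrivial rank-$1$ $R$-systems on a torus have different spaces of global sections. Finally, your ``openness of the skyscraper locus propagates along orbits'' step is vague: there is no group action on $X$ compatible with translation on the source, so it is not clear how openness near $1$ propagates. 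The paper instead exhibits the whole family as a single coherent Fourier--Mukai kernel $\cK$ on $X\times J$ (properness and perfectness of $\fN_L$ are the needed inputs, Lemmas \ref{lem:propernl} and \ref{lem:perfectnl}), invokes Inaba's representability of the moduli of simple complexes over a projective $\bar X$, gets a morphism $J\to Splcpx_{\bar X}$, and identifies the component through the image of $z=1$ with $X$ itself. That moduli-theoretic packaging is exactly what turns ``family of modules'' into an actual morphism to $X$, and your sketch does not quite reach it; in particular the affine case needs the push-forward-and-proper-support argument the paper supplies to land in $\bar X$ and then deduce the image lies in $X$.
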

\addtocounter{thm}{-1}
\endgroup
The idea is to construct ``charts'' in the moduli of objects/modules. The family $h_L\otimes_{\cW(M)} \fM^M$ is ``versal'' in the sense that infinitesimally it deforms in every possible direction. Using Theorem \ref{thm:stabilizer}, we will actually show that by restricting this family to a subvariety of the base, we obtain a family of modules that can be thought as a chart in the moduli of objects.

More precisely, let $H\subset H^1(M,\bG_m)$ be the subtorus of points $z$ such that $L$ and $\phi_z(L)$ are stably isomorphic (i.e. the subtorus with Lie algebra $ker(H^1(M,\Lambda)\to H^1(L,\Lambda))$ and cocharacter lattice $ker(H^1(M,\bZ)\to H^1(L,\bZ))$). By choosing a complement to the cocharacter lattice of $H$, we obtain a subtorus $J\subset H^1(M,\bG_m)$ such that $H^1(M,\bG_m)=H\oplus J$. Hence, the induced maps $J\to H^1(L,\bG_m)$ and $Lie(J)\to H^1(L,\Lambda)$ are isomorphisms. Consider the family 
\begin{equation}
\fN_L:=	h_L\otimes_{\cW(M)} \fM^M|_J
\end{equation}
parametrized by $J$. Heuristically, this family can be seen as an algebraic map from $J$ to the moduli of objects/modules. Also, by \Cref{thm:mainabstract}, one still has $\fN_L|_z\simeq h_{\phi_z(L)}$. We will show that $\fN_L$ is injective on $\Lambda$-points and an immersion, in an appropriate sense. Indeed, 
\begin{lem}\label{lem:injectivenl}
If $z_1,z_2\in J(\Lambda)$ are such that $\fN_L|_{z_1}\simeq \fN_L|_{z_2}$, then $z_1=z_2$.
\end{lem}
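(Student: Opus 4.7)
The plan is to reduce injectivity of $\fN_L$ on $\Lambda$-points directly to \Cref{thm:stabilizer} via the group-like property of $\fM^M$ and the main abstract theorem.

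First I would translate the hypothesis $\fN_L|_{z_1}\simeq \fN_L|_{z_2}$ into a statement about Lagrangians. By the definition of $\fN_L$ and \Cref{thm:mainabstract}, we have
\begin{equation}
\fN_L|_{z_i} \;=\; h_L\otimes_{\cW(M)}\fM^M|_{z_i} \;\simeq\; h_{\phi_{z_i}(L)}
\end{equation}
for $i=1,2$. Thus the hypothesis yields $h_{\phi_{z_1}(L)}\simeq h_{\phi_{z_2}(L)}$ as right $\cW(M)$-modules, which in particular means $\phi_{z_1}(L)$ and $\phi_{z_2}(L)$ are stably isomorphic (indeed, quasi-isomorphic) objects after Yoneda embedding.

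Next I would use the group-like property (\Cref{prop:grouplike1}, \Cref{prop:grouplikewrapped}) to transport this stable isomorphism back to $L$. Setting $w:=z_2z_1^{-1}\in J(\Lambda)$, convolving both sides with $\fM^M|_{z_1^{-1}}$ and applying the group-like equivalence $\fM^M|_{z_1^{-1}}\otimes_{\cW(M)}\fM^M|_{z_2}\simeq \fM^M|_{w}$ gives
\begin{equation}
h_L \;\simeq\; h_{\phi_{z_1}(L)}\otimes_{\cW(M)}\fM^M|_{z_1^{-1}} \;\simeq\; h_{\phi_{z_2}(L)}\otimes_{\cW(M)}\fM^M|_{z_1^{-1}} \;\simeq\; h_{\phi_{w}(L)},
\end{equation}
where the last quasi-isomorphism is another application of \Cref{thm:mainabstract}. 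Hence $L$ is stably isomorphic to $\phi_w(L)$.

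Now I would invoke \Cref{thm:stabilizer}: the stability locus $H\subset H^1(M,\bG_m)$ is precisely the subtorus whose Lie algebra equals $\ker(H^1(M,\Lambda)\to H^1(L,\Lambda))$, so $w\in H(\Lambda)$. But $w=z_2z_1^{-1}$ lies in $J(\Lambda)$ by hypothesis, and $J$ was chosen so that its cocharacter lattice is complementary to that of $H$ in $H^1(M,\bZ)$; equivalently $J\cap H=\{1\}$ as algebraic subgroups of $H^1(M,\bG_m)$. Consequently $w=1$, i.e.\ $z_1=z_2$. The only step that required genuine input was the reduction to \Cref{thm:stabilizer}; everything else is formal manipulation of the group-like family and the Yoneda picture, so I do not anticipate a real obstacle.
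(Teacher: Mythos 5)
Your proof is correct and follows essentially the same route as the paper: translate the module quasi-isomorphism into a stable isomorphism of $\phi_{z_1}(L)$ and $\phi_{z_2}(L)$, use the group-like property to reduce to $L$ being stably isomorphic to $\phi_w(L)$ for $w=z_2z_1^{-1}$ (the paper uses $z_2^{-1}z_1$, immaterial since $H$ is a group), invoke \Cref{thm:stabilizer} to conclude $w\in H$, and finish via $H\cap J=\{1\}$. You have merely made explicit the intermediate appeals to \Cref{thm:mainabstract} and the group-like property that the paper leaves implicit.
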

\begin{proof}
If $\fN_L|_{z_1}\simeq \fN_L|_{z_2}$, then 	$\phi_{z_1}(L)$ and $\phi_{z_2}(L)$ are stably isomorphic. Hence, $L$ is stably isomorphic to $\phi_{z_2^{-1}z_1}(L)$ and by \Cref{thm:stabilizer}, $z_2^{-1}z_1\in H$. On the other hand, $z_2^{-1}z_1\in J$ as well, and $H\cap J=\{1\}$. This implies $z_1=z_2$.
\end{proof}
Without loss of generality, we can assume $L$ is exact. Indeed, the surjectivity of $H^1(M,\bR)\to H^1(L,\bR)$ implies that there exists a closed $1$-form $\alpha$ on $M$ whose restriction to $L$ agree with the Liouville form. Hence, $\phi_\alpha^{-1}(L)=\phi_{T^{-[\alpha]}}(L)$ is exact. We can also assume $T^{[\alpha]}\in J$. Then, the families $h_{L}\otimes_{\cW(M)} \fM^M|_J$ and 	$h_{\phi_{T^{-[\alpha]}}(L)}\otimes_{\cW(M)} \fM^M|_J$ are related by the change of parameter $z\mapsto T^{[\alpha]}z$, thanks to group-like property. From now on assume $L$ is exact. 

Let $z\in J(\Lambda)$. Recall that a tangent vector at $z$ can be identified with an element $z_\nu\in J(\Lambda[\epsilon]/(\epsilon^2))$ that specialize to $z$ at $\epsilon=0$. A map is an immersion if its differential is injective on tangent vectors. Our immersion statement is 
\begin{lem}\label{lem:immersivenl}
If $z_{\nu_1}$ and $z_{\nu_2}$ are two tangent vectors at $z$, and $\fN_L|_{z_{\nu_1}}$ and $\fN_L|_{z_{\nu_2}}$ are quasi-isomorphic, then $z_{\nu_1}=z_{\nu_2}$.
\end{lem}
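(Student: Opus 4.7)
The strategy parallels that of \Cref{lem:injectivenl}, but at the infinitesimal level: the plan is to identify the ``derivative'' of the map $z\mapsto \fN_L|_z$ with the restriction $Lie(J)\hookrightarrow H^1(M,\Lambda)\to H^1(L,\Lambda)$, which is an isomorphism by the choice of $J$. Since two isomorphic first-order deformations of a module share the same class in $Ext^1$, injectivity of this derivative will yield injectivity on tangent vectors.

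First I would reduce to a statement at the identity of $J$. Write the two tangent vectors as $z_{\nu_i}=z\cdot\exp(\epsilon\tilde v_i)$ with $\tilde v_i\in Lie(J)$, $i=1,2$. The group-like property of $\fM^M$ (\Cref{prop:grouplikewrapped}), base-changed to $\Lambda[\epsilon]/(\epsilon^2)$, gives
\begin{equation}
\fM^M|_{z_{\nu_i}}\simeq \fM^M|_z\otimes_{\cW(M)}\fM^M|_{\exp(\epsilon\tilde v_i)};
\end{equation}
convolving with $h_L$ and using \Cref{thm:mainabstract} to rewrite $\fN_L|_z\simeq h_{\phi_z(L)}$ yields
\begin{equation}
\fN_L|_{z_{\nu_i}}\simeq h_{\phi_z(L)}\otimes_{\cW(M)}\fM^M|_{\exp(\epsilon\tilde v_i)}.
\end{equation}
Thus it suffices to show that the assignment
\begin{equation}
\tilde v\mapsto \bigl[\,h_{\phi_z(L)}\otimes_{\cW(M)}\fM^M|_{\exp(\epsilon\tilde v)}\,\bigr]
\end{equation}
into the set of isomorphism classes of first-order deformations of $h_{\phi_z(L)}$ is injective.

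Next I would compute the corresponding deformation class. Because $\epsilon$ is nilpotent, $\exp(\epsilon\tilde v)$ lies in every affinoid neighborhood of the identity after base change to $\Lambda[\epsilon]/(\epsilon^2)$, so \Cref{lem:hlconvmtohlsemicontwrapped} (applied to $\phi_z(L)$ in place of $\tilde L$) identifies $h_{\phi_z(L)}\otimes_{\cW(M)}\fM^M|_{\exp(\epsilon\tilde v)}$ with $h_{\phi_z(L)}^{an}|_{\exp(\epsilon\tilde v)}$. Inspecting the defining formula, the latter is the right module attached to $(\phi_z(L),\exp(\epsilon\,\tilde v|_{\phi_z(L)}))$ with its $\Lambda[\epsilon]^*$-local system. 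Its class in
\begin{equation}
Ext^1_{\cW(M)}(h_{\phi_z(L)},h_{\phi_z(L)})\simeq HF^1(\phi_z(L),\phi_z(L))\simeq H^1(\phi_z(L),\Lambda)
\end{equation}
can be read off the $\epsilon$-linear term of the weighted strip counts in the differential of $h^{an}$: it equals the PSS representative of $\tilde v|_{\phi_z(L)}$, which, via the Lagrangian isotopy from $L$ to $\phi_z(L)$, corresponds to $\tilde v|_L\in H^1(L,\Lambda)$.

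Finally, the composition $Lie(J)\hookrightarrow H^1(M,\Lambda)\to H^1(L,\Lambda)$ is an isomorphism by the definition of $J$, so the deformation class map is injective. Isomorphic first-order deformations share the same class in $Ext^1$; hence $\fN_L|_{z_{\nu_1}}\simeq \fN_L|_{z_{\nu_2}}$ forces $\tilde v_1|_L=\tilde v_2|_L$, whence $\tilde v_1=\tilde v_2$ and $z_{\nu_1}=z_{\nu_2}$. The main technical obstacle will be the explicit identification of the deformation cocycle coming from $\epsilon$-differentiation of the weighted Floer counts with the PSS image of $\tilde v|_{\phi_z(L)}$: at the chain level one must recognize the cocycle $u\mapsto \tilde v([\partial_L u])\,T^{E(u)}$ as a representative of the class $\tilde v|_{\phi_z(L)}\in H^1(\phi_z(L),\Lambda)\cong HF^1(\phi_z(L),\phi_z(L))$, which for tautologically unobstructed Lagrangians reduces to a standard PSS comparison.
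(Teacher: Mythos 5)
Your proposal is correct in spirit but takes a genuinely different route from the paper. You phrase the injectivity on tangent vectors as injectivity of a Kodaira--Spencer map: reduce via the group-like property to first-order deformations of $h_{\phi_z(L)}$ by $\exp(\epsilon\tilde v)$, identify the deformation class in $Ext^1(h_{\phi_z(L)},h_{\phi_z(L)})\cong H^1(L,\Lambda)$ with $\tilde v|_L$ via PSS, and conclude from the isomorphism $Lie(J)\xrightarrow{\sim} H^1(L,\Lambda)$. The paper instead avoids any Kodaira--Spencer or PSS computation. It works directly over $R=\Lambda[\epsilon]/(\epsilon^2)$: it builds the $R$-linear Fukaya category $\widetilde{\cW}(M)$ with rank-one $R$-local systems, uses the Yoneda lemma to compute the family-hom $hom(h^{alg}_L|_{z_{\nu_1}},h^{alg}_L|_{z_{\nu_2}})$ as $R\Gamma_L\bigl((\xi_{z_{\nu_2}}\otimes\xi_{z_{\nu_1}}^{-1})|_L\bigr)$, and observes that if $z_{\nu_1}\neq z_{\nu_2}$ then $z_{\nu_2}z_{\nu_1}^{-1}\in J(R)\setminus\{1\}$ is outside $H$, hence $\xi_d:=(\xi_{z_{\nu_2}}\otimes\xi_{z_{\nu_1}}^{-1})|_L$ is a nontrivial $R$-local system with monodromy in $1+\epsilon R$, so $H^0=\epsilon R$; no element of $\epsilon R$ can be a unit (nor can two such compose to one since $\epsilon^2=0$), whereas the self-hom in degree $0$ is all of $R$. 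This kills any quasi-isomorphism without ever computing a cohomology class.

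Two points in your write-up deserve care and are not filled in. First, the assertion that ``isomorphic first-order deformations share the same class in $Ext^1$'' is only true up to the action of $Aut(h_{\phi_z(L)})$ on $Ext^1$; you need to observe that here $End^0(h_{\phi_z(L)})\cong HF^0(\phi_z(L),\phi_z(L))\cong\Lambda$ because $L$ is a connected torus and the category is $\bZ$-graded, so automorphisms are scalars and act trivially on $Ext^1$. Second — and this is the real technical burden, which you correctly flag but do not discharge — the identification of the $\epsilon$-linearization of the weighted strip counts with the PSS image of $\tilde v|_{\phi_z(L)}\in H^1(\phi_z(L),\Lambda)$ under $HF^1(\phi_z(L),\phi_z(L))\cong H^1(\phi_z(L),\Lambda)$ requires an argument matching the chain-level cocycle $u\mapsto \tilde v([\partial_L u])T^{E(u)}$ against the image of the Morse cocycle representing $\tilde v|_L$; this is exactly the sort of moduli comparison the paper's construction of $\widetilde{\cW}(M)$ over $R$ is designed to sidestep. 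Your route would work once that identification is proved (or replaced by an appeal to the paper's \Cref{lem:localsystofukaya} after base change to $R$), but as written it is incomplete at this step; the paper's argument is shorter precisely because it never needs to name the cohomology class, only to see that the relevant degree-zero $R$-module is $\epsilon R$ rather than $R$.
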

Here, $\fN_L|_{z_{\nu_1}}$ and $\fN_L|_{z_{\nu_2}}$ are families of modules over $\cW(M)$ parametrized by $Spec(\Lambda[\epsilon]/(\epsilon^2))$. Denote $\Lambda[\epsilon]/(\epsilon^2)$ by $R$.
\begin{proof}
As $L$ is exact, it can be considered as an object of $\cW(M)$ and $\fN_L$ is quasi-isomorphic to $\fM^M(\cdot,L)$. In the language of \Cref{sec:algfamilydefclass}, this is the same as $h_L^{alg}|_J$ (see \Cref{defn:hlalg}). In particular, $\fN_L|_{z_{\nu_i}}\simeq h^{alg}_L|_{z_{\nu_i}}$. Let $\xi_{z_{\nu_i}}$ denote the rank $1$, $R=\Lambda[\epsilon]/(\epsilon^2)$ local system on $M$ corresponding to $z_{\nu_i}$. Then $h^{alg}_L|_{z_{\nu_i}}$ is represented as ``the Yoneda module of $(L,\xi_{z_{\nu_i}}|_L)$''. We will use the Yoneda lemma to identify $hom(h^{alg}_L|_{z_{\nu_i}},h^{alg}_L|_{z_{\nu_j}})$ with $CF((L,\xi_{z_{\nu_i}}|_L),(L,\xi_{z_{\nu_j}}|_L))$ (and hence with $RHom(\xi_{z_{\nu_i}}|_L,\xi_{z_{\nu_j}}|_L)$ in the category of $R$-linear local systems over $L$). 

To make this precise, one can construct the $R$-linear wrapped Fukaya category of Lagrangians equipped with rank $1$ $R$-linear local systems. Consider this category spanned by the objects $ob(\cW(M))\sqcup \{(L,\xi_{z_{\nu_i}}|_L):i=1,2\}$, and denote it by $\widetilde{\cW}(M)$. Explicitly, the $A_\infty$-subcategory of $\widetilde{\cW}(M)$ spanned of the objects of $\cW(M)$ is obtained by base change $\cW(M)\otimes_\Lambda R$, but once the other two objects are added, the $A_\infty$-structure maps are obtained by counting discs whose weights are twisted by the local systems. Consider the Yoneda modules $h_{(L,\xi_{z_{\nu_i}}|_L)}$ and restrict them to the subcategory $\cW(M)\otimes_\Lambda R$. The category of modules over this subcategory can be identified with families of modules over $\cW(M)$ parametrized by $Spec(R)$. In particular, $h_{(L,\xi_{z_{\nu_i}}|_L)}$ corresponds to $h^{alg}_L|_{z_{\nu_i}}$, and the homomorphisms $hom(h^{alg}_L|_{z_{\nu_i}},h^{alg}_L|_{z_{\nu_j}})$ of families is quasi-isomorphic to  $hom(h_{(L,\xi_{z_{\nu_i}}|_L)},h_{(L,\xi_{z_{\nu_j}}|_L)})$. By Yoneda lemma, the latter is quasi-isomorphic to 
\begin{equation}\label{eq:tildewcflocal}
\widetilde{\cW}(M)((L,\xi_{z_{\nu_i}}|_L),(L,\xi_{z_{\nu_j}}|_L))\simeq CF((L,\xi_{z_{\nu_i}}|_L),(L,\xi_{z_{\nu_j}}|_L))\simeq R\Gamma_L((\xi_{z_{\nu_j}}\otimes \xi_{z_{\nu_i}}^{-1})|_L)
\end{equation}
If $z_{\nu_1}\neq z_{\nu_2}$, then $z_{\nu_2}z_{\nu_1}^{-1}\in J(R)\setminus\{1\}$, which does not intersect $H$. Therefore, $z_{\nu_2}z_{\nu_1}^{-1}\not\in H$, and the restriction $\xi_d:=(\xi_{z_{\nu_2}}\otimes \xi_{z_{\nu_1}}^{-1})|_L$ is a non trivial rank $1$ $R$-local system on $L$. Since, $z=z_{\nu_1}|_{\epsilon=0}= z_{\nu_2}|_{\epsilon=0}$, on the other hand, the $\Lambda=R/(\epsilon)$-local system obtained from $\xi_d$ by setting $\epsilon=0$ is trivial. In other words, $\xi_d$ has monodromy in $1+\epsilon R\subset R^*$, and this monodromy is non-trivial. In particular, $R^0\Gamma_L(\xi_d)=\Gamma_L(\xi_d)=\epsilon R$. 
On the other hand, if one puts $i=j$ in \eqref{eq:tildewcflocal}, one obtains $R$ in degree $0$. This suffices to show that $h^{alg}_L|_{z_{\nu_1}}$ and $h^{alg}_L|_{z_{\nu_2}}$ are different, which finishes the proof.
\end{proof}
\begin{note}
Observe that the only property of the ring $R$ that we need here is that the fixed set of any $R$-linear non-trivial action of $\pi_1(L)$ on $R$ is different from $R$ (as an $R$-module). More precisely, we distinguished a non-trivial $R$-local system on $L$ of rank $1$ from the trivial one by using the fact that the global sections are different. This is equivalent to the given condition. This property holds for any field extension of $\Lambda$, any commutative Artinian local ring, as well as for any integral domain. The conclusion is, for any two different elements $z_1,z_2\in J(R)$, $\fN_L|_{z_1}$ and $\fN_L|_{z_2}$ are different (as families of modules parametrized by $Spec(R)$). 
\end{note}
Next, we discuss technical properties of this family. 
\begin{lem}\label{lem:propernl}
The family $\fN_L$ is proper, i.e. $\fN_L(L')$ is cohomologically finitely generated over the base $J\subset H^1(M,\Lambda)$, for any $L'\in \cW(M)$.
\end{lem}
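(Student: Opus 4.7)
The plan is to reduce the computation to the smooth proper category $\cW(M,\fstop)$ via the stop-removal technique employed in \Cref{subsec:exactcase}. By \cite{girouxpardon}, endow $M$ with a Lefschetz fibration structure and let $\fstop$ denote the corresponding fiber stop, so that $\cW(M,\fstop)$ is both smooth and proper and $\cW(M)$ is the quotient $\cW(M,\fstop)/\cD$ by a finite collection of linking discs. Since $L$ is compact and $L'$ is cylindrical, both can be arranged to be disjoint from $\fstop$ and hence lift to objects of $\cW(M,\fstop)$, where the Yoneda modules $h_L$ and $h^{L'}$ are automatically proper by properness of the ambient category.

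First, using \Cref{lem:hlconvhlequalshom}, I would rewrite
\begin{equation*}
\fN_L(L') \simeq h_L \otimes_{\cW(M)} \fM^M|_J \otimes_{\cW(M)} h^{L'}.
\end{equation*}
Then, applying the descent argument from the proofs of \Cref{lem:hlconvmtohlsemicontwrapped} and \Cref{prop:grouplikewrapped} --- quotienting by $\cD$ on each side and invoking \cite[Lemma 3.15]{GPS1} --- this triple convolution is identified with
\begin{equation*}
h_L \otimes_{\cW(M,\fstop)} \fM^M_{\cW(M,\fstop)}|_J \otimes_{\cW(M,\fstop)} h^{L'}.
\end{equation*}

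Since $\fM^M_{\cW(M,\fstop)}$ is a proper family of bimodules over the smooth category $\cW(M,\fstop)$, \Cref{lem:properimpliesperfect} shows that its restriction $\fM^M_{\cW(M,\fstop)}|_J$ is a perfect family. Iterating the family analogue of \Cref{cor:properconvperfisperf}, in which one convolves a perfect family of bimodules on each side with the proper Yoneda modules $h_L$ and $h^{L'}$ (which is possible precisely because $\cW(M,\fstop)$ is proper), then shows that the triple convolution above is quasi-isomorphic to a bounded complex of finite-rank projective $\cO(J)$-modules. Such a perfect complex is in particular cohomologically finitely generated over the Noetherian ring $\cO(J)\cong\Lambda[z^{H_1(L)}]$, which is exactly the conclusion sought.

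The main obstacle is the descent step passing from $\cW(M)$ to $\cW(M,\fstop)$; however, this is a routine repetition of the manipulations already carried out in \Cref{subsec:exactcase} and requires no essentially new ingredients beyond careful bookkeeping with the quotient by $\cD$ and the observation that both $h_L$ and $h^{L'}$ are acyclic on each disc of $\cD$ (the former because $L$ is compact, the latter because the discs can be pushed into the cylindrical end far from $L'$).
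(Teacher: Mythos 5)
Your proof is correct in outline, but it takes a substantially longer route than the paper's own argument, which is essentially one line. Before \Cref{lem:propernl} the paper has already arranged (via \Cref{thm:stabilizer} and the group-like property) that $L$ itself is exact, hence an object of $\cW(M)$. With that in hand, \Cref{lem:hlconvhlequalshom} gives directly
\begin{equation*}
\fN_L(L') \simeq h_L\otimes_{\cW(M)} \fM^M|_J \otimes_{\cW(M)} h^{L'} \simeq \fM^M(L',L)|_J,
\end{equation*}
and $\fM^M(L',L)$ is \emph{by construction} the free module $\Lambda[z^{H_1(M)}]\otimes_\Lambda CF(L',L)$, which is of finite rank because $L$ is compact. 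No descent, no perfectness of $\fM^M_{\cW(M,\fstop)}$, and no appeal to \Cref{subsec:exactcase} is needed.

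The detour you propose through $\cW(M,\fstop)$ is where the delicate point hides. You want to identify the $\cW(M)$-triple convolution with a triple convolution over $\cW(M,\fstop)$, quotienting by $\cD$ "on each side." But $h^{L'}$ as a Yoneda module over $\cW(M,\fstop)$ does not descend to $\cW(M)$ — $L'$ need not be acyclic against the linking discs — so a naive double quotient as in \Cref{lem:hlconvmtohlsemicontwrapped} does not directly apply here. One can repair this by first contracting the two inner convolutions (giving $\fM^M(L',L)|_J$) before worrying about the model, but at that point you have reproduced the paper's one-step argument and the whole $\cW(M,\fstop)$ scaffolding becomes superfluous. If you do want to argue as you propose, you should either contract $h^{L'}$ before descending, or explain why the localization defining $\cW(M)$ from $\cW(M,\fstop)$ does not change $CF(L',L)$ when $L$ is compact — neither is automatic from the lemmas you invoke. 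The practical moral is that for \emph{this} lemma the heavy machinery of \Cref{subsec:exactcase} (properness/perfectness of $\fM^M_{\cW(M,\fstop)}$, \cite[Lemma 3.15]{GPS1}) is overkill: the underlying module is literally finite free, so there is nothing to prove beyond unwinding definitions.
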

\begin{proof}
As $L$ is exact, we can assume $L$ is an object of $\cW(M)$. By \Cref{lem:hlconvhlequalshom}
\begin{equation}
	\fN_L(L')\simeq \fN_L \otimes_{\cW(M)} h^{L'} =h_L\otimes_{\cW(M)} \fM^M|_J \otimes_{\cW(M)} h^{L'} =\fM^M(L',L)|_J
\end{equation}
$\fM^M(L',L)$ is by definition a trivial vector bundle over $H^1(M,\bG_m)$ of finite rank; hence, so is its restriction to $J$.
\end{proof}
We also have:
\begin{lem}\label{lem:perfectnl}
$\fN_L$	is a perfect family, i.e. it can be represented as a twisted complex of constant families. Also, for any $z$, $Hom(\fN_L|_z,\fN_L|_z):=H^*(hom(\fN_L|_z,\fN_L|_z))$ vanishes in negative degree, and it is one dimensional at degree $0$.
\end{lem}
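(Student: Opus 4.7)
The two claims decouple cleanly, so I will treat them in sequence.

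\emph{Perfectness.} The strategy is to invoke \Cref{lem:properimpliesperfect}, which reduces perfectness of a family to properness, provided the base category is smooth. Since $M$ is Weinstein, $\cW(M)$ is a smooth $A_\infty$-category (as already used in \Cref{subsec:exactcase}). Properness of $\fN_L$ was established in \Cref{lem:propernl}: the argument there identifies $\fN_L(L')\simeq \fM^M(L',L)|_J$ via the Yoneda-type collapse of \Cref{lem:hlconvhlequalshom}, and $\fM^M(L',L)$ is by construction a locally free $\cO(H^1(M,\bG_m))$-module of finite rank on each slot (in the wrapped setting, working with a suitable quasi-isomorphic minimal or cohomologically finite model after restricting to the compact object $L$). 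Together these give the perfect representative as an iterated cone of locally constant families of Yoneda modules.

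\emph{Self-$\mathrm{Hom}$.} First, by \Cref{thm:mainabstract}, $\fN_L|_z\simeq h_{\phi_z(L)}$ with $\phi_z(L)=(\phi^1_\alpha(L),\xi_{z_0}|_L)$, where $\alpha$ represents $val_T(z)$ and $\xi_{z_0}|_L$ is a rank-$1$ unitary local system. Next, to compute $\mathrm{Hom}(\fN_L|_z,\fN_L|_z)$, I would extend $\cW(M)$ by the Lagrangian brane $\phi_z(L)$ (choosing compatible Floer and perturbation data, as done in the proof of \Cref{lem:immersivenl}) to a category $\widetilde{\cW}(M)$, and use Yoneda in this larger category: the derived hom between the two copies of $h_{\phi_z(L)}$, now presented as restrictions of genuine Yoneda modules from $\widetilde{\cW}(M)$, is quasi-isomorphic to
\begin{equation}
	\widetilde{\cW}(M)(\phi_z(L),\phi_z(L))=CF(\phi_z(L),\phi_z(L)).
\end{equation}
This identification uses that $\cW(M)$ split-generates $\widetilde{\cW}(M)$, which follows from smoothness of $\cW(M)$ together with properness of the Yoneda module $h_{\phi_z(L)}$ (as in the compact vs.\ wrapped comparison earlier in the paper).

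\emph{Topological identification.} Finally, I compute $HF^*(\phi_z(L),\phi_z(L))$. Since $\phi_z(L)$ is a Lagrangian torus equipped with a unitary rank-$1$ local system, $\mathrm{End}(\xi_{z_0}|_L)\cong \underline{\Lambda}_L$, so the PSS-type argument of \Cref{lem:localsystofukaya} (applied to $\phi_z(L)$ in place of $L$) yields a $\bZ$-graded algebra isomorphism
\begin{equation}
	HF^*(\phi_z(L),\phi_z(L))\cong H^*(L,\Lambda)\cong \textstyle\bigwedge^*\Lambda^{b_1(L)}.
\end{equation}
Because the mirror equivalence is $\bZ$-graded (so the Maslov class of $L$ vanishes and the absolute grading agrees with the topological one), the right-hand side is concentrated in non-negative degrees and is one-dimensional in degree $0$. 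This establishes both claims.

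\emph{Expected obstacle.} The main delicate point is the Yoneda-type identification of $\mathrm{Hom}(\fN_L|_z,\fN_L|_z)$ with $HF^*(\phi_z(L),\phi_z(L))$: $\phi_z(L)$ is not a priori an object of $\cW(M)$, so one must justify passing to the extension $\widetilde{\cW}(M)$ and descending the Yoneda identification. This is essentially the same mechanism used in the proof of \Cref{lem:immersivenl}, and one has to check that the computation is compatible with the derived structure on modules over $\cW(M)$ via split-generation. The remaining parts (perfectness via \Cref{lem:properimpliesperfect,lem:propernl} and the torus cohomology computation) are essentially formal.
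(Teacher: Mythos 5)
Your proof is correct, but it takes a genuinely different route from the paper for both claims, and the two approaches are worth contrasting. For perfectness, you invoke \Cref{lem:properimpliesperfect} together with \Cref{lem:propernl}; this is a valid reduction (and \Cref{lem:properimpliesperfect} indeed does not require properness of the ambient category, only smoothness). The paper nods to this route by citing an external reference, but its in-paper argument is different: it observes that $\fN\mapsto\fN\otimes_{\cW(M)}^{rel}\fM^M|_J$ is an auto-equivalence of the dg category of families (thanks to the group-like property of $\fM^M$), hence takes compact objects to compact objects, so $\fN_L$, being the image of the constant family $\underline{h_L}$, is automatically perfect. For the self-$\mathrm{Hom}$ computation you pass through $\fN_L|_z\simeq h_{\phi_z(L)}$, extend $\cW(M)$ by $\phi_z(L)$, and run a Yoneda-plus-split-generation argument followed by PSS for $\phi_z(L)$. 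This works, but it is more delicate than what the paper does: $\phi_z(L)$ is a compact but typically \emph{non-exact} Lagrangian, so adding it as an object to a wrapped-type category and justifying the Yoneda identification over the extension requires some care beyond the exact, infinitesimal setting of \Cref{lem:immersivenl}. The paper sidesteps all of this by exploiting the \emph{same} auto-equivalence used for perfectness: since $(\cdot)\otimes_{\cW(M)}^{rel}\fM^M|_J$ is an equivalence of families, it preserves hom-complexes, so $hom(\fN_L,\fN_L)\simeq hom(\underline{h_L},\underline{h_L})\simeq CF(L,L)\otimes\cO(J)$, and then the degree computation is just $H^*(L)$ at $z=1$, no PSS for a deformed Lagrangian needed. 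The paper's route buys uniformity (one auto-equivalence does both jobs) and avoids the non-exact subtleties; your route buys a more concrete identification of the pointwise self-$\mathrm{Hom}$ as $HF^*(\phi_z(L),\phi_z(L))$, at the cost of the extra foundational work. One small remark: your appeal to the $\bZ$-graded mirror equivalence to pin down the grading is unnecessary — the conclusion only needs the PSS isomorphism $HF^*(L,L)\cong H^*(L)$ to respect grading, which is part of the $\bZ$-graded Fukaya-category hypotheses already in force in this section.
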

\begin{proof}
Perfectness follow from properness, see \cite[Lemma 3.15, Lemma 3.16]{owniteratespadic} for some versions (these statements does not use properness of the category). Alternatively, consider the functor 
\begin{equation}
	\fN\mapsto \fN\otimes_{\cW(M)}^{rel}\fM^M|_J
\end{equation}
on the families of right modules over $J$. Group-like property implies that this functor is an equivalence with inverse given by 
\begin{equation}
	\fN\mapsto \fN\otimes_{\cW(M)}^{rel}\fM^{M,-}|_J
\end{equation}
Hence, it maps compact objects of the category of families to compact objects. It is standard that the perfect modules coincide with the compact objects (see \cite{kellerdg}), and this still holds for families of modules. Therefore, $\fN_L$, the image of the constant family $\underline{h_L}$ is perfect.

Properties of hom-sets also follow from this auto-equivalence of families, namely $hom(\underline{h_L},\underline{h_L})$ is quasi-isomorphic to $hom(h_L,h_L)\otimes \cO(J)\simeq CF(L,L)\otimes \cO(J)$. This property is preserved under the equivalences of families.
\end{proof}
\begin{note}
This claim holds for points $z$ over other rings as well, not only for $\Lambda$-points.	
\end{note}
The properties of $\fN_L$ shown in \Cref{lem:injectivenl}, \Cref{lem:immersivenl}, \Cref{lem:propernl}, and \Cref{lem:perfectnl} are intrinsic to this family, and are preserved under the mirror equivalence.  

We use $Coh(X)$ to denote an $A_\infty$-enhancement of $D^b Coh(X)$, which is derived equivalent (more precisely Morita equivalent) to $\cW(M)$ by assumption (and by the uniqueness of dg enhancements, see \cite{orlovlunts}). Using the Morita equivalence, $\fN_L$ can be translated to a family over $Coh(X)$, i.e. an $A_\infty$-functor 
\begin{equation}
	Coh(X)^{op}\to \cO(J)^{mod}\simeq Coh(J)
\end{equation}
By \cite{indcoh}, one can see this functor (or rather its extension to $IndCoh(X)$) as a Fourier-Mukai transform
\begin{equation}
\scrF\mapsto	Rq_*\derivedsheafhom (p^*\scrF, \cK)	
\end{equation}
where $\cK\in IndCoh(X\times J)$, $\derivedsheafhom$ denote the local-hom and $p:X\times J\to X$, $q:X\times J\to J$ are the first and second projections. 
\begin{rk}
To obtain a functor $Coh(X)\to \cO(J)^{mod}\simeq Coh(J)$ and a Fourier-Mukai transform of the form $Rq_*(p^*\scrF\otimes^L  \cK)$ in its more typical form, one has to consider the family $\fM^M|_J\otimes_{\cW(M)} h^L$ instead. Everything else works in the same way; hence, we stick to the family $\fN_L$.
\end{rk}
One can construct the Fourier-Mukai kernel without reference to \cite{indcoh}, as the proof of the following lemma implies:
\begin{lem}\label{lem:coherenceK}
$\cK$ is quasi-isomorphic to a bounded complex of coherent sheaves over $X\times J$.
\end{lem}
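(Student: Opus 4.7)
The plan is to leverage the perfectness of $\fN_L$ established in \Cref{lem:perfectnl}. Since $\fN_L$ is a perfect family of right modules over $\cW(M)$, it is quasi-isomorphic to a direct summand of a finite iterated cone of locally constant families of Yoneda modules $\underline{h^{L'}} \otimes_\Lambda E$, where $L' \in \cW(M)$ and $E$ is a finite rank vector bundle on $J$. The assignment $\fN \mapsto \cK$ of the Fourier--Mukai kernel is functorial with respect to cones and direct summands (this is intrinsic to the setup of \cite{indcoh}, and can also be seen directly from the bar construction underlying the convolution), so it suffices to show that the kernels corresponding to the constant family building blocks lie in $D^b Coh(X \times J)$, and that this subcategory is closed under cones and summands inside $IndCoh(X \times J)$.

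For the building blocks: under the equivalence $\cW(M) \simeq Coh(X)$, each $L'$ corresponds to an object $\scrF_{L'} \in D^b Coh(X)$ representable by a bounded complex of coherent sheaves, and the Yoneda module $h^{L'}$ translates to the representable functor $\scrG \mapsto \derivedsheafhom_X(\scrG, \scrF_{L'})$. The constant family $\underline{h^{L'}} \otimes_\Lambda E$ then corresponds to the exterior product kernel $p^* \scrF_{L'} \otimes q^* E$ on $X \times J$, where $p$ and $q$ are the projections to the two factors. Since $p$ and $q$ are flat and $X \times J$ is Noetherian, this kernel is a bounded complex of coherent sheaves. Cones of morphisms between such kernels remain in $D^b Coh(X \times J)$, and direct summands of bounded coherent complexes on the Noetherian scheme $X \times J$ have coherent cohomology sheaves in the same bounded degree range (using that $Coh(X \times J)$ is itself closed under summands). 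Combining these observations yields $\cK \in D^b Coh(X \times J)$, as required.

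The hard part will be justifying the functoriality of the Fourier--Mukai kernel construction under the $A_\infty$-level operations of cones and summands of families, together with the identification of the kernel of a constant family as the stated exterior product with the correct variance. These points are essentially bookkeeping within \cite{indcoh}, but require explicit verification: the convention $\scrG \mapsto Rq_* \derivedsheafhom(p^*\scrG, \cK)$ adopted in the paragraph preceding this lemma, rather than the more common $\scrG \mapsto Rq_*(p^*\scrG \otimes^L \cK)$, must be matched against the right-module conventions used for $\fN_L$, and the vector bundle factor $E$ must be handled carefully in the $E \neq \cO_J$ case.
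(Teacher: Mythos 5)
Your proposal is correct and matches the paper's own (much terser) proof: both reduce to perfectness of $\fN_L$ via \Cref{lem:perfectnl}, express the family as an iterated cone/summand of (locally) constant families, and observe that the constant family with fiber $\scrF$ has Fourier--Mukai kernel $p^*\scrF$ under the $\derivedsheafhom$-convention. One small notational slip: since $\fN_L$ is a family of \emph{right} modules, the building blocks should be written $\underline{h_{L'}}\otimes_\Lambda E$ rather than $\underline{h^{L'}}\otimes_\Lambda E$; the formula $\scrG\mapsto \derivedsheafhom_X(\scrG,\scrF_{L'})$ you wrote is the right one, so this is only a typo and does not affect the argument. You go further than the paper in spelling out the functoriality of $\fN\mapsto\cK$ under cones and summands and the closure of $D^bCoh(X\times J)$ under these operations, which the paper leaves implicit; that extra care is reasonable but not a departure in method.
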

\begin{proof}
By Lemma \ref{lem:perfectnl}, the corresponding family over $Coh(X)$ can be represented by a twisted complex of constant families. Any constant family $\underline{\scrF}$, where $\scrF\in Coh(X)$, is represented by the kernel $p^*\scrF$. Hence, the lemma follows.	
\end{proof}
The functor category is quasi-isomorphic to $IndCoh(X\times J)(\simeq IndCoh(X)\otimes IndCoh(J))$. Therefore, one can compute $hom(\fN_L,\fN_L)$ as $RHom_{X\times J}(\cK,\cK)$, which identifies with $Rq_*\derivedsheafhom(\cK,\cK)$, the $J$-relative homomorphisms of $\cK$, as $J$ is affine.

The second part of Lemma \ref{lem:perfectnl} translates as:
\begin{lem}
For any point $z$ of $J$, $RHom_X(\cK|_z,\cK|_z)\simeq H^*(L)$. In particular, it vanishes in negative degree, and it is one dimensional at degree $0$.	
\end{lem}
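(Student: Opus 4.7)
The plan is to translate the question back to the A-side via the mirror equivalence and then invoke \Cref{thm:mainabstract}. By construction of $\cK$ as the Fourier-Mukai kernel of the functor $Coh(X)^{op}\to Coh(J)$ corresponding to $\fN_L$ under the Morita equivalence $\cW(M)\simeq Coh(X)$, restriction to a $\Lambda$-point $z\in J$ gives a perfect complex $\cK|_z$ on $X$ that represents, on the $B$-side, the object corresponding to $\fN_L|_z$. Hence one has a chain of quasi-isomorphisms
\begin{equation}
RHom_X(\cK|_z,\cK|_z)\simeq Hom_{Coh(X)}(\cK|_z,\cK|_z)\simeq Hom_{\cW(M)}(\fN_L|_z,\fN_L|_z).
\end{equation}
(Strictly speaking, one applies $Rq_*\derivedsheafhom(\cK,\cK)$ and uses base change along the closed inclusion $z\hookrightarrow J$ for the perfect complex $\cK$; coherence from \Cref{lem:coherenceK} makes this routine.)

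Next, the $A$-side computation reduces to Floer cohomology via \Cref{thm:mainabstract}. By definition $\fN_L|_z = h_L\otimes_{\cW(M)}\fM^M|_z$, which by \Cref{thm:mainabstract} is quasi-isomorphic to $h_{\phi_z(L)}$. Combining this with the Yoneda-type quasi-isomorphism of \Cref{lem:hlconvhlequalshom} gives
\begin{equation}
Hom_{\cW(M)}(\fN_L|_z,\fN_L|_z)\simeq Hom_{\cW(M)}(h_{\phi_z(L)},h_{\phi_z(L)})\simeq HF(\phi_z(L),\phi_z(L)).
\end{equation}
Here compactness of $L$ (hence of $\phi_z(L)$) ensures the right-hand side is the compact Floer cohomology, not the wrapped one.

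Finally, I need to identify $HF(\phi_z(L),\phi_z(L))$ with $H^*(L,\Lambda)$. Writing $z = T^{val_T(z)}z_0$ with $z_0\in H^1(M,U_\Lambda)$ and picking a closed $1$-form $\alpha$ representing $val_T(z)$, the brane $\phi_z(L) = (\phi_\alpha^1(L),\xi_{z_0}|_{\phi_\alpha^1(L)})$ is tautologically unobstructed because $L$ is and $\phi_\alpha^1$ is a symplectomorphism. For a tautologically unobstructed Lagrangian brane $(L',\xi)$, self-Floer cohomology equals singular cohomology with coefficients in the trivial local system $\xi\otimes\xi^{-1}\simeq \underline{\Lambda}$, so
\begin{equation}
HF(\phi_z(L),\phi_z(L))\simeq H^*(\phi_\alpha^1(L),\Lambda)\simeq H^*(L,\Lambda),
\end{equation}
the last identification being diffeomorphism invariance. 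The main obstacle — really the only non-formal step — is the identification $HF((L',\xi),(L',\xi))\simeq H^*(L',\Lambda)$ for a tautologically unobstructed brane with unitary local system. This is standard (e.g.\ a PSS argument, or a Morse/pearly model in which the two copies of $\xi$ appear with opposite holonomies on each disc and cancel), and in the torus case it is classical; the statement just needs to be invoked in the Weinstein setting with the conventions fixed here.
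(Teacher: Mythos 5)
Your proof is correct, but it routes through a different identification than the paper does. The paper derives this lemma as an immediate translation of the second part of \Cref{lem:perfectnl}: the functor $(\cdot)\otimes^{rel}_{\cW(M)}\fM^M|_J$ is an auto-equivalence of the category of families (by the group-like property), so $hom(\fN_L,\fN_L)\simeq hom(\underline{h_L},\underline{h_L})\simeq CF(L,L)\otimes\cO(J)$, and base change along a point $z$ (legitimate since $\fN_L$ is perfect) gives $Hom(\fN_L|_z,\fN_L|_z)\simeq HF(L,L)\simeq H^*(L)$ directly, with no reference to $\phi_z(L)$. You instead invoke \Cref{thm:mainabstract} to identify $\fN_L|_z\simeq h_{\phi_z(L)}$ pointwise, reduce to $HF(\phi_z(L),\phi_z(L))$, and then appeal to PSS to get $H^*(\phi^1_\alpha(L),\Lambda)\simeq H^*(L,\Lambda)$. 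Both routes ultimately rest on the group-like property of $\fM^M$ and both need the PSS-type identification of self-Floer cohomology of a compact, tautologically unobstructed brane with its singular cohomology (the paper needs it to pass from $HF(L,L)$ to $H^*(L)$, you need it for $\phi_z(L)$). Your route is heavier — \Cref{thm:mainabstract} is a substantive geometricity theorem, whereas the paper's auto-equivalence argument only uses the group-like property at the level of families — but it makes the geometric meaning of the constancy of self-hom explicit, which is arguably clarifying. One small caveat on your base-change parenthetical: \Cref{lem:coherenceK} gives $\cK\in D^bCoh(X\times J)$, which is not the same as perfectness on $X\times J$ when $X$ is singular; the base change you want is over the smooth affine base $J$ (equivalently, the perfectness of $\fN_L$ as a family, i.e.\ \Cref{lem:perfectnl}), not perfectness of $\cK$ as a complex on $X\times J$.
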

In other words, $\cK$ can be seen as morphism from the scheme $J$ to the moduli of complexes of sheaves satisfying this property (more precisely, it is a natural transformation from the functor of points of $J$ to the moduli functor of complexes with vanishing negative degree $(-1)$ self-Ext, and one dimensional degree $0$ self-Ext).

First assume $X$ is projective. Then, by \cite{inabamoduli}, this moduli space is represented by an algebraic space $Splcpx_X$. Therefore, the natural transformation above can be seen as a map $J\to Splcpx_X$. Moreover, $\fN|_{z=1}\simeq h_L$ corresponds to $\cO_x$, the sky-scraper sheaf of $x\in X$, by the assumption on mirror equivalence. Hence, $J$ maps to the component of $Splcpx_X$ consisting of sky-scraper sheaves. This component is clearly isomorphic to $X$, in other words we obtain a map $J\to X$.

Now assume $X$ is affine, and let $X\subset \bar X$ be a compactification to a projective scheme. By \Cref{lem:coherenceK}, $\cK\in D^bCoh(X\times J)$. We have 
\begin{lem}
The push-forward $\bar\cK$ of $\cK$ to $\bar X\times J$ is still in $D^b Coh(\bar X\times J)$, and for any $z\in J$, $Ext_{\bar X}(\bar\cK|_z,\bar\cK|_z)$ is the same as $Ext_X(\cK|_z,\cK|_z)$.
\end{lem}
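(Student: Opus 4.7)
The plan is to show that the support of $\cK$ inside $X\times J$ is already closed in $\bar X\times J$; this will let me define $\bar\cK$ as the zero-extension $j_*\cK$ along $j\colon X\times J\hookrightarrow \bar X\times J$ and will force the Ext-computations on $\bar X$ to localize to the common locus where $\bar X$ and $X$ agree.

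First I would show that each fiber $\cK|_z$ has $0$-dimensional support concentrated at a single closed point $y_z\in X$. Since $\fN_L$ is proper by \Cref{lem:propernl}, under the derived equivalence $\cW(M)\simeq D^b Coh(X)$ the complex $\cK|_z\in D^b Coh(X)$ satisfies $RHom_X(\scrG,\cK|_z)$ finite-dimensional for every $\scrG\in D^b Coh(X)$; taking $\scrG=\cO_X$ and using that $X$ is affine forces each cohomology sheaf of $\cK|_z$ to have finite length, hence $0$-dimensional support. Combined with $Ext^0(\cK|_z,\cK|_z)=\Lambda$ from \Cref{lem:perfectnl} (which forbids a nontrivial direct-sum decomposition across several support points) and with the identification of the total Ext-algebra with the exterior algebra $H^*(L)$ on $b_1(L)=\dim X$ generators coming from the mirror assumption $L\leftrightarrow \cO_x$, I would conclude that $\cK|_z\simeq \cO_{y_z}$ for a single smooth point $y_z\in X$.

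Next, I would show that the support scheme $Y:=Supp(\cK)\subset X\times J$ is finite over $J$. The projection $Y\to J$ is already quasi-finite of finite type; to upgrade to properness I would apply the valuative criterion. Given a DVR $R$ with fraction field $K$, a specialization $Spec(R)\to J$, and a $K$-point of $Y$ over the generic point of $Spec(R)$, the pulled-back complex $\cK_R\in D^b Coh(X\times Spec(R))$ has a closed fiber, and the single-point argument of the previous step (applied over the residue field of $R$) shows that this fiber is a skyscraper at some $y_0\in X$, furnishing the required $R$-point of $Y$. Hence $Y\to J$ is finite, and composing the closed immersion $Y\hookrightarrow X\times J$ with the open immersion $X\times J\hookrightarrow \bar X\times J$ realizes $Y$ as the image of a proper morphism to $\bar X\times J$, which is therefore closed.

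With $Y$ closed in $\bar X\times J$, each cohomology sheaf of $\cK$ extends by zero to a coherent sheaf on $\bar X\times J$ supported in $Y$, so $\bar\cK:=j_*\cK=Rj_*\cK$ lies in $D^b Coh(\bar X\times J)$. For any $z\in J$ the fiber $\bar\cK|_z$ is the zero-extension of the skyscraper $\cO_{y_z}$, so the Ext-computations on both $\bar X$ and $X$ localize to an affine open neighborhood of $y_z$ on which the two schemes agree, giving $Ext_{\bar X}(\bar\cK|_z,\bar\cK|_z)=Ext_X(\cK|_z,\cK|_z)$. The hardest part will be the valuative-criterion step: one must verify that the derived base change of $\cK$ to $Spec(R)$ preserves the single-point support property needed to extract the closed-point value $y_0\in X$, which ultimately reduces to the fact that properness and the Ext-algebra constraints for $\fN_L$ are stable under base change along $Spec(R)\to J$.
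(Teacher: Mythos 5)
Your approach differs from the paper's and, as written, has a gap in the valuative-criterion step.

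The paper's proof is shorter and avoids the valuative criterion entirely. It observes that properness of $\fN_L$ (applied to $\scrF=\cO_X$) gives $Rq_*\cK\in D^bCoh(J)$, and since $q$ is affine ($X$ is affine), coherence of $Rq_*$ of the cohomology sheaves $\cH^i(\cK)$ forces $\mathrm{Supp}(\cH^i(\cK))\to J$ to be finite: locally one embeds $\cO_{\mathrm{Supp}(\cH^i)}\hookrightarrow (\cH^i)^{\oplus n}$ using a generating set, so $q_*\cO_{\mathrm{Supp}(\cH^i)}$ is coherent, which for an affine morphism is equivalent to finiteness. Your fiber computation (``$R\Gamma(X,\cK|_z)$ finite-dimensional $\Rightarrow$ finite-length cohomology sheaves'') is the same as the paper's, but the paper then gets proper support directly, whereas you route through single-point fibers and properness of $Y\to J$.

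The gap: in the valuative-criterion step you pull back along $\mathrm{Spec}(R)\to J$, note that the closed fiber of $\cK_R$ is a skyscraper at some $y_0$, and claim this ``furnishes the required $R$-point.'' It does not. The given $K$-point of $Y$ sits at a point $y_1\in X_K$ in the generic fiber, and you must show that \emph{this} point specializes to a point of $Y$ over the closed point. If $y_1$ escapes to infinity, then $\{(y_1,\eta)\}$ is already closed in $X\times\mathrm{Spec}(R)$, and $Y_R$ could decompose as $\{(y_1,\eta)\}\sqcup Z_0$ with $Z_0\subset X_s$ containing $y_0$; the closed fiber is still a skyscraper at $y_0$, but the valuative criterion fails because there is no $R$-point extending $(y_1,\eta)$. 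To rule this out you would need an extra argument: the decomposition of $Y_R$ would force $\cK_R$ to split as $\cK_1\oplus\cK_2$ with $\cK_1$ a $K$-module and $\cK_2$ $\mathfrak{m}_R$-torsion, and then you must invoke the $R$-relative version of $\mathrm{Ext}^0(\fN_L|_z,\fN_L|_z)\cong R$ (which the paper asserts in a note after \Cref{lem:perfectnl}, but which you do not cite) to see that $\mathrm{End}(\cK_R)\cong R$ admits no such nontrivial direct-sum decomposition. As a secondary point, your claim that $\cK|_z\simeq \cO_{y_z}$ is not forced by the Ext-algebra computation alone (shifts or twisted complexes supported at a single smooth point can have the same Ext-algebra); fortunately you only need $0$-dimensional support, which you already have. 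The net effect is that your proposal needs both more input (the $R$-relative $\mathrm{Ext}^0$ statement) and more work than the paper's coherence argument, which sidesteps all of this.
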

In particular, $\bar\cK$ satisfies the same self-Ext conditions and defines a map $J\to Splcpx_{\bar{X}}$.
\begin{proof}
It suffices to show that the support of $\cK$ is proper over $J$. By \Cref{lem:propernl}, for any $\scrF\in Coh(X)$, $Rq_*(\derivedsheafhom_X(p^*\scrF,\cK) )$ is a bounded complex of coherent sheaves over $J$. This in particular, holds for $\scrF=\cO_X$. Therefore, $Rq_*\cK$ in in $D^bCoh(J)$. In particular, for any point $z$ of $J$, $R\Gamma(X,\cK|_z)$ is finite dimensional. As $X$ is affine, this implies $\cK|_z$ has proper support in $X$, and the $J$-relative version also holds, i.e. $\cK$ has proper support over $J$. 
\end{proof}
Hence, we obtain a map $J\to Splcpx_{\bar{X}}$, and by the same reasoning, this actually comes from a map $J\to \bar X$. On the other hand, $\cK$ is pushed forward from $X\times J$, its support is also there. Therefore, the map $J\to \bar X$ has image in $X$.

In summary, we obtain a map from the affine torus $J$ to $X$. \Cref{lem:injectivenl} implies this map is injective on $\Lambda$-points, and \Cref{lem:immersivenl} implies it is an immersion. Note that $J$ and $X$ have the same dimension ($b_1(L)$, for $X$ this can be seen using $Ext^1(\cO_x,\cO_x)\cong H^1(L,\Lambda)$); thus, the map is an isomorphism on tangent spaces. From this one can conclude that $J$ actually lands in the smooth locus of $X$. Therefore, we have an open immersion $J\to X$. 


This proves the first part of Theorem \ref{thm:rationalmirror}. On the other hand, the module $\fN_L|_z\simeq h_{\phi_z(L)}$ is shown to be mirror to a point in this chart. Therefore, other points in this chart are mirror to Lagrangians isotopic to $L$.

\appendix
\section{Affinoid domains and semi-continuity}\label{sec:appendixaffinoidsemicont}
In this \namecref{sec:appendixaffinoidsemicont}, we review the basics of affinoid domains and prove a necessary semi-continuity result. In practice, we only need examples $S_P$ defined in Section \ref{subsec:familiesdeformations}; however, we start in higher generality here. Here too, we mostly take the dual point of view and work with rings of analytic functions. As before, let $\Lambda=\bC((T^\bR))$ and $val_T$ denote the $T$-adic valuation. Let $|x|_T:=e^{-val_T(x)}$ denote the $T$-adic norm. 
\begin{defn}\label{defn:tatealgebra}\cite{kedlayaoverconvergent}
Let $\rho:=(\rho_1,\dots, \rho_m)\in \bR^m_+$. Define \textbf{the Tate algebra $\Lambda\langle x_1,\dots, x_m\rangle_\rho$} to be the ring of series that converge over the polydisc of radius $\rho$, i.e. the set of series $\sum a_Ix^I$, $I\in \bN^m, a_I\in \Lambda$, such that \begin{equation}
	val_T(a_I)+\langle I,-\log \rho\rangle=val_T(a_I)-\sum_{i=1}^m I_i\log\rho_i\to \infty
\end{equation} 
as $I\to \infty$. This condition is equivalent to $|a_I|_T\rho^I\to 0$, there $\rho^I=\rho_1^{I_1}\dots\rho_m^{I_m}$. Notice that $\Lambda\langle x_1,\dots, x_m\rangle_\rho$ is endowed with the norm 
\begin{equation}
	\bigg| \sum a_Ix^I \bigg|:=\max\{|a_I|_T\rho^I:I\in\bN^m\}
\end{equation}
\end{defn}
\begin{rk}
The term Tate algebra is mostly used when $\rho=(1,\dots,1)$, i.e. for the set of series that converge over the closed polydisc of radius $1$. Observe that changing $\rho$ defines isomorphic rings. Namely, the map $x_i\to T^{\log \rho_i}x_i$ defines an isomorphism from $\Lambda\langle x_1,\dots, x_m\rangle_{(1,\dots, 1)}$ to $\Lambda\langle x_1,\dots, x_m\rangle_\rho$. We drop the subscript when $\rho=(1,\dots, 1)$.
\end{rk}
\begin{rk}
In Definition \ref{defn:tatealgebra}, one can replace the ground field by another non-Archimedean normed algebra. 
\end{rk}
\begin{defn}
	An \textbf{affinoid algebra} is a quotient of the Tate algebra by a closed ideal. 	
\end{defn}
One can show that Tate algebras and more general affinoid algebras are Noetherian, see \cite[p.222]{boschguntherremmert}. One can define the dual notion ``affinoid domain $Sp(A)$'', see \cite{bosch}, \cite{boschguntherremmert} for more details. The set of $\Lambda$-points of the spectrum of Tate algebra $\Lambda\langle x_1,\dots,x_m\rangle_\rho$ is the set of elements $(a_1,\dots, a_m)\in \Lambda^m$ such that $|a_i|_T\leq \rho_i$. We will formally work with affinoid algebras, but urge the reader to keep the geometric picture in mind. Throughout the appendix, we will emphasize if a point $x\in Sp(A)$ is a $\Lambda$-point, but for the rest of the paper we drop the adjective $\Lambda$, assuming this implicitly.

Given affinoid algebra $A$, and $f,g\in A$, without common zeroes in $Sp(A)$, one can define subdomains called \textbf{Weierstrass domains}, \textbf{Laurent domains} and \textbf{rational domains}. The $\Lambda$-points of the Weierstrass domain is given by those points of $Sp(A)$ such that $|f(x)|_T\leq 1$. Similarly, Laurent domains are given by those satisfying $|f(x)|_T\leq 1$ and $|g(x)|_T\geq 1$, and rational domains are given by those satisfying $|f(x)|_T\leq |g(x)|_T$ (only the last one requires no common zero assumption). These correspond to affinoid algebras $A\langle w\rangle/(w-f)$, $A\langle w_1,w_2 \rangle/(w_1-f,1-w_2g)$, and $A\langle w\rangle/(f-wg)$, respectively. Morally, these algebras can be thought as ``$A\langle f\rangle$'', ``$A\langle f, g^{-1}\rangle$'', and ``$A\langle \frac{f}{g}\rangle$'', respectively. For more details see \cite{bosch}, \cite{boschguntherremmert}. Note that these notions are slightly more general. For instance, for a finite number of functions $f_i,g_j$, the subdomain $\{|f_i(x)|_T\leq 1, |g_j(x)|_T\geq 1 \}$ is still called a Laurent domain. One can also define subdomains by iterating these constructions. These subdomains are called \textbf{subdomains of special type}.

Note that the affinoid subdomains are used to define the Grothendieck topology on affinoid domains, so they are more comparable to open subsets than the closed immersions. We will only use subdomains of special type or subdomains obtained by iterating these constructions; therefore, a subdomain will mean one of these. We refer the reader to \cite{bosch}, \cite{boschguntherremmert} for throughout discussions of affinoid subdomains. 

Next, we expand \Cref{exmp:adicannulus} and \Cref{exmp:adicpolytope} given in Section \ref{subsec:familiesdeformations} and show how they can be realized as affinoid spaces:
\begin{exmp}
Recall that for $a<b\in\bR$, $\Lambda\{z^\bZ \}_{[a,b]}$ denotes the ring of series $\sum_{n\in\bZ} a_nz^n$, $a_n\in \Lambda$ such that $val_T(a_n)+n\nu\to\infty $ as $n\to\pm\infty$, for every $a\leq \nu\leq b$. In other words, this is the ring of functions that converge at $z\in\Lambda$ with valuation between $a$ and $b$. The ring $\Lambda\{z^\bZ \}_{[a,b]}$ can be identified with the ring of functions on a Laurent subdomain of $Sp(\Lambda\langle z\rangle_{e^{-a}} )$, namely for $f(z)=T^{-a}z$, $g(z)=T^{-b}z$. The condition $|f(z)|_T\leq 1$ is equivalent to $val_T(z)\geq a$ and the condition $|g(z)|_T\geq 1$ is equivalent to $val_T(z)\leq b$. In other words, 
\begin{equation}
	\Lambda\{z^\bZ \}_{[a,b]}=\Lambda\langle z\rangle_{e^{-a}}\langle w_1,w_2 \rangle/(w_1-T^{-a}z,1-w_2T^{-b}z)
\end{equation}
We denoted the spectrum of $\Lambda\{z^\bZ \}_{[a,b]}$ by $S_{[a,b]}$, and its $\Lambda$-points are in correspondence with the elements of $\Lambda$ with $T$-adic valuation between $a$ and $b$. 
\end{exmp}
\begin{exmp}[\cite{abouzaidicm}]
More generally, let $V$ be a finite rank lattice, and let $P\subset Hom(V,\bR)$ be a convex, compact polytope defined by integral affine equations. In other words, there exists $v_1,\dots ,v_k\in V$ and $a_1,\dots ,a_k\in \bR$ such that $P$ is the set of points $\boldsymbol{\nu}$ satisfying $\langle\boldsymbol{\nu},v_i \rangle\leq a_i$. Let $\Lambda\{z^V \}_P$ denote the ring of series $\sum_{v\in V} a_vz^v$, $a_v\in \Lambda$ such that $val_T(a_v)+\langle \boldsymbol{\nu},v \rangle\to \infty$ for all $\boldsymbol{\nu}\in P$. As mentioned, if $S_P$ denote the spectrum of $\Lambda\{z^V \}_P$, then the set of $\Lambda$-points of $S_P$ is given by $val_T^{-1}(P)$, where $val_T:Hom(V,\Lambda^*)\to Hom(V,\bR)$ is extended from the valuation. 

By choosing coordinates on $V$, we obtain a positive cone $V^+$ on $V$ (the set of vectors with non-negative coordinates), and for each $v\in V$ a decomposition $v=v^+-v^-$, where $v^+ ,v^-\in V^+$ (concretely we split $v$ into its negative and positive parts). Consider the affinoid domain $\Lambda\langle z^{V^+}\rangle_\rho$ (where components $\rho_j\gg 0$). Let $v_c\in V^+$ denote an element with all positive components ($v_c$ can be taken to be $(1,1,\dots,1)$ in the chosen basis). Also choose a small $\epsilon>0$. To obtain $S_P$ from the spectrum of this domain, one puts the conditions
\begin{equation}\label{eq:geometriccuts}
\langle val_T(z),v_c \rangle\geq \epsilon \text{ and }
\langle val_T(z),v_i \rangle\leq  a_i \text{ for }i=1,\dots , k	
\end{equation} 
To explain these geometrically, recall that $S_P\subset Spec(\Lambda [z^V])\subset Spec(\Lambda [z^{V^+}])\cong \bA^{rk(V)}_\Lambda$. As $P$ is compact, $S_P$ is also in $Sp(\Lambda\langle z^{V^+}\rangle_\rho)$ for large $\rho$. Observe that the first condition in \eqref{eq:geometriccuts} cuts out the part of $Sp(\Lambda\langle z^{V^+}\rangle_\rho)\subset \bA^{rk(V)}_\Lambda$ that is close to the zero. Other conditions cut the polytope $P$. Note that, alternatively one could consider the variables (corresponding to basis elements of $V$) and add a condition like the first one in \eqref{eq:geometriccuts}. As we choose $v_c$ with all positive components and cut out further, this does not make a difference. 

Algebraically, one first takes the Weierstrass subdomain with algebra $\Lambda\langle z^{V^+}\rangle_\rho\langle w_0\rangle/ (w_0-T^{-\epsilon} z^{v_c})$ of $\Lambda\langle z^{V^+}\rangle_\rho$. This inverts all $z^v, v\in V^+$ in the process, so this domain is contained in $Spec(\Lambda [z^V])\cong \bG_{m,\Lambda}^{rk(V)}$. Then one takes the (iterated) Laurent subdomain with algebra
\begin{equation}
	\Lambda\langle z^{V^+}\rangle_\rho\langle w_0,w_1\dots w_k\rangle/ (w_0-T^{-\epsilon} z^{v_c}, 1-w_iT^{-a_i}z^{v_i}|i=1,\dots ,k)
\end{equation}
of the latter ($z^{v_i}$ may not belong to the former algebra). Alternative is to take an iterated rational subdomain 
\begin{equation}
		\Lambda\langle z^{V^+}\rangle_\rho\langle w_0,w_1'\dots w_k'\rangle/ (w_0-T^{-\epsilon} z^{v_c}, T^{a_i}z^{v_i^-}-w_i'z^{v_i^+}|i=1,\dots ,k)
\end{equation}
(recall $z^{v_i^\pm}\in \Lambda\langle z^{V^+}\rangle_\rho$). The equivalence of these is due to $val_T(z^{v_i^+})-val_T(z^{v_i^-})=\langle val_T(z),v_i \rangle$; hence, $val_T(z^{v_i^+})\leq val_T(T^{a_i}z^{v_i^-})$ iff $val_T(T^{-a_i}z^{v_i})\leq 0$. In short, the outcome does not change, in both cases one obtains $\Lambda\{z^V\}_P$.
\end{exmp}
Our next goal is to prove a semi-continuity statement for the rank of chain complexes over affinoid domains. More precisely:
\begin{prop}\label{prop:semicontinuityoverbase}
Let $(C,d)$ be a chain complex over $\Lambda\{z^V\}_P=\cO^{an}(S_P)$ with finitely generated cohomology, where $P$ is a polytope as above which also contains $0$ in its interior. Assume the derived restriction of $(C,d)$ to any $\Lambda$-point of $S_0$ is acyclic. Then, there exists a smaller polytope $P'\subset P$ such that $0\in int (P')$ and the derived restriction of $(C,d)$ to $\Lambda\{z^V\}_{P'}=\cO^{an}(S_{P'})$ is acyclic.
\end{prop}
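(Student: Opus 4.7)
My plan is to reduce to a bounded complex of projectives, analyze its cohomology modules one at a time (in the spirit of \Cref{lem:pointwisevanishing}), and then upgrade the pointwise vanishing on $S_0$ to uniform vanishing on an affinoid neighborhood via a rigid-analytic Nullstellensatz together with an approximation/perturbation argument. Set $A := \Lambda\{z^V\}_P$ and $B := \Lambda\{z^V\}_{\{0\}}$.

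\textbf{Step 1 (Reduction to projectives).} Since $P$ has non-empty interior, $S_P$ is an affinoid subdomain of the smooth torus $\mathrm{Spec}(\Lambda[z^V])$, and hence $A$ is regular Noetherian of finite Krull dimension. Any complex with finitely generated cohomology is quasi-isomorphic to a bounded complex $P^\bullet$ of finite rank projective $A$-modules (via Spaltenstein-type replacements combined with truncation to kill the resolution beyond Krull dimension). Replace $C$ by $P^\bullet$; since projectives are flat, derived restriction becomes underived restriction, so the hypothesis says $P^\bullet|_{z_0}$ is acyclic for every $\Lambda$-point $z_0\in S_0$. Mimicking the spectral sequence argument of \Cref{lem:pointwisevanishing}, this forces $M_q := H^q(P^\bullet)$ to satisfy $M_q\otimes_A \kappa(z_0)=0$ for every $q$ and every $\Lambda$-point $z_0\in S_0$.

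\textbf{Step 2 (Vanishing over $B$ via Nullstellensatz).} The Novikov field $\Lambda=\bC((T^\bR))$ is algebraically closed (Mal'cev-Neumann), so by Tate's rigid-analytic Nullstellensatz every maximal ideal of $B$ corresponds to a $\Lambda$-point of $S_0$. A finitely generated module over the Jacobson ring $B$ which vanishes at every maximal ideal is zero, so $M_q\otimes_A B=0$ for every $q$.

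\textbf{Step 3 (Approximation on a smaller polytope).} Present each $M_q$ as $\mathrm{coker}(\phi_q\colon A^{p_q}\to A^{r_q})$. Vanishing of $M_q\otimes_A B$ means $\phi_q\otimes_A B$ is surjective; choose lifts $v_1,\dots,v_{r_q}\in B^{p_q}$ with $\phi_q(v_i)=e_i$. Laurent polynomials are norm-dense in $B$ (truncating a series with $\mathrm{val}_T(a_v)\to\infty$ gives uniform approximation), and any Laurent polynomial lies in $\Lambda\{z^V\}_{P'}$ for every polytope $P'$. Replacing $v_i$ by a sufficiently fine Laurent truncation $\tilde v_i$, the column vectors $\phi_q(\tilde v_i)=e_i+\varepsilon_i$ differ from the standard basis by arbitrarily small error; by taking the polytope $P'_q\ni 0$ small enough, the resulting matrix becomes a norm-$<1$ perturbation of the identity in $\Lambda\{z^V\}_{P'_q}$, hence invertible by the geometric series. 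Therefore $\phi_q\otimes_A\Lambda\{z^V\}_{P'_q}$ is surjective, so $M_q\otimes_A \Lambda\{z^V\}_{P'_q}=0$. Set $P'=\bigcap_q P'_q$ (a finite intersection, since $P^\bullet$ is bounded).

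\textbf{Step 4 (Flatness).} The inclusion of an affinoid subdomain induces a flat map of affinoid algebras (standard, see Bosch-Güntzer-Remmert), so the derived and underived restrictions of $P^\bullet$ along $A\to\Lambda\{z^V\}_{P'}$ agree. Its cohomology is $M_q\otimes_A\Lambda\{z^V\}_{P'}=0$ for all $q$, proving acyclicity of the derived restriction to $S_{P'}$.

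\textbf{Main obstacle.} The substantive step is Step 3: promoting pointwise vanishing on the Shilov-type skeleton $S_0$ to vanishing on an \emph{affinoid} neighborhood of the form $S_{P'}$. The Berkovich/analytic neighborhoods of $S_0$ in $S_P$ are more numerous than the polytope-thickened ones, so one really has to produce the explicit perturbative estimate showing that the specific family $\{S_{P'}\}_{P'\ni 0}$ is cofinal among these neighborhoods, and combine it with Tate's Nullstellensatz to make the algebraic geometry of maximal ideals match the rigid geometry of $\Lambda$-points. An alternative, slicker route would be to apply the upper semi-continuity of $z\mapsto \dim H^q(C|_z)$ for perfect complexes on affinoids directly, but one still needs the neighborhood basis statement at the end.
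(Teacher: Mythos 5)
Your argument is correct in its overall logic, but it takes a genuinely different route from the paper's. Where you present each cohomology module $M_q$ as a cokernel $\mathrm{coker}(\phi_q)$, lift the identity over $B=\Lambda\{z^V\}_{\{0\}}$, truncate to Laurent polynomials, and invoke the geometric series to invert a near-identity matrix, the paper instead works at the level of the complex itself: it picks a covering by Laurent domains on which the projective terms become free (\Cref{note:affinoidvectorbundle}), considers the minors of the differential $d$ of size $\operatorname{rank}(C)/2$, applies \Cref{fact:nullstellensatzforaffinoids} to these minors, approximates the resulting Nullstellensatz coefficients by finite partial sums, and then invokes \Cref{lem:functiondeformsboundedaway} to push the non-vanishing bound from $S_0$ to a small $S_{P'}$; finally it combines pointwise acyclicity with \Cref{lem:pointwisevanishing} and \Cref{lem:fingenvanisheverywhere}. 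Your module-by-module route has the advantage of sidestepping the projective-but-not-free bookkeeping (the $g_i$ cover), because cokernel presentations are by \emph{free} modules; the paper's route has the advantage of packaging the estimate once and for all in \Cref{lem:functiondeformsboundedaway}.

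The one step you flag yourself is indeed the real content: you must show that if $\varepsilon\in\Lambda\{z^V\}_P$ has small sup-norm on $S_0$, then its sup-norm on $S_{P'}$ is $<1$ for $P'$ a sufficiently small polytope around $0$. This is exactly what \Cref{lem:functiondeformsboundedaway} encodes (the paper proves it via quasi-compactness of the Berkovich spectrum and the homeomorphism $S_P^{\mathrm{berk}}\cong P\times S_0^{\mathrm{berk}}$). For your purposes a direct estimate also works and you should spell it out: write $\varepsilon=\sum_v a_v z^v$; since the polytope $P$ is compact with finitely many vertices $\nu_i$, the condition $\mathrm{val}_T(a_v)+\langle\nu_i,v\rangle\to\infty$ for each $i$ gives $\mathrm{val}_T(a_v)+\min_{\nu\in P}\langle\nu,v\rangle\to\infty$, so the tail $\{|v|>R\}$ contributes a uniformly small amount to the sup-norm over any $S_{P'}\subset S_P$; the head $\{|v|\le R\}$ is finite, so $\max_{\nu\in P'}e^{-\langle\nu,v\rangle}\to 1$ uniformly as $P'\to\{0\}$, and the head contribution is controlled by the sup-norm on $S_0$. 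With that estimate in place, Steps 1, 2 and 4 are standard and correct, so the proof closes. You should also add a line noting (as \Cref{note:z2gradedmodify} does) that in the $\bZ/2\bZ$-graded applications one needs the projective replacement to admit a compatible $\bZ$-grading for the Tor spectral sequence in Step 1 to converge.
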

First recall:
\begin{fact}[Weak Nullstellensatz for affinoid algebras, {\cite[p. 263, Corollary 5]{boschguntherremmert}}, {\cite[Fact 3.1.2.3]{Temkin2015}}]\label{fact:nullstellensatzforaffinoids}
If $A$ is an affinoid algebra over $\Lambda$, i.e. a quotient of a Tate algebra, and $f_1,\dots, f_k\in A$ have no common zeroes among $\Lambda$-points of $Sp(A)$, then $f_1,\dots,f_k$ generate the unit ideal of $A$.
\end{fact}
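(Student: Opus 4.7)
My plan is to reduce the statement to the claim that every maximal ideal $\mathfrak{m}$ of $A$ has residue field $A/\mathfrak{m} \cong \Lambda$, i.e.\ corresponds to a $\Lambda$-point of $Sp(A)$. Once this is in hand, the Fact follows by the usual algebraic Nullstellensatz argument: if $f_1,\dots,f_k$ were not to generate the unit ideal they would be contained in some maximal ideal $\mathfrak{m}$, and then their common images in $A/\mathfrak{m} = \Lambda$ would all vanish, giving a $\Lambda$-point where all $f_i$ vanish, contradicting the hypothesis. So the real content is reduction to the residue-field statement.

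The first main step is to establish Noether normalization for affinoid algebras over $\Lambda$: any affinoid $A = \Lambda\langle x_1,\dots,x_m\rangle/I$ admits, after an automorphism of the Tate algebra (the standard change of variables $x_i \mapsto x_i + x_m^{N^i}$ for $N$ large, which turns any nonzero element into a Weierstrass-type polynomial in $x_m$), a module-finite injection $\Lambda\langle y_1,\dots,y_d\rangle \hookrightarrow A$, where $d = \dim A$. The tool here is the Weierstrass preparation/division theorem over $\Lambda$, which works because the residue field $\bC$ and value group $\bR$ behave exactly as in the classical $p$-adic case. I would cite this directly from Bosch-G\"untner-Remmert rather than reprove it.

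The second main step is to handle the case of a Tate algebra $T_d = \Lambda\langle y_1,\dots,y_d\rangle$: every maximal ideal $\mathfrak{n}$ of $T_d$ has residue field a \emph{finite} extension of $\Lambda$. One way is by induction on $d$: given $\mathfrak{n}$, choose $0 \ne g \in \mathfrak{n}$; after a Weierstrass change of coordinates $g$ becomes a monic polynomial in $y_d$ with coefficients in $T_{d-1}$, so $T_d/(g)$ is finite over $T_{d-1}$, and hence $T_d/\mathfrak{n}$ is a finite extension of $T_{d-1}/(\mathfrak{n} \cap T_{d-1})$. By induction the latter is a finite extension of $\Lambda$, so we are done. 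Now $\Lambda = \bC((T^\bR))$ is algebraically closed (its value group $\bR$ is divisible and its residue field $\bC$ is algebraically closed; this is a standard fact about generalized power series fields), so any finite extension of $\Lambda$ is $\Lambda$ itself, giving $T_d/\mathfrak{n} \cong \Lambda$.

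The final step is to transfer this from $T_d$ to the general affinoid $A$. Given a maximal ideal $\mathfrak{m} \subset A$ and the finite injection $T_d \hookrightarrow A$ from step one, the contraction $\mathfrak{n} := \mathfrak{m} \cap T_d$ is maximal in $T_d$ by the usual going-up/lying-over for integral extensions, and $A/\mathfrak{m}$ is a finite extension of $T_d/\mathfrak{n} \cong \Lambda$, hence equal to $\Lambda$ by algebraic closedness again. This is the residue-field claim, and completes the proof. The main obstacle is step two (or really the Weierstrass preparation underlying both steps); everything else is formal once those analytic inputs are in place, which is why in the excerpt the statement is quoted as a fact with references rather than reproved.
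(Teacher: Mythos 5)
Your argument is correct and is exactly the standard proof of the affinoid weak Nullstellensatz (Noether normalization via Weierstrass preparation, induction on the number of variables for Tate algebras, lying over for the finite extension, and algebraic closedness of $\Lambda=\bC((T^\bR))$ to collapse the finite residue extension to $\Lambda$ itself). The paper does not reprove the statement at all: it records it as a Fact with citations, and its only added content is precisely the two reductions you handle in steps one and three — from Tate algebras to general affinoids, and from ``point over a finite extension'' to ``$\Lambda$-point'' using that $\Lambda$ is algebraically closed — so your write-up is a faithful expansion of what the cited references supply.
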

\cite{boschguntherremmert} state this for Tate algebras only, but the statement we want follows immediately from this. \cite{Temkin2015} states the existence of a point over a finite extension of the ground field; however, as $\Lambda$ is algebraically closed, this gives us what we want.

We will also need:
\begin{lem}\label{lem:projectiveresolution}
If $S$ is a smooth affinoid domain, then every chain complex of $\cO^{an}(S)$-modules with finitely generated cohomology is quasi-isomorphic to a finite complex of finite rank projective $\cO^{an}(S)$-modules. 
\end{lem}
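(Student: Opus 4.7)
The plan is to reduce the statement to the fact that $\cO^{an}(S)$ is a regular Noetherian ring of finite Krull dimension, and then to apply the standard truncation argument for perfect complexes. Concretely, I would proceed in the following steps.

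First I would use that the Tate algebra $\Lambda\langle x_1,\dots,x_m\rangle_\rho$ is Noetherian (cited in the paper via \cite{boschguntherremmert}), so any of its quotients $\cO^{an}(S)$ is Noetherian as well. Over any Noetherian ring, a chain complex $(C,d)$ with finitely generated (and bounded) cohomology is pseudo-coherent: by a standard inductive construction one produces a bounded-above complex $F^\bullet$ of finite rank free $\cO^{an}(S)$-modules together with a quasi-isomorphism $F^\bullet\to C$ (one builds $F^\bullet$ degree by degree from the top cohomological degree downward, lifting generators and relations).

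The core input is bounded global dimension. Smoothness of the affinoid domain $S$ implies that for every maximal ideal $\mathfrak{m}\subset \cO^{an}(S)$ the localization $\cO^{an}(S)_{\mathfrak{m}}$ is a regular local ring; by Serre's theorem its global dimension equals its Krull dimension, which is bounded above by the (finite) dimension $d$ of $S$. For finitely generated modules $M$ over a Noetherian ring one has $\mathrm{pd}_{\cO^{an}(S)}(M)=\sup_{\mathfrak{m}}\mathrm{pd}_{\cO^{an}(S)_{\mathfrak{m}}}(M_{\mathfrak{m}})$, so every finitely generated $\cO^{an}(S)$-module has projective dimension at most $d$.

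Finally I would truncate. Say $C$ is concentrated in degrees $\geq a$ cohomologically, and let $F^\bullet\to C$ be the resolution above. For any integer $k\leq a-d-1$, consider the brutal truncation
\begin{equation*}
\widetilde F^\bullet \;:=\; \bigl(\ker(F^k\to F^{k+1}) \;\to\; F^k \;\to\; F^{k+1}\;\to\;\cdots \bigr),
\end{equation*}
where the leftmost term sits in some fixed degree. The complex $\widetilde F^\bullet$ is still quasi-isomorphic to $C$, and the kernel $K:=\ker(F^k\to F^{k+1})$ is finitely generated (as $\cO^{an}(S)$ is Noetherian and the $F^i$ are finite free). Because $F^\bullet$ resolves something with cohomology only in degrees $\geq a$, the complex $(\cdots\to F^{k-1}\to K\to 0)$ is a resolution of $K$ of length $\geq d+1$, so by the projective dimension bound $K$ is itself projective. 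This exhibits $C$ as quasi-isomorphic to the finite complex $\widetilde F^\bullet$ of finite rank projective modules, as desired.

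The main technical point to verify is that smoothness of the affinoid domain $S$ in the sense used in the paper does give regularity of all localizations at maximal ideals and a finite uniform bound on their Krull dimension; this is standard for rigid-analytic smooth spaces (smoothness is a local notion and implies geometric regularity of the stalks), but it is the only step where we actually use the hypothesis on $S$ rather than generalities about Noetherian rings.
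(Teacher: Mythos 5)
Your proof is correct and takes essentially the same route as the paper: both arguments reduce to the fact that $\cO^{an}(S)$ is a regular Noetherian ring of finite Krull dimension, hence has finite global dimension, from which the statement follows by a standard truncation of a bounded-above free resolution. The paper compresses the last step into a citation of \cite[(5.92) Theorem]{lamlecturesmodulesrings} and the remark that finite projective dimension of f.g.\ modules ``implies the statement immediately,'' whereas you write out the syzygy/truncation argument explicitly; the substance is identical.
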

\begin{proof}
The regular Noetherian ring $\cO^{an}(S)$ has finite Krull dimension; hence, also finite projective dimension. More precisely, since it is regular the Krull dimension agrees with the projective dimension at every maximal ideal, and therefore by \cite[(5.92) Theorem]{lamlecturesmodulesrings}, the projective dimension of $\cO^{an}(S)$ is also bounded. In particular, every finitely generated module over $\cO^{an}(S)$ has a finite projective resolution, and this implies the statement of \Cref{lem:projectiveresolution} immediately.
\end{proof}
\begin{note}
By \cite[Prop 6.5]{kedlayaoverconvergent}, finitely generated modules over $\Lambda\langle x_1,\dots, x_m\rangle_\rho$ has finite free resolutions, which implies the same for the complexes as in \Cref{lem:projectiveresolution}. We believe, the same holds for its affinoid subdomains such as $S_P$, but we do not need this. Indeed, to show this one only needs to be able extend coherent sheaves over $S_P$ to $Sp(\Lambda\langle x_1,\dots, x_m\rangle_\rho)$, as then one can take a free resolution of the extension and then restrict back. 
\end{note}
\begin{note}\label{note:affinoidvectorbundle}
Given affinoid domain $A$, a finite rank projective $A$-module $E$ is locally free in the affinoid topology too. Indeed, if $E$ is projective, there exists $u_i,g_i\in A, i=1,\dots, m$ such that $\sum u_ig_i=1$ and $E_{g_i}$ is free over $A_{g_i}$ (hence, $E_{u_ig_i}$ is free over $A_{u_ig_i}$). By replacing $g_i$ by $u_ig_i$, we can assume $\sum g_i=1$. This implies the Laurent domains $\{|g_i|\geq 1\}$ cover $Sp(A)$. Moreover, $A\langle w\rangle/(1-wg_i)$ is an extension of $A_{g_i}$; thus, the base change of $E$ to $A\langle w\rangle/(1-wg_i)$ is also free. Also observe that one can choose the (co)framing of $E_{g_i}$ to be defined over $A$. In other words, one can choose a map $E\to A^{rk(E)}$ that become an isomorphism over the localization $A_{g_i}$. Therefore, one can also choose the (co)framing of the base change of $E$ to $A\langle w\rangle/(1-wg_i)$ well-defined over $A$. 
\end{note}

\begin{lem}\label{lem:functiondeformsboundedaway}
Let $Sp(A)$	be an affinoid subdomain of $S_P$ (hence, $Sp(A)\cap S_{P'}$ is a subdomain of both $Sp(A)$ and $S_{P'}$). Let $f\in A$ be such that $f$ is non-zero on the $\Lambda$-points of $Sp(A)\cap S_0$. Then, there exists a small $P'\subset P$ that contain $0$ in its interior such that restriction of $f$ to $Sp(A)\cap S_{P'}$ is invertible and bounded away from $0$. More precisely, there exists a small $\epsilon>0$ such that $|f(x)|_T\geq \epsilon$ for all $x\in Sp(A)\cap S_{P'}$.
\end{lem}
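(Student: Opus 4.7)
My plan is to combine the affinoid Nullstellensatz (\Cref{fact:nullstellensatzforaffinoids}) with a compactness argument on the Berkovich spectrum. First, I would observe that $Sp(A)\cap S_{0}$ is itself an affinoid subdomain (being the intersection of two affinoid subdomains of $S_{P}$), so $f$ restricts to an element of $\cO^{an}(Sp(A)\cap S_{0})$. By hypothesis $f$ has no zeros on the $\Lambda$-points of this affinoid, so by \Cref{fact:nullstellensatzforaffinoids} applied to the single function $f$, it generates the unit ideal and is therefore invertible there. Denoting its inverse by $g$, the sup norm $|g|_{\sup}$ on $Sp(A)\cap S_{0}$ is finite, yielding the lower bound $|f(x)|_{T}\geq |g|_{\sup}^{-1}=:2\epsilon>0$ for every $\Lambda$-point $x\in Sp(A)\cap S_{0}$.

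Next I would consider the Weierstrass-type affinoid subdomain $W:=Sp(A)\cap \{|f|_{T}\leq \epsilon\}$, cut out by the condition $|T^{-val_{T}(\epsilon)}f|_{T}\leq 1$. By the previous bound, $W\cap S_{0}$ contains no $\Lambda$-point, and since $\Lambda$ is algebraically closed, this forces the affinoid algebra of $W\cap S_{0}$ to vanish, so $W\cap S_{0}$ is empty also as a Berkovich space. I would then invoke the tropicalization map $\mu\colon S_{P}\to P$, which on Berkovich points sends a bounded multiplicative seminorm $\|\cdot\|$ to the linear functional $v\mapsto -\log\|z^{v}\|$; this map is continuous and $\mu^{-1}(0)=S_{0}$. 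Since the Berkovich analytification $W^{\mathrm{Berk}}$ is compact and Hausdorff, its image $\mu(W^{\mathrm{Berk}})$ is a compact subset of $P\setminus\{0\}$, so $0$ admits an open polytope neighborhood $P'\subset P$ with $0\in \mathrm{int}(P')$ disjoint from this image. Then $W\cap S_{P'}=\emptyset$, i.e., $|f(x)|_{T}>\epsilon$ for every $\Lambda$-point $x\in Sp(A)\cap S_{P'}$. One final application of \Cref{fact:nullstellensatzforaffinoids} on the affinoid $Sp(A)\cap S_{P'}$ (noting that $f$ has no zero there by the bound just proved) shows that $f$ is invertible, completing the proof.

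The main difficulty is justifying the compactness/tropicalization step cleanly, which requires being comfortable passing from the rigid $\Lambda$-points to the Berkovich spectrum (and verifying the continuity of $\mu$). A more hands-on alternative would be to fix some small polytope $P'_{0}\subset P$ and approximate $g=1/f\in \cO^{an}(Sp(A)\cap S_{0})$ by $\tilde g\in \cO^{an}(Sp(A)\cap S_{P'_{0}})$ with $|f\tilde g-1|_{\sup,S_{0}}<1$ (using density of the restriction map in the sup norm on $S_{0}$), and then argue that the sup norm of $f\tilde g-1$ on $Sp(A)\cap S_{P'}$ tends to its sup norm on $S_{0}$ as $P'\subset P'_{0}$ shrinks to $\{0\}$, so that $f\tilde g$ is a unit by the geometric series on a small enough $S_{P'}$, and hence so is $f$ with a quantitative bound on $|f|_{T}$ in terms of $|\tilde g|_{\sup}$.
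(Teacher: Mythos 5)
Your proof is correct and follows essentially the same route as the paper: use the affinoid Nullstellensatz to get a uniform lower bound $|f|\geq 2\epsilon$ on $Sp(A)\cap S_0$, then pass to the Berkovich spectrum and use compactness together with continuity of the tropicalization map $\mu=trop\colon S_P^{berk}\to P$ to shrink the polytope.

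The only difference is in packaging. The paper takes the \emph{open} set $\{|f|>a/2\}\subset Sp_b(A_P)$, observes it contains the fiber $Sp_b(A_0)$, and then appeals to the product/fibration structure $S_P^{berk}\cong S_0^{berk}\times P$ to find a tube $trop^{-1}(P')\cap Sp_b(A_P)$ inside it. You instead pass to the \emph{closed} complement $W=Sp(A)\cap\{|f|\leq\epsilon\}$, note that $W^{berk}$ is compact, that $\mu(W^{berk})$ is therefore compact and misses $0$ (because $W\cap S_0=\emptyset$ over the algebraically closed $\Lambda$), and so choose $P'$ disjoint from $\mu(W^{berk})$. This is precisely the standard way to prove the "tube lemma" used implicitly in the paper, so the two arguments coincide once the paper's sketch is unpacked; your phrasing arguably makes the role of compactness more transparent and avoids having to mention the homeomorphism $S_P^{berk}\cong S_0^{berk}\times P$. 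One small point worth being careful about, which you handle correctly: the passage from "$W\cap S_0$ has no $\Lambda$-points" to "$W^{berk}\cap \mu^{-1}(0)=\emptyset$" uses both that $\Lambda$ is algebraically closed (so a nonzero affinoid has a $\Lambda$-point) and that $\mu^{-1}(0)$ in $S_P^{berk}$ coincides with the image of $S_0^{berk}$ under the closed embedding induced by the subdomain $S_0\hookrightarrow S_P$.
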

\begin{proof}
Let $A_P$ denote the ring of functions on $Sp(A)\cap S_P$. By \Cref{fact:nullstellensatzforaffinoids}, the restriction of $f$ to the affinoid domain $Sp(A)\cap S_0$ is invertible, i.e. there is an analytic function $u\in A_0=\cO^{an}(Sp(A)\cap S_0)$ such that $fu=1$. As $|u(x)|_T$ is bounded above, this implies $|f(x)|_T$ bounded away from $0$ on $Sp(A)\cap S_0$.
	
The rest of the argument is topological, and for this, it is convenient to use the language of Berkovich spaces. See \cite{Temkin2015} or \cite{bakerintroberkovich} for an introduction. In summary, the Berkovich spectrum $Sp_b(A)$ of the normed algebra $A$ is the set of multiplicative semi-norms $|\cdot|$ on $A$ that is bounded above by the norm of $A$ and that extend the norm $|\cdot|_T$ on $\Lambda$. It refines the spectrum of maximal ideals $Sp(A)$ and the semi-norm corresponding to a $\Lambda$-point $x$ is given by $g\mapsto |g(x)|_T$. It also has a natural topology, which is the coarsest one making $|\cdot|\mapsto |g|$ continuous for all $g\in A$. This space is quasi-compact and Hausdorff. In particular, for $u\in A_0$ above, $|\cdot|\mapsto |u|$ has bounded image, and there exists an $a>0$ such that $|f|\geq a$ for all $|\cdot|\in Sp_b(A_0)$. The set of $|\cdot|\in Sp_b(A_P)$ such that $|f|>a/2$ is an open neighborhood of $Sp_b(A_0)$. 

Let $S_P^{berk}:=Sp_b(\Lambda\{z^V\}_P)$ and observe that $val_T$ extends to a continuous map $trop:S_P^{berk}\to P\subset Hom(V,\bR)$. Explicitly, one sends a semi-norm $|\cdot|$ to $(v\mapsto -\log|z^v|)\in Hom(V,\bR)$. Also note that $S_P^{berk}$ is homeomorphic to the product of $S_0^{berk}$ and $P$. Indeed, given $\nu\in P$, and $g(z)$, let $g(T^\nu z)$ denote the function obtained by replacing $z^v$ by $T^{\langle \nu ,v\rangle}z^v$. Given semi-norm $|\cdot|$ define $tr_\nu |\cdot|$ by $g\mapsto |g(T^\nu z)|$. It is easy to see that $trop(tr_\nu|\cdot|)=\langle \nu ,v\rangle+trop(|\cdot|)$. Hence, $P\times S_0^{berk}\to S_P^{berk}, (\nu,|\cdot|)\mapsto tr_\nu|\cdot|$ is an homoemorphism with the obvious inverse.
%
%

Therefore, $S_P^{berk}\to P$ is a fibration, and there exists a small neighborhood $0\in P'\subset P$ such that the neighborhood of $Sp_b(A_0)\subset Sp_b(A_P)\subset S_P^{berk}$ that consists of $|\cdot|\in Sp_b(A_P)$ satisfying $|f|>a/2$ contains $trop^{-1}(P')\cap Sp_b(A_P)$. In other words, $|f|>a/2$ holds on $trop^{-1}(P')\cap Sp_b(A_P)$, and this implies the desired result. 
%
%
\end{proof}
Another affinoid version of a classical statement is the following:
\begin{lem}\label{lem:fingenvanisheverywhere}
If a finitely generated module $M$ over an affinoid domain $A$ restricts to $0$ at every $\Lambda$ point of $Sp(A)$, then $M=0$.
\end{lem}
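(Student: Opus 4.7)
The plan is to reduce the statement to a standard fact of commutative algebra via the weak Nullstellensatz for affinoid algebras (Fact \ref{fact:nullstellensatzforaffinoids}). Since $\Lambda$ is algebraically closed, the weak Nullstellensatz implies that every maximal ideal $\mathfrak{m}$ of $A$ is of the form $\mathfrak{m}_x$ for some $\Lambda$-point $x \in Sp(A)$: indeed, if some maximal ideal contained no point in its zero locus, its generators would have no common zero and would generate the unit ideal, a contradiction. Hence $\Lambda$-points of $Sp(A)$ are in bijection with the maximal ideals of $A$.

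Now let $x \in Sp(A)$ and $\mathfrak{m} = \mathfrak{m}_x$ the corresponding maximal ideal. The hypothesis that $M$ restricts to $0$ at $x$ means precisely $M \otimes_A A/\mathfrak{m} = 0$, i.e.\ $M = \mathfrak{m}M$. Localizing at $\mathfrak{m}$ gives $M_\mathfrak{m} = \mathfrak{m}A_\mathfrak{m} \cdot M_\mathfrak{m}$. Since $A$ is Noetherian and $M$ is finitely generated, $M_\mathfrak{m}$ is a finitely generated module over the local ring $A_\mathfrak{m}$, so Nakayama's lemma yields $M_\mathfrak{m} = 0$.

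Thus $M_\mathfrak{m} = 0$ for every maximal ideal $\mathfrak{m} \subset A$. By the standard fact that a module vanishes if and only if it vanishes after localization at every maximal ideal, we conclude $M = 0$.

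The only non-routine input is the identification of maximal ideals with $\Lambda$-points, which is exactly Fact \ref{fact:nullstellensatzforaffinoids}; once this is available, the rest is formal commutative algebra. No topological or Berkovich-style argument is needed here, in contrast to Lemma \ref{lem:functiondeformsboundedaway}.
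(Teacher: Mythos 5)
Your proof is correct, and it rests on exactly the same two ingredients as the paper's own argument: the weak Nullstellensatz (Fact \ref{fact:nullstellensatzforaffinoids}) to relate maximal ideals to $\Lambda$-points, and Nakayama's lemma. The packaging differs slightly — you first establish that every maximal ideal of $A$ is of the form $\mathfrak{m}_x$ and then invoke the fact that vanishing at all localizations implies vanishing, whereas the paper argues contrapositively via the annihilator ideal $\mathrm{ann}_A(M)$, applying the Nullstellensatz to its (finitely many) generators directly — but these are the same argument dressed differently.
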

\begin{proof}
Assume $M\neq 0$, and consider the proper ideal $ann_A(M)$. This ideal is finitely generated, and \Cref{fact:nullstellensatzforaffinoids} implies it is contained in the maximal ideal of a $\Lambda$-point $x$. Combining this with Nakayama lemma implies the restriction of $M$ to $x$ is also non-zero. 
\end{proof}
\begin{proof}[Proof of \Cref{prop:semicontinuityoverbase}]
By \Cref{lem:projectiveresolution}, $(C,d)$ is quasi-isomorphic to a complex of finite rank projective $\Lambda\{z^V\}_P$-modules, and without loss of generality, assume $(C,d)$ itself is such. Choose $g_i$ as in \Cref{note:affinoidvectorbundle}, i.e. satisfying $\sum g_i=1$ and such that $C$ is (degreewise) free on the Laurent domains $\{|g_i(x)|_T\geq 1\}$ (and indeed even over the localization at $g_i$). 

One can show that the dimension of $H(C|_x,d|_x)$ is equal to $rank (C)-2rank(d|_x)$. By assumption, the complex is acyclic at every $\Lambda$-point $x\in Hom(V,U_\Lambda)=S_0(\Lambda)$; thus, $rank(d|_x)$ is maximal and equal to $r:=rank(C)/2$. Choose a trivialization for $C$ over each $\{|g_i(x)|_T\geq 1\}$ that extends to a map $\Lambda\{z^V\}_P^{rk(C)}\to C$ on $S_P$ (possibly as a non-trivialization, see \Cref{note:affinoidvectorbundle}). Hence, one can see $d$ as an upper triangular matrix and consider its set of minors of size $r$. The minors may not be defined over $S_P$; however, by multiplying them with $g_i^k,k\gg 0$, we can ensure this holds (here it is important that the trivialization is defined over the localization at $g_i$ rather than just the Laurent domain). Let $f_{ij}$ denote this set of (modified) minors. 

For any $\Lambda$ point $x\in S_0(\Lambda)$, some $g_i(x)\neq 0$, and as $rank(d|_x)=r$, some $f_{ij}\neq 0$; hence, $g_i(x)f_{ij}(x)\neq 0$. Thus, $g_if_{ij}\in \Lambda\{z^V\}_P=\cO^{an}(S_P)$ has no common zeroes on $S_0$. As a result, by \Cref{fact:nullstellensatzforaffinoids} again, there exists $u_{ij}\in \Lambda\{z^V \}_0=\cO^{an}(S_0)$ such that $\sum u_{ij}g_if_{ij}=1$. Each $u_{ij}$ is a series in $z^v$ that does not necessarily converge outside $S_0$. However, we can replace each $u_{ij}$ by a sufficiently large finite partial sum $u_{ij}'$, so that $u:=\sum u_{ij}'g_if_{ij}$ is close to $1$ over $S_0$. More precisely, the sup-norm of $1-u$ is small and thus, $|u(x)|_T$ is bounded away from $0$ for $x\in S_0$. Being finite sums $u_{ij}'$ are defined over $S_P$; thus, the same holds for $u$. On the other hand, by \Cref{lem:functiondeformsboundedaway}, for a small neighborhood $P'\subset P$ of $0$, $|u(x)|_T$ is bounded away from $0$ for $x\in S_{P'}(\Lambda)$. Say $|u(x)|_T\geq a>0$.

Therefore, for every $x\in S_{P'}(\Lambda)$, there exists $i,j$ such that $|u_{ij}'(x)g_i(x)f_{ij}(x)|_T\geq a$. In particular, $g_i(x)\neq 0$ and the cofactor corresponding to $f_{ij}$ is also non-vanishing at $x$. In other words, $d|_x$ has rank $r$, and $(C|_x,d|_x)$ is acyclic, for every $\Lambda$-point $x\in S_{P'}(\Lambda)$. The spectral sequence argument in \Cref{lem:pointwisevanishing} shows that the restriction of $H(C,d)$ to $x$ vanishes. This, combined with \Cref{lem:fingenvanisheverywhere}, implies \Cref{prop:semicontinuityoverbase}.
\end{proof}	

\bibliographystyle{alpha}
\bibliography{bibliogeneral}	

\begin{thebibliography}{BGR84}

\bibitem[Abo10]{generation}
Mohammed Abouzaid.
\newblock A geometric criterion for generating the {F}ukaya category.
\newblock {\em Publ. Math. Inst. Hautes \'Etudes Sci.}, (112):191--240, 2010.

\bibitem[Abo14]{abouzaidicm}
Mohammed Abouzaid.
\newblock Family {F}loer cohomology and mirror symmetry.
\newblock In {\em Proceedings of the {I}nternational {C}ongress of
  {M}athematicians---{S}eoul 2014. {V}ol. {II}}, pages 813--836. Kyung Moon Sa,
  Seoul, 2014.

\bibitem[AGV]{localcats}
M.~{Abouzaid}, Y.~{Groman}, and U.~{Varolgunes}.
\newblock {Local Fukaya categories}.
\newblock {\em work in progress}.

\bibitem[AS10]{abousei}
Mohammed Abouzaid and Paul Seidel.
\newblock An open string analogue of {V}iterbo functoriality.
\newblock {\em Geom. Topol.}, 14(2):627--718, 2010.

\bibitem[Bak08]{bakerintroberkovich}
Matthew Baker.
\newblock An introduction to {B}erkovich analytic spaces and non-{A}rchimedean
  potential theory on curves.
\newblock In {\em {$p$}-adic geometry}, volume~45 of {\em Univ. Lecture Ser.},
  pages 123--174. Amer. Math. Soc., Providence, RI, 2008.

\bibitem[Bel06]{skolemmahler}
Jason~P. Bell.
\newblock A generalised {S}kolem-{M}ahler-{L}ech theorem for affine varieties.
\newblock {\em J. London Math. Soc. (2)}, 73(2):367--379, 2006.

\bibitem[BGR84]{boschguntherremmert}
S.~Bosch, U.~G\"{u}ntzer, and R.~Remmert.
\newblock {\em Non-{A}rchimedean analysis}, volume 261 of {\em Grundlehren der
  Mathematischen Wissenschaften [Fundamental Principles of Mathematical
  Sciences]}.
\newblock Springer-Verlag, Berlin, 1984.
\newblock A systematic approach to rigid analytic geometry.

\bibitem[Bos14]{bosch}
Siegfried Bosch.
\newblock {\em Lectures on formal and rigid geometry}, volume 2105 of {\em
  Lecture Notes in Mathematics}.
\newblock Springer, Cham, 2014.

\bibitem[BSS17]{dynmordellforcoherent}
Jason~P. Bell, Matthew Satriano, and Susan~J. Sierra.
\newblock On a dynamical {M}ordell-{L}ang conjecture for coherent sheaves.
\newblock {\em J. Lond. Math. Soc. (2)}, 96(1):28--46, 2017.

\bibitem[CK21]{ownwithcotefiltrationgrowth}
Laurent C\^{o}t\'e and Yusuf~Bar{\i}\c{s} Kartal.
\newblock Categorical action filtrations via localization and the growth as a
  symplectic invariant, 2021.

\bibitem[Efi20]{efimovhfp}
Alexander~I. Efimov.
\newblock Homotopy finiteness of some {DG} categories from algebraic geometry.
\newblock {\em J. Eur. Math. Soc. (JEMS)}, 22(9):2879--2942, 2020.

\bibitem[Fuk01]{fukayamirror}
Kenji Fukaya.
\newblock Floer homology and mirror symmetry. {I}.
\newblock In {\em Winter {S}chool on {M}irror {S}ymmetry, {V}ector {B}undles
  and {L}agrangian {S}ubmanifolds ({C}ambridge, {MA}, 1999)}, volume~23 of {\em
  AMS/IP Stud. Adv. Math.}, pages 15--43. Amer. Math. Soc., Providence, RI,
  2001.

\bibitem[Gai13]{indcoh}
Dennis Gaitsgory.
\newblock ind-coherent sheaves.
\newblock {\em Mosc. Math. J.}, 13(3):399--528, 553, 2013.

\bibitem[Gan12]{sheelthesis}
Sheel Ganatra.
\newblock {\em Symplectic {C}ohomology and {D}uality for the {W}rapped {F}ukaya
  {C}ategory}.
\newblock ProQuest LLC, Ann Arbor, MI, 2012.
\newblock Thesis (Ph.D.)--Massachusetts Institute of Technology.

\bibitem[GP17]{girouxpardon}
Emmanuel Giroux and John Pardon.
\newblock Existence of {L}efschetz fibrations on {S}tein and {W}einstein
  domains.
\newblock {\em Geom. Topol.}, 21(2):963--997, 2017.

\bibitem[GPS17]{GPS1}
S.~{Ganatra}, J.~{Pardon}, and V.~{Shende}.
\newblock {Covariantly functorial wrapped Floer theory on Liouville sectors}.
\newblock {\em ArXiv e-prints}, June 2017.

\bibitem[GPS18]{GPS2}
S.~{Ganatra}, J.~{Pardon}, and V.~{Shende}.
\newblock {Structural results in wrapped Floer theory}.
\newblock {\em ArXiv e-prints}, September 2018.

\bibitem[Ina02]{inabamoduli}
Michi-aki Inaba.
\newblock Toward a definition of moduli of complexes of coherent sheaves on a
  projective scheme.
\newblock {\em J. Math. Kyoto Univ.}, 42(2):317--329, 2002.

\bibitem[Kar20]{owniteratespadic}
Yusuf~Bar{\i}{\textcommabelow s} Kartal.
\newblock {Iterations of symplectomorphisms and p-adic analytic actions on
  Fukaya category}.
\newblock {\em arXiv e-prints}, page arXiv:2008.08566, August 2020.

\bibitem[Kar21]{ownpaperalg}
Yusuf~Barış Kartal.
\newblock Dynamical invariants of mapping torus categories.
\newblock {\em Advances in Mathematics}, 389:107882, 2021.

\bibitem[Ked04]{kedlayaoverconvergent}
Kiran~S. Kedlaya.
\newblock Full faithfulness for overconvergent {$F$}-isocrystals.
\newblock In {\em Geometric aspects of {D}work theory. {V}ol. {I}, {II}}, pages
  819--835. Walter de Gruyter, Berlin, 2004.

\bibitem[Kel06]{kellerdg}
Bernhard Keller.
\newblock On differential graded categories.
\newblock In {\em International {C}ongress of {M}athematicians. {V}ol. {II}},
  pages 151--190. Eur. Math. Soc., Z\"urich, 2006.

\bibitem[KS06]{kosoaffine}
Maxim Kontsevich and Yan Soibelman.
\newblock Affine structures and non-{A}rchimedean analytic spaces.
\newblock In {\em The unity of mathematics}, volume 244 of {\em Progr. Math.},
  pages 321--385. Birkh\"{a}user Boston, Boston, MA, 2006.

\bibitem[KS09]{koso}
M.~Kontsevich and Y.~Soibelman.
\newblock Notes on {$A_\infty$}-algebras, {$A_\infty$}-categories and
  non-commutative geometry.
\newblock In {\em Homological mirror symmetry}, volume 757 of {\em Lecture
  Notes in Phys.}, pages 153--219. Springer, Berlin, 2009.

\bibitem[Lam99]{lamlecturesmodulesrings}
T.~Y. Lam.
\newblock {\em Lectures on modules and rings}, volume 189 of {\em Graduate
  Texts in Mathematics}.
\newblock Springer-Verlag, New York, 1999.

\bibitem[Lee15]{leehmsforopen}
Heather~Ming Lee.
\newblock {\em Homological mirror symmetry for open {R}iemann surfaces from
  pair-of-pants decompositions}.
\newblock ProQuest LLC, Ann Arbor, MI, 2015.
\newblock Thesis (Ph.D.)--University of California, Berkeley.

\bibitem[LMP98]{lalondemcduffpolterovichonflux}
Fran\c{c}ois Lalonde, Dusa McDuff, and Leonid Polterovich.
\newblock On the flux conjectures.
\newblock In {\em Geometry, topology, and dynamics ({M}ontreal, {PQ}, 1995)},
  volume~15 of {\em CRM Proc. Lecture Notes}, pages 69--85. Amer. Math. Soc.,
  Providence, RI, 1998.

\bibitem[LO95]{luptonoprea}
Gregory Lupton and John Oprea.
\newblock Cohomologically symplectic spaces: toral actions and the {G}ottlieb
  group.
\newblock {\em Trans. Amer. Math. Soc.}, 347(1):261--288, 1995.

\bibitem[LO10]{orlovlunts}
Valery~A. Lunts and Dmitri~O. Orlov.
\newblock Uniqueness of enhancement for triangulated categories.
\newblock {\em J. Amer. Math. Soc.}, 23(3):853--908, 2010.

\bibitem[Ono06]{onoflux}
K.~Ono.
\newblock Floer-{N}ovikov cohomology and the flux conjecture.
\newblock {\em Geom. Funct. Anal.}, 16(5):981--1020, 2006.

\bibitem[Sei08]{seidelbook}
Paul Seidel.
\newblock {\em Fukaya categories and {P}icard-{L}efschetz theory}.
\newblock Zurich Lectures in Advanced Mathematics. European Mathematical
  Society (EMS), Z\"urich, 2008.

\bibitem[Sei12]{seidellef1}
Paul Seidel.
\newblock Fukaya {$A_\infty$}-structures associated to {L}efschetz fibrations.
  {I}.
\newblock {\em J. Symplectic Geom.}, 10(3):325--388, 2012.

\bibitem[Sei14]{flux}
Paul Seidel.
\newblock Abstract analogues of flux as symplectic invariants.
\newblock {\em M\'em. Soc. Math. Fr. (N.S.)}, (137):135, 2014.

\bibitem[She16]{sheridanfano}
Nick Sheridan.
\newblock On the {F}ukaya category of a {F}ano hypersurface in projective
  space.
\newblock {\em Publ. Math. Inst. Hautes \'{E}tudes Sci.}, 124:165--317, 2016.

\bibitem[Spa88]{spaltenstein}
N.~Spaltenstein.
\newblock Resolutions of unbounded complexes.
\newblock {\em Compositio Math.}, 65(2):121--154, 1988.

\bibitem[Tem15]{Temkin2015}
Michael Temkin.
\newblock Introduction to {B}erkovich analytic spaces.
\newblock In {\em Berkovich spaces and applications}, volume 2119 of {\em
  Lecture Notes in Math.}, pages 3--66. Springer, Cham, 2015.

\bibitem[TV07]{toenvaquiemoduli}
Bertrand To\"{e}n and Michel Vaqui\'{e}.
\newblock Moduli of objects in dg-categories.
\newblock {\em Ann. Sci. \'{E}cole Norm. Sup. (4)}, 40(3):387--444, 2007.

\bibitem[WW10]{wehrheimwoodwardquilted1}
Katrin Wehrheim and Chris~T. Woodward.
\newblock Quilted {F}loer cohomology.
\newblock {\em Geom. Topol.}, 14(2):833--902, 2010.

\end{thebibliography}

\end{document}